\documentclass{article}
\usepackage{amsthm,amsmath,amssymb,mathrsfs,amsfonts,xcolor,wasysym,bbm}
\usepackage{booktabs}
\usepackage{xcolor}
\usepackage{enumitem}
\usepackage{graphicx}
\usepackage{caption}
\usepackage{bm}
\usepackage{amsfonts}

\usepackage{algorithm}
\usepackage{algpseudocode}
\usepackage{algorithmicx}
\usepackage{booktabs}
\usepackage{array}
\usepackage{multirow}
\usepackage[colorlinks=true, linkcolor=blue, citecolor=blue, urlcolor=blue]{hyperref}

\usepackage{geometry}
\RequirePackage[round,authoryear]{natbib}
\usepackage[most]{tcolorbox}
\usepackage{authblk}
\numberwithin{equation}{section}
\newtheorem{theorem}{Theorem}[section]
\newtheorem{definition}{Definition}[section]
\newtheorem{lemma}{Lemma}[section]

\newtheorem{remark}{Remark}[section]

\newtheorem{assumption}{Assumption}[section]

\begin{document}

\title{\textbf{The logarithmic law of sample correlation matrices}}

\author[1]{Yanpeng Li\thanks{Email: \texttt{20230256@hit.edu.cn}}}
\author[2]{Zhi Liu\thanks{Email: \texttt{liuzhi@um.edu.mo}}}
\author[3]{Jiahui Xie\thanks{Email: \texttt{jh.xie@nus.edu.sg}}}
\author[3]{Wang Zhou\thanks{Email: \texttt{wangzhou@nus.edu.sg}}}
\affil[1]{\small School of Mathematics, Harbin Institute of Technology, China}
\affil[2]{\small Faculty of Science and Technology, University of Macau, China}
\affil[3]{\small Department of Statistics and Data Science, National University of Singapore, Singapore}

\date{}

\maketitle
\vspace{-3em}

\begin{abstract}
    Let $\mathbf{R}$ be the sample correlation matrix constructed from $\mathbf{X}\in \mathbb{R}^{p\times n}$, whose entries are independent and identically distributed random variables with mean zero and tail probability condition $\lim_{x\rightarrow \infty}x^3\mathbb{P}(|\xi|>x)=0$. We derive the universal logarithmic law for $\log \det \mathbf{R}$,
    \begin{equation*}
    \frac{\log \det \mathbf{R}-(p-n+1/2)\log (1-\frac{p-1}{n})+p-\frac{p}{n}}{\sqrt{-2\log (1-\frac{p-1}{n})-2\frac{p}{n}}}\stackrel{d}{\rightarrow} {N}(0,1),
    \end{equation*}
    if $p\le n$ as $p,n\rightarrow \infty$. Moreover, under the near-singularity case $0\le n-p\le n^{1-w}$ for any $w\in (0,1)$, it is shown that the tail probability condition can be weakened to $\lim_{x\rightarrow \infty}x^3(\log x)^{-1/4+\mathfrak{c}}\mathbb{P}(|\xi|>x)<\infty$ for any constant $0<\mathfrak{c}<1/4$.
\end{abstract}

\section{Introduction}\label{sec_introduction}
As determinants are among the most fundamental matrix functions, it is a basic problem in the theory of random matrices to study the distribution of random determinants, and this study has a long and rich history. The investigations of random determinants trace back to \cite{szekeres1937extremal} in 1937, who studied the sum of squares and the sum of the fourth powers of all determinants for random Bernoulli matrices (whose entries are $-1$ and $1$ with the same probability). Later in the 1950s, there were a series of papers (see \cite{dembo1989random,nyquist1954distribution} and references therein) devoted to the computation of moments of fixed orders of random matrices. The central limit theorem (CLT) for the logarithmic determinant of random Gaussian matrices,
Wigner matrices and matrices with real independent and identically distributed (i.i.d.) entries with sub-exponential tails were proved by \cite{goodman1963distribution}, \cite{tao2011random}, and \cite{nguyen2014random}, respectively. Specifically,  \cite{goodman1963distribution} considered random Gaussian matrices whose entries are i.i.d. standard Gaussian variables. He noticed that in this case, the determinant is a product of independent Chi-square variables. Therefore, its logarithm
is the sum of independent variables and, thus, one expects a central limit theorem to hold. He further showed that
\begin{equation}\label{eq_defloglaw}
    \frac{\log |\det G_n|-(1/2)\log (n-1)!}{\sqrt{(1/2)\log n}}\stackrel{d}{\rightarrow}N(0,1),
\end{equation}
as $n\rightarrow\infty$, where $G_n=(G_{ij})_{1\le i\le n,1\le j\le n}$ with $G_{ij}$s' being i.i.d. standard normal random variables. Later,  \cite{girko1978central,girko1998refinement} stated and partially proved that the CLT result \eqref{eq_defloglaw} (also named ``the logarithmic law'' of random determinant) holds for random matrices in a very generic case under the existence of the $(4+\delta)$ moment for some small $\delta>0$ by using an elegant method of perpendiculars and the classical CLT for martingale. However, though the proof route of \cite{girko1978central} is clear and quite original, the proof is not complete, and several parts lack mathematical rigour, which was completed by \cite{bao2015logarithmic} and extended to the sharp finite fourth moment condition.

Consider a random sample $\mathbf{x}_1,\ldots,\mathbf{x}_n$ from a $p$-dimensional distribution collected into a $p\times n$ random data matrix $\mathbf{X}$. For statistical applications the logarithmic determinants of the sample covariance matrix $\mathbf{S} = n^{-1}\mathbf{X}\mathbf{X}^{\top}$ and the sample correlation matrix $\mathbf{R}=\{\operatorname{diag}\mathbf{S}\}^{-1/2}\mathbf{S}\{\operatorname{diag}\mathbf{S}\}^{-1/2}$ are of real importance, such as the volume of random parallelotopes in random geometry, the likelihood ratio statistics in multivariate statistics, the difference of the log determinants of two sample covariance matrices in quadratic discriminant analysis for classification and so on, see \cite{bai2010spectral,heiny2023logdet} and references therein. The logarithmic law of large sample covariance matrices can be deduced from the work of \cite{bai2004clt} for the linear spectral statistics (LSS) $\operatorname{tr}(f(\mathbf{S}))$ with a test function $f(x) = \log(x)$ when $p/n\rightarrow \gamma\in (0,1)$ as $n\rightarrow \infty$, which was further extended to near-singularity cases $p/n\rightarrow 1$ and $p\le n$ under finite fourth moment by \cite{bao2015logarithmic,wang2018}.
The first generic result for $\log \det \mathbf{R}$ was proved in \cite{gao2014high} under the fourth moment condition, which gives
\begin{equation}\label{eq_logdetR}
    \frac{\log \det \mathbf{R}-(p-n+1/2)\log (1-(p-1)/n)+p-\frac{p}{n}}{\sqrt{-2\log (1-(p-1)/n)-2p/n}}\stackrel{d}{\rightarrow} N(0,1),
\end{equation}
for $p/n\rightarrow \gamma\in (0,1)$ as $n\rightarrow\infty$. Surprisingly, the fourth moment is
not present in \eqref{eq_logdetR}, which was further extended to a weaker moment condition by \cite{heiny2023logdet}, who proved that \eqref{eq_logdetR} holds given that $X_{ij}$ are symmetric i.i.d., having regularly varying tails with index $\alpha\in (3,4)$. Thus, one may ask \textit{whether the logarithmic law of the determinant of $\mathbf{R}$ holds under weaker moment conditions regardless of the singularity}.

In this paper, we provide an affirmative answer to this question and establish the universal logarithmic law of $\det\mathbf{R}$ under condition \eqref{eq_oldcondition}. Specifically, under the non-singularity case $p/n\in (0,1)$, \eqref{eq_oldcondition} coincides with the necessary and sufficient condition for CLT of LSS of $\mathbf{R}$ in \cite{Li2024clt}. While in the near-singularity case $0\le n-p\le n^{1-w}$ for any $w\in (0,1)$, the divergent asymptotic variance due to singularity relaxes the constraints of the moments, we provide a weakened condition \eqref{eq_newcondition}, which we believe to be near optimal.

Consider a $p$-dimensional population $\mathbf{x}=(X_1,X_2,\ldots,X_p)^{\top}\in \mathbb{R}^{p}$, where the coordinates $X_i$ for $1\le i\le p$ are i.i.d. non-degenerate random variables with the same law of $\xi$ as follows.
\begin{assumption}\label{assump1}
	Assume that $\xi$ has a regularly varying tail with index $\alpha>0$, that is
	\begin{equation*}
	\mathbb{P}(|\xi|>x)=\mathcal{L}(x)x^{-\alpha},~\text{as}~x\rightarrow \infty,
	\end{equation*}
	where $\mathcal{L}(x)$ is a ``slowly varying function'' such that $\lim_{x\rightarrow \infty}\frac{\mathcal{L}(tx)}{\mathcal{L}(x)}=1$
	for all $t\in \mathbb{R}$. Moreover, we assume that $\mathbb{E}\xi=0$ and $\mathbb{E}\xi^2=1$ if $\alpha>2$.
\end{assumption}
We now state the universal central limit theorem for the log-determinant of the sample correlation matrix $\mathbf{R}$, which constitutes the main result of this paper.
\begin{theorem}\label{thm_logdet}
	Under
	\begin{equation}\label{eq_oldcondition}
	\lim_{x\rightarrow \infty}x^3\mathbb{P}(|\xi|>x)=0,
	\end{equation}
	the log determinant of the sample correlation matrix $\mathbf{R}$ satisfies
	\begin{equation}\label{eq_universalclt}
	\frac{\log \det \mathbf{R}-(p-n+1/2)\log (1-\frac{p-1}{n})+p-\frac{p}{n}}{\sqrt{-2\log (1-\frac{p-1}{n})-2\frac{p}{n}}}\stackrel{d}{\rightarrow} {N}(0,1)
	\end{equation}
	when $p/n\rightarrow \gamma\in (0,1]$ as $n,p\rightarrow \infty$ and $p\le n$.
\end{theorem}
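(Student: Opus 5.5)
We will prove Theorem \ref{thm_logdet} by Girko's method of perpendiculars, combined with a martingale CLT. Write $\mathbf{y}_i=\mathbf{x}_{(i)}/\|\mathbf{x}_{(i)}\|\in\mathbb{R}^n$, where $\mathbf{x}_{(i)}^\top$ is the $i$-th row of $\mathbf{X}$. Then $\mathbf{R}=\mathbf{Y}\mathbf{Y}^\top$ with $\mathbf{Y}$ having rows $\mathbf{y}_i$, which are i.i.d. and exchangeable, and
\begin{equation*}
\log\det\mathbf{R}=\sum_{k=1}^{p}\log\bigl(1-\mathbf{y}_k^\top \mathbf{P}_{k-1}\mathbf{y}_k\bigr),
\end{equation*}
where $\mathbf{P}_{k-1}$ is the projection onto the span of $\mathbf{y}_1,\dots,\mathbf{y}_{k-1}$. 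The self-normalization is what makes a third-moment-type condition sufficient. Each $\mathbf{y}_k$ lies on the unit sphere and is independent of $\mathbf{P}_{k-1}$. Set $\mathbf{Q}_{k}=\mathbf{I}-\mathbf{P}_{k-1}$, of rank $n-k+1$. Then
\begin{equation*}
\mathbb{E}_{k-1}\,\mathbf{y}_k^\top\mathbf{Q}_k\mathbf{y}_k=\frac{n-k+1}{n},
\end{equation*}
exactly, by exchangeability of the coordinates $X_{kj}^2/\|\mathbf{x}_{(k)}\|^2$. Setting $q_k=\frac{n}{n-k+1}\mathbf{y}_k^\top\mathbf{Q}_k\mathbf{y}_k-1$, we get
\begin{equation*}
\log\det\mathbf{R}=\sum_{k}\log\frac{n-k+1}{n}+\sum_k\log(1+q_k).
\end{equation*}
The deterministic sum yields $(p-n+1/2)\log(1-\frac{p-1}{n})-p+O(1)$-type terms by Stirling's formula. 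We then expand $\log(1+q_k)=q_k-q_k^2/2+r_k$.

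The key steps are as follows.

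\textbf{Step 1.} We compute the conditional variance $\mathbb{E}_{k-1}q_k^2$. For a quadratic form of a normalized vector, $\mathrm{Var}(\mathbf{y}^\top\mathbf{Q}\mathbf{y})$ equals $\frac{2}{n}(\frac{m}{n}-\frac{m^2}{n^2})$ times a Gaussian-like factor, plus a correction involving $\mathbb{E}(X_1^2/\|\mathbf{x}\|^2)^2$ and $\sum_j Q_{jj}^2$. For spherical $\mathbf{y}$, $\mathbb{E}_{k-1}q_k^2\approx \frac{2}{n-k+1}-\frac{2}{n}$. Summing gives the variance $-2\log(1-\frac{p-1}{n})-2\frac{p}{n}$ and, through $-\frac12\sum\mathbb{E}q_k^2$, the centering contribution, including the $\frac12\log$ and $-p/n$ terms.

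\textbf{Step 2.} We show that the non-spherical corrections are negligible. These come from the fourth-moment term $n\,\mathbb{E}(X_1^4/\|\mathbf{x}\|^4)-3$ multiplied by $\sum_j (Q_k)_{jj}^2-\frac{(n-k+1)^2}{n}$. Here we use a delocalization estimate: the diagonal entries of the random projection $\mathbf{Q}_k$ concentrate around $(n-k+1)/n$. We expect the fluctuations $\sum_j((Q_k)_{jj}-\frac{n-k+1}{n})^2$ to be $O(1)$, while $n\mathbb{E}(X^4/\|\mathbf{x}\|^4)=o(n^{1/3})$-ish under \eqref{eq_oldcondition}, and the product must be summed against weights $(n-k+1)^{-2}$. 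This cancellation is exactly why the fourth moment disappears.

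\textbf{Step 3.} We verify the Lindeberg condition, $\sum_k\mathbb{E}|q_k|^{2+\delta}\mathbf 1\to0$ relative to the variance, and control $\sum r_k$. We also show concentration of $\sum_k\mathbb{E}_{k-1}q_k^2$ around its mean, then apply the martingale CLT.

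The main obstacle is the near-singular regime $k$ close to $n$ (and $\gamma=1$). There $\mathbf{Q}_k$ has small rank $m=n-k+1$, $q_k$ is not small, and the Taylor expansion fails. For the last $O(n^{\epsilon})$ indices we will instead compare $\mathbf{y}_k^\top\mathbf{Q}_k\mathbf{y}_k$ to a scaled $\chi^2_m/n$. To do so, we first prove that $\mathbf{Q}_k$ is delocalized, using a local law for the correlation matrix (whence truncation at level $n^{1/3}$, permitted by \eqref{eq_oldcondition}). We then use a Lindeberg replacement of $\mathbf{y}_k$ by Gaussian spherical vectors, applied only to $\mathbb{E}_{k-1}$-characteristic functions of $\log$ of the quadratic form. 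These terms are handled exactly as in the Gaussian case, where they give a sum of independent $\log\chi^2$ variables. In the bulk range we instead truncate the entries at $\eta_n n^{1/3}$ and renormalize, which is justified because $\mathbf{R}$ is invariant to row scaling, so only the event of a large entry matters.
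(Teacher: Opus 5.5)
Your architecture (perpendiculars, Taylor expansion with a martingale CLT, Gaussian comparison near the hard edge) matches the paper's, but the first step has a genuine gap. The identity $\mathbb{E}_{k-1}\mathbf{y}_k^\top\mathbf{Q}_k\mathbf{y}_k=(n-k+1)/n$ does not hold ``exactly by exchangeability'' unless $\xi$ is symmetric. Exchangeability of the $Y_{kj}^2$ only settles the diagonal. The off-diagonal part contributes $\beta_{1,1}\sum_{j\ne l}(\mathbf{Q}_k)_{jl}$, where $\beta_{1,1}=\mathbb{E}(Y_{11}Y_{12})=\int_0^\infty(\mathbb{E}\,\xi e^{-t\xi^2})^2(\mathbb{E}\,e^{-t\xi^2})^{n-2}\,dt$ is strictly positive for every non-symmetric $\xi$. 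So your $q_k$ are not martingale differences. Removing exactly this symmetry assumption of \cite{heiny2023logdet} is part of what the theorem claims. Three things are missing. First, center the off-diagonal part by $\beta_{1,1}$ (the paper's $\widetilde{Z}_{i+1}=U_{i+1}+V_{i+1}$ in \eqref{eq_decomZ}). Second, prove a quantitative bound on $\beta_{1,1}$ so that the accumulated drift $\sum_k\frac{n\beta_{1,1}}{n-k+1}\bigl|\sum_{j\ne l}(\mathbf{Q}_k)_{jl}\bigr|\le Cn^{5/2}\beta_{1,1}$ tends to zero (Lemma \ref{lemma_odd_moment}). Third, show via $\sum_jY_{1j}^2=1$ that the further odd mixed moments $\beta_{3,1},\beta_{2,1,1},\beta_{1,1,1,1},\dots$ are negligible in the conditional variance, in the cross term $\mathbb{E}(U_{i+1}V_{i+1}\mid\mathcal{F}_i)$, and in the fourth-moment bound feeding the Lindeberg condition (Lemma \ref{lemma_offdiag}). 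For symmetric $\xi$ all of these vanish, which is why your Step 1 computation looks exact.

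A second step that fails is the truncation. Truncating at $\eta_nn^{1/3}$ is not permitted by \eqref{eq_oldcondition}. There are $pn\asymp n^2$ entries, and $pn\,\mathbb{P}(|\xi|>\eta_nn^{1/3})$ diverges in general (it is of order $n^{5/6}$ if $\mathbb{P}(|\xi|>x)\sim x^{-7/2}$). So the truncated matrix differs from $\mathbf{X}$ with probability tending to one. Row-scaling invariance of $\mathbf{R}$ does not help, since truncation is not a row scaling. The admissible level is of order $n^{2/3}$ (the paper uses $n^{2/3}\log n$), where $\mathbb{E}\xi^4\mathbbm{1}(|\xi|\le n^{2/3})$ can be nearly of order $n^{2/3}$. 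This is why the paper relies on the resampling bounds of Lemmas \ref{lemma_quadratic3} and \ref{lemma_esdconvergence} rather than a standard local law.

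The wrong truncation also breaks your Step 2 when $\gamma=1$. The actual fourth-moment factor is $n^2\beta_4\asymp\mathcal{L}(n^{1/2})n^{2-\alpha/2}$ (Lemma \ref{lem_moment_rates}), which at $\alpha=3$ is only $o(n^{1/2})$. Multiplied by your $O(1)$ diagonal-fluctuation bound and summed against $(n-k+1)^{-2}$, it gives $o(n^{1/2})/(n-p+1)$. This is in general not $o(\sqrt{\log n})$ when $n-p\le n^{1/2-\delta}$, so treating only the last $O(n^{\epsilon})$ rows separately leaves a window uncovered. The paper closes it by replacing a macroscopic block of $[p/100]$ rows by Gaussian rows, where the fourth-moment factor is $O(1)$. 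It justifies the replacement through variance bounds for products of the $\Delta_{i+1}^2$. These rest on the sharp bounds for $\max_\ell p_{i,\ell\ell}$ in Lemma \ref{lemma_upperdiagonal} and on the multilevel block truncation. Row-by-row comparison is used only at the extreme edge (Sections \ref{sec_bulkgaussianreplace} and \ref{sec_edgegaussianreplace}).
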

\begin{remark}
	Note that \eqref{eq_oldcondition} holds automatically under Assumption \ref{assump1} for $\alpha>3$. In the sequel, we often use this assumption equipped with this precise tail probability condition. For the case of $p/n\rightarrow \gamma\in (0,1)$, Theorem \ref{thm_logdet} shows that the symmetry assumption and $\mathbb{E}|\xi|^3<\infty$ of Theorem 3.1 of \cite{heiny2023logdet} are redundant.
\end{remark}
Moreover, in the near-singularity case $0\le n-p\le n^{1-w}$ for any constant $w\in (0,1)$, we have
\begin{theorem}\label{thm_pn}
	Suppose
	\begin{equation}\label{eq_newcondition}
	\limsup_{x\rightarrow \infty}\frac{x^3\mathbb{P}(|\xi|>x)}{(\log x)^{1/4-\mathfrak{c}}}<\infty
	\end{equation}
	for any constant $0<\mathfrak{c}<1/4$.
	Then, if $0\le n-p\le n^{1-w}$ for any constant $w\in (0,1)$, we have
	\begin{equation}\label{eq_universalcltpn}
	\frac{\log \det \mathbf{R}-(p-n+1/2)\log (1-\frac{p-1}{n})+p}{\sqrt{-2\log (1-\frac{p-1}{n})}}\stackrel{d}{\rightarrow} {N}(0,1).
	\end{equation}
	In addition, for $p=n$, we have
	\begin{equation*}
	\frac{\log\det \mathbf{R}+\frac{1}{2}\log n+n}{\sqrt{2\log n}}\stackrel{d}{\rightarrow}N(0,1).
	\end{equation*}
\end{theorem}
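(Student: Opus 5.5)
We prove Theorem \ref{thm_pn} by combining the Gaussian (or Bernoulli-type) exact computation with a comparison argument that accounts for the divergent variance. The starting point is the column-by-column \emph{method of perpendiculars}. Write $\mathbf{y}_i=\mathbf{x}_i/\|\mathbf{x}_i\|$ for the normalized rows of $\mathbf{X}$ (the $p$ variables, each a vector in $\mathbb{R}^n$), so that $\mathbf{R}=\mathbf{Y}\mathbf{Y}^\top$. Then
\begin{equation*}
\log\det\mathbf{R}=\sum_{k=1}^{p}\log\|P_{k-1}^{\perp}\mathbf{y}_k\|^2 ,
\end{equation*}
where $P_{k-1}^{\perp}$ is the projection onto the orthogonal complement of $\mathrm{span}(\mathbf{y}_1,\ldots,\mathbf{y}_{k-1})$. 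Define $Z_k=\|P_{k-1}^\perp\mathbf{x}_k\|^2/(n-k+1)$ and $W_k=\|\mathbf{x}_k\|^2/n$. Each summand then splits as
\begin{equation*}
\log\frac{n-k+1}{n}+\log Z_k-\log W_k .
\end{equation*}
The deterministic sum $\sum_k\log\frac{n-k+1}{n}$ yields, via Stirling, the centering $(p-n+1/2)\log(1-\frac{p-1}{n})+p$, up to lower-order terms. The main random contribution is $\sum_k(Z_k-1)-\tfrac12(Z_k-1)^2$, organised as a martingale with respect to $\mathcal{F}_k=\sigma(\mathbf{x}_1,\ldots,\mathbf{x}_k)$.

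\textbf{Step 1: the variance.} Conditional on $\mathcal{F}_{k-1}$, $\mathrm{Var}(Z_k)\approx 2/(n-k+1)$ plus a fourth-cumulant term $\kappa_4\sum_j (P^\perp_{k-1})_{jj}^2/(n-k+1)^2$. The quantity $\sum_k 2/(n-k+1)$ produces the variance $-2\log(1-\frac{p-1}{n})$, which diverges like $2w\log n$ in the near-singular regime. The $\kappa_4$ contribution cancels against the covariance with $\log W_k$; this cancellation is exactly why the fourth moment disappears in \eqref{eq_logdetR}. In the near-singular regime, however, the $-2p/n$ correction and all $O(1)$ terms become negligible compared with $\log n$. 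This is why \eqref{eq_universalcltpn} drops the $p/n$ terms.

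\textbf{Step 2: truncation.} Truncate the entries at level $\eta_n\sqrt n$ and recenter and renormalize. Rows in which a large entry appears are handled separately. Under \eqref{eq_newcondition}, the expected number of entries exceeding $n^{1/2}$-scale thresholds is at most of order $pn\cdot n^{-3/2}(\log n)^{1/4-\mathfrak c}$. Not every such entry is harmless, but the effect of a single big entry on $\log\|P^\perp\mathbf{y}_k\|^2$ is $O(1)$, or $O(\log)$ near $k\approx p$. We would show that the total perturbation is $o_P(\sqrt{\log n})$. The first attempt would bound the contribution through the rank-one perturbation formula $\log\det(\mathbf{R}+\Delta)-\log\det\mathbf{R}=\log(1+\cdots)$ combined with resolvent bounds. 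The logarithmic slack $(\log n)^{1/4-\mathfrak c}$ against the normalizer $\sqrt{\log n}$ is what makes this possible.

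\textbf{Step 3: CLT and singularity.} Apply the martingale CLT to the truncated sum normalized by $\sqrt{-2\log(1-\frac{p-1}{n})}$. Lindeberg's condition follows from conditional fourth-moment bounds on $Z_k$, using $\sum_j (P^\perp)_{jj}^2\le n-k+1$ and the truncation. The last $O(n^{1-w'})$ indices, where $n-k+1$ is small, need separate treatment. There $Z_k$ is a quadratic form of low rank, so the Gaussian approximation $\log\chi^2_{n-k+1}$ must be justified by a Lindeberg replacement restricted to those columns. We would also need a lower tail bound $\mathbb{P}(\|P^\perp_{k-1}\mathbf{x}_k\|^2\le \epsilon (n-k+1))$ of small-ball (Littlewood–Offord/Rudelson–Vershynin) type, to control $\log Z_k$ when $Z_k$ is near $0$. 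The case $p=n$ follows by plugging in $\log(1-\frac{n-1}{n})=-\log n$.

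\textbf{Main obstacle.} We expect the final block of near-singular columns under only the weak tail condition to be the crux. Uniform small-ball estimates for $\log Z_k$ with heavy-tailed, truncated entries are needed there, and the heavy entries must be shown not to accumulate in these last $\sim n^{1-w}$ directions.
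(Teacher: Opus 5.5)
Your overall architecture (perpendiculars, a martingale CLT for the bulk, a Gaussian comparison for the last rows) matches the paper's, but two of the steps you rely on would fail as stated.

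\textbf{Step 2 (truncation).} Under \eqref{eq_newcondition}, the expected number of entries of $\mathbf{X}$ exceeding $\eta_n\sqrt n$ is of order $pn(\eta_n\sqrt n)^{-3}(\log n)^{1/4-\mathfrak c}\asymp\eta_n^{-3}n^{1/2}(\log n)^{1/4-\mathfrak c}$, i.e.\ polynomially many. An $O(1)$ (or $O(\log n)$) bound per exceptional entry therefore only bounds the total perturbation by order $n^{1/2}$, nowhere near $o_{\mathbb{P}}(\sqrt{\log n})$. The logarithmic slack cannot be used at that level. Moreover, a large entry $X_{kj}$ has a small effect on $\log\Delta_k^2$ only when $np_{k-1,jj}/(n-k+1)\approx 1$. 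So the exceptional entries cannot be separated from the real issue, which is concentration of the diagonals of the projections. The paper instead truncates only at $n^{2/3}\log n$, where exceedances have probability $o(1)$. When $n-p<p^{1/6}\log^{1/4}p$, it also uses row-dependent levels such as $(n(p-i))^{1/3}\log^{1/12}n$ in the last $p^{1/6}\log^{1/4}p$ rows, chosen so that the summed exceedance probabilities over the blocks are $o(1)$.

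\textbf{Steps 1 and 3 (the more serious gap).} The split into $\log Z_k-\log W_k$ has to be undone. For $\alpha$ near $3$, both $\sum_k\log Z_k$ and $\sum_k\log W_k$ fluctuate on a polynomial scale. Also, the fourth-moment term does not cancel exactly. For the self-normalized ratio $\widetilde Z_{i+1}=U_{i+1}+V_{i+1}$, what survives is $\mathbb{E}U_{i+1}^2=\frac{n(n^2\beta_4-1)}{n-1}\mathbb{E}(S_2^{(i)}-n^{-1})$, with $n^2\beta_4\asymp\mathcal{L}(n^{1/2})n^{2-\alpha/2}\to\infty$. Your trivial bound $\sum_j(P^{\perp}_{k-1})_{jj}^2\le n-k+1$ only gives $\sum_i\mathbb{E}U_{i+1}^2\lesssim\mathcal{L}(n^{1/2})n^{2-\alpha/2}\log n$, which diverges polynomially. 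So none of the Lindeberg condition, $\sigma_n\sum_i(\widetilde Z_{i+1}^2-\mathbb{E}(\widetilde Z_{i+1}^2\mid\mathcal{F}_i))\to0$, or control of the Taylor remainder follows.

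The missing ingredient is the diagonal concentration of Lemma \ref{lemma_diagonals}. It gives $\sum_{i<p-s_1}\mathbb{E}U_{i+1}^2\lesssim[\mathcal{L}(n^{1/2})]^2n^{3-\alpha}\lesssim(\log n)^{1/2-2\mathfrak c}=o(\sqrt{\log n}/\log\log n)$, and this is the main place where the exponent $1/4$ is forced. That argument is only available for $i/n$ bounded away from $1$. This is why the paper replaces a positive fraction $s_1=[p/100]$ of the last rows by Gaussian rows, not just the last $n^{1-w'}$.

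Justifying that replacement is the heart of the proof, and it is absent from your plan. The paper uses probabilistic bounds $\max_k p_{i,kk}<n^{-1/8}\log s_2,\ n^{-1/4}\log s_2,\ n^{-1/3}\log s_2$ on successive ranges of $i$ (Lemma \ref{lemma_upperdiagonal}, derived from the convergence rate in Lemma \ref{lemma_esdconvergence}). Together with the multilevel truncations, these give $\operatorname{Var}(\Delta_{i+1}^2\mid\mathcal{F}_i)=o(((n-i)/n)^2)$. The paper then compares the whole product $\prod_i\Delta_{i+1}^2$ through its variance by induction, because line-by-line swaps accumulate too much error when $n-p=n^{c}$. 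A separate row-by-row Berry--Esseen argument covers $n-p<d_n$. Once those rows are Gaussian, $n\mathbf{y}_{i+1}^{\top}\mathbf{P}_i\mathbf{y}_{i+1}$ is a $\chi^2_{n-i}$ variable up to an independent norm factor. So the heavy-tailed small-ball estimates you anticipate are never needed.

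A minor bookkeeping point: $\sum_k\log\frac{n-k+1}{n}$ alone gives $(p-n-\frac12)\log(1-\frac pn)-p$. The shift to $+\frac12$ comes from $-\frac12\sum_i\mathbb{E}(\widetilde Z_{i+1}^2\mid\mathcal{F}_i)\approx\log(1-\frac{p-1}{n})$, which is of order $\log n$, not a lower-order term.
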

\begin{remark}\label{remark_differentcondition}
	The condition \eqref{eq_newcondition} is used when justifying the martingale central limit theorem, see \eqref{eq_sharp1} and \eqref{eq_sharp2} for details. We believe that \eqref{eq_newcondition} is at least near optimal in two aspects. On the one hand, if \eqref{eq_newcondition} fails, then one can not use the martingale method to show the asymptotic normality since both the second-order term and error term in expansion \eqref{eq_expansion} are non-negligible. On the other hand, to implement the multilevel block truncations (Section \ref{sec_multitruncation}) and hierarchical Gaussian repalcement (Section \ref{sec_gaussianreplace}), the nearly sharp bound $\log^{1/4-\mathfrak{c}}n$ is essential to guarantee that one can control the conditional variance $\operatorname{Var}(\Delta_{i+1}^2\mid \mathcal{F}_i)$ well (Section \ref{sec_bulkgaussianreplace}), which is also nearly sharp due to the edge replacement in Section \ref{sec_edgegaussianreplace}.
\end{remark}

Our proof strategies rely on the method of perpendiculars and the classical CLT for martingale developed by \cite{girko1998refinement} and \cite{bao2015logarithmic,wang2018}. To adapt the sample correlation matrix setting under infinite fourth moment and near singularity, we also deploy the mixed moments of self-normalization entries (Lemma \ref{lemma_odd_moment}), the (nearly) sharp upper bounds and concentrations for the diagonals of large projection matrices (Lemma \ref{lemma_upperdiagonal} and Lemma \ref{lemma_diagonals}, respectively), and the convergence rate of the expected spectral distribution (Lemma \ref{lemma_esdconvergence}). To overcome the asymmetry, we consider a centralized version of the expansion \eqref{eq_decomZ}, which does not affect limiting distribution for $\alpha\ge 3$ due to an improved estimation for the mixed moment of $\mathbb{E}(Y_{11}Y_{12})$ by Lemma \ref{lemma_odd_moment}, which is of its own interest contrast to the classical estimation $\mathbb{E}(Y_{11}Y_{12})=\mathrm{o}(n^{-2})$ for general $\alpha>2$. Moreover, to dispose of the heavy-tailedness and singularity under the cases of divergent slowly varying functions \eqref{eq_newcondition}, we propose multilevel block truncations (Section \ref{sec_multitruncation}) and hierarchical Gaussian replacement (Section \ref{sec_gaussianreplace}), which are brand new and shed insights into the context of heavy-tailed random matrices.
In addition, we deploy the resampling techniques to assist the analysis of the projection matrices, which provides powerful tools to handle the heavy-tailed condition as witnessed by \cite{aggarwal2018goe,bao2023phase,Li2024clt}. The rest of the paper is organized as follows. In Section \ref{sec_main2}, we outline the proof sketch. Section \ref{sec_proof} consists of the detailed proofs of our theoretical results. In the Appendix, we present the auxiliary lemmas and their proofs.

\textbf{Notation}. Throughout this article, we set $C,c>0$ to be constants whose value may be different from line to line in the request. We write $[n]:=\{1,\ldots,n\}$ for $n\in \mathbb{Z}_+$. For any value $a,b\in\mathbb{R}$, $a\land b=\min(a,b)$ and $a\lor b=\max(a,b)$. For a matrix $A=(A_{ij})\in\mathbb{R}^{p\times n}$, $\operatorname{tr}A$ denotes the trace of $A$, $\|A\|$ denotes the spectral norm of $A$ equal to the largest singular value of $A$.
For two sequences of numbers $\{a_{n}\}_{n=1}^{\infty}$, $\{b_{n}\}_{n=1}^{\infty}$, $a_{n}\asymp b_{n}$ if there exist
constants $C_{1},C_{2}>0$ such that $C_{1}|b_{n}|\le|a_{n}|\le C_{2}|b_{n}|$, and we denote $a_n=\Omega(b_n)$ for convenience.
We denote $a_n\lesssim b_n$ if there exist constant $C_1>0$ such that $|a_n|\le C_1|b_n|$. Also, we write $a_n\lesssim b_n$ as $b_n\gtrsim a_n$. We say $\mathrm{O}(a_{n})$ and $\mathrm{o}(a_{n})$ in the sense that $|\mathrm{O}(a_{n})/a_{n}|\le C$ with some constant $C>0$ for all large $n$ and $\lim_{n\to\infty}\mathrm{o}(a_{n})/a_{n}=0$. Specifically, we use $\mathrm{O}_{\prec}(a_n)$ and $\mathrm{o}_{\prec}(a_n)$ in the sense that, with high probability, $|\mathrm{O}_{\prec}(a_n)/a_n|\le C$ for all large $n$ and $\lim_{n\rightarrow\infty}|\mathrm{o}_{\prec}(a_n)/a_n|=0$, repestively.
We also use $\mathrm{O}_{\mathbb{P}}$ and $\mathrm{o}_{\mathbb{P}}$ to denote the big and small $\mathrm{O}$ notation in probability. When we write $a_n\ll b_n$ and $a_n\gg b_n$, we mean $|a_n|/b_n\rightarrow0$ and $|a_n|/b_n\rightarrow\infty$ respectively when $n\rightarrow\infty$. For a set $A$, $A^{c}$ denotes its complement (with respect to some whole set which is clear in the context). For any event $\Xi$, $\mathbbm{1}(\Xi)$ denotes the indicator of the event $\Xi$, equal to $1$ if $\Xi$ occurs and $0$ if $\Xi$ does not occur. For any number $x>0$, let $\lceil x \rceil$ represent the least integer not less than $x$, and $\lfloor x\rfloor$ denote the largest integer not larger than $x$.

\section{Outline of the proof}\label{sec_main2}
Let $\mathbf{x}_1,\ldots,\mathbf{x}_n$ be independent observations from the population. Define the data matrix
\begin{equation*}
    \mathbf{X}=\mathbf{X}_n=(\mathbf{x}_1,\ldots,\mathbf{x}_n)=(X_{ij})_{1\le i\le p,1\le j\le n}\in \mathbb{R}^{p\times n}.
\end{equation*}
Then the sample covariance matrix and the sample correlation matrix are given by $ \mathbf{S}=\mathbf{S}_n=n^{-1}\mathbf{X}\mathbf{X}^{\top}$ and
\begin{equation*}
    \mathbf{R}=\mathbf{R}_n=\{\operatorname{diag}(\mathbf{S}_n)\}^{-1/2}\mathbf{S}_n\{\operatorname{diag}(\mathbf{S}_n)\}^{-1/2}=:\mathbf{Y}\mathbf{Y}^{\top},
\end{equation*}
respectively, where the entries of the self-normalized matrix $\mathbf{Y}=\mathbf{Y}_n=(Y_{ij})_{1\le i\le p,1\le j\le n}$ for the sample correlation matrix take the form as
\begin{equation}\label{eq_selfentry}
    Y_{ij}=\frac{X_{ij}}{\sqrt{X_{i1}^2+\cdots+X_{in}^2}}.
\end{equation}
Throughout the paper, we suppress the explicit dependence on $n$ in our notation. We assume that the dimension $p = p_n$ grows with the sample size $n$ such that $p/n\rightarrow \gamma\in(0,1]$. This setting includes the nearly singular case $\gamma=1$, for which the smallest eigenvalue $\lambda_{p}(\mathbf{S})$ converges to zero as $p\rightarrow\infty$.

The proofs of Theorems \ref{thm_logdet} and \ref{thm_pn} rely on the resampling strategy \citep{aggarwal2018goe,bao2023phase}, Gaussian replacement \citep{bao2015logarithmic,nguyen2014random,wang2018},
novel bounds for projection matrices \citep{heiny2023logdet}, and convergence rate of expected spectral distributions \citep{bao2015logarithmic}. Moreover, to address the singularity, we propose multilevel block truncations to control variance estimation (see Section \ref{sec_multitruncation} for details).
At a high level, compared with the previous works, to overcome the barriers from the heavy-tailed condition and singularity, our proof strategy involves adapting the approachs from \cite{heiny2023logdet} for non-singularity case of sample correlation matrices, \cite{bao2015logarithmic,nguyen2014random,wang2018} for near-singularity case of sample covariance matrices, and \cite{aggarwal2018goe,bao2023phase} for resampling technique. However, this adaptation is far from being straightforward. We summarize major ideas as follows.
\begin{itemize}
	\item[(i)] \textit{Multilevel block truncations}. Similarly to many previous works on RMT under finite fourth moments, one can work on a truncated data matrix without affecting the limiting properties, whose entries can have appropriate high-order moment bounds. However, under the heavy-tailed case $\alpha<4$, the classical truncation level at $n^{1/2}$ is not enough since there are about $n^{2-\alpha/2}$ entries larger than $n^{1/2}$ among $n^2$ entries of $\mathbf{X}$ (see Appendix \ref{app_pre} for more details).
	Moreover, due to singularity, a uniform crude truncation bound is far from canceling the rate from the heavy-tailedness when $p$ is close to $n$. To tackle the barriers from singularity, heavy-tailedness, and the divergent slowly varying function case \eqref{eq_newcondition}, we need multilevel block truncations which consist of two parts, and handle each block carefully to derive good enough estimations block by block. At a global scale for the first $p-\mathrm{o}(p)$ rows, we can use a unified truncation level since there is no need to consider the effect of singularity. At a local scale for the last $\mathrm{o}(p)$ rows which are divided into $(K+1)$ blocks, we conduct two-level truncations on each block, which provide improved bounds for the fourth moment of the self-normalized entries after this truncation and thus can cancel the divergent rate caused by heavy-tailedness, singularity and the divergent slowly varying funcation.
\item[(ii)] \textit{Hierarchical Gaussian replacement}. Similar to many previous works on the logarithmic law, Girko's method of perpendiculars and the martingale method rely on a Taylor expansion, which can not be guaranteed near singularity (say, $n-i=\mathrm{O}(1)$). To handle the large $i$ parts, \cite{girko1998refinement} proposed to replace the last $\log^{c}n$ rows by Gaussian ones and proved the logarithmic law for the matrix after replacement, and then recovered the result to the original one by a comparison procedure. Such a strategy was also used in  \cite{bao2015logarithmic,nguyen2014random,wang2018}, where they replaced the last $\log^{c}n$ rows with standard Gaussian ones line by line and then controlled the difference carefully to recover the logarithmic law for Wigner matrices and sample covariance matrices. However, for sample correlation matrices under heavy-tailed conditions, such a Taylor expansion is no longer valid if $n-p=n^{c}$ for some constant $0<c<1$, since one can not control the difference after replacing lines one by one due to error accumulation. Thus, we blaze a new trail and aim to control the overall difference for Gaussian replacement, which is achieved by distinguishing two different cases according to the value of $n-p$. On the one hand, for $n-p\ge \log^c n$, the comparison procedure boils down to controlling the variance for the products of a series of dependent random variables, where we use mathematical induction to derive the desired bounds for the variance. In contrast to the Gaussian part, the variance estimation is much more involved due to the heavy-tailed condition, which needs meticulous handling for different levels of $n-i$, depending on
(nearly) sharp bounds for the diagonals of the projection matrices (Lemma \ref{lemma_upperdiagonal}) and novel multilevel block truncations above (Section \ref{sec_multitruncation}). This is pivotal to control the variance of the $U$-part for $\alpha\ge 3$. On the other hand, for $n-p\le \log^c n$, we use mathematical induction following the ideas of \cite{wang2018} and \cite{nguyen2014random} with modifications to fit our sample correlation matrix setting.
\item[(iii)]\textit{Novel bounds for large projection matrices}. According to \cite{heiny2023logdet} for the non-singular case, the martingale method depends on the variance estimation of ${Z}_{i+1}$ (see \eqref{eq_decomZ0} below), where the main contribution comes from the off-diagonal parts $V_{i+1}$. The variance estimation for $U$-parts (i.e., \eqref{eq_meanU2}) relies on $\mathbb{E}Y_{ij}^4$ and the concentration for the diagonals of projection matrices $\mathbf{P}_i$ (see Lemma \ref{lemma_diagonals} for details). Under the heavy-tailed case $\alpha\in (3,4)$, one needs fast rate $\mathrm{O}(n^{-1/2})$ to cancel the factor in $\mathbb{E}Y_{ij}^4\sim \mathcal{L}(n^{1/2})n^{-\alpha/2}$. However, under near-singularity, the concentration result for the diagonals of projection matrices $\mathbf{P}_i$ fails since the least eigenvalue of the sample covariance matrix tends to zero.
Unlike \cite{heiny2023logdet}, we estimate $\mathbb{E}p_{i,kk}^2$ directly by the resampling technique from \cite{aggarwal2018goe,bao2023phase} and some innovative high probability estimations for the expected spectral distribution $\mathbb{E}p^{-1}\operatorname{tr}(n^{-1}\mathbf{X}\mathbf{X}^{\top}+\epsilon_{n}\mathbf{I}_n)^{-1}$ below, where $\epsilon_{n}\rightarrow 0$.
Fortunately, for large $i$ parts, the Gaussian replacement above improves the estimation rate for $\mathbb{E}\check{Y}_{ij}^4$, which together with some novel (nearly) sharp bounds for the maximum of the diagonals of $\mathbf{P}_i$ gives the desired result (see Lemma \ref{lemma_upperdiagonal} for details). The (nearly) sharp upper bounds for the maximum of the diagonals of $\mathbf{P}_i$ require a (nearly) sharp estimation for expected spectral distribution $\mathbb{E}p^{-1}\operatorname{tr}(n^{-1}\mathbf{X}\mathbf{X}^{\top}+\epsilon_{n}\mathbf{I}_n)^{-1}$ again. While for the $V$-parts, it is crucial to estimate $\mathbb{E}V_{i+1}^4$ to verify the condition for martingale CLT. There are many extra terms for $\mathbb{E}V_{i+1}^4$ without the symmetry condition, which need novel bounds for the projection matrices (Lemma \ref{lemma_Qbounds}) and the mixed moments of self-normalized entries (Lemma \ref{lemma_odd_moment}).
\item[(iv)]\textit{Improved convergence rate of expected spectral distribution}. Similarly to \cite{bao2015logarithmic,wang2018}, by the Sherman-Morrison formula, one can write
$p_{i,kk}\le (1+n^{-1}\mathbf{v}_{k,i}^{\top}(n^{-1}\mathbf{B}_{(i,k)}\mathbf{B}_{(i,k)}^{\top}+\epsilon_n\mathbf{I}_i)^{-1}\mathbf{v}_{k,i})^{-1}$. Intuitively, we expect that $p_{i,kk}\approx \mathbb{E}p_{i,kk}=(n-i)/n$. But, to the best of our knowledge, there is no idea to derive it for large $i$ since there is no available variance estimation for $p_{i,kk}$ due to near singularity. As demonstrated by \cite{bao2015logarithmic,wang2018}, since $p_{i,kk}$ are identically distributed but dependent due to the sum identity $\sum_{k=1}^{n}p_{i,kk}=n-i$, one may expect that the maximum can not hit large values with probability tending to one. With this starting point, \cite{bao2015logarithmic} stated that $\mathbb{E}\max_{1\le k\le n}p_{i,kk}=\mathrm{O}(\log^{-C_1}p)$ and \cite{wang2018} showed that $\mathbb{P}(\max_{1\le k\le n}p_{i,kk}\ge \log^{-C_1}p)=\mathrm{O}(n^{-1/3})$ for large $i\ge p-p/\log^{C_2} p$, where $C_1$ and $C_2$ are some positive constants. However, those bounds are far away from the ideal point since we need a fast rate to cancel the effect of the heavy-tailed condition.
To establish (nearly) sharp upper bounds for the maximum of the diagonals of $\mathbf{P}_i$ for large $i$, we need appropriate estimation for the convergence rate of expected spectral distribution $\mathbb{E}p^{-1}\operatorname{tr}(n^{-1}\mathbf{X}\mathbf{X}^{\top}+\epsilon_{n}\mathbf{I}_n)^{-1}$ (Lemma \ref{lemma_esdconvergence}). Specifically, we deploy the resampling technique, which is powerful to tackle the heavy-tailed condition as illustrated by \cite{aggarwal2018goe,bao2023phase,Li2024clt}, to obtain a crude estimation first. Then we invoke the bootstrap strategy in local law \cite{erdHos2017dynamical} to get a near-optimal convergence rate for large $i$. Then, with the intuition that $\max_{1\le k\le n}p_{i,kk}\approx [\mathbb{E}p^{-1}\operatorname{tr}(n^{-1}\mathbf{B}_{(i,k)}\mathbf{B}_{(i,k)}^{\top}+\epsilon_{n}\mathbf{I}_i)^{-1}]^{-1}$ with probability tending to one, we finally derive nearly sharp bounds for $\max_{1\le k\le n}p_{i,kk}$.
\end{itemize}

As it is quite long and technical, the proof will be divided into several parts.

Firstly, using the method of perpendiculars (see \cite{bao2015logarithmic,girko1978central,heiny2023logdet} for more details), we can write the determinant of the sample correlation matrix $\mathbf{R}=\mathbf{Y}\mathbf{Y}^{\top}$ as
\begin{equation}\label{eq_defdetR}
    \mathrm{det}(\mathbf{R})=\prod_{i=0}^{p-1}\mathbf{y}_{i+1}^{\top}\mathbf{P}_i\mathbf{y}_{i+1}=:\prod_{i=0}^{p-1}\Delta_{i+1}^2,
\end{equation}
where $\mathbf{y}_{i+1}^{\top}=(Y_{i+1,1},\ldots,Y_{i+1,n})$ denotes the $(i+1)$-st row of $\mathbf{Y}$. As illustrated by \cite{bao2015logarithmic,nguyen2014random}, $\Delta_{i+1}^2=\mathbf{y}_{i+1}^{\top}\mathbf{P}_i\mathbf{y}_{i+1}$ denotes the distance from $\mathbf{y}_{i+1}$ to the subspace generated by the first $i$ rows of $\mathbf{X}$.
The projection matrix $\mathbf{P}_i=\mathbf{I}_n-\mathbf{B}_i^{\top}(\mathbf{B}_i\mathbf{B}_i^{\top})^{-1}\mathbf{B}_i$ is the same as that of sample covariance matrix cases due to the scaled invariance of correlation matrix as illustrated by \cite{heiny2023logdet}, where the matrix $\mathbf{B}_i$ is defined by the first $i$ rows of the data matrix $\mathbf{X}$, that is, $\mathbf{B}_i=(\mathbf{b}_1,\cdots,\mathbf{b}_i)^{\top}$ with $\mathbf{b}_i=(X_{i1},\cdots,X_{in})^{\top}$. It is obvious that $\mathbf{y}_{i+1}$ is independent of $\mathbf{P}_i$, which suggests that
\begin{equation*}
    \mathbb{E}\Delta_{i+1}^2=\mathbb{E}\mathbf{y}_{i+1}^{\top}\mathbf{P}_i\mathbf{y}_{i+1}\approx \frac{1}{n}\mathbb{E}\operatorname{tr}\mathbf{P}_i= \frac{n-i}{n}.
\end{equation*}
Following the decomposition in \cite{bao2015logarithmic,heiny2023logdet}, we represent the log determinant of the sample correlation matrix $\mathbf{R}=\mathbf{Y}\mathbf{Y}^{\top}$ as
\begin{equation}\label{eq_decomlogdetR}
    \log \mathrm{det}(\mathbf{R})= \sum_{i=0}^{p-1}\log \Delta_{i+1}^2=\sum_{i=0}^{p-1}\log\frac{n-i}{n}+\sum_{i=0}^{p-1}\log \frac{n\Delta_{i+1}^2}{n-i}=:c_n+\sum_{i=0}^{p-1}\log (1+Z_{i+1}),
\end{equation}
where $c_n=-p\log n+\log (n(n-1)\cdots (n-p+1))$ and $Z_{i+1}=n\mathbf{y}_{i+1}^{\top}\mathbf{Q}_i\mathbf{y}_{i+1}-1$ with $\mathbf{Q}_i$ being an $n\times n$ projection matrix normalized by its trace and only depending on $\mathbf{x}_1,\ldots,\mathbf{x}_i$, which can be decomposed as
\begin{equation*}
    Z_{i+1}=n\mathbf{y}_{i+1}^{\top}\operatorname{diag}(\mathbf{Q}_i)\mathbf{y}_{i+1}+n\mathbf{y}_{i+1}^{\top}\operatorname{off-diag}(\mathbf{Q}_i)\mathbf{y}_{i+1}-1,
\end{equation*}
where
\begin{equation}\label{eq_defQ}
    \mathbf{Q}_i=\mathbf{P}_i/(n-i)=(q_{i,k\ell})_{1\le k,\ell\le n}~\text{with}~\mathbf{P}_i=\mathbf{I}_n-\mathbf{B}_i^{\top}(\mathbf{B}_i\mathbf{B}_i^{\top})^{-1}\mathbf{B}_i,
\end{equation}
which is independent of $\mathbf{y}_{i+1}$. It is obvious that
\begin{equation*}
    Z_{i+1}=\frac{n\Delta_{i+1}^2-(n-i)}{n-i}.
\end{equation*}
By choosing $p_n=1$ in Theorem 4.1 of \cite{gotze2010circularlaw}, without loss of generality, we can assume that all square submatrices of $\mathbf{X}$ are invertible. So, all the random variables are well-defined.

Secondly, similarly to the non-singularity case $\lim_{n\rightarrow \infty}p/n=\gamma<1$ in \cite{heiny2023logdet}, we shall perform a Taylor series expansion (which is guaranteed by Lemma \ref{lemma_momentestimateUV}) for the random parts in $\log \operatorname{det}(\mathbf{R})$, say,
\begin{equation*}
	\sum_{i=0}^{p-1}\log(1+{Z}_{i+1})\approx \sum_{i=0}^{p-1}{Z}_{i+1}-\sum_{i=0}^{p-1}\frac{{Z}_{i+1}^2}{2}+\mathrm{error}.
\end{equation*}
and then use the martingale difference method to show the first part $\sum_{i=0}^{p-1}Z_{i+1}$ converges to Gaussian asymptotically, while the second part $\sum_{i=0}^{p-1}Z_{i+1}^2/2$ converges to some deterministic limit in probability, and the error term is negligible. However, it is not the case when $\lim_{n\rightarrow\infty}p/n\rightarrow 1$, and the martingale trick fails without the symmetry condition in \cite{heiny2023logdet}. On the one hand, notice that the random variable adapts the following decomposition
\begin{equation}\label{eq_decomZ0}
    Z_{i+1}=\sum_{j=1}^{n}q_{i,jj}(nY_{i+1,j}^2-1)+\sum_{k\ne l}q_{i,kl}nY_{i+1,k}Y_{i+1,l},
\end{equation}
where $\mathbb{E}Y_{i+1,k}Y_{i+1,l}=0$ if and only if $\xi$ is symmetric by \cite{Jonsson2010quadratic}. To tackle the asymmetry, we focus on the following centralized form as
\begin{equation}\label{eq_decomZ}
    \widetilde{Z}_{i+1}=\sum_{j=1}^{n}q_{i,jj}(nY_{i+1,j}^2-1)+\sum_{k\ne l}q_{i,kl}n(Y_{i+1,k}Y_{i+1,l}-\beta_{1,1})=:U_{i+1}+V_{i+1},
\end{equation}
where $\beta_{1,1}:=\mathbb{E}(Y_{11}Y_{12})=\mathrm{o}(n^{-3})$ for $\alpha> 3$ (see Lemma \ref{lemma_odd_moment} for details) does not affect the asymptotic properties of $\log \det\mathbf{R}$ (see Section \ref{sec_proof12} for details). This also suggests that the asymmetry does not affect the asymptotic distribution of $\log \det \mathbf{R}$ while it might have a non-negligible effect on the convergence rate as pointed out by \cite{gine1997student,Li2024clt}.

Moreover, since one can not guarantee the Taylor series expansion near the singularity case $\lim_{n\rightarrow\infty} p/n=1$, we adapt a replacement argument inspired by \cite{bao2015logarithmic,wang2018} and a resampling strategy by \cite{aggarwal2018goe,bao2023phase,Li2024clt}. Specifically, we introduce three parameters
\begin{equation}\label{eq_def_parameters}
    s_1=[p/100],~s_2=[(-\log (1-(p-1)/n))^{1/2}],~\text{and}~~s_3=[p/\log^a p]
\end{equation}
for some constant $a>2$. For $0\le i\le p-s_1-1$, similarly to \cite{heiny2023logdet}, we can perform a Taylor series expansion (which is guaranteed by Lemma \ref{lemma_momentestimateUV}) of the first $(p-s_1)$ random parts of $\log \operatorname{det}(\mathbf{R})$, say,
\begin{equation}\label{eq_expansion}
	\sum_{i=0}^{p-s_1-1}\log(1+\widetilde{Z}_{i+1})\approx \sum_{i=0}^{p-s_1-1}\widetilde{Z}_{i+1}-\sum_{i=0}^{p-s_1-1}\frac{\widetilde{Z}_{i+1}^2}{2}+\mathrm{error}.
\end{equation}
However, for large $i$ near $p$ (say, for instance, $i\ge p-s_3$), this expansion can not be guaranteed due to singularity in our heavy-tailed setting. The main barrier is that one can not control the variance of $U_{i+1}$ well for $i>p-s_3$ since $\mathbb{E}Y_{i+1,j}^4\sim \mathcal{L}(n^{1/2})n^{-\alpha/2}$ is deterministic by Lemma \ref{lem_moment_rates} for $\alpha\in [3,4)$. To overcome this barrier, we adopt a replacement strategy in the spirit of the work \citep{bao2015logarithmic,wang2018}. However, due to the limitations from heavy-tail and near singularity, it is more suitable to control the difference between two larger determinants rather than change lines one by one as introduced in \cite{bao2015logarithmic,wang2018}. Specifically, we consider a ``replaced random matrix'', that is, let $\check{\mathbf{X}}$ be a random matrix differing from $\mathbf{X}$ only in $s_1$ rows with the first $p-s_1$ rows unchanged, where the $s_1$ rows consist of i.i.d. standard Gaussian random variables. Then we aim to prove
    \begin{equation}\label{eq_diffRR}
        \mathbb{P}\left(\sigma_{n}|\log\operatorname{det}(\check{\mathbf{R}})-\log\operatorname{det}({\mathbf{R}})|=\mathrm{o}(1)\right)=1-\mathrm{o}(1),
    \end{equation}
 where $\sigma_{n}^2=:(-2\log (1-\frac{p-1}{n})-2\frac{p}{n})^{-1}$ and $\check{\mathbf{R}}$ denotes the sample correlation matrix of $\check{\mathbf{X}}$. Thus, it suffices to show
    \begin{equation}\label{eq_diffGR}
        \sigma_{n}(\log\operatorname{det}(\check{\mathbf{R}})-\mu_n)\stackrel{d}{\rightarrow}N(0,1),
    \end{equation}
where $\mu_n=:(p-n+1/2)\log (1-\frac{p-1}{n})-p+\frac{p}{n}$ is defined in Theorem \ref{thm_logdet}. Due to the additive structure in \eqref{eq_decomlogdetR}, we can perform a Taylor expansion for the first $(p-s_2)$ random parts of $\log\operatorname{det}(\check{\mathbf{R}})$ since we can control the variance of the diagonal part well for $p-s_1<i\le p-s_2-1$ by the replacement trick. Finally, for the last $s_2$ rows, we show that this part can be negligible due to the variance being of order $-\log (1-(p-1)/n)$.

For convenience, we begin with some notation. Let $\mathbf{z}_{p-s_1+1},\ldots,\mathbf{z}_{p}\in \mathbb{R}^n$ consist of i.i.d. standard normal random variables. Denote the self-normalized vectors $\check{\mathbf{y}}_{p-s_1+1},\ldots,\check{\mathbf{y}}_{p}\in \mathbb{R}^n$ with $\check{\mathbf{y}}_{i+1}=\mathbf{z}_{i+1}/\|\mathbf{z}_{i+1}\|$ for $p-s_1\le i\le p-1$. For the replaced random matrix $\check{\mathbf{X}}$, we also define $\check{\mathbf{B}}_i,\check{\mathbf{P}}_i,\check{\mathbf{Q}}_i$ similar to \eqref{eq_defQ}. Moreover, for $p-s_1\le i\le p-1$, let $\check{\Delta}_{i+1}^2=\check{\mathbf{y}}_{i+1}^{\top}\check{\mathbf{P}}_i\check{\mathbf{y}}_{i+1}$ and
\begin{equation}\label{eq_defcheckZ}
    \begin{split}
        \check{Z}_{i+1}
    =\sum_{j=1}^{n}\check{q}_{i,jj}(n\check{Y}_{i+1,j}^2-1)+\sum_{k\ne l}\check{q}_{i,kl}n\check{Y}_{i+1,k}\check{Y}_{i+1,l}=:\check{U}_{i+1}+\check{V}_{i+1}.
    \end{split}
\end{equation}
It is straightforward to get $\mathbb{E}\check{Z}_{i+1}=0$. For simplicity, for $0\le i\le p-1$, denote
\begin{equation}\label{eq_defxixi}
    \Delta_{i+1}^2=\mathbf{y}_{i+1}^{\top}\mathbf{P}_{i}\mathbf{y}_{i+1}~\text{and}~\check{\Delta}_{i+1}^2=\check{\mathbf{y}}_{i+1}^{\top}\check{\mathbf{P}}_{i}\check{\mathbf{y}}_{i+1}.
\end{equation}
In view of \eqref{eq_defdetR}, it is obvious that $\Delta_{i+1}^2=\check{\Delta}_{i+1}^2$ for $0\le i\le p-s_1-1$ due to the definitions of $\mathbf{X}$ and $\check{\mathbf{X}}$. So, we can write
\begin{equation*}
    \det (\mathbf{R})=(\Delta_{p-1}^2\cdots \Delta_{p-s_2}^2)\cdot (\Delta_{p-s_2-1}^2\cdots\Delta_{p-s_1}^2)\cdot\det(\mathbf{Y}_{p-s_1}\mathbf{Y}_{p-s_1}^{\top})
\end{equation*}
and
\begin{equation*}
    \det (\check{\mathbf{R}})=(\check{\Delta}_{p-1}^2\cdots \check{\Delta}_{p-s_2}^2)\cdot (\check{\Delta}_{p-s_2-1}^2\cdots\check{\Delta}_{p-s_1}^2)\cdot\det(\mathbf{Y}_{p-s_1}\mathbf{Y}_{p-s_1}^{\top}),
\end{equation*}
since the first $p-s_1$ rows of $\mathbf{X}$ and $\check{\mathbf{X}}$ are the same. So, we can represent $\log \det \mathbf{R}$ and $\log \det\check{\mathbf{R}}$ as
\begin{equation}\label{eq_splitlogdetR}
    \log\det \mathbf{R}=\sum_{i=0}^{p-1}\log \Delta_{i+1}^2=\sum_{i=0}^{p-s_1-1}\log \Delta_{i+1}^2+\sum_{i=p-s_1}^{p-s_2-1}\log \Delta_{i+1}^2+\sum_{i=p-s_2}^{p-1}\log \Delta_{i+1}^2
\end{equation}
and
\begin{equation}\label{eq_splitlogdetRc}
    \log\det \check{\mathbf{R}}=\sum_{i=0}^{p-1}\log \check{\Delta}_{i+1}^2=\sum_{i=0}^{p-s_1-1}\log \Delta_{i+1}^2+\sum_{i=p-s_1}^{p-s_2-1}\log \check{\Delta}_{i+1}^2+\sum_{i=p-s_2}^{p-1}\log \check{\Delta}_{i+1}^2.
\end{equation}
It is straightforward that
\begin{equation*}
    \log\det \mathbf{R}-\log\det \check{\mathbf{R}}=\sum_{i=p-s_1}^{p-1}\log \Delta_{i+1}^2-\sum_{i=p-s_1}^{p-1}\log \check{\Delta}_{i+1}^2
\end{equation*}
and
\begin{equation*}
    \log\det \check{\mathbf{R}}=\sum_{i=0}^{p-s_1-1}\log \Delta_{i+1}^2+\sum_{i=p-s_1}^{p-s_2-1}\log \check{\Delta}_{i+1}^2+\sum_{i=p-s_2}^{p-1}\log \frac{n-i}{n}+\sum_{i=p-s_2}^{p-1}\log \frac{n\check{\Delta}_{i+1}^2}{n-i}.
\end{equation*}
Formally, the proofs of Theorems \ref{thm_logdet} and \ref{thm_pn} are divided into the following four steps:
\begin{itemize}
    \item[(i)] Warm up: Multilevel block truncations
    \begin{equation*}
    	\mathbb{P}(\log\det\mathbf{R}= \log \det\hat{\mathbf{R}})=1-\mathrm{o}(1),
    \end{equation*}
    where $\hat{\mathbf{R}}$ is contructed from $\mathbf{X}$ after appropriate truncations.
    \item[(ii)] Opening: Hierarchical Gaussian replacement
    \begin{equation}\label{eq_controldiff}
        \mathbb{P}\left(\sigma_{n}\left|\sum_{i=p-s_1}^{p-1}\log \Delta_{i+1}^2-\sum_{i=p-s_1}^{p-1}\log \check{\Delta}_{i+1}^2\right|=\mathrm{o}(1)\right)=1-\mathrm{o}(1).
    \end{equation}
    \item[(iii)] Mid game: Martingale central limit theorem
    \begin{equation}\label{eq_martingaleclt}
        \sum_{i=0}^{p-s_1-1}\log \Delta_{i+1}^2+\sum_{i=p-s_1}^{p-s_2-1}\log \check{\Delta}_{i+1}^2+\sum_{i=p-s_2}^{p-1}\log \frac{n-i}{n}-\mu_n\stackrel{d}{\rightarrow}N(0,\sigma_{n}^{-2}),~\text{as}~n\rightarrow \infty.
    \end{equation}
    \item[(iv)] End game: Last $s_2$ rows are negligible
    \begin{equation}\label{eq_lasts2rows}
        \sigma_{n}\sum_{i=p-s_2}^{p-1}\log \frac{n\check{\Delta}_{i+1}^2}{n-i}\stackrel{\mathbb{P}}{\rightarrow} 0.
    \end{equation}
\end{itemize}

\section{Proof of the main results}\label{sec_proof}

\subsection{Preliminaries}\label{sec_proof1}
This subsection collects preliminaries which are used frequently throughout the sequel, including some key estimations for the mixed moments of self-normalized entries (Lemma \ref{lem_moment_rates} and Lemma \ref{lemma_odd_moment}) and novel identities of the normalized projection matrix $\mathbf{Q}_i$ defined by \eqref{eq_defQ} (see Lemma \ref{lemma_Qbounds}).
We begin with basic definitions.
\begin{definition}[High probability event]\label{def_highpro}
	We say an event $\Xi$ holds with high probability if for any constant $D> 0$, $\mathbb{P}(\Xi) \ge  1- n^{-D}$ for large enough $n$.
\end{definition}
\begin{lemma}[High probability lower bound of $\|\mathbf{b}_i\|^2$]\label{lemma_highpronorm}
	Under Assumption \ref{assump1} for $\alpha>2$, there exist positive constants $c_1,c_2,c_3>0$ such that
	\begin{equation}\label{eq_highpronorm1}
		\mathbb{P}(\|\mathbf{b}_i\|^2\le c_1n)\le \exp(-c_2n)
	\end{equation}
	and
	\begin{equation}\label{eq_highpronorm2}
	\mathbb{P}(\min_{1\le i\le p}\|\mathbf{b}_i\|^2\ge c_1n)\ge 1-\exp(-c_3n).
	\end{equation}
\end{lemma}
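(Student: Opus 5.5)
The bound \eqref{eq_highpronorm1} is a lower-tail deviation estimate for the sum $\|\mathbf{b}_i\|^2=\sum_{j=1}^n X_{ij}^2$ of i.i.d. nonnegative variables. Such sums have exponentially small lower tails without any moment assumption beyond non-degeneracy. I would prove it with a truncated Chernoff argument, and then obtain \eqref{eq_highpronorm2} by a union bound over the $p\le n$ rows.

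For \eqref{eq_highpronorm1}, fix a truncation level $M>0$ large enough that $\mu_M:=\mathbb{E}(\xi^2\wedge M)\ge 1/2$. Such an $M$ exists because $\mathbb{E}\xi^2=1$ when $\alpha>2$ and $\mathbb{E}(\xi^2\wedge M)\uparrow \mathbb{E}\xi^2$ by monotone convergence. Set $W_j:=X_{ij}^2\wedge M\in[0,M]$. Then $\|\mathbf{b}_i\|^2\ge \sum_{j=1}^n W_j$, so it suffices to bound $\mathbb{P}(\sum_j W_j\le c_1 n)$. Hoeffding's inequality for bounded i.i.d. variables gives, for $c_1=\mu_M/2$,
\begin{equation*}
\mathbb{P}\Big(\sum_{j=1}^n W_j\le c_1 n\Big)\le \mathbb{P}\Big(\sum_{j=1}^n (W_j-\mu_M)\le -\tfrac{\mu_M}{2}n\Big)\le \exp\Big(-\frac{\mu_M^2 n}{2M^2}\Big).
\end{equation*}
This is \eqref{eq_highpronorm1} with $c_2=\mu_M^2/(2M^2)$, and the bound is uniform in $i$ because the rows are identically distributed. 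Alternatively, one could use the elementary bound $\mathbb{E}e^{-tX^2}\le 1-t\mathbb{E}(X^2\wedge M)+t^2M^2/2$ together with a Markov–Chernoff step, but Hoeffding is cleaner.

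For \eqref{eq_highpronorm2}, take the union bound
\begin{equation*}
\mathbb{P}\Big(\min_{1\le i\le p}\|\mathbf{b}_i\|^2< c_1n\Big)\le p\,e^{-c_2 n}\le n e^{-c_2 n}\le e^{-c_3 n}
\end{equation*}
for any $c_3<c_2$ and all large $n$, using $p\le n$. Since the lemma concerns large $n$, adjusting $c_3$ (or the constants) covers small $n$ as well. The boundary between $\le$ and $<$ is immaterial: replacing $c_1$ by $c_1/2$ in the first step handles it.

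There is no serious obstacle. The only point needing care is that the argument avoids any moment of order higher than two, so the heavy tail (only $\alpha>2$) causes no trouble. Truncating from above only decreases the sum, so it works in the right direction for a lower-tail bound, and this is the key step.
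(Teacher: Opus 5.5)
Your proof is correct and takes essentially the paper's approach: bound $\|\mathbf{b}_i\|^2$ from below by a sum of bounded i.i.d.\ nonnegative variables with positive mean, obtain an exponentially small lower tail, and finish with a union bound over the $p\le n$ rows. The difference is in the bounded surrogate and the concentration tool. The paper uses $X_{ij}^2\ge C^2\mathbbm{1}(|X_{ij}|>C)$ with a binomial counting bound, and assumes a $C$ with $\mathbb{P}(|\xi|>C)=1/2$, which need not exist, for instance if $\xi$ has an atom at $0$ of mass above $1/2$. Your $X_{ij}^2\wedge M$ with Hoeffding, where $M$ is chosen by monotone convergence so that $\mathbb{E}(\xi^2\wedge M)\ge 1/2$, avoids that issue.
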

\begin{proof}
	Without loss of generality, let $C$ be a positive constant such that $\mathbb{P}(|\xi|>C)=1/2$. For constant $c_1>0$ such that $c_1/C^2<1/10$, one has
	\begin{equation*}
		\begin{split}
		\mathbb{P}(\|\mathbf{b}_i\|^2\le c_1n)\le \mathbb{P}(\sum_{j=1}^{n}\mathbbm{1}_{|X_{ij}|>C}\le \frac{nc_1}{C^2})\le \mathbb{P}(\sum_{j=1}^{n}\mathbbm{1}_{|X_{ij}|>C}\le \frac{n}{10})
		\le \sum_{k=0}^{n/10}\binom{n}{k}2^{-n}\le \exp(-c_2n),
		\end{split}
	\end{equation*}
	for some constant $c_2>0$, as $n\rightarrow \infty$, where in the last line we used $\binom{n}{k}\le (en/k)^{k}$. For the second statement,
	\begin{equation*}
	\begin{split}
	\mathbb{P}(\min_{1\le i\le p}\|\mathbf{b}_i\|^2\ge c_1n)
	\ge 1-\sum_{i=1}^{p}\mathbb{P}(\|\mathbf{b}_i\|^2\le c_1n)\ge 1-\exp(-c_3n)
	\end{split}
	\end{equation*}
	for some constant $c_3>0$, as $n\rightarrow \infty$. This completes the proof of this proposition.
\end{proof}

\subsubsection{The mixed moments of self-normalized entries}\label{sec_proof11}
Throughout this paper, for positive integers $k_1,\ldots,k_r$, we will use the notation
\begin{equation*}
    \beta_{k_1,\ldots,k_r}:=\mathbb{E}(Y_{11}^{k_1}\cdots Y_{1r}^{k_r}),
\end{equation*}
where we recall the definition of $Y_{ij}$ from \eqref{eq_selfentry}. Due to the i.i.d. assumption of $X_{ij}$, for any permutation $\pi$ on $\{1,\ldots,r\}$, we have $\beta_{k_1,\ldots,k_r}=\beta_{k_{\pi(1)},\ldots,k_{\pi(r)}}$, and thus we typically write the indices in decreasing order. For instance, we write $\beta_{3,1}$ instead of $\beta_{1,3}$. The first preliminary borrowed from Lemma 4.1 of \cite{heiny2023logdet} gives the precise asymptotic behavior of the even mixed moments of $Y_{ij}$'s.
\begin{lemma}\label{lem_moment_rates}
	Suppose Assumption \ref{assump1} holds with $\alpha\in (2,4)$. Consider $k_1,\dots,k_q\ge1$ for $q\in\{1,\dots, n\}$. Then
	\begin{equation*}
		\lim_{n\rightarrow\infty}\frac{n^{N_1(1-\alpha/2)+q\alpha/2}}{\mathcal{L}^{q-N_1}(n^{1/2})}\mathbb{E}(Y_{11}^{2k_1}\cdots Y_{1q}^{2k_q})=\frac{(\alpha/2)^{q-N_1}\Gamma(N_1(1-\alpha/2)+q\alpha/2)\prod_{i:k_i\ge2}^q\Gamma(k_i-\alpha/2)}{\Gamma(k_1+\dots+k_q)},
	\end{equation*}
	where $N_1=\#\{1\le i\le q:k_i=1\}$. In particular, we have
	\begin{equation*}
		\lim_{n\rightarrow\infty}\frac{n^{\alpha/2}}{\mathcal{L}(n^{1/2})}\mathbb{E}(Y_{11}^{2k})=\frac{\alpha\Gamma(\alpha/2)\Gamma(k-\alpha/2)}{2\Gamma(k)},\quad k\ge2.
	\end{equation*}
\end{lemma}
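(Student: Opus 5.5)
The plan is a Laplace-transform / Gamma-integral representation of the self-normalized moments, followed by a regular-variation (Karamata/Tauberian) analysis near the origin. Write $S=\sum_{j=1}^n X_{1j}^2$ and $K=k_1+\cdots+k_q$. We use
\begin{equation*}
S^{-K}=\frac{1}{\Gamma(K)}\int_0^\infty t^{K-1}e^{-tS}\,dt .
\end{equation*}
By independence and Fubini this gives
\begin{equation*}
\mathbb{E}(Y_{11}^{2k_1}\cdots Y_{1q}^{2k_q})=\frac{1}{\Gamma(K)}\int_0^\infty t^{K-1}\prod_{i=1}^q \mathbb{E}\big(\xi^{2k_i}e^{-t\xi^2}\big)\,\varphi(t)^{n-q}\,dt,
\end{equation*}
where $\varphi(t)=\mathbb{E}e^{-t\xi^2}$. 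The task then reduces to the small-$t$ asymptotics of the one-dimensional quantities $\varphi(t)$ and $\mathbb{E}(\xi^{2k}e^{-t\xi^2})$, since the mass of the integral concentrates at $t\asymp 1/n$.

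\textbf{Step 1 (local asymptotics).} Since $\mathbb{E}\xi^2=1$ and $\xi^2$ has tail index $\alpha/2\in(1,2)$, Karamata's theorem gives the expansions we need.
\begin{itemize}
\item For the moment generating function, $1-\varphi(t)=t-\Gamma(1-\alpha/2)\,\mathcal{L}(t^{-1/2})\,t^{\alpha/2}(1+\mathrm{o}(1))$. Hence $\varphi(t)^{n}\approx e^{-nt}$ uniformly for $t\lesssim (\log n)/n$, and the correction term contributes $n\,t^{\alpha/2}\mathcal{L}\to 0$ at $t\sim 1/n$.
\item For $k=1$, $\mathbb{E}(\xi^2 e^{-t\xi^2})=1+\mathrm{o}(1)$ as $t\to0$.
\item For $k\ge2$, $\xi^{2k}$ has infinite mean, and the Abelian theorem for Laplace transforms gives
\begin{equation*}
\mathbb{E}(\xi^{2k}e^{-t\xi^2})\sim \frac{\alpha}{2}\,\Gamma(k-\alpha/2)\,\mathcal{L}(t^{-1/2})\,t^{\alpha/2-k}.
\end{equation*}
This follows by writing the expectation as $\int x^{k}e^{-tx}\,dF_{\xi^2}(x)$, integrating by parts against the tail $\mathcal{L}(\sqrt x)x^{-\alpha/2}$, and substituting $x=u/t$.
\end{itemize}

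\textbf{Step 2 (assembling the integral).} Substitute $t=u/n$ and split the range into $u\le \log^2 n$ and $u>\log^2 n$.
\begin{itemize}
\item On the main range, the product of the Step 1 factors is
\begin{equation*}
(\alpha/2)^{q-N_1}\prod_{k_i\ge2}\Gamma(k_i-\alpha/2)\;\mathcal{L}(\sqrt{n/u})^{q-N_1}\,(u/n)^{\sum_{k_i\ge2}(\alpha/2-k_i)}.
\end{equation*}
Using $\varphi^{n-q}\to e^{-u}$, the slowly varying factor satisfies $\mathcal{L}(\sqrt{n/u})/\mathcal{L}(\sqrt n)\to1$ locally uniformly. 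The Potter bounds supply a dominating function $u^{\pm\delta}$, so dominated convergence applies.
\item Counting powers of $u$: $t^{K-1}dt$ contributes $u^{K-1}n^{-K}$, and the product contributes $u^{(q-N_1)\alpha/2-(K-N_1)}$. The exponent of $u$ is therefore $N_1(1-\alpha/2)+q\alpha/2-1$, and the integral $\int u^{\cdot}e^{-u}\,du$ gives $\Gamma(N_1(1-\alpha/2)+q\alpha/2)$.
\item The powers of $n$ combine to $n^{-K}\cdot n^{K-N_1-(q-N_1)\alpha/2}=n^{-(N_1(1-\alpha/2)+q\alpha/2)}$, matching the normalization in the statement. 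The factor $1/\Gamma(K)$ supplies the denominator.
\end{itemize}
The "in particular" claim is the case $q=1$, $N_1=0$: there $\Gamma(\alpha/2)\,(\alpha/2)\,\Gamma(k-\alpha/2)/\Gamma(k)$ equals the stated constant.

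\textbf{Main obstacle.} The hard part is the uniform control of the tails of the $t$-integral: the range $t\gg (\log^2 n)/n$, where $\varphi(t)^{n-q}$ must beat the polynomial growth, and very small $t$ near $0$. For $t\ge (\log^2 n)/n$ we use $\varphi(t)\le e^{-ct}$ for $t\le1$ and $\varphi(t)\le \varphi(1)<1$ for $t\ge1$. These give $\varphi^{n-q}\le e^{-c\log^2 n}$ on the middle range, which kills the polynomial factors, and exponential decay in $n$ beyond $t=1$, where the remaining integral is finite because each factor is bounded by $C t^{-k_i}$ and $K$ is fixed. Near $t=0$ the integrand is $O(t^{K-1+\sum(\alpha/2-k_i)})$, which is integrable because the total exponent is $q\alpha/2+N_1(1-\alpha/2)-1>-1$. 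Finally, when $q$ grows with $n$ the constants must be tracked carefully, but for fixed $q$ (the case used in the paper) the argument is complete.
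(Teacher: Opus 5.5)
Your Gamma-integral representation combined with Karamata's Abelian theorem is the standard route to these self-normalized moments. The paper itself gives no proof; it simply quotes Lemma 4.1 of \cite{heiny2023logdet}. Most of your argument is right: the asymptotics $\mathbb{E}(\xi^{2k}e^{-t\xi^2})\sim\frac{\alpha}{2}\Gamma(k-\alpha/2)\mathcal{L}(t^{-1/2})t^{\alpha/2-k}$ hold, the domination on $u\le\log^2 n$ via Potter's bound and $\varphi(t)\le e^{-ct}$ works, and the powers of $n$ and $u$ are tracked correctly. The step that fails is the one you single out as the main obstacle, on $t\ge1$. Bounding each factor by $Ct^{-k_i}$ leaves $t^{K-1}\prod_i Ct^{-k_i}=C^q t^{-1}$, and $\int_1^\infty t^{-1}\,dt=\infty$. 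The factor $\varphi(t)^{n-q}\le\varphi(1)^{n-q}$ is constant in $t$ and cannot rescue this. Indeed $\varphi(t)\to\mathbb{P}(\xi=0)$, which Assumption \ref{assump1} allows to be positive, and even when it is $0$ the decay of $\varphi$ can be arbitrarily slow. The fix is not to bound factor by factor, but to use the monotonicity of $\varphi$ together with the same Laplace identity for $q$ variables. For i.i.d. copies $\xi_1,\dots,\xi_q$ of $\xi$ and any $\tau>0$, Tonelli gives
\[
\frac{1}{\Gamma(K)}\int_{\tau}^{\infty}t^{K-1}\prod_{i=1}^{q}\mathbb{E}\bigl(\xi^{2k_i}e^{-t\xi^{2}}\bigr)\,\varphi(t)^{n-q}\,dt\le\varphi(\tau)^{n-q}\,\mathbb{E}\biggl(\frac{\prod_{i=1}^{q}\xi_i^{2k_i}}{\bigl(\sum_{i=1}^{q}\xi_i^{2}\bigr)^{K}}\biggr)\le\varphi(\tau)^{n-q},
\]
because $\prod_{i\le q}\xi_i^{2k_i}\le(\sum_{i\le q}\xi_i^2)^{K}$. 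Take $\tau=\log^2 n/n$ and use $\varphi(\tau)\le e^{-\tau/2}$ for small $\tau$, which follows from your own expansion of $1-\varphi$. Then the whole range $u>\log^2 n$ contributes at most $e^{-\log^2 n/4}$ for $n\ge 2q$. This is negligible against the main term $n^{-(N_1(1-\alpha/2)+q\alpha/2)}\mathcal{L}^{q-N_1}(n^{1/2})$, which is of polynomial size by Potter's bound. The bound replaces both your middle-range and your $t\ge1$ estimates.

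Two smaller points. First, the correction term in Step 1 has the wrong sign. Since $1-\varphi(t)\le t$, the expansion is $1-\varphi(t)=t+\Gamma(1-\alpha/2)\mathcal{L}(t^{-1/2})t^{\alpha/2}(1+\mathrm{o}(1))$ with $\Gamma(1-\alpha/2)<0$. This is harmless, because the term is negligible at $t\asymp 1/n$. Second, your restriction to fixed $q$ is necessary, not merely convenient. For $q=n$ and all $k_i=1$ the claimed limit is $1$. However, AM--GM gives $n^n\prod_{j}Y_{1j}^2\le 1$, and the strong law together with strict Jensen ($\mathbb{E}\log\xi^2<\log\mathbb{E}\xi^2=0$) forces $n^n\prod_j Y_{1j}^2\to0$ almost surely. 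Hence the expectation tends to $0$.
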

Next, we turn to the estimation of odd mixed moments of $Y_{ij}$'s, which relies on a pivotal estimation of $\beta_{1,1}=\mathbb{E}(Y_{11}Y_{12})$ and the self-normalization sum identity $Y_{11}^2+\cdots+Y_{1n}^2=1$, see Lemma 2.8 of \cite{Li2024clt}, and thus we omit the proof.
\begin{lemma}\label{lemma_odd_moment}
Suppose Assumption \ref{assump1} holds with $\alpha\in [3,4)$. Then, we have, as $n\rightarrow\infty$
\begin{equation*}
    \beta_{1,1}=\mathrm{o}(n^{-3}), \beta_{3,1}=\mathrm{o}(n^{-3}), \beta_{\underbrace{1,\ldots,1}_{2q}}=\mathrm{o}(n^{-2q-1})
\end{equation*}
for integer $q\ge 1$.
\end{lemma}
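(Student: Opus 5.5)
The plan is to first obtain the estimate for $\beta_{1,1}$ and then derive the other two from the self-normalization identity $\sum_{j=1}^n Y_{1j}^2=1$ together with exchangeability of $(Y_{11},\dots,Y_{1n})$.

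\emph{Step 1: $\beta_{1,1}$.} Set $S=\sum_{j=1}^n X_{1j}$. Then
\begin{equation*}
\mathbb{E}\Big(\frac{S^2}{\sum_j X_{1j}^2}\Big)=1+n(n-1)\beta_{1,1}.
\end{equation*}
So the bound $\beta_{1,1}=\mathrm{o}(n^{-3})$ is equivalent to showing that the second moment of the self-normalized sum $T_n=S/(\sum_j X_{1j}^2)^{1/2}$ satisfies $\mathbb{E}T_n^2=1+\mathrm{o}(n^{-1})$. I would compute this with the integral representation
\begin{equation*}
\frac{1}{\sum_j X_{1j}^2}=\int_0^\infty e^{-t\sum_j X_{1j}^2}\,dt,
\end{equation*}
which gives
\begin{equation*}
\beta_{1,1}=\int_0^\infty \big(\mathbb{E}\,\xi e^{-t\xi^2}\big)^2\big(\mathbb{E}e^{-t\xi^2}\big)^{n-2}\,dt .
\end{equation*}
The key bound comes from $\mathbb{E}\xi=0$:
\begin{equation*}
\mathbb{E}\,\xi e^{-t\xi^2}=\mathbb{E}\,\xi\big(e^{-t\xi^2}-1\big).
\end{equation*}
Its absolute value is at most $\mathbb{E}|\xi|\min(t\xi^2,1)$. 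For $\alpha\ge 3$, and using regular variation in the borderline case $\alpha=3$ with the tail condition, this is $\mathrm{o}(t)$ as $t\to0$. It is in fact $\mathcal{O}(t^{(\alpha-1)/2}\mathcal{L}(t^{-1/2}))$ when $\alpha<3$ would fail, while for $\alpha\ge3$ it is $\mathrm{o}(t)$. Meanwhile $(\mathbb{E}e^{-t\xi^2})^{n-2}\le e^{-cnt}$ for small $t$ and decays exponentially for $t$ bounded away from zero. Substituting $t=u/n$ then gives
\begin{equation*}
\beta_{1,1}\le \int_0^\infty \mathrm{o}(u^2/n^2)\,e^{-cu}\,\frac{du}{n}=\mathrm{o}(n^{-3}).
\end{equation*}
The $\mathrm{o}$ is justified by dominated convergence: $\mathbb{E}|\xi|\min(t\xi^2,1)/t\to 0$ requires only that $\mathbb{E}|\xi|^3\mathbbm{1}(|\xi|\le t^{-1/2})\,t$ plus the truncated tail be $\mathrm{o}(1)$, and this follows from $x^3\mathbb{P}(|\xi|>x)\to 0$. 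Controlling this borderline is the main technical point. It is also exactly where $\alpha\ge3$, rather than just $\alpha>2$, is used: for $\alpha<3$ one would only get $\mathcal{O}(t^{(\alpha-1)/2})$.

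\emph{Step 2: $\beta_{3,1}$.} Multiply $\sum_j Y_{1j}^2=1$ by $Y_{11}Y_{12}$ and take expectations. By exchangeability,
\begin{equation*}
\beta_{1,1}=2\beta_{3,1}+(n-2)\beta_{2,1,1}.
\end{equation*}
To bound $\beta_{2,1,1}$ I would repeat the Laplace-transform representation,
\begin{equation*}
\beta_{2,1,1}=\int_0^\infty t\,\mathbb{E}\big(\xi^2e^{-t\xi^2}\big)\big(\mathbb{E}\,\xi e^{-t\xi^2}\big)^2\big(\mathbb{E}e^{-t\xi^2}\big)^{n-3}\,dt .
\end{equation*}
Here the weight $t$ comes from $x^{-2}=\int_0^\infty te^{-tx}\,dt$. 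Since $\mathbb{E}\xi^2e^{-t\xi^2}\le1$, the same argument gives $\beta_{2,1,1}=\mathrm{o}(n^{-4})$, and hence $\beta_{3,1}=\mathrm{o}(n^{-3})$. Alternatively, $\beta_{3,1}$ can be bounded directly with the weight $t$ and the factor $\mathbb{E}|\xi|^3e^{-t\xi^2}$. This factor is $\mathcal{O}(t^{-(3-\alpha)_+/2}\cdot)$ up to a slowly varying factor, which makes the direct route more delicate. I would therefore use the identity route.

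\emph{Step 3: $\beta_{1,\dots,1}$ with $2q$ ones.} By the same representation,
\begin{equation*}
\beta_{1^{2q}}=\frac{1}{\Gamma(q)}\int_0^\infty t^{q-1}\big(\mathbb{E}\,\xi e^{-t\xi^2}\big)^{2q}\big(\mathbb{E}e^{-t\xi^2}\big)^{n-2q}\,dt .
\end{equation*}
The integrand is bounded by $t^{q-1}\mathrm{o}(t^{2q})e^{-cnt}$, uniformly for small $t$, with exponential decay elsewhere. The substitution $t=u/n$ yields $\mathrm{o}(n^{-3q})$, which is at most $\mathrm{o}(n^{-2q-1})$ for $q\ge1$.

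The only nonroutine ingredient is the uniform estimate $|\mathbb{E}\,\xi e^{-t\xi^2}|=\mathrm{o}(t)$ as $t\downarrow0$ under the tail condition \eqref{eq_oldcondition}. The other pieces are the routine splitting of the $t$-integral at a small constant and the bound
\begin{equation*}
\mathbb{E}e^{-t\xi^2}\le 1-ct\quad\text{for small } t,\qquad \mathbb{E}e^{-t\xi^2}\le \rho<1\quad\text{for } t\ge t_0 .
\end{equation*}
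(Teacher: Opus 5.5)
The argument fails at the step you single out as the only non-routine one, $|\mathbb{E}\,\xi e^{-t\xi^2}|=\mathrm{o}(t)$. Your justification loses a factor of $t$. Dividing $\mathbb{E}|\xi|\min(t\xi^2,1)$ by $t$ gives $\mathbb{E}|\xi|^3\mathbbm{1}(|\xi|\le t^{-1/2})+t^{-1}\mathbb{E}|\xi|\mathbbm{1}(|\xi|>t^{-1/2})$, and the first term tends to $\mathbb{E}|\xi|^3>0$, not to $0$. Nor is the crude bound to blame. For $\alpha\in(3,4)$ we have $\mathbb{E}|\xi|^3<\infty$, and dominated convergence applied to $\xi(e^{-t\xi^2}-1)/t\to-\xi^3$ and $(1-e^{-t\xi^2})/t\to\xi^2$ gives
\[
\mathbb{E}\,\xi e^{-t\xi^2}=-t\,\mathbb{E}\xi^3+\mathrm{o}(t),\qquad \mathbb{E}e^{-t\xi^2}=1-t+\mathrm{o}(t).
\]
Insert this into your own representation of $\beta_{1,1}$ with $t=u/n$. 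On $u\le nt_0$ the rescaled integrand is dominated by $Cu^2e^{-u/8}$, and the range $t\ge t_0$ is exponentially small. This yields
\[
n^3\beta_{1,1}\longrightarrow(\mathbb{E}\xi^3)^2\int_0^\infty u^2e^{-u}\,du=2(\mathbb{E}\xi^3)^2,
\]
that is, $\mathbb{E}T_n^2=1+2(\mathbb{E}\xi^3)^2/n+\mathrm{o}(n^{-1})$. In the same way $\beta_{3,1}\sim-2(\mathbb{E}\xi^3)^2n^{-3}$ and $\beta_{2,1,1}\sim6(\mathbb{E}\xi^3)^2n^{-4}$, consistent with $\beta_{1,1}=2\beta_{3,1}+(n-2)\beta_{2,1,1}$. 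Now take a skewed $\xi$ satisfying Assumption \ref{assump1} with $\alpha\in(3,4)$, for instance a centred, standardized Pareto variable, which has $\mathbb{E}\xi^3>0$. For it, the claims $\beta_{1,1}=\mathrm{o}(n^{-3})$ and $\beta_{3,1}=\mathrm{o}(n^{-3})$ are false, so no repair of Step 1 can produce them; they need $\mathbb{E}\xi^3=0$ (e.g.\ symmetry). The paper does not prove the lemma either. It cites Lemma 2.8 of \cite{Li2024clt} for $\beta_{1,1}$ and then uses the same identity as your Step 2. So the problem lies in the claimed rate, not in your choice of route.

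Here is what your method does prove once $\mathrm{o}(t)$ is replaced by the correct bound $|\mathbb{E}\,\xi e^{-t\xi^2}|\le t\,\mathbb{E}|\xi|^3$. For $\alpha\in(3,4)$ you get $\beta_{1,1}=\mathrm{O}(n^{-3})$ and $\beta_{2,1,1}=\mathrm{O}(n^{-4})$, hence $\beta_{3,1}=\mathrm{O}(n^{-3})$. You also get $\beta_{1,\ldots,1}=\mathrm{O}(n^{-3q})$ with $2q$ ones, which is $\mathrm{o}(n^{-2q-1})$ only for $q\ge2$. At $\alpha=3$ matters are worse: $\mathbb{E}|\xi|^3$ can be infinite even under $x^3\mathbb{P}(|\xi|>x)\to0$, a condition that is not among the lemma's hypotheses anyway. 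There one only gets $|\mathbb{E}\,\xi e^{-t\xi^2}|=\mathrm{o}(t\log(1/t))$, hence $\beta_{1,1}=\mathrm{o}(n^{-3}\log^2n)$. These weaker rates still give estimates such as $n^{5/2}\beta_{1,1}=\mathrm{o}(1)$, which is what is used to pass from $Z_{i+1}$ to $\widetilde{Z}_{i+1}$. But the lemma as stated must be weakened accordingly, or a zero-skewness assumption added.
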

\begin{lemma}[Potter's bound \citep{albrecher2007asymptotic}]\label{lemma_potterbound}
    For the regularly varying function $\mathcal{L}(x)$, for any $\epsilon>0$ and $C>1$, we have $C^{-1}x^{-\epsilon}\le\mathcal{L}(x)\le Cx^{\epsilon}$ for sufficiently large $x$.
\end{lemma}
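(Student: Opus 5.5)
Potter's bound is a classical fact about slowly varying functions, and the plan is to derive it from Karamata's representation theorem. Strictly, the statement needs $\mathcal{L}$ to be slowly varying (the tail of $\xi$ in Assumption \ref{assump1} is regularly varying, and $\mathcal{L}$ is its slowly varying part). Under that reading, Karamata's theorem gives a representation
\begin{equation*}
\mathcal{L}(x)=c(x)\exp\Bigl(\int_{a}^{x}\frac{\eta(t)}{t}\,dt\Bigr),\qquad x\ge a,
\end{equation*}
with measurable $c(x)\to c_0\in(0,\infty)$ and $\eta(t)\to 0$ as $t\to\infty$. I would quote this representation as standard rather than reprove it. Its proof runs through the uniform convergence theorem for $\mathcal{L}(tx)/\mathcal{L}(x)$ on compact $t$-sets, followed by an averaging of $\log\mathcal{L}(e^u)$.

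Given the representation, fix $\epsilon>0$ and $C>1$. First choose $x_0\ge a$ so that $|\eta(t)|\le\epsilon/2$ for $t\ge x_0$. Also require $c(x)\in\bigl(c_0C^{-1/3},\,c_0C^{1/3}\bigr)$ for $x\ge x_0$. Next split the integral at $x_0$ and set $K:=\exp\bigl(\int_a^{x_0}\eta(t)/t\,dt\bigr)\in(0,\infty)$. For $x\ge x_0$ this gives
\begin{equation*}
c_0C^{-1/3}K\,(x/x_0)^{-\epsilon/2}\le\mathcal{L}(x)\le c_0C^{1/3}K\,(x/x_0)^{\epsilon/2}.
\end{equation*}
The remaining task is to absorb all constants into the extra factor $x^{\pm\epsilon/2}$. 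The quantities $c_0K x_0^{\pm\epsilon/2}C^{\pm1/3}$ are fixed numbers, while $x^{\epsilon/2}\to\infty$. So for $x$ large enough,
\begin{equation*}
c_0C^{1/3}Kx_0^{\epsilon/2}x^{\epsilon/2}\le Cx^{\epsilon}\quad\text{and}\quad c_0C^{-1/3}Kx_0^{-\epsilon/2}x^{-\epsilon/2}\ge C^{-1}x^{-\epsilon},
\end{equation*}
since each amounts to $x^{\epsilon/2}$ dominating a constant. This yields $C^{-1}x^{-\epsilon}\le\mathcal{L}(x)\le Cx^{\epsilon}$ for sufficiently large $x$.

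The only nontrivial ingredient is the representation theorem, so that is the step I regard as the main obstacle. If one wanted to avoid it, I would argue directly instead. By the uniform convergence theorem there is $x_1$ with $\mathcal{L}(2x)/\mathcal{L}(x)\in[2^{-\epsilon/2},2^{\epsilon/2}]$ for $x\ge x_1$. Iterating this over dyadic scales gives $\mathcal{L}(2^kx_1 s)\le 2^{k\epsilon/2}\mathcal{L}(x_1 s)$ for $s\in[1,2]$. Local boundedness of $\mathcal{L}$ on $[x_1,2x_1]$, which is also a consequence of the uniform convergence theorem, then bounds $\sup_{s\in[1,2]}\mathcal{L}(x_1s)$. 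The lower bound follows the same way. The constants are absorbed as above, which gives the same conclusion.
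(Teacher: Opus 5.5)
Your proof is correct. The paper gives no argument for this lemma; it simply quotes it from the literature. What you write is the classical derivation behind that citation (Karamata's representation theorem, as in Bingham--Goldie--Teugels), so there is nothing in the paper to diverge from, and your version has the merit of making the hypotheses explicit.

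You are right that $\mathcal{L}$ must be slowly varying rather than merely regularly varying: for index $\rho\neq0$ the bound fails once $\epsilon<|\rho|$. It is also worth noticing that the statement as written is much weaker than the usual Potter ratio inequality $\mathcal{L}(y)/\mathcal{L}(x)\le C\max\{(y/x)^{\epsilon},(y/x)^{-\epsilon}\}$ for $x,y\ge X$. It only asserts $x^{\epsilon}\mathcal{L}(x)\to\infty$ and $x^{-\epsilon}\mathcal{L}(x)\to0$, which is exactly what your constant-absorption step delivers.

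There is one slip. Since $(x/x_0)^{\pm\epsilon/2}=x_0^{\mp\epsilon/2}x^{\pm\epsilon/2}$, the upper bound carries $x_0^{-\epsilon/2}$ and the lower bound carries $x_0^{\epsilon/2}$, the reverse of your last display. This is harmless, because you only use that these are fixed positive constants.

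Finally, in this paper's setting your dyadic alternative does not need the uniform convergence theorem. The ratio bound at the single point $t=2$ follows from the definition of slow variation applied at $t=2$. Because $\mathcal{L}(x)=x^{\alpha}\mathbb{P}(|\xi|>x)$ with a nonincreasing tail, one has $x_1^{\alpha}\mathbb{P}(|\xi|>2x_1)\le\mathcal{L}(x)\le(2x_1)^{\alpha}$ on $[x_1,2x_1]$. This gives boundedness above and away from zero there directly.
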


\subsubsection{The normalized projection matrices}\label{sec_proof12}
For a given integer $i\in [0,p-1]$, from the decomposition of $\widetilde{Z}_{i+1}$ in \eqref{eq_decomZ}, we can see that both the entries of the normalized projection matrices $\mathbf{Q}_i$ and the self-normalized entries $Y_{i+1,j}$'s govern the asymptotic behaviors of $\widetilde{Z}_{i+1}$'s. In this subsection, let's illustrate some deterministic bounds for  $\mathbf{Q}_i$, whose proof is deferred to Appendix \ref{sec-offdiag}. For simplicity, denote
\begin{equation}\label{eq_defSki}
S_{k}^{(i)}=:\sum_{j=1}^{n}q_{i,jj}^{k},~\text{for}~k\ge 1.
\end{equation}
It can be checked that $S_{1}^{(i)}=1$ and $S_{2}^{(i)}\le S_{1}^{(i)}/(n-i)=1/(n-i)$.
\begin{lemma}[Deterministic bounds for $\mathbf{Q}_i$]\label{lemma_Qbounds}
	Given $0\le i\le p-1$, for the matrices $\mathbf{Q}_i=\mathbf{P}_i/(n-i)$ defined by \eqref{eq_defQ}, we have the following bounds
	\begin{equation}\label{eq_boundq1}
	\sum_{l}|q_{i,kl}|\le \frac{n^{1/2}}{(n-i)}~,~\sum_{k\ne l}|q_{i,kl}|\le \frac{n}{(n-i)^{1/2}},~\sum_{l\ne s}|q_{i,kl}q_{i,ks}|\le \frac{n}{(n-i)^2},
	\end{equation}
	\begin{equation}\label{eq_boundq2}
	\sum_{k\ne l\ne s}|q_{i,kl}q_{i,ks}|\le \frac{n}{n-i}~,~\sum_{k\ne l\ne s}|q_{i,kl}q_{i,ks}q_{i,ls}|\le \frac{n}{(n-i)^2}.
	\end{equation}
	\begin{equation}\label{eq_boundq3}
	\sum_{k\ne l\ne s\ne m}|q_{i,kl}q_{i,ks}q_{i,lm}|\le \frac{n}{(n-i)^{3/2}},~\sum_{k\ne l\ne s\ne m}|q_{i,kl}q_{i,ks}q_{i,km}|\le \frac{n^{3/2}}{(n-i)^2}.
	\end{equation}
\end{lemma}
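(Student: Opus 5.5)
The whole lemma should follow from three elementary facts about the orthogonal projection $\mathbf{P}_i=(p_{i,kl})$ of rank $n-i$; no probabilistic input is needed. From $\mathbf{P}_i^2=\mathbf{P}_i=\mathbf{P}_i^{\top}$ we get
\begin{equation*}
\sum_{l}p_{i,kl}^2=p_{i,kk}\in[0,1],\qquad \sum_k p_{i,kk}=n-i,\qquad \sum_{k,l}p_{i,kl}^2=n-i .
\end{equation*}
Dividing by $n-i$, these become, for $\mathbf{Q}_i$,
\begin{equation*}
\sum_l q_{i,kl}^2=\frac{p_{i,kk}}{(n-i)^2},\qquad \sum_{k,l}q_{i,kl}^2=\frac{1}{n-i}.
\end{equation*}
The main device is then Cauchy--Schwarz, applied so as to keep the factor $p_{i,kk}^{1/2}$ rather than bounding it crudely by $1$:
\begin{equation*}
\sum_l|q_{i,kl}|\le \sqrt{n\,p_{i,kk}}/(n-i).
\end{equation*}
All restricted sums ($k\ne l$, etc.) are bounded by the corresponding unrestricted sums of absolute values.

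\textbf{Bounds in \eqref{eq_boundq1}.} The first bound is immediate from the row estimate, using $p_{i,kk}\le1$. For the second, Cauchy--Schwarz over all $n^2$ pairs gives
\begin{equation*}
\sum_{k,l}|q_{i,kl}|\le n\Bigl(\sum_{k,l}q_{i,kl}^2\Bigr)^{1/2}=n(n-i)^{-1/2}.
\end{equation*}
The third follows from $\sum_{l,s}|q_{i,kl}q_{i,ks}|=(\sum_l|q_{i,kl}|)^2\le n/(n-i)^2$.

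\textbf{Bounds in \eqref{eq_boundq2}.} For the first, write the sum as $\sum_k(\sum_l|q_{i,kl}|)^2$. The row estimate bounds this by $\sum_k n\,p_{i,kk}/(n-i)^2=n/(n-i)$. For the triangle sum, fix $k,l$ and apply Cauchy--Schwarz in $s$:
\begin{equation*}
\sum_s|q_{i,ks}q_{i,ls}|\le \sqrt{p_{i,kk}p_{i,ll}}/(n-i)^2\le (n-i)^{-2}.
\end{equation*}
Multiplying by $\sum_{k,l}|q_{i,kl}|\le n(n-i)^{-1/2}$ gives $n(n-i)^{-5/2}\le n/(n-i)^2$.

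\textbf{Bounds in \eqref{eq_boundq3}.} These are the only place where some care is needed, and I expect the path-type sum to be the main obstacle. Bounding both $\sum_s|q_{i,ks}|$ and $\sum_m|q_{i,lm}|$ by $n^{1/2}/(n-i)$ loses a factor $n/(n-i)$, so the diagonal factors must be retained. Factorizing the sum gives
\begin{equation*}
\sum_{k,l}|q_{i,kl}|\Bigl(\sum_s|q_{i,ks}|\Bigr)\Bigl(\sum_m|q_{i,lm}|\Bigr)\le\frac{n}{(n-i)^2}\sum_{k,l}|q_{i,kl}|\sqrt{p_{i,kk}p_{i,ll}}.
\end{equation*}
A further Cauchy--Schwarz bounds the last sum by
\begin{equation*}
\Bigl(\sum_{k,l}q_{i,kl}^2\Bigr)^{1/2}\Bigl(\sum_{k,l}p_{i,kk}p_{i,ll}\Bigr)^{1/2}=(n-i)^{-1/2}(n-i)=(n-i)^{1/2}.
\end{equation*}
This yields $n/(n-i)^{3/2}$. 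For the star-type sum,
\begin{equation*}
\sum_k\Bigl(\sum_l|q_{i,kl}|\Bigr)^3\le\sum_k\frac{(n\,p_{i,kk})^{3/2}}{(n-i)^3}\le\frac{n^{3/2}}{(n-i)^3}\sum_kp_{i,kk}=\frac{n^{3/2}}{(n-i)^2},
\end{equation*}
where the middle inequality uses $p_{i,kk}^{3/2}\le p_{i,kk}$.
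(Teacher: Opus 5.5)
Your proof is correct and takes essentially the paper's route: the identities $\sum_l q_{i,kl}^2=q_{i,kk}/(n-i)$, $\sum_k q_{i,kk}=1$ and $\sum_{k,l}q_{i,kl}^2=1/(n-i)$, which follow from idempotence, combined with Cauchy--Schwarz that keeps the diagonal factor $p_{i,kk}^{1/2}$. Your two deviations are harmless and both still give the stated bounds: for the triangle sum you use $\sqrt{p_{i,kk}p_{i,ll}}\le 1$ and get $n(n-i)^{-5/2}$, where the paper keeps the diagonal factor and gets $n^{1/2}(n-i)^{-2}$; for the path-type sum you factor out the two end row sums and apply one global Cauchy--Schwarz over $(k,l)$, where the paper applies Cauchy--Schwarz in the middle index $l$ first.
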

Thereafter, with \eqref{eq_boundq1}, we have
\begin{equation*}
    \begin{split}
        \sum_{i=0}^{p-1}|Z_{i+1}-\widetilde{Z}_{i+1}|=\sum_{i=0}^{p-1}|\sum_{k\ne l}q_{i,kl}n\beta_{1,1}|\le \sum_{i=0}^{p-1}\frac{n^2\beta_{1,1}}{(n-i)^{1/2}}\le Cn^{5/2}\beta_{1,1}=\mathrm{o}(1)
    \end{split}
\end{equation*}
by Lemma \ref{lemma_odd_moment}. Thus, by the elementary inequality $\log (1+x)-\log (1+y)=\log (1+\frac{x-y}{1+y})\le\mathrm{O}(|x-y|)$, it's sufficient to study the asymptotic behaviors of $\sum_{i=0}^{p-1}\log (1+\widetilde{Z}_{i+1})$.
\subsubsection{The variance of quadratics}
The variance estimation for $\widetilde{Z}_{i+1}$ plays an essential role in the martingale method. Before we present the proof of the main results, we collect the variance estimation for ${U}_{i+1}$ and $V_{i+1}$, which are used frequently in the sequel.
 Recalling the definitions of $U_{i+1}$ and $V_{i+1}$, we have
\begin{equation}\label{eq_meanU2}
    \begin{split}
        \mathbb{E}(U_{i+1}^2)=&\mathbb{E}\big(\sum_{j}q_{i,jj}(nY_{i+1,j}^2-1)\big)^2
        =\mathbb{E}\sum_{j}q_{i,jj}^2(n^2\beta_{4}-1)+(n^2\beta_{2,2}-1)\mathbb{E}(\sum_{j\ne \ell}q_{i,jj}q_{i,\ell\ell})\\
        =&(n^2\beta_4-1)\mathbb{E}\big(S_{2}^{(i)}-(n-1)^{-1}[1-S_{2}^{(i)}]\big)
        =n\frac{n^2\beta_4-1}{n-1}\mathbb{E}\big(S_{2}^{(i)}-n^{-1}\big)
    \end{split}
\end{equation}
by $n^2\beta_{2,2}-1=n^2(n-1)^{-1}(\beta_2-\beta_{4})-1=(1-n^2\beta_{4})/(n-1)$ and $\sum_{j}q_{i,jj}=1$.
Similarly, by the identities $\mathbf{Q}_{i}/(n-i)=\mathbf{Q}_i^2$ and $\operatorname{tr}(\mathbf{Q}_i)=1$, we have
\begin{equation*}
    \sum_{k\ne l}q_{i,kl}^2=\operatorname{tr}(\mathbf{Q}_i^2)-\sum_{k}q_{i,kk}^2=\frac{1}{n-i}-S_{2}^{(i)},
\end{equation*}
which further gives
\begin{equation}\label{eq_meanV2}
    \begin{split}
        \mathbb{E}(V_{i+1}^2)=&2\mathbb{E}\sum_{k\ne l}q_{i,kl}^2n^2\beta_{2,2}+4\mathbb{E}\sum_{k\ne l\ne s}q_{i,kl}q_{i,ks}n^2\beta_{2,1,1}\\
        &+\mathbb{E}\sum_{k\ne l\ne s\ne m}q_{i,kl}q_{i,sm}n^2\beta_{1,1,1,1}-n^2\beta_{1,1}^2\mathbb{E}\big(\sum_{k\ne l}q_{i,kl}\big)^2.
    \end{split}
\end{equation}
It is obvious that the last three terms vanish when $X_{ij}$ is symmetric. Without the symmetry condition, we claim that the first term is the main term, while the other terms are negligible by Lemma \ref{lemma_odd_moment} and Lemma \ref{lemma_Qbounds}. Taking expectation on both hands of $Y_{11}Y_{12}=Y_{11}Y_{12}(Y_{11}^2+\cdots+Y_{1n}^2)$ gives $\beta_{1,1}=2\beta_{3,1}+(n-2)\beta_{2,1,1}$, which further implies $\beta_{2,1,1}\le Cn^{-1}\beta_{1,1}=\mathrm{o}(n^{-4})$ by Lemma \ref{lemma_odd_moment}. This together with \eqref{eq_boundq2} gives
\begin{equation*}
    \sum_{k\ne l\ne s}q_{i,kl}q_{i,ks}n^2\beta_{2,1,1}\lesssim n^3\beta_{2,1,1}/(n-i)=\mathrm{o}(n^{-1}(n-i)^{-1}).
\end{equation*}
By \eqref{eq_boundq1} and Lemma \ref{lemma_odd_moment}, we have $\sum_{k\ne l\ne s\ne m}q_{i,kl}q_{i,sm}\le \big(\sum_{k\ne l}q_{i,kl}\big)^2\le n^2/(n-i),$
which further gives
\begin{equation*}
   \sum_{k\ne l\ne s\ne m}q_{i,kl}q_{i,sm}n^2\beta_{1,1,1,1}-n^2\beta_{1,1}^2\big(\sum_{k\ne l}q_{i,kl}\big)^2\le \mathrm{o}(n^{-1}(n-i)^{-1}).
\end{equation*}
Therefore, we have
\begin{equation}\label{eq_vsquare}
    \mathbb{E}(V_{i+1}^2)=2n^2\beta_{2,2}\big(\frac{1}{n-i}-\mathbb{E}S_{2}^{(i)}\big)+\mathrm{o}(\frac{1}{n(n-i)}).
\end{equation}

\subsection{Warm up: Multilevel block truncations}\label{sec_multitruncation}
We always consider the case $\lim_{n\rightarrow \infty}p/n\rightarrow 1$ with \eqref{eq_newcondition} unless otherwise specified in the sequel, since the non-singular case with \eqref{eq_oldcondition} follows from the martingale method directly.
For simplicity, denote
\begin{equation}\label{eq_defdn}
d_n=:s_2^{1-\mathfrak{c}},
\end{equation}
where $s_2=[(-\log (1-(p-1)/n))^{1/2}]\rightarrow \infty$ is given in \eqref{eq_def_parameters} and $0<\mathfrak{c}<1/4$ in \eqref{eq_newcondition}. It is obvious that $d_n\rightarrow\infty$ for $\lim_{n\rightarrow \infty}p/n=1$.
Firstly, we introduce multilevel truncations to the entries of $\mathbf{X}$ under the condition  \eqref{eq_newcondition}, which does not affect the limiting distribution of $\log\det\mathbf{R}$. On the one hand, if $n-p\ge p^{1/6}\log^{1/4} p$, we consider the following global truncation
\begin{equation}\label{eq_globaltruncation}
\hat{X}_{ij}=
{X}_{ij}\mathbbm{1}(|{X}_{ij}|<n^{2/3}\log n),
\end{equation}
which further implies
\begin{equation*}
\mathbb{P}(\hat{\mathbf{X}}\ne {\mathbf{X}})\le\sum_{i,j}\mathbb{P}(|X_{ij}|>n^{2/3}\log n)\lesssim \mathcal{L}(n^{2/3}\log n)n^{2-2\alpha/3}\log^{-\alpha}n\le C\log^{-2}n \rightarrow 0
\end{equation*}
for $\alpha\ge 3$ with \eqref{eq_newcondition}.
For the case of $0\le n-p<p^{1/6}\log^{1/4} p$, we need multilevel block truncations depending on the location of the rows. It is easy to see $-\log (1-(p-1)/n)\asymp \log n$ if $0\le n-p<n^{1/4}$, which yields that
\begin{equation*}
s_2\asymp \log^{1/2} n~,~d_n\asymp \log^{(1-\mathfrak{c})/2}n.
\end{equation*}
Specifically, let
\begin{equation}\label{eq_trunction}
\hat{X}_{ij}=\begin{cases}
{X}_{ij}\mathbbm{1}(|{X}_{ij}|<n^{2/3}\log n),~&\text{if}~1\le i\le p-p^{1/6}\log^{1/4} p,\\
{X}_{ij}\mathbbm{1}(|{X}_{ij}|<(np^{1/6})^{1/3}\log^{1/6} n),~&\text{if}~ p-p^{1/6}\log^{1/4} p<i\le p-p^{1/18}\log^{5/12}p,\\
{X}_{ij}\mathbbm{1}(|{X}_{ij}|<(n(p-i))^{1/3}\log^{1/12} n),~&\text{if}~ p-p^{1/18}\log^{5/12} p<i\le p-p^{1/18}\log^{1/4}p,\\
{X}_{ij}\mathbbm{1}(|{X}_{ij}|<(np^{1/18})^{1/3}\log^{1/6} n),~&\text{if}~ p-p^{1/18}\log^{1/4}p<i\le p-p^{1/54}\log^{5/12}p,\\
{X}_{ij}\mathbbm{1}(|{X}_{ij}|<(n(p-i))^{1/3}\log^{1/12} n),~&\text{if}~ p-p^{1/54}\log^{5/12} p<i\le p-p^{1/54}\log^{1/4}p,\\
\qquad\vdots~&~\qquad\vdots\\
{X}_{ij}\mathbbm{1}(|{X}_{ij}|<(nd_n)^{1/3}\log^{1/12} n),~&\text{if}~p-d_n+1\le i\le p.
\end{cases}
\end{equation}
For simplicity, denote $\mathbf{X}_0=(\mathbf{b}_1^{\top},\cdots,\mathbf{b}_{[p-p^{1/6}\log^{1/4}p]}^{\top})^{\top}$ as the base block of $\mathbf{X}$. Let
\begin{equation*}
\mathbf{X}_k=(\mathbf{b}_{[p-p^{\frac{1}{2\cdot 3^k}}\log^{1/4}p]+1}^{\top},\cdots,\mathbf{b}_{[p-p^{\frac{1}{2\cdot 3^{k+1}}}\log^{1/4}p]}^{\top})^{\top}
\end{equation*}
be the $k$-th block for integer $1\le k\le K$, where $K$ is some integer which will be fixed later.
Moreover, denote
\begin{equation*}
\mathbf{X}_{K}=:(\mathbf{b}_{[p-p^{\frac{1}{2\cdot 3^K}}\log^{1/4}p]+1}^{\top},\cdots,\mathbf{b}_{p-d_n}^{\top})^{\top}~\text{and}~\mathbf{X}_{K+1}=:(\mathbf{b}_{p-d_n+1}^{\top},\cdots,\mathbf{b}_{p}^{\top})^{\top}
\end{equation*}
as the last two blocks of $\mathbf{X}$. The multilevel block truncations are implemented as follows: in each block $\mathbf{X}_k$ with $1\le k\le K$, we first truncate the ``most'' top rows at a high level depending on the initial location of the block, and then apply a localized truncation line by line for the ``little'' bottom rows in the block. Specifically, for $1\le k\le K$,
\begin{equation}\label{eq_blocktruncation}
\hat{X}_{ij}=\begin{cases}
X_{ij}\mathbbm{1}(|X_{ij}|<(np^{\frac{1}{2\cdot 3^k}})^{1/3}\log^{1/6}n),&\text{if}~p-p^{\frac{1}{2\cdot 3^k}}\log^{1/4}p<i\le p-p^{\frac{1}{2\cdot 3^{k+1}}}\log^{5/12}p,\\
X_{ij}\mathbbm{1}(|X_{ij}|<(n(p-i))^{1/3}\log^{1/12}n),&\text{if}~p-p^{\frac{1}{2\cdot 3^{k+1}}}\log^{5/12}p<i\le p-p^{\frac{1}{2\cdot 3^{k+1}}}\log^{1/4}p.
\end{cases}
\end{equation}
Let $\hat{\mathbf{X}}_0,\ldots,\hat{\mathbf{X}}_{K+1}$ be the block matrices of $\hat{\mathbf{X}}$ defined in a similar way. Note $d_n\asymp \log^{(1-\mathfrak{c})/2} n>\log^{1/4}n$ by \eqref{eq_newcondition}. This shows that there exists $1\le K\le \log^{\mathfrak{c}/2}n$ such that
\begin{equation}\label{eq_defK}
p^{\frac{1}{2\cdot 3^{K+1}}}\log^{1/4}p< d_n\le p^{\frac{1}{2\cdot 3^{K}}}\log^{1/4}p,
\end{equation}
which implies that $K\le \log^{\mathfrak{c}/2}n$ as $n\rightarrow\infty$. Thus, by \eqref{eq_newcondition}, we have,
\begin{equation*}
\mathbb{P}(\hat{\mathbf{X}}_0\ne \mathbf{X}_0)\le \sum_{i=1}^{p-p^{1/6}\log^{1/12}p}\sum_{j=1}^{n}\mathbb{P}(|X_{ij}|>n^{2/3}\log n )\lesssim n^2\frac{\mathcal{L}(n^{2/3}\log n)}{(n^{2/3}\log n )^{\alpha}}\le C\log^{-2}n
\end{equation*}
and for all $1\le k\le K$,
\begin{equation*}
\begin{split}
\mathbb{P}(\hat{\mathbf{X}}_k\ne \mathbf{X}_k)\le& \sum_{i=[p-p^{\frac{1}{2\cdot 3^k}}\log^{1/4}p]+1}^{[p-p^{\frac{1}{2\cdot 3^{k+1}}}\log^{5/12}p]}\sum_{j=1}^{n}\mathbb{P}(|X_{ij}|> (np^{\frac{1}{2\cdot 3^k}})^{1/3}\log^{1/6}n)\\
&+\sum_{i=[p-p^{\frac{1}{2\cdot 3^{k+1}}}\log^{5/12}p]+1}^{[p-p^{\frac{1}{2\cdot 3^{k+1}}}\log^{1/4}p]}\sum_{j=1}^{n}\mathbb{P}(|X_{ij}|> (n(p-i))^{1/3}\log^{1/12}n)\\
\lesssim & np^{\frac{1}{2\cdot 3^k}}\log^{1/4}p\frac{\mathcal{L}((np^{\frac{1}{2\cdot 3^k}})^{1/3}\log^{1/6}n)}{[(np^{\frac{1}{2\cdot 3^k}})^{1/3}\log^{1/6}n]^{\alpha}}+n\sum_{i=[p-p^{\frac{1}{2\cdot 3^{k+1}}}\log^{5/12}p]+1}^{[p-p^{\frac{1}{2\cdot 3^{k+1}}}\log^{1/4}p]}\frac{\mathcal{L}((n(p-i))^{1/3}\log^{1/12}n)}{[(n(p-i))^{1/3}\log^{1/12}n]^{\alpha}}\\
\lesssim& \frac{\log^{1/2-\mathfrak{c}}n}{\log^{1/2}n}+\frac{\log^{1/4-\mathfrak{c}}n}{\log^{1/4}n}\sum_{i=[p-p^{\frac{1}{2\cdot 3^{k+1}}}\log^{5/12}p]+1}^{[p-p^{\frac{1}{2\cdot 3^{k+1}}}\log^{1/4}p]}\frac{1}{p-i}\lesssim (\log\log p)\log^{-\mathfrak{c}}n\le \log^{-2\mathfrak{c}/3}n
\end{split}
\end{equation*}
by \eqref{eq_newcondition} as $n\rightarrow\infty$.
Similarly, for the last block, we have
\begin{equation*}
\begin{split}
\mathbb{P}(\hat{\mathbf{X}}_{K+1}\ne \mathbf{X}_{K+1})\le \sum_{i=p-d_n+1}^{p}\sum_{j=1}^{n}\mathbb{P}(|X_{ij}|> (nd_n)^{1/3}\log^{1/12}n)
\lesssim  nd_n\frac{\mathcal{L}((nd_n)^{1/3}\log^{1/12}n)}{[(nd_n)^{1/3}\log^{1/12}n]^{\alpha}}\lesssim \log^{-\mathfrak{c}}n.
\end{split}
\end{equation*}
It follows that
\begin{equation*}
\begin{split}
\mathbb{P}(\hat{\mathbf{X}}\ne {\mathbf{X}})\le \sum_{k=0}^{K+1}\mathbb{P}(\hat{\mathbf{X}}_k\ne \mathbf{X}_k)
\lesssim \log^{-2}n+K\log^{-2\mathfrak{c}/3}n+\log^{-\mathfrak{c}}n
\le \log^{-\mathfrak{c}/6}n= \mathrm{o}(1)
\end{split}
\end{equation*}
as $n\rightarrow\infty$. Thus, we have
\begin{equation*}
\mathbb{P}(\log\det\mathbf{R}= \log \det\hat{\mathbf{R}})=1-\mathrm{o}(1),
\end{equation*}
which implies that we can always work on the data matrix $\mathbf{X}$ under the multilevel block truncations \eqref{eq_trunction}. Without abusing notation, we also use $\mathbf{X}$ and $\mathbf{R}$ to denote the associated matrices under truncation \eqref{eq_trunction}. For simplicity, we call the truncation level $n^{2/3}\log n$ the global truncation.

\subsection{Opening: Hierarchical Gaussian replacement}\label{sec_gaussianreplace}
The hierarchical Gaussian replacement includes two parts depending on the value of $n-p$. Recall $d_n=s_2^{1-\mathfrak{c}}$ and $s_2=[(-\log (1-(p-1)/n))^{1/2}]$ in \eqref{eq_defdn}.
\subsubsection{Bulk replacement: The case $n-p\ge d_n$}\label{sec_bulkgaussianreplace}
Recall the definitions of $\Delta_{i+1}^2$ and $\check{\Delta}_{i+1}^2$ in \eqref{eq_defxixi}. It follows that $\mathbb{E}\check{\Delta}_i^2=(n-i)/n$ for $p-s_1\le i\le p-1$ and
\begin{equation*}
    \mathbb{E}\Delta_{i+1}^2= \frac{n-i}{n}+\beta_{1,1}(n-i)\sum_{k\ne l}q_{i,kl}=\frac{n-i}{n}(1+\mathrm{o}(n^{-3/2})),
\end{equation*}
where we used $\beta_{1,1}=\mathrm{o}(n^{-3})$ by Lemma \ref{lemma_odd_moment} and $\sum_{k\ne l}|q_{i,kl}|\le n(n-i)^{-1/2}$ by \eqref{eq_boundq1}, which together with the conditional argument implies
\begin{equation}\label{eq_meandiff}
    |\log \mathbb{E}(\Delta_{p-1}^2 \cdots \Delta_{p-s_1+1}^2)-\log \mathbb{E}(\check{\Delta}_{p-1}^2 \cdots \check{\Delta}_{p-s_1+1}^2)|\lesssim |s_1\log (1+\mathrm{o}(n^{-3/2}))|=\mathrm{o}(n^{-1/2}),
\end{equation}
where we used $\log (1+x)\le |x|$ for $x\rightarrow 0$.
Note the fact that $-2\log (1-(p-1)/n)-2p/n\sim -2\log (1-(p-1)/n)$ for $\lim_{n\rightarrow \infty}p/n\rightarrow 1$.
Let
\begin{equation*}
    t_{np}=\sqrt{-2\log (1-(p-1)/n)}\cdot s_2^{-1}\log s_2\asymp \log s_2\rightarrow \infty
\end{equation*}
for $\lim_{n\rightarrow\infty}p/n\rightarrow 1$.
In what follows, we aim to show that
\begin{equation}\label{eq_diffalpha}
    \mathbb{P}(|\log (\Delta_{p-1}^2 \cdots \Delta_{p-s_1+1}^2)-\log \mathbb{E}(\Delta_{p-1}^2 \cdots \Delta_{p-s_1+1}^2)|>t_{np})=\mathrm{o}(1)
\end{equation}
and
\begin{equation}\label{eq_diffG}
    \mathbb{P}(|\log (\check{\Delta}_{p-1}^2 \cdots \check{\Delta}_{p-s_1+1}^2)-\log \mathbb{E}(\check{\Delta}_{p-1}^2 \cdots \check{\Delta}_{p-s_1+1}^2)|>t_{np})=\mathrm{o}(1),
\end{equation}
which together with \eqref{eq_meandiff} complete the proof of \eqref{eq_controldiff}.
Consider \eqref{eq_diffG} first. It is obvious that
\begin{equation*}
    \begin{split}
        &\mathbb{P}\big(\left|\log \frac{\check{\Delta}_{p-1}^2 \cdots \check{\Delta}_{p-s_1+1}^2}{\mathbb{E}(\check{\Delta}_{p-1}^2 \cdots \check{\Delta}_{p-s_1+1}^2)}\right|>t_{np}\big)\\
        \le & \mathbb{P}\big(\frac{\check{\Delta}_{p-1}^2 \cdots \check{\Delta}_{p-s_1+1}^2}{\mathbb{E}(\check{\Delta}_{p-1}^2 \cdots \check{\Delta}_{p-s_1+1}^2)}-1>e^{t_{np}}-1\big)+\mathbb{P}\big(\frac{\check{\Delta}_{p-1}^2 \cdots \check{\Delta}_{p-s_1+1}^2}{\mathbb{E}(\check{\Delta}_{p-1}^2 \cdots \check{\Delta}_{p-s_1+1}^2)}-1<e^{-t_{np}}-1\big)\\
        \le & \frac{\mathbb{E}(\check{\Delta}_{p-1}^2 \cdots \check{\Delta}_{p-s_1+1}^2-\mathbb{E}(\check{\Delta}_{p-1}^2 \cdots \check{\Delta}_{p-s_1+1}^2))^2}{(e^{t_{np}}-1)^2[\mathbb{E}(\check{\Delta}_{p-1}^2 \cdots \check{\Delta}_{p-s_1+1}^2)]^2}+\frac{\mathbb{E}(\check{\Delta}_{p-1}^2 \cdots \check{\Delta}_{p-s_1+1}^2-\mathbb{E}(\check{\Delta}_{p-1}^2 \cdots \check{\Delta}_{p-s_1+1}^2))^2}{(e^{-t_{np}}-1)^2[\mathbb{E}(\check{\Delta}_{p-1}^2 \cdots \check{\Delta}_{p-s_1+1}^2)]^2}\\
        \le& C\frac{\operatorname{Var}(\check{\Delta}_{p-1}^2 \cdots \check{\Delta}_{p-s_1+1}^2)}{ [\mathbb{E}(\check{\Delta}_{p-1}^2 \cdots \check{\Delta}_{p-s_1+1}^2)]^2},
    \end{split}
\end{equation*}
where we used Markov's inequality and the fact $t_{np}\rightarrow \infty$ as $n\rightarrow\infty$.
Then, it suffices to show
\begin{equation}\label{eq_diffRR1}
   \operatorname{Var}(\check{\Delta}_{p-1}^2 \cdots \check{\Delta}_{p-s_1+1}^2) = \mathrm{o}(1)\cdot[\mathbb{E}(\check{\Delta}_{p-1}^2 \cdots \check{\Delta}_{p-s_1+1}^2)]^2
\end{equation}
as $n\rightarrow\infty$ and $n-p\ge d_n$.
Moreover, recalling \eqref{eq_defcheckZ} and the relation $\check{\Delta}_{i+1}^2=(\check{Z}_{i+1}+1)(n-i)/n$, we have
\begin{equation}\label{eq_conditionvar}
    \operatorname{Var}(\check{\Delta}_{i+1}^2\mid \mathcal{F}_{i})\le C(\frac{n-i}{n})^2\operatorname{Var}(\check{Z}_{i+1}\mid \mathcal{F}_i)\le C(\frac{n-i}{n})^2\mathbb{E}(\check{U}_{i+1}^2+\check{V}_{i+1}^2\mid \mathcal{F}_i)\le C\frac{n-i}{n^2},
\end{equation}
where we used the facts
\begin{equation*}
    \mathbb{E}(\check{U}_{i+1}^2+\check{V}_{i+1}^2\mid \mathcal{F}_i)\le Cn^2(\mathbb{E}\check{Y}_{i+1,j}^4)\sum_{k=1}^{n}\check{q}^2_{i,kk}+Cn^2(\mathbb{E}\check{Y}_{i+1,1}^2\check{Y}_{i+1,2}^2)\sum_{k\ne l}\check{q}_{i,kl}^2\le C(n-i)^{-1}
\end{equation*}
due to $\mathbb{E}\check{Y}_{i+1,j}^4=Cn^{-2}$ and the trivial bound $\sum_{k,l}\check{q}_{i,kl}^2=\operatorname{tr}(\check{\mathbf{Q}}_i)/(n-i)=\frac{1}{n-i}$.
Applying the conditional argument gives
\begin{equation}\label{eq_Exixi}
    \mathbb{E}\check{\Delta}_{p-1}^2\cdots \check{\Delta}_{p-s_1+1}^2=\prod_{i=p-s_1}^{p-1}\frac{n-i}{n}.
\end{equation}
In what follows, we use the induction on $p-s_1+1\le p-j\le p-1$ to show $\operatorname{Var}(\check{\Delta}_{p-1}^2\cdots \check{\Delta}_{p-s_1+1}^2)\le \mathrm{o}(1)(\mathbb{E}(\check{\Delta}_{p-1}^2\cdots \check{\Delta}_{p-s_1+1}^2))^2$. First, we have
\begin{equation*}
    \operatorname{Var}(\check{\Delta}_{p-s_1+1}^2)\le C\frac{n-p+s_1}{n^2}=\mathrm{o}(1)\frac{(n-p+s_1)^2}{n^2}=\mathrm{o}(1)(\mathbb{E}\check{\Delta}_{p-s_1+1}^2)^2.
\end{equation*}
Suppose, for some $p-s_1+1\le p-j\le p-2$, we have
\begin{equation}\label{eq_inductionvar}
    \operatorname{Var}(\check{\Delta}_{p-j}^2\cdots \check{\Delta}_{p-s_1+1}^2)\le \mathrm{o}(1)(\mathbb{E}(\check{\Delta}_{p-j}^2\cdots \check{\Delta}_{p-s_1+1}^2))^2.
\end{equation}
Thus, the law of total variance and conditional expectation argument imply
\begin{equation*}
    \begin{split}
        &\operatorname{Var}(\check{\Delta}_{p-j+1}^2\cdots \check{\Delta}_{p-s_1+1}^2)
        =\mathbb{E}(\operatorname{Var}(\check{\Delta}_{p-j+1}^2\cdots \check{\Delta}_{p-s_1+1}^2\mid \mathcal{F}_{p-j}))+\operatorname{Var}(\mathbb{E}(\check{\Delta}_{p-j+1}^2\cdots \check{\Delta}_{p-s_1+1}^2\mid \mathcal{F}_{p-j}))\\
        =&\mathbb{E}\big([\check{\Delta}_{p-j}^2\cdots \check{\Delta}_{p-s_1+1}^2]^2\cdot\operatorname{Var}(\check{\Delta}_{p-j+1}^2\mid \mathcal{F}_{p-j})\big)+(\frac{n-p+j}{n})^2\operatorname{Var}(\check{\Delta}_{p-j}^2\cdots \check{\Delta}_{p-s_1+1}^2)\\
        \le &C[(\mathbb{E}(\check{\Delta}_{p-j}^2\cdots \check{\Delta}_{p-s_1+1}^2))^2+\operatorname{Var}(\check{\Delta}_{p-j}^2\cdots \check{\Delta}_{p-s_1+1}^2)]\frac{n-p+j}{n^2}+\mathrm{o}(1)(\frac{n-p+j}{n})^2(\mathbb{E}(\check{\Delta}_{p-j}^2\cdots \check{\Delta}_{p-s_1+1}^2))^2\\
        \le & \mathrm{o}(1)(\mathbb{E}(\check{\Delta}_{p-j+1}^2\cdots \check{\Delta}_{p-s_1+1}^2))^2,
    \end{split}
\end{equation*}
for $n-p\ge d_n\rightarrow \infty$, where in the first inequality we used \eqref{eq_conditionvar} and \eqref{eq_inductionvar}, in the last line we used \eqref{eq_Exixi}, completing the proof of \eqref{eq_diffG}.

It remains to show \eqref{eq_diffalpha}, which is more involved and relies on the multilevel block truncations \eqref{eq_trunction} and Lemma \ref{lemma_upperdiagonal}.
Following a similar argument for \eqref{eq_diffG}, it suffices to show
\begin{equation}\label{eq_diffXX}
    \mathbb{P}\big(|\log \frac{\Delta_{p-1}^2 \cdots \Delta_{p-s_1+1}^2}{\mathbb{E}(\Delta_{p-1}^2 \cdots \Delta_{p-s_1+1}^2)}|>t_{np}\big)=\mathrm{o}(1).
\end{equation}
To this end, we have to distinguish three cases for $p-s_1\le i\le p-1$. \\
\noindent\textbf{Case I. $p-s_1\le i\le p-p^{2/3}$.}\\
It holds that
\begin{equation*}
\operatorname{Var}(\Delta_{i+1}^2\mid \mathcal{F}_i)\le C(\frac{n-i}{n})^2\mathbb{E}(U_{i+1}^2+V_{i+1}^2\mid \mathcal{F}_i)\lesssim C(\frac{n-i}{n})^2\frac{\mathcal{L}(n^{1/2})n^{2-\alpha/2}}{p^{2/3}}=\mathrm{o}((\frac{n-i}{n})^2).
\end{equation*}
for $\alpha\ge 3$. Thus, applying the induction as before, we can show
\begin{equation*}
\operatorname{Var}(\Delta_{i+1}^2\cdots \Delta_{p-s_1+1}^2)=\mathrm{o}(1)(\mathbb{E}(\Delta_{i+1}^2\cdots \Delta_{p-s_1+1}^2))^2
\end{equation*}
as $n\rightarrow \infty$, which implies \eqref{eq_diffXX} similar to \eqref{eq_diffG}.

\noindent\textbf{Case II. $p-p^{2/3}\le i\le p-p^{1/6}\log^{1/4}p$.}\\
In this case, we need to control the magnitude of $\max_{1\le k\le n}p_{i,kk}$ carefully. By \eqref{eq_maxpikklarge}, we have, for any $p-p^{2/3}\le i\le p-1$, with high probability,
\begin{equation*}
    \mathbb{P}(\max_{1\le k\le n}p_{i,kk}\ge n^{-1/8}\log s_2)=\mathrm{O}(n^{-3/4}\log^{10} n),
\end{equation*}
which implies
\begin{equation}\label{eq_eventmaxp1}
    \begin{split}
        &\mathbb{P}\big(\bigcup_{p-p^{2/3}\le i\le p-1}\max_{1\le k\le n}p_{i,kk}\ge n^{-1/8}\log s_2\big)\\
        \le &\sum_{i=[p-p^{2/3}]}^{p-1} \mathbb{P}(\max_{1\le k\le n}p_{i,kk}\ge n^{-1/8}\log s_2)
        \lesssim p^{2/3}n^{-3/4}\log ^{10}n\le \mathrm{o}(n^{-1/20})
    \end{split}
\end{equation}
with high probability, as $n\rightarrow \infty$. Analgously, by \eqref{eq_maxpikksmall}, we have, with high probability,
\begin{equation}\label{eq_eventmaxp3}
\begin{split}
\mathbb{P}\big(\bigcup_{p-p^{7/16}\le i\le p-1}\max_{1\le k\le n}p_{i,kk}\ge n^{-1/4}\log s_2\big)\lesssim \mathrm{o}(n^{-1/20})
\end{split}
\end{equation}
and
\begin{equation}\label{eq_eventmaxp2}
    \begin{split}
        \mathbb{P}\big(\bigcup_{p-p^{7/24}\le i\le p-1}\max_{1\le k\le n}p_{i,kk}\ge n^{-1/3}\log s_2\big)\lesssim \mathrm{o}(n^{-1/30}).
    \end{split}
\end{equation}
Define three events
\begin{equation}\label{eq_event1}
    \begin{split}
    \mathcal{A}_1&=\bigcup_{p-p^{2/3}\le i\le p-p^{7/16}}\max_{1\le k\le n}p_{i,kk}\ge n^{-1/8}\log s_2,\\
    \mathcal{A}_2&=\bigcup_{p-p^{7/16}\le i\le p-p^{7/24}}\max_{1\le k\le n}p_{i,kk}\ge n^{-1/4}\log s_2,\\
    \mathcal{A}_3&=\bigcup_{p-p^{7/24}\le i\le p-1}\max_{1\le k\le n}p_{i,kk}\ge n^{-1/3}\log s_2.
    \end{split}
\end{equation}
It follows that $\mathbb{P}(\mathcal{A}_1\cup\mathcal{A}_2\cup\mathcal{A}_3)=\mathrm{o}(n^{-1/30})$ with high probability as $n\rightarrow \infty$. Therefore, similarly to \eqref{eq_diffG}, we have
\begin{equation*}
    \begin{split}
        \mathbb{P}\big(\left|\log \frac{{\Delta}_{p-1}^2 \cdots {\Delta}_{p-s_1+1}^2}{\mathbb{E}({\Delta}_{p-1}^2 \cdots {\Delta}_{p-s_1+1}^2)}\right|>t_{np}\big)\le \mathbb{P}\big(\left|\log \frac{{\Delta}_{p-1}^2 \cdots {\Delta}_{p-s_1+1}^2}{\mathbb{E}({\Delta}_{p-1}^2 \cdots {\Delta}_{p-s_1+1}^2)}\right|>t_{np},\mathcal{A}_1^c\cap\mathcal{A}_2^c\cap\mathcal{A}_3^c\big)+\mathrm{o}(n^{-1/30}),
    \end{split}
\end{equation*}
where we used Markov's inequality and the fact $t_{np}\rightarrow \infty$ as $n\rightarrow\infty$. It suffices to consider the variance estimation conditional on $\mathcal{A}_1^c\cap\mathcal{A}_2^c\cap\mathcal{A}_3^c$. So, in the sequel, we assume
\begin{equation}\label{eq_conditionmaxpikk}
    \begin{split}
    \max_{p-p^{2/3}\le i\le p-p^{7/16}}\max_{1\le k\le n}p_{i,kk}< n^{-1/8}\log s_2,
    \max_{p-p^{7/16}\le i\le p-p^{7/24}}\max_{1\le k\le n}p_{i,kk}< n^{-1/4}\log s_2,
    \end{split}
\end{equation}
and
\begin{equation}\label{eq_conditionmaxpikk2}
    \max_{p-p^{7/24}\le i\le p-1}\max_{1\le k\le n}p_{i,kk}<n^{-1/3}\log s_2.
\end{equation}
For $p-p^{2/3}\le i\le p-p^{7/16}$, we have
\begin{equation*}
\operatorname{Var}(\Delta_{i+1}^2\mid \mathcal{F}_i)\le C(\frac{n-i}{n})^2\mathbb{E}(U_{i+1}^2+V_{i+1}^2\mid \mathcal{F}_i).
\end{equation*}
Since $\sum_{k=1}^{n}q_{i,kk}=1$ and Lemma \ref{lem_moment_rates}, we have
\begin{equation*}
\begin{split}
\mathbb{E}(U_{i+1}^2\mid \mathcal{F}_i)\le& Cn^2(\mathbb{E}Y_{i+1,j}^4)\sum_{k=1}^{n}q_{i,kk}^2\le Cn^2(\mathbb{E}Y_{i+1,j}^4)\max_{1\le k\le n}q_{i,kk}
\lesssim \mathcal{L}(n^{1/2})n^{2-\alpha/2}\frac{\max_{1\le k\le n}p_{i,kk}}{n-i}
\end{split}
\end{equation*}
by $\sum_{k=1}^{n}q_{i,kk}^2\le \max_{1\le k\le n}q_{i,kk}\sum_{k=1}^{n}q_{i,kk}=\max_{1\le k\le n}q_{i,kk}$. From \eqref{eq_vsquare}, we have $\mathbb{E}(V_{i+1}^2\mid\mathcal{F}_i)\le C(n-i)^{-1}$. Given \eqref{eq_conditionmaxpikk}, it is easy to verify that
\begin{equation*}
\mathbb{E}(U_{i+1}^2+V_{i+1}^2\mid \mathcal{F}_i)\lesssim \mathcal{L}(n^{1/2})n^{2-\alpha/2}\frac{n^{-1/8}\log s_2}{n-i}\lesssim \frac{\mathcal{L}(n^{1/2})n^{1/2}n^{-1/8}\log s_2}{p^{7/16}}=\mathrm{o}(1)
\end{equation*}
for $\alpha\ge 3$ and $p-p^{2/3}\le i\le p-p^{7/16} $ as $n\rightarrow \infty$.

Analgously, for $p-p^{7/16}\le i\le p-p^{7/24}$,
given \eqref{eq_conditionmaxpikk}, we have
\begin{equation*}
    \mathbb{E}(U_{i+1}^2+V_{i+1}^2\mid \mathcal{F}_i)\lesssim \mathcal{L}(n^{1/2})n^{2-\alpha/2}\frac{n^{-1/4}\log s_2}{n-i}\lesssim \mathcal{L}(n^{1/2})n^{-1/24}\log s_2=\mathrm{o}(1)
\end{equation*}
for $\alpha\ge 3$. For $p-p^{7/24}\le i\le p-p^{1/6}\log^{1/4} p$, given \eqref{eq_conditionmaxpikk2}, we get
\begin{equation*}
    \mathbb{E}(U_{i+1}^2+V_{i+1}^2\mid \mathcal{F}_i)\lesssim \mathcal{L}(n^{1/2})n^{2-\alpha/2}\frac{n^{-1/3}\log s_2}{n-i}\le \mathcal{L}(n^{1/2})n^{(3-\alpha)/2}\frac{\log s_2}{\log^{1/4} p}\lesssim \frac{\log\log n}{\log^{1/4-\mathfrak{c}}n}=\mathrm{o}(1)
\end{equation*}
due to \eqref{eq_newcondition} with $0<\mathfrak{c}<1/4$.

\noindent\textbf{Case III. $p-p^{1/6}\log^{1/4}p\le i\le p-1$.}\\
While for the last $[p^{1/6}\log^{1/4} p]$ rows, we need the multilevel block truncations in \eqref{eq_trunction}. Firstly, by \eqref{eq_highpronorm2} in Lemma \ref{lemma_highpronorm}, we assume that $\|\mathbf{b}_{i+1}\|^2\ge cn$ for all $p-p^{1/6}\log^{1/4}p\le i\le p-1$. For $1\le k\le K$, set
\begin{equation*}
t_{nk}=:(np^{\frac{1}{2\cdot 3^k}})^{1/3}\log^{1/6}n~\text{and}~T_{ni}=:(n(p-i))^{1/3}\log^{1/12}n.
\end{equation*}
For the $k$-th block $\mathbf{X}_k$ with $1\le k\le K$, recall \eqref{eq_blocktruncation},
\begin{equation*}
	|{X}_{ij}|<\begin{cases}
t_{nk},&\text{for}~p-p^{\frac{1}{2\cdot 3^k}}\log^{1/4}p<i\le p-p^{\frac{1}{2\cdot 3^{k+1}}}\log^{5/12}p,\\
	T_{ni},&\text{for}~p-p^{\frac{1}{2\cdot 3^{k+1}}}\log^{5/12}p<i\le p-p^{\frac{1}{2\cdot 3^{k+1}}}\log^{1/4}p,
	\end{cases}
\end{equation*}
after truncation. Thus, for $p-p^{\frac{1}{2\cdot 3^k}}\log^{1/4}p<i\le p-p^{\frac{1}{2\cdot 3^{k+1}}}\log^{5/12}p$, the fourth moment of the self-normalized entries can be bounded as
\begin{equation*}
    \begin{split}
        \mathbb{E}Y_{i+1,j}^4\mathbbm{1}(|X_{i+1,j}|<t_{nk})\lesssim \frac{\mathbb{E}X_{i+1,j}^4\mathbbm{1}(|X_{i+1,j}|<t_{nk})}{\|\mathbf{b}_{i+1}\|^4}
        \lesssim \frac{\mathcal{L}(t_{nk})t_{nk}^{4-\alpha}}{n^2},
    \end{split}
\end{equation*}
where we used Lemma \ref{lemma_truncatemoment}. Analgously, for $p-p^{\frac{1}{2\cdot 3^{k+1}}}\log^{5/12}p<i\le p-p^{\frac{1}{2\cdot 3^{k+1}}}\log^{1/4}p$, we have
\begin{equation*}
\begin{split}
\mathbb{E}Y_{i+1,j}^4\mathbbm{1}(|X_{i+1,j}|<T_{nk})\lesssim  \frac{\mathcal{L}(T_{ni})T_{ni}^{4-\alpha}}{n^2}.
\end{split}
\end{equation*}
Therefore, following a similar argument above, we get, given \eqref{eq_conditionmaxpikk2}, for each $i$ such that $p-p^{\frac{1}{2\cdot 3^k}}\log^{1/4} p< i\le p-p^{\frac{1}{2\cdot 3^{k+1}}}\log^{5/12} p$ in block $\mathbf{X}_k$,
\begin{equation*}
    \begin{split}
    \mathbb{E}(U_{i+1}^2+V_{i+1}^2\mid \mathcal{F}_i)\lesssim& Cn^2\mathbb{E}Y_{i+1,j}^4\mathbbm{1}(|X_{i+1,j}|<t_{nk})\frac{n^{-1/3}\log s_2}{n-i}\\
    \lesssim& \mathcal{L}(t_{nk})t_{nk}^{4-\alpha}\frac{n^{-1/3}\log s_2}{n-i}=\mathcal{L}(t_{nk})[(np^{\frac{1}{2\cdot 3^k}})^{1/3}\log^{1/6}n]^{4-\alpha}\cdot\frac{n^{-1/3}\log s_2}{n-p+p-i}\\
    \lesssim& \mathcal{L}(t_{nk})n^{(3-\alpha)/3}(\log^{1/6}n)\log s_2\cdot\frac{p^{\frac{1}{2\cdot 3^{k+1}}}}{d_n+p^{\frac{1}{2\cdot 3^{k+1}}}\log^{5/12} p}\lesssim \frac{\log \log n}{\log^{1/4-\mathfrak{c}}n}\lesssim \mathrm{o}(1)
    \end{split}
\end{equation*}
with high probability, as $n\rightarrow \infty$, where in the last line we used $0<\mathfrak{c}<1/4$ in \eqref{eq_newcondition} and the assumption $n-p\ge d_n$. For each $i$ such that $p-p^{\frac{1}{2\cdot 3^{k+1}}}\log^{5/12}p<i\le p-p^{\frac{1}{2\cdot 3^{k+1}}}\log^{1/4}p$ in block $\mathbf{X}_k$, it follows that
\begin{equation*}
\begin{split}
\mathbb{E}(U_{i+1}^2+V_{i+1}^2\mid \mathcal{F}_i)
\lesssim& \mathcal{L}(T_{nk})T_{nk}^{4-\alpha}\frac{n^{-1/3}\log s_2}{n-i}=\mathcal{L}(T_{nk})[(n(p-i))^{1/3}\log^{1/12}n]^{4-\alpha}\cdot\frac{n^{-1/3}\log s_2}{n-p+p-i}\\
\lesssim& \mathcal{L}(T_{nk})(\log^{1/12}n)\log s_2\cdot\frac{(p-i)^{1/3}}{d_n+p-i}\lesssim \frac{(\log^{1/3-\mathfrak{c}}n)\log \log n}{d_n^{2/3}}\lesssim \frac{\log\log n}{\log^{2\mathfrak{c}/3}}=\mathrm{o}(1)
\end{split}
\end{equation*}
where in the last line we used $p-i\ge d_n\asymp \log^{(1-\mathfrak{c})/2} n$.

For the case of $p-d_n\le i\le p-1$, invoking the multilevel truncations \eqref{eq_trunction}, we have
\begin{equation*}
	\mathbb{E}Y_{i+1,j}^4\mathbbm{1}(|X_{i+1,j}|<(nd_n)^{1/3}\log^{1/12} n)\lesssim \frac{\mathcal{L}((nd_n)^{1/3}\log^{1/12} n )[(nd_n)^{1/3}\log^{1/12} n]^{4-\alpha}}{n^2},
\end{equation*}
which together with \eqref{eq_conditionmaxpikk2} further implies
\begin{equation*}
\begin{split}
\mathbb{E}(U_{i+1}^2+V_{i+1}^2\mid \mathcal{F}_i)\lesssim& \mathcal{L}((nd_n)^{1/3}\log^{1/12} n)[(nd_n)^{1/3}\log^{1/12} n]^{4-\alpha}\frac{n^{-1/3}\log s_2}{n-p+p-i}\\
\lesssim& \log^{1/3-\mathfrak{c}}n\frac{\log s_2}{d_n^{2/3}}\lesssim \frac{\log \log n}{\log^{-2\mathfrak{c}/3}n}=\mathrm{o}(1)
\end{split}
\end{equation*}
with high probability, by \eqref{eq_newcondition}.
And thus, given \eqref{eq_conditionmaxpikk2}, with high probability, we have
\begin{equation}\label{eq_conditionvar2}
\operatorname{Var}({\Delta}_{i+1}^2\mid \mathcal{F}_{i})\le C(\frac{n-i}{n})^2\mathbb{E}(U_{i+1}^2+V_{i+1}^2\mid \mathcal{F}_i)\lesssim \mathrm{o}(1)(\frac{n-i}{n})^2
\end{equation}
for all $p-p^{1/6}\log^{1/4}p\le i\le p-1$.
Therefore, similarly to \eqref{eq_inductionvar}, applying the induction on $p-p^{2/3}\le i\le p-1$, by the law of total variance and conditional expectation argument, we have
\begin{equation*}
    \begin{split}
        \operatorname{Var}({\Delta}_{i+1}^2\cdots {\Delta}_{p-s_1+1}^2)
        \lesssim
        \mathrm{o}(1)(\mathbb{E}({\Delta}_{i+1}^2\cdots {\Delta}_{p-s_1+1}^2))^2,
    \end{split}
\end{equation*}
by induction for $n-p\ge d_n\rightarrow \infty$, where we used \eqref{eq_conditionvar2} and the induction step, completing the proof of \eqref{eq_diffXX}.

\subsubsection{Edge replacement: The case $0\le n-p<d_n$}\label{sec_edgegaussianreplace}
For the case of $0\le n-p\le d_n$, noticing that $-2\log (1-(p-1)/n)\asymp \log n$. Our overall strategy is to replace lines one by one. Noting that since the magnitude of a determinant
is invariant under swapping of two rows, without loss of generality, we first compare two random matrices $\mathbf{B}$ and $\bar{\mathbf{B}}$ satisfying \eqref{eq_newcondition} such that they only differ in the last row. Assume that $b_{jk}=\bar{b}_{jk}, 1\le j\le p-1, 1\le k\le n$, and $\mathbf{b}_p^{\top}$ and $\bar{\mathbf{b}}_p^{\top}$ are independent, which denote the $p$-th row of $\mathbf{B}$ and $\bar{\mathbf{B}}$, respectively. Moreover, let $\mathbf{y}_p^{\top}=\mathbf{b}_p^{\top}/\|\mathbf{b}_p^{\top}\|$ and $\bar{\mathbf{y}}_p^{\top}=\bar{\mathbf{b}}_p^{\top}/\|\bar{\mathbf{b}}_p^{\top}\|$ be the self-normalized versions, which are the $p$-th row of $\mathbf{Y}$ and $\bar{\mathbf{Y}}$, respectively. We consider the special case $n=p$ first.
It is elementary that
\begin{equation*}
    \det(\mathbf{Y})=\sum_{k=1}^{n}Y_{n,k}B_{nk},~ \det(\bar{\mathbf{Y}})=\sum_{k=1}^{n}\bar{Y}_{n,k}B_{nk},
\end{equation*}
where $B_{nk}$ is the cofactor of $Y_{n,k}$ and $\bar{Y}_{n,k}$. Similarly to \eqref{eq_diffalpha}, it suffices to show
\begin{equation}\label{eq_diff00}
    \frac{\log (\sum_{k=1}^{n}Y_{n,k}B_{nk})^2}{\sqrt{\log n}}-\frac{\log (\sum_{k=1}^{n}\bar{Y}_{n,k}B_{nk})^2}{\sqrt{\log n}}=\mathrm{O}_{\mathbb{P}}(s_2^{-1}\log s_2).
\end{equation}
Set $\tilde{t}_{np}=:\sqrt{\log n}\cdot s_2^{-1}\log s_2\asymp (\log\log n)\rightarrow \infty$ and $\Delta=\sqrt{\sum_{k=1}^n B_{nk}^2}$.
Thus, \eqref{eq_diff00} is equivalent to
\begin{equation}\label{eq_difflarge}
    \begin{split}
    \big(\log (\sum_{k=1}^{n}Y_{n,k}B_{nk})^2-\log(n^{-1}\Delta^2)\big)-\big(\log (\sum_{k=1}^{n}\bar{Y}_{n,k}B_{nk})^2-\log(n^{-1}\Delta^2)\big)
    =\mathrm{O}_{\mathbb{P}}(\tilde{t}_{np}).
    \end{split}
\end{equation}
Similarly to \eqref{eq_diffG}, we have
\begin{equation}\label{eq_berryesseen1}
    \begin{split}
        &\mathbb{P}\left(\left|\log\big(\sum_{k=1}^{n}Y_{n,k}B_{nk}\big)^2-\log (n^{-1}\Delta^2) \right|>\tilde{t}_{np}\right)\\
        =&\mathbb{P}\left(\frac{\sqrt{n}|\sum_{k=1}^{n}Y_{n,k}B_{nk}|}{\Delta}>e^{\tilde{t}_{np}/2}\right)+\mathbb{P}\left(\frac{\sqrt{n}|\sum_{k=1}^{n}Y_{n,k}B_{nk}|}{\Delta}<e^{-\tilde{t}_{np}/2}\right).
    \end{split}
\end{equation}
To deploy the Berry-Esseen bound, we consider the following event
\begin{equation}\label{eq_normevent}
    \mathcal{A}=\{|n^{-1}\|\mathbf{b}_p^{\top}\|^2-1|\ge \epsilon_n\},
\end{equation}
where $\epsilon_n\rightarrow 0$ will be fixed later. Invoking the global truncation level at $n^{2/3}\log n$ and Lemma \ref{lemma_quadratic3}, we have
\begin{equation*}
    \begin{split}
        \mathbb{P}(\mathcal{A})\le \epsilon_n^{-2}\mathbb{E}(n^{-1}\|\mathbf{b}_p^{\top}\|^2-1)^2\lesssim  \epsilon_n^{-2}\mathcal{L}(n^{2/3}\log n)n^{(4-\alpha)2/3-1}\log^2 n
    \end{split}
\end{equation*}
for $\alpha\ge 3$. Thus, we can choose $\epsilon_n=s_2^{-3/2}\sim (\log n)^{-3/4}\rightarrow 0$, which satisfies $\mathbb{P}(\mathcal{A})\lesssim \mathrm{o}(n^{-1/4})$ for $\alpha\ge 3$ by Potter's bound. Thus, consider the quantity
\begin{equation*}
    \sum_{k=1}^{n}Y_{n,k}\frac{\sqrt{n}B_{nk}}{\Delta}=\sum_{k=1}^{n}\frac{\sqrt{n}}{\|\mathbf{b}_p^{\top}\|}b_{n,k}\frac{B_{nk}}{\Delta},
\end{equation*}
which further implies
\begin{equation*}
    \begin{split}
        &\mathbb{P}\left(\frac{\sqrt{n}|\sum_{k=1}^{n}Y_{n,k}B_{nk}|}{\Delta}>e^{\tilde{t}_{np}/2}\right)\le \mathbb{P}\left(\frac{\sqrt{n}|\sum_{k=1}^{n}Y_{n,k}B_{nk}|}{\Delta}>e^{\tilde{t}_{np}/2},\mathcal{A}^c\right)+\mathbb{P}(\mathcal{A})\\
        \le & \mathbb{P}\left(\left|\sum_{k=1}^{n}b_{n,k}\frac{B_{nk}}{\Delta}\right|>e^{\tilde{t}_{np}/2}(1-\epsilon_n)^{1/2}\right)+\mathrm{o}(n^{-1/4}).
    \end{split}
\end{equation*}
Analogously, we can obtain
\begin{equation*}
    \mathbb{P}\left(\frac{\sqrt{n}|\sum_{k=1}^{n}Y_{n,k}B_{nk}|}{\Delta}<e^{-\tilde{t}_{np}/2}\right)\lesssim \mathbb{P}\left(\left|\sum_{k=1}^{n}b_{n,k}\frac{B_{nk}}{\Delta}\right|<e^{-\tilde{t}_{np}/2}(1+\epsilon_n)^{1/2}\right)+\mathrm{o}(n^{-1/4}).
\end{equation*}
For the case of $\alpha>3$, by Lemma \ref{lemma_berryesseen} and following a similar argument of Section 5 in \cite{bao2015logarithmic}, we can get
\begin{equation*}
    \sup_{x}\left|\mathbb{P}\left(\sum_{k=1}^{n}b_{n,k}\frac{B_{nk}}{\Delta}>x\right)-\Phi(x)\right|=\mathrm{o}(1).
\end{equation*}
Specifically, to adapt the Berry-Esseen bound for $\alpha= 3$, we first note
\begin{equation*}
	\mathbb{E}|b_{n,k}|^3\lesssim \mathbb{E}|\xi|^3\mathbbm{1}(|\xi|<n^{2/3}\log n)\lesssim \int_C^{n^{2/3}\log n}\frac{\mathcal{L}(x)}{x}\mathrm{d}x\lesssim \mathcal{L}(n^{2/3}\log n)\log n
\end{equation*}
as $n\rightarrow \infty$, due to the global truncation $n^{2/3}\log n$. Then, following a similar argument as Section 5 in \cite{bao2015logarithmic}, by Lemma \ref{lemma_berryesseen}, we obtain
\begin{equation*}
	\sup_{x}\left|\mathbb{P}\left(\sum_{k=1}^{n}b_{n,k}\frac{B_{nk}}{\Delta}>x\mid \mathcal{F}_{n-1}\right)-\Phi(x)\right|\le C[\mathcal{L}(n^{2/3}\log n)\log n]\mathbb{E}\sum_{k=1}^{n}\frac{B_{nk}^3}{\Delta^3},
\end{equation*}
where $\mathcal{F}_{n-1}=:\sigma(\mathbf{b}_{1},\ldots,\mathbf{b}_{n-1})$ denotes the sigma algebra generated by $\mathbf{b}_{1},\ldots,\mathbf{b}_{n-1}$. By \eqref{eq_Emaxpilllarge}, we get
\begin{equation*}
\mathbb{E}\sum_{k=1}^{n}\frac{B_{nk}^3}{\Delta^3}\le \mathbb{E}\max_{1\le k\le n}\frac{|B_{nk}|}{\Delta}\lesssim  \mathbb{E}\max_{1\le \ell\le n}p_{i,\ell\ell}^{1/2}\lesssim n^{-1/10}
\end{equation*}
with high probability, as $n\rightarrow \infty$.
Therefore, we have
\begin{equation*}
    \mathbb{P}\left(\left|\log\big(\sum_{k=1}^{n}Y_{n,k}B_{nk}\big)^2-\log (n^{-1}\Delta^2) \right|>\tilde{t}_{np}\right)=\mathrm{o}(1)
\end{equation*}
due to the fact that $\tilde{t}_{np}\rightarrow \infty$ and $\epsilon_n\rightarrow 0$, which further implies \eqref{eq_difflarge}.

For the general case where $n-p<d_n$, we aim to prove that, for any $1\le n-p\le d_n$,
\begin{equation}\label{eq_diffpn}
\frac{\log \operatorname{det}\big(\mathbf{Y}_{(p)} \mathbf{Y}_{(p)}^{\top}\big)}{\sqrt{\log n}}-\frac{\log \operatorname{det}\big(\bar{\mathbf{Y}}_{(p)} \bar{\mathbf{Y}}_{(p)}^{\top}\big)}{\sqrt{\log n}}=\mathrm{O}_{\mathbb{P}}\left(s_2^{- 1}\log s_2\right)+\mathrm{O}_{\mathbb{P}}(\log^{-1/2+c}n).
\end{equation}
For any $1\le n-p\le d_n$, let $\tilde{\mathbf{b}}_1^{\top},\ldots,\tilde{\mathbf{b}}_{n-p}^{\top}$ consist of standard Gaussian random variables and $\tilde{\mathbf{y}}_i^{\top}=\tilde{\mathbf{b}}_i^{\top}/\|\tilde{\mathbf{b}}_i^{\top}\|$ for $1\le i\le n-p$. We define two new $n\times n$ matrices based on $\mathbf{B}_{(p)}$ and $\bar{\mathbf{B}}_{(p)}$:
\begin{align*}
\mathbf{C}=\begin{pmatrix}
\mathbf{B}_{(p)} \\
\tilde{\mathbf{b}}_1^{\top} \\
\vdots\\
\tilde{\mathbf{b}}_{n-p}^{\top}
\end{pmatrix}, \quad {\mathbf{D}}=\begin{pmatrix}
\bar{\mathbf{B}}_{(p)} \\
\tilde{\mathbf{b}}_1^{\top} \\
\vdots\\
\tilde{\mathbf{b}}_{n-p}^{\top}
\end{pmatrix},
\end{align*}
where $\mathbf{C}$ and ${\mathbf{D}}$ only differ in the row next to $\tilde{\mathbf{b}}_1^{\top}$.
Denote the associated self-normalized matrices as
\begin{align*}
\mathbf{E}=\begin{pmatrix}
\mathbf{Y}_{(p)} \\
\tilde{\mathbf{y}}_1^{\top}\\
\vdots\\
\tilde{\mathbf{y}}_{n-p}^{\top}
\end{pmatrix}, \quad {\mathbf{F}}=\begin{pmatrix}
\bar{\mathbf{Y}}_{(p)}\\
\tilde{\mathbf{y}}_1^{\top}\\
\vdots\\
\tilde{\mathbf{y}}_{n-p}^{\top}
\end{pmatrix}.
\end{align*}
Since the magnitude of a determinant
is invariant under swapping of two rows, according to \eqref{eq_diff00}, we have
\begin{equation}\label{eq_indutionlarge}
\frac{\log \operatorname{det}\big(\mathbf{E}\mathbf{E}^{\top}\big)}{\sqrt{\log n}}-\frac{\log \operatorname{det}\big(\mathbf{F}\mathbf{F}^{\top}\big)}{\sqrt{\log n}}=\mathrm{O}_{\mathbb{P}}\left(s_2^{-1}\log s_2\right).
\end{equation}
We expand $\mathbf{E}$ and ${\mathbf{F}}$ for the last $n-p$ rows and obtain
\begin{align*}
\log \operatorname{det}\left(\mathbf{E}\mathbf{E}^{\top}\right)=\sum_{i=1}^{n-p}\log\tilde{\Delta}_i^2+\log \operatorname{det}\big(\mathbf{Y}_{(p)} \mathbf{Y}_{(p)}^{\top}\big),~\log \operatorname{det}\left(\mathbf{F}\mathbf{F}^{\top}\right)=\sum_{i=1}^{n-p}\log\hat{\Delta}_i^2+\log \operatorname{det}\big(\bar{\mathbf{Y}}_{(p)} \bar{\mathbf{Y}}_{(p)}^{\top}\big),
\end{align*}
where
\begin{align*}
\tilde{\Delta}_i^2=:\|\tilde{\mathbf{b}}_i^{\top}\|^{-2}\tilde{\mathbf{b}}_i^{\top}\big(\mathbf{I}_n-\mathbf{C}_{(p+i-1)}^{\top}\big(\mathbf{C}_{(p+i-1)} \mathbf{C}_{(p+i-1)}^{\top}\big)^{-1} \mathbf{C}_{(p+i-1)}\big) \tilde{\mathbf{b}}_i,
\end{align*}
and
\begin{equation*}
	\hat{\Delta}_i^2=\|\tilde{\mathbf{b}}_i^{\top}\|^{-2}\tilde{\mathbf{b}}_i^{\top}\big(\mathbf{I}_n-\mathbf{D}_{(p+i-1)}^{\top}\big(\mathbf{D}_{(p+i-1)} \mathbf{D}_{(p+i-1)}^{\top}\big)^{-1} \mathbf{D}_{(p+i-1)}\big) \tilde{\mathbf{b}}_i,.
\end{equation*}
Here, we denote $\mathbf{C}_{(p+i-1)}$ and $\mathbf{C}_{(p+i-1)}$ as the matrices constructed from the first $(p+i-1)$ rows of $\mathbf{C}$ and $\mathbf{D}$, respectively.
Therefore,
\begin{equation*}
\begin{split}
&\frac{\log \operatorname{det}\big(\mathbf{E}\mathbf{E}^{\top}\big)}{\sqrt{\log n}}-\frac{\log \operatorname{det}\big(\mathbf{F}\mathbf{F}^{\top}\big)}{\sqrt{\log n}}\\
=&\frac{\log \operatorname{det}\big(\mathbf{Y}_{(p)} \mathbf{Y}_{(p)}^{\top}\big)}{\sqrt{\log n}}-\frac{\log \operatorname{det}\big(\bar{\mathbf{Y}}_{(p)} \bar{\mathbf{Y}}_{(p)}^{\top}\big)}{\sqrt{\log n}}+\frac{\sum_{i=1}^{n-p}\log\tilde{\Delta}_i^2}{\sqrt{\log n}}-\frac{\sum_{i=1}^{n-p}\log\hat{\Delta}_i^2}{\sqrt{\log n}} .
\end{split}
\end{equation*}
Note $\mathbb{E}(\|\tilde{\mathbf{b}}_i^{\top}\|^{2}\tilde{\Delta}_i^2)=\mathbb{E}(\|\tilde{\mathbf{b}}_i^{\top}\|^{2}\hat{\Delta}_i^2)=n-p-i+1$ for $1\le i\le n-p$. Moreover, since $\tilde{\mathbf{b}}_i^{\top}$ is Gaussian, $\|\tilde{\mathbf{b}}_i^{\top}\|^{2}\tilde{\Delta}_i^2$ and $\|\tilde{\mathbf{b}}_i^{\top}\|^{2}\hat{\Delta}_i^2 \sim \chi_{n-p-i+1}^2$.
Hence, by \eqref{eq_indutionlarge}, in order to show \eqref{eq_diffpn}, it suffices to prove that, for any constant $0<c<1/100$,
\begin{equation}\label{eq_lastdiff}
\frac{\sum_{i=1}^{n-p}\log \frac{\tilde{\Delta}_i^2\|\tilde{\mathbf{b}}_i^{\top}\|^{2}}{n-p-i+1}}{\sqrt{\log n}}-\frac{\sum_{i=1}^{n-p}\log \frac{\hat{\Delta}_i^2\|\tilde{\mathbf{b}}_i^{\top}\|^{2}}{n-p-i+1}}{\sqrt{\log n}}=\mathrm{O}_{\mathbb{P}}(\log^{-1/2+c}n).
\end{equation}
Following Lemma 2.8 of \cite{nguyen2014random} and noticing $1\le n-p\le d_n\lesssim \log^{1/2} n$, one has, for any constant $0<c<1/100$,
\begin{equation*}
	\mathbb{P}\left(\left|\frac{\sum_{i=1}^{n-p}\log \frac{\tilde{\Delta}_i^2\|\tilde{\mathbf{b}}_i^{\top}\|^{2}}{n-p-i+1}}{\sqrt{2\log n}}\right|\ge \log^{-1/2+c}n\right)=\mathrm{o}(\exp(-\log^{c/2}n))
\end{equation*}
and
 \begin{equation*}
 \mathbb{P}\left(\left|\frac{\sum_{i=1}^{n-p}\log \frac{\hat{\Delta}_i^2\|\tilde{\mathbf{b}}_i^{\top}\|^{2}}{n-p-i+1}}{\sqrt{2\log n}}\right|\ge \log^{-1/2+c}n\right)=\mathrm{o}(\exp(-\log^{c/2}n)),
 \end{equation*}
which further implies \eqref{eq_lastdiff}.
Thus, we have shown that for any $1 \leq n-p \leq d_n$,
\begin{align*}
\frac{\log \operatorname{det}\big(\mathbf{Y}_{(p)} \mathbf{Y}_{(p)}^{\top}\big)}{\sqrt{2 \log n}}-\frac{\log \operatorname{det}\big(\bar{\mathbf{Y}}_{(p)} \bar{\mathbf{Y}}_{(p)}^{\top}\big)}{\sqrt{2 \log n}}=\mathrm{O}_{\mathbb{P}}(s_2^{- 1}\log s_2)+\mathrm{O}_{\mathbb{P}}(\log^{-1/2+c}n).
\end{align*}
Then, since $c$ is arbitrary small, which can be choosen as $c=\mathfrak{c}/25>0$, after $d_n=s_2^{1-\mathfrak{c}}\asymp \log^{(1-\mathfrak{c})/2}n$ steps of replacing, we can show that \eqref{eq_diffRR} holds. This completes the proof of the proposition.

\subsection{Mid game: Martingale central limit theorem}\label{sec_midgame}
From Section \ref{sec_gaussianreplace}, it can be seen that the Gaussian replacement for the last $s_1$ rows does not affect the limiting distribution of $\log\det \mathbf{R}$ in the case of $p/n\rightarrow 1$. Without abusing notation, we also use $\mathbf{X}$ to denote the replaced matrix, as well as other associated notation. Keep in mind that all the last $s_1$ rows of $\mathbf{X}$ are standard Gaussian. It suffices to consider
\begin{equation}\label{eq_taylor}
\sum_{i=0}^{p-s_2-1}\log (1+\widetilde{Z}_{i+1})=\sum_{i=0}^{p-s_2-1}\widetilde{Z}_{i+1}-\sum_{i=0}^{p-s_2-1}\frac{\widetilde{Z}_{i+1}^2}{2}+\sum_{i=0}^{p-s_2-1}{E}_{i+1},
\end{equation}
where
\begin{equation}\label{eq_error}
	E_{i+1}=:\log (1+\widetilde{Z}_{i+1})-\big(\widetilde{Z}_{i+1}-\frac{\widetilde{Z}_{i+1}^2}{2}\big).
\end{equation}
Thus, we can rewrite the left hand side of \eqref{eq_martingaleclt} as
\begin{equation*}
\begin{split}
\sum_{i=0}^{p-s_2-1}\widetilde{Z}_{i+1}-\sum_{i=0}^{p-s_2-1}\big(\frac{\widetilde{Z}_{i+1}^2}{2}-\mathbb{E}(\frac{\widetilde{Z}_{i+1}^2}{2}\mid \mathcal{F}_{i})\big)+\sum_{i=0}^{p-s_2-1}{E}_{i+1}-\sum_{i=0}^{p-s_2-1}\mathbb{E}(\frac{\widetilde{Z}_{i+1}^2}{2}\mid \mathcal{F}_{i})+c_n-\mu_n,
\end{split}
\end{equation*}
where $c_n$ and $\mu_n$ are given by \eqref{eq_decomlogdetR} and \eqref{eq_diffGR}, respectively. Here, $\mathcal{F}_{i}=\mathcal{F}_{i}^{(n)}$ denotes the sigma algebra generated by the first $i$ rows of $\mathbf{X}$. In the sequel, invoking $\sigma_{n}^2=(-2\log (1-\frac{p-1}{n})-2\frac{p}{n})^{-1}$, we shall show the following four statements:
\begin{align}
\sigma_{n}\sum_{i=0}^{p-s_2-1}\widetilde{Z}_{i+1}\stackrel{d}{\rightarrow} N(0,1),\label{eq_claim1}\\
\sigma_{n}\sum_{i=0}^{p-s_2-1}\big(\widetilde{Z}_{i+1}^2-\mathbb{E}(\widetilde{Z}_{i+1}^2\mid \mathcal{F}_{i})\big)\stackrel{\mathbb{P}}{\rightarrow} 0,\label{eq_claim2}\\
\sigma_{n}\sum_{i=0}^{p-s_2-1}{E}_{i+1}\stackrel{\mathbb{P}}{\rightarrow} 0,\label{eq_claim3}\\
\sigma_{n}\big(\frac{1}{2}\sum_{i=0}^{p-s_2-1}\mathbb{E}(\widetilde{Z}_{i+1}^2\mid \mathcal{F}_{i})-c_n+\mu_n\big)\stackrel{\mathbb{P}}{\rightarrow} 0,\label{eq_claim4}
\end{align}
which combined with Slutsky's lemma imply \eqref{eq_martingaleclt}. Here, we keep the universal variance formula $-2\log (1-(p-1)/n)-2p/n$ since it concides with the non-singularity case $\lim_{n\rightarrow \infty}p/n<1$.
We will distinguish two cases depending on the value of $n-p$ to prove the above statements.

The first result is the key ingredient to use the martingale central limit theorem.
\begin{lemma}\label{lemma_momentestimateUV}
	Under Assumption \ref{assump1} for $\alpha\ge 3$, we have
	\begin{equation}\label{eq_smallsumq}
	 0\le \sum_{i=0}^{p-s_1-1}\sum_{k=1}^{n}\mathbb{E}(q_{i,kk}^2-\frac{1}{n^2})\le Cn^{-1/2} +C\mathcal{L}(n^{1/2})n^{1-\alpha/2}.
	\end{equation}
	In addition, if $n-p\le n^{19/20}$, we have
	\begin{equation}\label{eq_largesumq1}
	\sum_{i=p-s_1}^{p-s_2-1}\sum_{k=1}^{n}\mathbb{E}q_{i,kk}^2=\mathrm{O}(\log\log n),
	\end{equation}
	and if $n-p\ge n^{19/20}$, we have
	\begin{equation}\label{eq_largesumq2}
		\sum_{i=p-s_1}^{p-s_2-1}\sum_{k=1}^{n}\mathbb{E}q_{i,kk}^2=\mathrm{O}(1).
	\end{equation}
\end{lemma}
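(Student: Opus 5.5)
The plan starts from the exchangeability of the diagonal entries of $\mathbf{P}_i$. Since the columns of $\mathbf{B}_i$ are i.i.d., the entries $p_{i,kk}$ ($1\le k\le n$) are identically distributed. Combined with $\sum_k p_{i,kk}=n-i$, this gives $\mathbb{E}p_{i,kk}=(n-i)/n$. Hence
\begin{equation*}
\sum_{k=1}^{n}\mathbb{E}\Big(q_{i,kk}^2-\frac{1}{n^2}\Big)=\frac{n}{(n-i)^2}\operatorname{Var}(p_{i,11}),
\end{equation*}
which gives the lower bound $0$ in \eqref{eq_smallsumq} immediately. Everything then reduces to bounding $\operatorname{Var}(p_{i,11})$, uniformly in $i$ in the relevant ranges.

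\textbf{Step 1: the bulk range $0\le i\le p-s_1-1$.} Here $i\le 0.99n$ and $\mathbf{B}_{(i,1)}\mathbf{B}_{(i,1)}^{\top}$ has smallest eigenvalue of order $n$ with overwhelming probability. I will use the Sherman--Morrison representation
\begin{equation*}
p_{i,11}=\big(1+\mathbf{v}_{1,i}^{\top}(\mathbf{B}_{(i,1)}\mathbf{B}_{(i,1)}^{\top})^{-1}\mathbf{v}_{1,i}\big)^{-1},
\end{equation*}
where $\mathbf{v}_{1,i}\in\mathbb{R}^i$ is the first column of $\mathbf{B}_i$, which is independent of $\mathbf{A}:=(\mathbf{B}_{(i,1)}\mathbf{B}_{(i,1)}^{\top})^{-1}$. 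Since $x\mapsto(1+x)^{-1}$ is $1$-Lipschitz on $[0,\infty)$, it suffices to bound the variance of the quadratic form $\mathbf{v}^{\top}\mathbf{A}\mathbf{v}$. I will split it into two parts.
\begin{itemize}
\item The conditional variance given $\mathbf{A}$ is $\lesssim(\mathbb{E}\hat X^4)\sum_j A_{jj}^2+2\operatorname{tr}\mathbf{A}^2$.
\item The remaining part is $\operatorname{Var}(\operatorname{tr}\mathbf{A})$.
\end{itemize}
Both $\sum_jA_{jj}^2$ and $\operatorname{tr}\mathbf{A}^2$ are $\mathrm{O}(n^{-1})$. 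To handle the fourth moment, I truncate the column at level $n^{1/2}$. Using $p_{i,11}\in[0,1]$, the event that some entry exceeds $n^{1/2}$ costs at most $i\,\mathbb{P}(|\xi|>n^{1/2})\lesssim\mathcal{L}(n^{1/2})n^{1-\alpha/2}$. On the complement, $\mathbb{E}\hat X^4\lesssim\mathcal{L}(n^{1/2})n^{2-\alpha/2}$ by Lemma \ref{lemma_truncatemoment}. For $\operatorname{Var}(\operatorname{tr}\mathbf{A})$, I will use a martingale (Efron--Stein) decomposition over the rows, or alternatively the concentration of diagonals in Lemma \ref{lemma_diagonals}, to get $\mathrm{O}(n^{-1})$ up to a factor $n^{1/2}$ of slack. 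Altogether this gives $\operatorname{Var}(p_{i,11})\lesssim n^{-3/2}+\mathcal{L}(n^{1/2})n^{-\alpha/2}$. Summing $n\operatorname{Var}(p_{i,11})/(n-i)^2\asymp n^{-1}\operatorname{Var}(p_{i,11})$ over $\le p$ indices then yields $Cn^{-1/2}+C\mathcal{L}(n^{1/2})n^{1-\alpha/2}$.

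\textbf{Step 2: the range $p-s_1\le i\le p-s_2-1$.} Here I no longer subtract the mean. I will use two bounds:
\begin{equation*}
\sum_kq_{i,kk}^2\le\frac{\max_kp_{i,kk}}{n-i},\qquad \sum_kq_{i,kk}^2=\frac1n+\frac{n\operatorname{Var}(p_{i,11})}{(n-i)^2}.
\end{equation*}
The $1/n$ terms contribute at most $s_1/n=\mathrm{O}(1)$ in total.
\begin{itemize}
\item \emph{Case $n-p\ge n^{19/20}$.} Then $n-i\ge n^{19/20}$ throughout. I will redo Step 1 with the resolvent $\mathbf{A}$ now having norm $\lesssim n/(n-i)^2$, using the convergence rate of the expected spectral distribution (Lemma \ref{lemma_esdconvergence}) to control $\operatorname{tr}\mathbf{A}$ and $\operatorname{tr}\mathbf{A}^2$. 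This should give $n\operatorname{Var}(p_{i,11})/(n-i)^2\lesssim (n-i)^{-1-\delta}$ for some $\delta>0$, which is summable to $\mathrm{O}(1)$ and yields \eqref{eq_largesumq2}.
\item \emph{Case $n-p\le n^{19/20}$.} I split at $n-i\approx n^{19/20}$. For larger $n-i$, the previous argument gives $\mathrm{O}(1)$. For smaller $n-i$, I use the first bound together with the bounds on $\mathbb{E}\max_kp_{i,kk}$ from Lemma \ref{lemma_upperdiagonal} (e.g.\ \eqref{eq_maxpikklarge}, \eqref{eq_maxpikksmall}). These give $\mathbb{E}\max_kp_{i,kk}\lesssim \log s_2\cdot\max\{n^{-1/8},(n-i)/n\}$ up to negligible-probability events, where $p_{i,kk}\le1$ is used on the exceptional set. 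Summing $\log s_2\,(n-i)^{-1}\max\{\cdot\}$ over the range, the harmonic sums are dominated by the $(n-i)/n$ piece plus an $n^{-1/8}\log n$ piece. This leaves an $\mathrm{O}(\log s_2)=\mathrm{O}(\log\log n)$ bound, which is \eqref{eq_largesumq1}.
\end{itemize}

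\textbf{Main obstacle.} The hardest part is the variance of $\operatorname{tr}\mathbf{A}$ (equivalently of $p_{i,11}$) when $n-i$ is polynomially small relative to $n$. The smallest eigenvalue then degenerates, so naive Lipschitz bounds lose powers of $n/(n-i)$. Here I would first try the resampling argument from Lemma \ref{lemma_esdconvergence}, regularizing by $\epsilon_n\mathbf{I}$ and comparing $p_{i,kk}$ with $[\mathbb{E}p^{-1}\operatorname{tr}(\cdot+\epsilon_n)^{-1}]^{-1}$. If that fails, I would fall back on the cruder max-diagonal bound, accepting the $\log\log n$ loss.
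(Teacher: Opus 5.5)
\textbf{Gap in the case $n-p\le n^{19/20}$ of \eqref{eq_largesumq1}.} Your argument here rests on $\mathbb{E}\max_k p_{i,kk}\lesssim \log s_2\cdot\max\{n^{-1/8},(n-i)/n\}$, and the results you cite do not give this.

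\eqref{eq_maxpikklarge}--\eqref{eq_maxpikksmall} are stated only for $p-p^{2/3}\le i\le p-1$. In that range they yield $\mathbb{E}\max_kp_{i,kk}\lesssim n^{-1/8}\log s_2$, plus the exceptional probability. Nothing in Lemma \ref{lemma_upperdiagonal} produces a term of order $(n-i)/n$.

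Now consider the window $n-p+p^{2/3}<n-i<n^{19/20}$, which is nonempty e.g. whenever $n-p\le p^{2/3}$. There you have no max-diagonal bound. Your variance argument does not cover it either. You only invoke it for $n-i\ge n^{19/20}$, and because of the $1$-Lipschitz step it already pays about $\tfrac{n}{(n-i)^2}\cdot 2\operatorname{tr}\mathbf{A}^2\asymp n^3/(n-i)^5$ per index. That is far too much once $n-i$ is near $p^{2/3}$.

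Falling back on $\sum_kq_{i,kk}^2\le 1/(n-i)$ on this window costs $\log\big(n^{19/20}/(n-p+p^{2/3})\big)\asymp\log n$, not $\log\log n$. So, as written, the estimate is carried by an unproved delocalization bound for $\max_kp_{i,kk}$.

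\textbf{The fix.} The missing ingredient is \eqref{eq_Emaxpll}. Since $n-p\le n^{19/20}=\mathrm{O}(s_3)$, it gives $\mathbb{E}\max_\ell p_{i,\ell\ell}\le C\log^{-a}n$ uniformly for $p-s_3\le i\le p-1$. Combine this with $\sum_k q_{i,kk}^2\le \min\{1,\max_kp_{i,kk}\}/(n-i)$; this is the paper's whole argument:
\[
\sum_{i=p-s_1}^{p-s_2-1}\sum_{k=1}^{n}\mathbb{E}q_{i,kk}^2\le\sum_{i=p-s_1}^{p-s_3-1}\frac{1}{n-i}+\sum_{i=p-s_3}^{p-s_2-1}\frac{C\log^{-a}n}{n-i}\lesssim\log\frac{s_1}{s_3}+\log^{1-a}n\lesssim\log\log n .
\]
So the $\log\log n$ comes from $s_1/s_3\asymp\log^a n$, not from $\log s_2$. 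No split at $n^{19/20}$ and no variance analysis is needed in this case.

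\textbf{The rest of your plan is sound.}

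Step 1 is the argument of Lemma \ref{lemma_diagonals}, which the paper simply invokes. You replace the $\epsilon_n n\mathbf{I}$ regularization by the bulk smallest-eigenvalue bound and $p_{i,11}\in[0,1]$, which is legitimate. Your bookkeeping there is off by a factor $n$ twice. First, $\operatorname{Var}(p_{i,11})$ is of order $n^{-1}$ for $i\asymp n$, since $\operatorname{tr}\mathbf{A}^2\asymp n^{-1}$; it is not $n^{-3/2}$. Second, $\sum_i n\operatorname{Var}(p_{i,11})/(n-i)^2\lesssim\max_i\operatorname{Var}(p_{i,11})$ over the bulk range, not $n$ times that. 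The two slips cancel, and the pieces you list do give $Cn^{-1/2}+C\mathcal{L}(n^{1/2})n^{1-\alpha/2}$.

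For $n-p\ge n^{19/20}$ your route differs from the paper's. The paper follows Wang: it bounds $\mathbb{E}p_{i,kk}^2$ by $C(1+n^{-1}\mathbb{E}\operatorname{tr}G_i(\epsilon_n))^{-2}$ plus an error and feeds in Lemma \ref{lemma_esdconvergence}. Your quadratic-form variance argument needs no spectral-distribution input and does close. For $n-i\ge n^{19/20}$, even the crude bounds $\|\mathbf{A}\|\lesssim n/(n-i)^2$, $\operatorname{tr}\mathbf{A}^2\le i\|\mathbf{A}\|^2$, and Efron--Stein for $\operatorname{Var}(\operatorname{tr}\mathbf{A})$ leave a summable $(n-i)^{-1-\delta}$.
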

\begin{proof}
	We first note that \eqref{eq_smallsumq} is a consequence of Lemma \ref{lemma_diagonals}. For \eqref{eq_largesumq1}, by \eqref{eq_Emaxpll} in Lemma \ref{lemma_upperdiagonal} and the trivial bound $\sum_{k=1}^{n}q_{i,kk}^2\le \operatorname{tr}\mathbf{Q}_i^2=\operatorname{tr}\mathbf{Q}_i/(n-i)=1/(n-i)$, we have
	\begin{equation*}
	\begin{split}
	\sum_{i=p-s_1}^{p-s_2-1}\sum_{k=1}^{n}\mathbb{E}q_{i,kk}^2\le & \sum_{i=p-s_1}^{p-s_3-1}\frac{1}{n-i}+\sum_{i=p-s_3}^{p-s_2-1}\frac{1}{n-i}\mathbb{E}\max_{1\le k\le n}p_{i,kk}\\
	\le & \sum_{i=p-s_1}^{p-s_3-1}\frac{1}{n-i}+\sum_{i=p-s_3}^{p-s_2-1}\frac{1}{n-i}\log^{-a}n\le C\log\log n +C\log^{-a+1}n,
	\end{split}
	\end{equation*}
	where we used $\sum_{k=1}^nq_{i,kk}^2\le \max_{1\le k\le n}q_{i,kk}\sum_{k=1}^{n}q_{i,kk}=\max_{1\le k\le n}q_{i,kk}$.
	It remains to show \eqref{eq_largesumq2} for $n-p\ge n^{19/20}$.
	Similarly to Lemma 3.6 of \cite{wang2018}, we have, if $n/2\le i\le p-s_2$,
	\begin{equation*}
	\mathbb{E}p_{i,kk}^2\le C\frac{1}{(1+n^{-1}\mathbb{E}\operatorname{tr}G_{i}(\epsilon_{n}))^2},
	\end{equation*}
	where $G_{i}(\epsilon_{n})=(n^{-1}\mathbf{B}_i\mathbf{B}_i^{\top}+\epsilon_{n} \mathbf{I}_i)^{-1}$ and $\epsilon_n=n^{-1/7}$. Specifically, following (3-13) to (3-22) in \cite{wang2018} and noticing the global truncation $n^{2/3}\log n$, we can show
	\begin{equation*}
	\begin{split}
	\mathbb{E}p_{i,kk}^2\le& 2\frac{1}{(1+n^{-1}\mathbb{E}\operatorname{tr}G_{i,k}(\epsilon_{n}))^2}+2\mathbb{E}\left[\frac{n^{-1}\mathbf{v}_{k,i}^{\top}G_{i,k}(\epsilon_{n})\mathbf{v}_{k,i}-n^{-1}\mathbb{E}\operatorname{tr}G_{i,k}(\epsilon_{n})}{1+n^{-1}\mathbb{E}\operatorname{tr}G_{i,k}(\epsilon_{n})}\right]^2\\
	\le &C\frac{1}{(1+n^{-1}\mathbb{E}\operatorname{tr}G_{i}(\epsilon_{n}))^2}+C\mathcal{L}(n^{2/3}\log n)(n^{2/3}\log n)^{4-\alpha}\frac{1}{n^2}\mathbb{E}\operatorname{tr}G^2_{i,k}(\epsilon_{n})+\frac{C}{n\epsilon_n^4}\\
	\le & C\frac{1}{(1+n^{-1}\mathbb{E}\operatorname{tr}G_{i}(\epsilon_{n}))^2}+\mathrm{o}(n^{-1/30})
	\end{split}
	\end{equation*}
	for $\epsilon_{n}=n^{-1/7}$ and $\alpha\ge 3$, where in the second line we used Lemma \ref{lemma_quadratic3} and
	\[
	\mathbb{E}v_{kj,i}^4\le \mathbb{E}\xi^4\mathbbm{1}(|\xi|<n^{2/3}\log n)\lesssim \mathcal{L}(n^{2/3}\log n)(n^{2/3}\log n)^{4-\alpha}
	\]
	by Lemma \ref{lemma_truncatemoment}, and in the last line we used $G_{i,k}(\epsilon_{n})\lesssim \epsilon_{n}^{-1}$. By Lemma \ref{lemma_esdconvergence}, we can see that for $n/2\le i\le p-s_2\le n-n^{19/20}$, with high probability,
	\begin{equation*}
	\mathbb{E}\big(\frac{1}{i}\operatorname{tr}G_i(\epsilon_{n})\big)=s_{i}(\epsilon_n)+\mathrm{O}(n^{-1/2}),
	\end{equation*}
	where
	\begin{equation*}
	s_{i}(\epsilon_n)=2\left(\epsilon_n+1-\frac{i}{n}+\sqrt{(\epsilon_n+1-\frac{i}{n})^{2}+\frac{4\epsilon_{n} i}{n}}\right)^{-1}\asymp (1-i/n)^{-1}.
	\end{equation*}
	Thus, similarly to (3-28) of \cite{wang2018}, we get
	\begin{equation*}
	\begin{split}
	\sum_{i=p-s_1}^{p-s_2-1}\mathbb{E}q_{i,kk}^2
	\le C\sum_{i=p-s_1}^{p-s_2-1}\frac{n}{(n-i)^2}\big(\frac{1}{1+\frac{i}{n}\frac{n}{n-i}}\big)^2\le C,
	\end{split}
	\end{equation*}
	finishing the proof of this proposition.
\end{proof}

\subsubsection{Proof of \eqref{eq_claim1}}\label{subsec_proofclaim1}
Notice that $(\widetilde{Z}_{i+1})$ is a martingale difference sequence with respect to the filtration $(\mathcal{F}_i)$ by our construction. Analogously to \cite{heiny2023logdet}, in what follows, we use the CLT for martingale differences to prove the claim \eqref{eq_claim1}. For the reader's convenience, we present a lemma of CLT for martingale difference as follows (e.g., \cite{heiny2023logdet}).
\begin{lemma}\label{lemma_cltmartingalediff}
Let $\{S_{ni},\mathcal{F}_{ni},1\le i\le k_n,n\ge 1\}$ be a zero-mean, square integrable martingale array with difference $Z_{ni}$. Suppose that $\mathbb{E}(\max_{i}Z_{ni}^2)$ is bounded in $n$ and that
\begin{equation*}
	\max_{i}|Z_{ni}|\stackrel{\mathbb{P}}{\rightarrow} 0~\text{and}~\sum_{i}Z_{ni}^2\stackrel{\mathbb{P}}{\rightarrow}1.
\end{equation*}
Then, we have $S_{nk_n}\stackrel{d}{\rightarrow}N(0,1)$.
\end{lemma}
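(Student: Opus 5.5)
The lemma is the classical martingale CLT of McLeish (see also Hall and Heyde, Theorem 3.2). I would prove it with McLeish's characteristic-function argument, which is self-contained. Fix $t\in\mathbb{R}$. The goal is $\mathbb{E}e^{itS_{nk_n}}\to e^{-t^2/2}$, and Lévy's continuity theorem then gives the claim. The basic tool is the elementary expansion
\[
e^{ix}=(1+ix)\exp\big(-x^2/2+r(x)\big),\qquad |r(x)|\le |x|^3 \ \text{for}\ |x|\le 1,
\]
which lets us factor $e^{itS_{nk_n}}=T_nW_n$. Here
\[
T_n=\prod_i(1+itZ_{ni}),\qquad W_n=\exp\Big(-\tfrac{t^2}{2}\sum_iZ_{ni}^2+\sum_i r(tZ_{ni})\Big).
\]

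The first step is a stopping-time truncation that makes $T_n$ controllable. Set $J_n=\min\{i:\sum_{j\le i}Z_{nj}^2>2\}$, with $J_n=k_n$ if no such $i$ exists. Define $Z'_{ni}=Z_{ni}\mathbbm{1}(i\le J_n)$. Since $\{i\le J_n\}\in\mathcal{F}_{n,i-1}$, the $Z'_{ni}$ are still martingale differences. Because $\sum_iZ_{ni}^2\to1$ in probability, we have $\mathbb{P}(Z'_{ni}\ne Z_{ni}\text{ for some }i)\le\mathbb{P}(\sum_iZ_{ni}^2>2)\to0$. The truncated array therefore still satisfies both hypotheses, and it suffices to prove the CLT for $S'_n=\sum_iZ'_{ni}$.

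Next I would establish uniform integrability of $T'_n=\prod_i(1+itZ'_{ni})$. Using $|1+itx|^2=1+t^2x^2\le e^{t^2x^2}$ for every index before $J_n$ and keeping the last factor separately gives
\[
|T'_n|^2\le \exp\Big(t^2\sum_{i<J_n}Z_{ni}^2\Big)(1+t^2Z_{nJ_n}^2)\le e^{2t^2}\big(1+t^2\max_iZ_{ni}^2\big).
\]
By the assumption that $\mathbb{E}\max_iZ_{ni}^2$ is bounded in $n$, the sequence $T'_n$ is bounded in $L^2$ and hence uniformly integrable. Iterated conditioning, using $\mathbb{E}(Z'_{ni}\mid\mathcal{F}_{n,i-1})=0$, gives $\mathbb{E}T'_n=1$. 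I expect this step to be the main obstacle. Without the stopping time, $\prod_i(1+t^2Z_{ni}^2)$ need not be integrable, and this is exactly where the bounded $\mathbb{E}\max_iZ_{ni}^2$ hypothesis is used.

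Finally I would show $W'_n\to e^{-t^2/2}$ in probability. On the event $\{\max_i|tZ_{ni}|\le1\}$, whose probability tends to one, we have
\[
\Big|\sum_ir(tZ'_{ni})\Big|\le |t|^3\max_i|Z_{ni}|\sum_iZ_{ni}'^2\le 2|t|^3\max_i|Z_{ni}|\stackrel{\mathbb{P}}{\rightarrow}0.
\]
Combined with $\sum_iZ_{ni}'^2\to1$, this gives the convergence of $W'_n$. Moreover $|W'_n|=|e^{itS'_n}|/|T'_n|\le1$, since $|T'_n|\ge1$. Then
\[
\mathbb{E}e^{itS'_n}=e^{-t^2/2}\,\mathbb{E}T'_n+\mathbb{E}\big[T'_n(W'_n-e^{-t^2/2})\big].
\]
The first term equals $e^{-t^2/2}$. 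The second term tends to zero because $T'_n$ is uniformly integrable and $W'_n-e^{-t^2/2}$ is bounded and tends to zero in probability. This proves $S'_n\stackrel{d}{\rightarrow}N(0,1)$, and hence $S_{nk_n}\stackrel{d}{\rightarrow}N(0,1)$.
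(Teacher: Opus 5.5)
Your proof is correct. It is not competing with an argument in the paper: the paper gives no proof of this lemma and simply quotes it (via Heiny et al.) as a known result. It is Theorem 3.2 of Hall and Heyde with $\eta^2\equiv 1$, going back to McLeish (1974). What you wrote is essentially McLeish's characteristic-function proof, which is the argument behind that citation:
\begin{itemize}
\item the stopping-time truncation at the first $i$ with $\sum_{j\le i}Z_{nj}^2>2$;
\item the exact factorization $e^{itS'_n}=T'_nW'_n$;
\item $L^2$-boundedness of $T'_n$, which comes from the bounded $\mathbb{E}\max_iZ_{ni}^2$;
\item the identity $\mathbb{E}T'_n=1$;
\item $W'_n\to e^{-t^2/2}$ in probability with $|W'_n|\le 1$, followed by uniform integrability.
\end{itemize}
The citation is more economical. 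Your self-contained version makes explicit that no nesting condition on the $\sigma$-fields is needed, because the limit of $\sum_iZ_{ni}^2$ is the constant $1$. It also shows exactly where each hypothesis enters, which is what the paper then checks for $\sigma_n\widetilde{Z}_{i+1}$.

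Three small points to tidy:
\begin{itemize}
\item The bound on the truncated sum of squares is slightly off. In fact $\sum_iZ_{ni}'^2=\sum_{i<J_n}Z_{ni}^2+Z_{nJ_n}^2\le 2+\max_iZ_{ni}^2$, not $\le 2$. So the remainder bound should read $|t|^3\max_i|Z_{ni}|\,(2+\max_iZ_{ni}^2)$, which still tends to zero in probability.
\item In the iterated conditioning for $\mathbb{E}T'_n=1$, the integrability needed to pull out the partial product comes from the stopping time. The quantity $\prod_{j<i}(1+itZ'_{nj})\mathbbm{1}(i\le J_n)$ is $\mathcal{F}_{n,i-1}$-measurable and bounded by $e^{t^2}$, and $Z'_{ni}$ vanishes off $\{i\le J_n\}$.
\item Define $r(x)$ for all real $x$ through the principal logarithm of $1+ix$. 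Then the factorization is exact everywhere. The bound $|r(x)|\le |x|^3$ is only needed on the event $\max_i|tZ_{ni}|\le 1$, where it holds because $0\le \mathrm{Re}\, r(x)\le x^4/4$ and $|\mathrm{Im}\, r(x)|=|x-\arctan x|\le |x|^3/3$.
\end{itemize}
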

Firstly, $(\widetilde{Z}_{i+1})$ is a martingale difference sequence with respect to the filtration $(\mathcal{F}_i)$. Thus, \eqref{eq_claim1} follows from applying Lemma \ref{lemma_cltmartingalediff} to $\sigma_{n}\widetilde{Z}_{i+1}$ with $\sigma_{n}=(-2\log (1-(p-1)/n)-2p/n)^{-1/2}$.

\noindent\textbf{Case I. $0\le n-p\le n^{19/20}$.}\\
In this case, one has $-\log (1-(p-1)/n)\asymp \log n$, which implies $\sigma_{n}^2\asymp \log^{-1}n$. Let $k_n=p-s_2$. To show $\max_{i=0,\ldots,p-s_2-1}|\sigma_{n}\widetilde{Z}_{i+1}|\stackrel{\mathbb{P}}{\rightarrow}0$, it suffices to show, for any $\epsilon>0$, that
\begin{equation*}
\sum_{i=0}^{p-s_2-1}\mathbb{P}(\sigma_{n}|\widetilde{Z}_{i+1}|>\epsilon)\rightarrow 0,~\text{as}~n\rightarrow \infty.
\end{equation*}
Recalling the decomposition \eqref{eq_decomZ}, one can obtain, for any $\epsilon>0$, that
\begin{equation}\label{eq_inprobability}
\begin{split}
\sum_{i=0}^{p-s_2-1}\mathbb{P}(\sigma_{n}|\widetilde{Z}_{i+1}|>\epsilon)\le & \sum_{i=0}^{p-s_2-1}\mathbb{P}(\sigma_{n}|U_{i+1}|>\epsilon/2)+\sum_{i=0}^{p-s_2-1}\mathbb{P}(\sigma_{n}|V_{i+1}|>\epsilon/2)\\
\lesssim & \frac{4}{\epsilon^2\log n}\sum_{i=0}^{p-s_2-1}\mathbb{E}U_{i+1}^2+\frac{16}{\epsilon^4\log^2 n}\sum_{i=0}^{p-s_2-1}\mathbb{E}V_{i+1}^4,
\end{split}
\end{equation}
which can be implied by
\begin{equation}\label{eq_inproU}
	\frac{1}{\log n}\sum_{i=0}^{p-s_2-1}\mathbb{E}U_{i+1}^2\rightarrow 0
\end{equation}
and
\begin{equation}\label{eq_inproV}
\frac{1}{\log^2 n}\sum_{i=0}^{p-s_2-1}\mathbb{E}V_{i+1}^4\rightarrow 0.
\end{equation}
Consider \eqref{eq_inproV} first. By Lemma \ref{lemma_offdiag}, one has
\begin{equation*}
	\frac{1}{\log^2 n}\sum_{i=0}^{p-s_1-1}\mathbb{E}V_{i+1}^4\le \frac{C}{\log^2 n}\big(\sum_{i=0}^{p-s_1-1}\frac{[\mathcal{L}(n^{1/2})]^2n^{4-\alpha}}{(n-i)^3}+\sum_{i=0}^{p-s_1-1}\frac{1}{(n-i)^2}\big)\le \frac{[\mathcal{L}(n^{1/2})]^2}{s_1\log^2 n}=\mathrm{o}(1)
\end{equation*}
and
\begin{equation*}
\frac{1}{\log^2 n}\sum_{i=p-s_1}^{p-s_2-1}\mathbb{E}V_{i+1}^4\le \frac{C}{\log^2 n}\sum_{i=p-s_1}^{p-s_2-1}\frac{1}{(n-i)^2}\le \frac{C}{s_2\log^2 n}=\mathrm{o}(1)
\end{equation*}
in view of \eqref{eq_newcondition} and $s_1$ in \eqref{eq_def_parameters}.
Now we proceed to the estimation for the variance of $\widetilde{Z}_{i+1}$. Recalling \eqref{eq_meanU2} and \eqref{eq_meanV2}, we have
\begin{equation}\label{eq_usquare}
	\sum_{i=0}^{p-s_2-1}\mathbb{E}(U_{i+1}^2)=\sum_{i=0}^{p-s_1-1}n\frac{n^2\beta_4-1}{n-1}\mathbb{E}\big(S_{2}^{(i)}-n^{-1}\big)+\sum_{i=p-s_1}^{p-s_2-1}n\frac{n^2\kappa_4-1}{n-1}\mathbb{E}\big(S_{2}^{(i)}-n^{-1}\big)
\end{equation}
and
\begin{equation}\label{eq_vsquaresum}
    \sum_{i=0}^{p-s_2-1}\mathbb{E}(V_{i+1}^2)=\sum_{i=0}^{p-s_1-1}2n^2\beta_{2,2}\big(\frac{1}{n-i}-\mathbb{E}S_{2}^{(i)}\big)+\sum_{i=p-s_1}^{p-s_2-1}2n^2\kappa_{2,2}\big(\frac{1}{n-i}-\mathbb{E}S_{2}^{(i)}\big)+\mathrm{o}\big(\frac{\log n}{n}\big),
\end{equation}
where we used $\kappa_{k_1,\ldots,k_r}$ to denote the mixed moments of the self-normalized entry for standard Gaussian random variables, similar to the notation of $\beta_{k_1,\ldots,k_r}$. It is straightforward that
\begin{equation*}
    \kappa_{2k}\sim n^{-k},~\kappa_{2,2}\sim n^{-2}.
\end{equation*}
Invoking the defintion of $\sigma_{n}^2$, combining \eqref{eq_usquare} and \eqref{eq_vsquare}, we get
\begin{equation*}
	\sigma_{n}^2\mathbb{E}\big(\max_{0\le i\le p-1}\widetilde{Z}_{i+1}^2\big)\le \sigma_{n}^2\sum_{i=0}^{p-s_2-1}\mathbb{E}(\widetilde{Z}_{i+1}^2)=\sigma_{n}^2\sum_{i=0}^{p-s_2-1}\big(\mathbb{E}U_{i+1}^2+\mathbb{E}V_{i+1}^2\big)+\mathrm{o}(1),
\end{equation*}
where we used that
\begin{equation}\label{eq_crossUV}
\begin{split}
\sum_{i=0}^{p-s_2-1}\mathbb{E}(U_{i+1}V_{i+1}\mid \mathcal{F}_i)
=&\sum_{i=0}^{p-s_2-1}\big(\sum_{j\ne l}^{n}q_{i,jj}q_{i,jl}n(n\beta_{3,1}-\beta_{1,1})+\sum_{j\ne k\ne l}^{n}q_{i,jj}q_{i,kl}n(n\beta_{2,1,1}-\beta_{1,1})\big)
\\
\lesssim &\sum_{i=0}^{p-s_2-1} n^{-1}\big(\sum_{j\ne l}q_{i,jj}^2\sum_{j\ne l}q_{i,jl}^2\big)^{1/2}+\sum_{i=0}^{p-s_2-1}\sum_{k\ne l}^{n}n^{-2}q_{i,kl}\\
\le& \sum_{i=0}^{p-s_2-1}n^{-1}\big(\frac{1}{(n-i)^2}\big)^{1/2}+\sum_{i=0}^{p-s_2-1}n^{-2}\frac{n}{(n-i)^{1/2}}\le n^{-1/2}\log n
\end{split}
\end{equation}
since $\sum_{j,l}q_{i,jl}^2= 1/(n-i)$, $\sum_{j\ne l}q_{i,jl}\le n/(n-i)^{1/2}$ and $\beta_{3,1}\le \beta_{1,1}=\mathrm{o}(n^{-3}), \beta_{2,1,1}\lesssim n^{-1}\beta_{1,1}=\mathrm{o}(n^{-4})$. Note that the last $s_1$ rows vanish due to the symmetry of the standard Gaussian random variable, which does not affect the estimation above since both terms are negligible as $n\rightarrow \infty$.

Now we consider \eqref{eq_usquare}. For the first part, by $\beta_4= C_{\alpha} n^{-\alpha/2}\mathcal{L}(n^{1/2})$ from Lemma \ref{lem_moment_rates}, we have
\begin{equation}\label{eq_usquaresmall}
	\sum_{i=0}^{p-s_1-1}\mathbb{E}U_{i+1}^2\lesssim \mathcal{L}(n^{1/2})n^{2-\alpha/2}\sum_{i=0}^{p-s_1-1}\mathbb{E}(S_2^{(i)}-n^{-1})\lesssim \mathcal{L}(n^{1/2})n^{(3-\alpha)/2}+[\mathcal{L}(n^{1/2})]^2n^{3-\alpha}
\end{equation}
by \eqref{eq_smallsumq}. While for the second part, note the relation
\begin{equation}\label{eq_meanS2i}
    \mathbb{E}S_2^{(i)}=\mathbb{E}\sum_{k=1}^{n}q_{i,kk}^2\le \mathbb{E}\max_{1\le k\le n}q_{i,kk}=\frac{1}{n-i}\mathbb{E}\max_{1\le k\le n}p_{i,kk},
\end{equation}
since $\sum_{k=1}^{n}q_{i,kk}=1$ and $q_{i,kk}\ge 0$. Thus, by \eqref{eq_largesumq1}, we get
\begin{equation}\label{eq_largeS2}
    \begin{split}
    \sum_{i=p-s_1}^{p-s_2-1}\mathbb{E}U_{i+1}^2= \sum_{i=p-s_1}^{p-s_2-1}n\frac{n^2\kappa_4-1}{n-1}\mathbb{E}\big(S_{2}^{(i)}-n^{-1}\big)\le C\sum_{i=p-s_1}^{p-s_2-1}\mathbb{E}\big(S_{2}^{(i)}-\frac{1}{n}\big)\le C\log\log n,
    \end{split}
\end{equation}
since $\kappa_4\sim 3n^{-2}$, which further implies
\begin{equation}\label{eq_usquare2}
	\frac{1}{\log n}\sum_{i=0}^{p-s_2-1}\mathbb{E}U_{i+1}^2\lesssim \frac{1}{\log n}\mathcal{L}(n^{1/2})n^{(3-\alpha)/2}+\frac{1}{\log n}[\mathcal{L}(n^{1/2})]^2n^{3-\alpha}+\frac{\log\log  n}{\log n}=\mathrm{o}(1)
\end{equation}
by \eqref{eq_newcondition} as $n\rightarrow \infty$.
Moreover, by \eqref{eq_vsquare}, \eqref{eq_largesumq1} and \eqref{eq_meanS2i}, one has
\begin{equation}\label{eq_vsquare2}
	\begin{split}
	\sum_{i=0}^{p-s_2-1}\mathbb{E}V_{i+1}^2=&\sum_{i=0}^{p-s_1-1}2n^2\beta_{2,2}\big(\frac{1}{n-i}-(\mathbb{E}S_{2}^{(i)}-\frac{1}{n})-\frac{1}{n}\big)\\
	&+\sum_{i=p-s_1}^{p-s_2-1}2n^2\kappa_{2,2}\big(\frac{1}{n-i}-(\mathbb{E}S_{2}^{(i)}-\frac{1}{n})-\frac{1}{n}\big)+\mathrm{o}(n^{-1/2})\\
    =&\sum_{i=0}^{p-s_2-1}\big(\frac{2}{n-i}-\frac{2}{n}\big)+C\log\log n
	= (-2\log (1-\frac{p-1}{n})-2\frac{p}{n})(1+\mathrm{o}(1)),
	\end{split}
\end{equation}
where we used $\beta_{2,2}=\frac{1-n\beta_4}{n(n-1)}=n^{-2}+\mathrm{O}(n^{-1-\alpha/2}\mathcal{L}(n^{1/2}))=n^{-2}+\mathrm{o}(n^{-5/2}\log n)$
by \eqref{eq_newcondition} and $ \kappa_{2,2}=\frac{1-n\kappa_4}{n(n-1)}=n^{-2}+\mathrm{O}(n^{-3})$.
Thereafter, we obtain
\begin{equation*}
	\sigma_{n}^2\mathbb{E}\big(\max_{0\le i\le p-s_2-1}\widetilde{Z}_{i+1}^2\big)\le \sigma_{n}^2\sum_{i=0}^{p-s_2-1}\mathbb{E}(\widetilde{Z}_{i+1}^2)= 1+\mathrm{o}(1).
\end{equation*}
It remains to show $\sigma_{n}^2\sum_{i=0}^{p-s_2-1}\widetilde{Z}_{i+1}^2\stackrel{\mathbb{P}}{\rightarrow}1$, which is an immediate consequence of \eqref{eq_claim2} and
\begin{equation*}
	\sigma_n\sum_{i=0}^{p-s_2-1}\big(\mathbb{E}(\widetilde{Z}_{i+1}^2\mid \mathcal{F}_{i})-\mathbb{E}\widetilde{Z}_{i+1}^2\big)\stackrel{\mathbb{P}}{\rightarrow} 0,~\text{as}~n\rightarrow \infty.
\end{equation*}
By \eqref{eq_meanU2}, \eqref{eq_vsquare},  \eqref{eq_usquare}, \eqref{eq_crossUV} and elementary calculations, we have
\begin{equation*}
\begin{split}
\sum_{i=0}^{p-s_2-1}\big(\mathbb{E}(\widetilde{Z}_{i+1}^2 \mid\mathcal{F}_i)-\mathbb{E}\widetilde{Z}_{i+1}^2\big)
=&\sum_{i=0}^{p-s_2-1}\big(\mathbb{E}(U_{i+1}^2+V_{i+1}^2 \mid \mathcal{F}_i)-\mathbb{E}(U_{i+1}^2+V_{i+1}^2)\big) +\mathrm{o}(1)\\
\sim &\sum_{i=0}^{p-s_1-1} \left(\frac{n\big(S_2^{(i)}-\mathbb{E}S_2^{(i)}\big)}{n-1}(1-n^2 \beta_4)+2 n^2 \beta_{2,2}(S_2^{(i)}-\mathbb{E}S_2^{(i)})\right) \\
&+\sum_{i=p-s_1}^{p-s_2-1}\left(\frac{n\big(S_2^{(i)}-\mathbb{E}S_2^{(i)}\big)}{n-1}(1-n^2 \kappa_4)+2 n^2 \kappa_{2,2}(S_2^{(i)}-\mathbb{E}S_2^{(i)})\right)\\
 \sim&\left(3-n^2 \beta_4\right) \sum_{i=0}^{p-s_1-1}(S_2^{(i)}-\mathbb{E}S_2^{(i)})+ \left(3-n^2 \kappa_4\right) \sum_{i=p-s_1}^{p-s_2-1}(S_2^{(i)}-\mathbb{E}S_2^{(i)}).
\end{split}
\end{equation*}
The first part follows from $n^2 \beta_4\sim n^{2-\alpha / 2}\mathcal{L}(n^{1/2})$ by Lemma \ref{lem_moment_rates}, and
\begin{equation*}
\begin{split}
\sum_{i=0}^{p-s_1-1}(S_2^{(i)}-\mathbb{E}S_2^{(i)})= & \sum_{i=0}^{p-s_1-1}(S_2^{(i)}-\frac{1}{n})-\sum_{i=0}^{p-s_1-1}(\mathbb{E}S_2^{(i)}-\frac{1}{n}) \\
= & \sum_{i=0}^{p-s_1-1} \sum_{\ell=1}^n(q_{i, \ell \ell}^2-\frac{1}{n^2})-\sum_{i=0}^{p-s_1-1} \sum_{\ell=1}^n(\mathbb{E}\left[q_{i, \ell \ell}^2\right]-\frac{1}{n^2})\\ =&\mathrm{O}_{\mathbb{P}}(n^{-1 / 2}+\mathcal{L}(n^{1/2})n^{(2-\alpha)/2})+\mathrm{O}(n^{-1/2}+\mathcal{L}(n^{1/2})n^{(2-\alpha)/2}),
\end{split}
\end{equation*}
where in the last line we used the fact that $0 \le \sum_{\ell=1}^n(q_{i, \ell \ell}^2-\frac{1}{n^2})$ by Lemma \ref{lemma_diagonals} and Markov's inequality. For the second part, noting the fact $S_2^{(i)}\ge 1/n$ and applying Markov's inequality, we have
\[
\sum_{i=p-s_1}^{p-s_2-1}S_2^{(i)}=\mathrm{O}_{\mathbb{P}} (\sum_{i=p-s_1}^{p-s_2-1}\mathbb{E}S_2^{(i)})=\mathrm{O}_{\mathbb{P}} (\log\log n)
\]
by \eqref{eq_largeS2}. This further gives
\begin{equation*}
\begin{split}
\sum_{i=p-s_1}^{p-s_2-1}(S_2^{(i)}-\mathbb{E}S_2^{(i)})=\mathrm{O}_{\mathbb{P}}(\log\log n).
\end{split}
\end{equation*}
Therefore, we obtain
\begin{equation*}
	\sigma_n\sum_{i=0}^{p-s_2-1}\big(\mathbb{E}(\widetilde{Z}_{i+1}^2\mid \mathcal{F}_{i})-\mathbb{E}\widetilde{Z}_{i+1}^2\big)\stackrel{\mathbb{P}}{\rightarrow} 0,~\text{as}~n\rightarrow \infty
\end{equation*}
in view of \eqref{eq_newcondition} and $\sigma_{n}^2\sim \log^{-1} n$.

\noindent\textbf{Case II.} $n^{19/20}\le n-p=:n\rho_n$, where $\rho_n=1-p/n\rightarrow 0$.\\
Following a similar argument above, we have
\begin{equation*}
\frac{1}{[-2\log(1-p/n)]^{2}}\sum_{i=0}^{p-s_2-1}\mathbb{E}V_{i+1}^4\le \frac{C}{(-\log\rho_n)^{2}}\sum_{i=0}^{p-s_2-1}\frac{1}{(n-i)^2}\le \frac{C}{s_2 (-\log\rho_n)^{2}}=\mathrm{o}(1).
\end{equation*}
For the $U$-part, by \eqref{eq_largesumq2} and \eqref{eq_usquaresmall}, we have
\begin{equation*}
	\sum_{i=0}^{p-s_1-1}\mathbb{E}U_{i+1}^2\lesssim \mathcal{L}(n^{1/2})n^{(3-\alpha)/2}+[\mathcal{L}(n^{1/2})]^2n^{3-\alpha}
\end{equation*}
and
\begin{equation*}
\sum_{i=p-s_1}^{p-s_2-1}\mathbb{E}U_{i+1}^2\le C\sum_{i=p-s_1}^{p-s_2-1}\sum_{k=1}^n\mathbb{E}q_{i,kk}^2\le C,
\end{equation*}
which together with \eqref{eq_usquaresmall} imply
\begin{equation*}
	\frac{1}{[-2\log(1-p/n)]}\sum_{i=0}^{p-s_2-1}\mathbb{E}U_{i+1}^2\le \frac{\mathcal{L}(n^{1/2})n^{(3-\alpha)/2}+[\mathcal{L}(n^{1/2})]^2n^{3-\alpha}+C}{(-\log \rho_n)}=\mathrm{o}(1)
\end{equation*}
by \eqref{eq_newcondition} and $\rho_n\rightarrow 0$. The rest is similar and thus omitted.

\subsubsection{Proof of \eqref{eq_claim2}}
By \eqref{eq_usquare}, \eqref{eq_crossUV}, and \eqref{eq_vsquare2}, we have
\begin{equation*}
	\begin{split}
	    \sum_{i=0}^{p-s_2-1}\mathbb{E}(\widetilde{Z}_{i+1}^2\mid \mathcal{F}_{i})=&\sum_{i=0}^{p-s_1-1}[(n^2\beta_4-1)(\sum_{u=1}^{n}q_{i,uu}^2-n^{-1})+2n^2\beta_{2,2}\sum_{u\ne v}q_{i,uv}^2]\\
        &+\sum_{i=p-s_1}^{p-s_2-1}[(n^2\kappa_4-1)(\sum_{u=1}^{n}q_{i,uu}^2-n^{-1})+2n^2\kappa_{2,2}\sum_{u\ne v}q_{i,uv}^2]+\mathrm{o}(1).
	\end{split}
\end{equation*}
Noticing the fact that $(\sum_{u=1}^{n}q_{i,uu}^2-n^{-1})\ge 0$, Lemma \ref{lemma_diagonals} tegother with Markov's inequality implies
\begin{equation*}
	\begin{split}
	    \sum_{i=0}^{p-s_1-1}(n^2\beta_4-1)(\sum_{u=1}^{n}q_{i,uu}^2-n^{-1})
        \lesssim \mathcal{L}(n^{1/2})n^{2-\alpha/2}(\mathrm{O}_{\mathbb{P}}(n^{-1/2}+\mathcal{L}(n^{1/2})n^{1-\alpha/2})).
	\end{split}
\end{equation*}
By \eqref{eq_largesumq1} and \eqref{eq_largesumq2}, we have
\begin{equation*}
	\frac{1}{\sqrt{-\log (1-(p-1)/n)}}\sum_{i=p-s_1}^{p-s_2-1}\sum_{u=1}^{n}\mathbb{E}q_{i,uu}^2=\mathrm{O}(\frac{\log\log n}{\sqrt{\log n}})+\mathrm{O}(\frac{C}{\sqrt{-\log \rho_n}})=\mathrm{o}(1)
\end{equation*}
as $\rho_n=1-p/n\rightarrow 0$.
Moreover, one has
\begin{equation*}
    |\beta_{2,2}-\kappa_{2,2}|=\mathrm{o}(n^{-5/2}\log n)
\end{equation*}
for \eqref{eq_newcondition}, which further implies
\begin{equation*}
    \left|\sum_{i=p-s_1}^{p-s_2-1}2n^2\kappa_{2,2}\sum_{u\ne v}q_{i,uv}^2-\sum_{i=p-s_1}^{p-s_2-1}2n^2\beta_{2,2}\sum_{u\ne v}q_{i,uv}^2\right|\le \mathrm{o}(n^{-1/2}\log n) \sum_{i=p-s_1}^{p-s_2-1}\frac{1}{n-i}\le \mathrm{o}(n^{-1/2}\log^2 n)
\end{equation*}
since $\sum_{u\ne v}q_{i,uv}^2\le (n-i)^{-1}$.
Therefore, to show \eqref{eq_claim2}, it suffices to show
\begin{equation}\label{eq_claim21}
	\sigma_{n}\big(\sum_{i=0}^{p-s_2-1}\widetilde{Z}_{i+1}^2-\sum_{i=0}^{p-s_2-1}2n^2\beta_{2,2}\sum_{u\ne v}q_{i,uv}^2\big)\stackrel{\mathbb{P}}{\rightarrow}0,~\text{as}~n\rightarrow \infty.
\end{equation}
On the one hand, recalling the definition of $\widetilde{Z}_{i+1}$, one has
\begin{equation}\label{eq_z2decomp}
\begin{split}
\widetilde{Z}_{i+1}^2
=&(U_{i+1}+V_{i+1})^2=\big[\sum_{u=1}^{n}q_{i,uu}(nY_{i+1,u}^2-1)+\sum_{u\ne v}q_{i,uv}n(Y_{i+1,u}Y_{i+1,v}-\beta_{1,1})\big]^2\\
=&U_{i+1}^2+2\sum_{u\ne v}q_{i,uv}^2n^2(Y_{i+1,u}Y_{i+1,v}-\beta_{1,1})^2\\
&+2\sum_{\substack{u_1\ne v_1,u_2\ne v_2\\ \{u_1,v_1\}\ne \{u_2,v_2\}}}q_{i,u_1v_1}q_{i,u_2v_2}n^2(Y_{i+1,u_1}Y_{i+1,v_1}-\beta_{1,1})(Y_{i+1,u_2}Y_{i+1,v_2}-\beta_{1,1})\\
&+2\sum_{u=1}^{n}q_{i,uu}(nY_{i+1,u}^2-1)\sum_{u\ne v}q_{i,uv}n(Y_{i+1,u}Y_{i+1,v}-\beta_{1,1})\\
=&:U_{i+1}^2+2W_{1}(i)+2W_{2}(i)+2W_{3}(i).
\end{split}
\end{equation}
In what follows, we analyze those terms one by one.
For the first term, from \eqref{eq_usquaresmall}, \eqref{eq_largeS2}, and \eqref{eq_largesumq2}, we obtain, for $n-p\le n^{19/20}$,
\begin{equation}\label{eq_sharp1}
\sigma_{n}\sum_{i=0}^{p-s_2-1}\mathbb{E}U_{i+1}^2\lesssim \frac{\mathcal{L}(n^{1/2})n^{(3-\alpha)/2}+[\mathcal{L}(n^{1/2})]^2n^{3-\alpha}+\log\log n}{\sqrt{\log n}}\lesssim \log^{-2\mathfrak{c}}n=\mathrm{o}(1),
\end{equation}
in view of \eqref{eq_newcondition}, which further implies $\sigma_{n}\sum_{i=0}^{p-s_2-1}U_{i+1}^2=\mathrm{o}_{\mathbb{P}}(1)$ by Markov's inequality.
Noticing the Gaussian repalcement for the last $s_1$ rows, we have, for $0\le i\le p-s_1-1$,
\begin{equation*}
\mathbb{E}U_{i+1}^2\lesssim n^{2}\beta_4 \sum_{u=1}^{n}(\mathbb{E}q_{i,uu}^2-n^{-2})\lesssim \frac{n^{2}\beta_4}{(n-i)},
\end{equation*}
and, for $p-s_1\le i\le p-s_3-1$,
\begin{equation*}
\mathbb{E}U_{i+1}^2\le C\mathbb{E}S_{2}^{(i)}\lesssim C\frac{1}{n-i}\mathbb{E}\max_{1\le k\le n}p_{i,kk} \le \frac{1}{n-i}.
\end{equation*}
From the proof of Lemmas \ref{lemma_upperdiagonal} and \ref{lemma_diagonals}, we have
\begin{equation*}
    \mathbb{E}U_{i+1}^2\le C\mathbb{E}S_{2}^{(i)}\lesssim C\frac{1}{n-i}\mathbb{E}\max_{1\le k\le n}p_{i,kk} \le \frac{1}{n-i}\log^{-a} n
\end{equation*}
for $p-s_3\le i\le p-s_2-1$. Moreover, it is straightforward that
\begin{equation*}
	\mathbb{E}V_{i+1}^2=2n^2\beta_{2,2}\big(\frac{1}{n-i}-\mathbb{E}S_{2}^{(i)}\big)+\mathrm{O}(\frac{n^{-1}}{n-i})\le \frac{4}{n-i}.
\end{equation*}
by \eqref{eq_vsquare}.
By the Cauchy-Schwarz inequality and \eqref{eq_Emaxpll}, we have
\begin{equation*}
	\begin{split}
	\sigma_{n}\sum_{i=0}^{p-s_2-1}\mathbb{E}|W_{3}(i)|\le & C\sigma_{n}\sum_{i=0}^{p-s_2-1}\big(\mathbb{E}[\sum_{u=1}^{n}q_{i,uu}(nY_{i+1,u}^2-1)]^2\big)^{1/2}\big(\mathbb{E}[\sum_{u\ne v}q_{i,uv}n(Y_{i+1,u}Y_{i+1,v}-\beta_{1,1})]^2\big)^{1/2}\\
    \lesssim & \sigma_n\sum_{i=0}^{p-s_1-1}\frac{1}{n-i}\big(\mathbb{E}U_{i+1}^2\big)^{1/2}+\sigma_n^2\sum_{i=p-s_1}^{p-s_2-1}\frac{1}{n-i}\big(\mathbb{E}U_{i+1}^2\big)^{1/2}\\
	\lesssim & \sigma_{n}\sum_{i=0}^{p-s_1-1}\big( \frac{n^{2}\beta_4}{(n-i)^3}\big)^{1/2}+\sigma_n\sum_{i=p-s_1}^{p-s_3-1}\frac{1}{n-i}\frac{1 }{(n-i)^{1/2}}+\sigma_n\sum_{i=p-s_3}^{p-s_2-1}\frac{1}{n-i}\frac{\log^{-a/2}n }{(n-i)^{1/2}}\\
	\lesssim & C\sigma_{n} p[\mathcal{L}(n^{1/2})n^{-1/2-\alpha/4}]+\sigma_{n}\frac{C}{s_3}++\sigma_{n}\log^{-a/2}n=\mathrm{o}(1),
	\end{split}
\end{equation*}
for $\alpha\ge 3$, which together with Markov's inequality implies
$\sigma_{n}\sum_{i=0}^{p-s_2-1}W_{3}(i)=\mathrm{o}_{\mathbb{P}}(1)$. Similarly, one has
\begin{equation*}
\begin{split}
W_{2}(i)=&\sum_{u_1\ne v_1\ne v_2}q_{i,u_1v_1}q_{i,u_1v_2}n^2(Y_{i+1,u_1}Y_{i+1,v_1}-\beta_{1,1})(Y_{i+1,u_1}Y_{i+1,v_2}-\beta_{1,1})\\
&+\sum_{u_1\ne v_1\neq u_2\ne v_2}q_{i,u_1v_1}q_{i,u_2v_2}n^2(Y_{i+1,u_1}Y_{i+1,v_1}-\beta_{1,1})(Y_{i+1,u_2}Y_{i+1,v_2}-\beta_{1,1}),
\end{split}
\end{equation*}
which further gives
\begin{equation*}
	\begin{split}
	\mathbb{E}W_{2}(i)=\sum_{u_1\ne v_1\ne v_2}q_{i,u_1v_1}q_{i,u_1v_2}n^2(\beta_{2,1,1}-\beta_{1,1}^2)+\sum_{u_1\ne v_1\neq u_2\ne v_2}q_{i,u_1v_1}q_{i,u_2v_2}n^2(\beta_{1,1,1,1}-\beta_{1,1}^2)
	\lesssim  \frac{n^{-1}}{n-i}
	\end{split}
\end{equation*}
due to Lemma \ref{lemma_Qbounds}, $\beta_{1,1}\le \mathrm{o}(n^{-3}), \beta_{2,1,1}\le \mathrm{o}(n^{-4})$ and $\beta_{1,1,1,1}\le \mathrm{o}(n^{-5})$ for $\alpha\ge 3$. Moreover, by Lemma \ref{lem_moment_rates}, one has, for $0\le i\le p-s_1-1$,
\begin{equation*}
	\begin{split}
	\mathbb{E}\big(W_{2}(i)\big)^2\lesssim& \mathbb{E}\sum_{u_1\ne v_1\ne v_2}q_{i,u_1v_1}^2q_{i,u_2v_2}^2n^4(\beta_{4,2,2}-\beta_{1,1}\beta_{3,2,1}-\beta_{1,1}^2\beta_{2,2}-\beta_{1,1}^2\beta_{2,1,1}-\beta_{1,1}^4)\\
	&+\mathbb{E}\sum_{u_1\ne v_1\ne u_2\ne v_2}q_{i,u_1v_1}^2q_{i,u_2v_2}^2n^4(\beta_{2,2,2,2}-\beta_{1,1}^2\beta_{2,2}-\beta_{1,1}^2\beta_{1,1,1,1}-\beta_{1,1}^4)\\
	\lesssim &\mathcal{L}(n^{1/2})n^{2-\alpha/2}\mathbb{E}\sum_{u_1\ne v_1\ne v_2}q_{i,u_1v_1}^2q_{i,u_2v_2}^2+\mathbb{E}\sum_{u_1\ne v_1\ne u_2\ne v_2}q_{i,u_1v_1}^2q_{i,u_2v_2}^2\\
	\le &\mathcal{L}(n^{1/2})n^{2-\alpha/2} (n-i)^{-3}+(n-i)^{-2}
	\end{split}
\end{equation*}
since $\sum_{v_1}q_{i,uv_1}^2=q_{i,uu}/(n-i)$ and $\sum_{u,v}q_{i,uv}^2=1/(n-i)$, where the odd terms are negligible by Lemma \ref{lemma_odd_moment}. Recalling the replacement argument, for $p-s_1\le i\le p-1$, we get
\begin{equation*}
	\begin{split}
	\mathbb{E}\big(W_{2}(i)\big)^2\lesssim& \mathbb{E}\sum_{u_1\ne v_1\ne v_2}q_{i,u_1v_1}^2q_{i,u_2v_2}^2n^4\kappa_{4,2,2}
	+\mathbb{E}\sum_{u_1\ne v_1\ne u_2\ne v_2}q_{i,u_1v_1}^2q_{i,u_2v_2}^2n^4\kappa_{2,2,2,2}\\
	\lesssim &C\mathbb{E}\sum_{u_1\ne v_1\ne v_2}q_{i,u_1v_1}^2q_{i,u_2v_2}^2+C\mathbb{E}\sum_{u_1\ne v_1\ne u_2\ne v_2}q_{i,u_1v_1}^2q_{i,u_2v_2}^2\le C(n-i)^{-2}
	\end{split}
\end{equation*}
due to symmetry. Thus, one has
\begin{equation*}
	\begin{split}
	&\sigma_{n}^2\mathbb{E}(\sum_{i=0}^{p-s_2-1}W_{2}(i))^2= \sigma_{n}^2\sum_{i=0}^{p-s_2-1}\mathbb{E}(W_{2}(i))^2+2\sigma_{n}^2\sum_{i\ne j}^{p-s_2-1}\mathbb{E}(W_{2}(i)W_{2}(j))\\
	\le &C\sigma_{n}^2\sum_{i=0}^{p-s_1-1}\big(\frac{\mathcal{L}(n^{1/2})n^{2-\alpha/2}}{(n-i)^3} +\frac{1}{(n-i)^2}\big)+C\sigma_{n}^2\sum_{i=p-s_1}^{p-s_2-1}\frac{1}{(n-i)^2}+2\sigma_{n}^2\sum_{i\ne j}^{p-s_2-1}\frac{n^{-2}}{(n-i)(n-j)}
	\le \mathrm{o}(1)
	\end{split}
\end{equation*}
for $\alpha\ge 3$, where we used that for $i\ne j$, by conditional expectation argument,
\begin{equation*}
	\begin{split}
	\mathbb{E}(W_{2}(i)W_{2}(j))=\mathbb{E}W_{2}(i)\mathbb{E}W_{2}(j)\le C\frac{n^{-2}}{(n-i)(n-j)}.
	\end{split}
\end{equation*}
Therefore, we have $\sigma_{n}^2\sum_{i=0}^{p-s_2-1}W_{2}(i)=\mathrm{o}_{\mathbb{P}}(1)$. Next, considering $W_{1}(i)$, one has
\begin{equation*}
	\begin{split}
	W_1(i)=
	&\sum_{u\ne v}q_{i,uv}^2n^2(Y_{i+1,u}^2Y_{i+1,v}^2-2\beta_{1,1}Y_{i+1,u}Y_{i+1,v}+\beta_{1,1}^2)\\
	=&\sum_{u\ne v}q_{i,uv}^2n^2(Y_{i+1,u}^2Y_{i+1,v}^2-2\beta_{1,1}Y_{i+1,u}Y_{i+1,v})+\mathrm{o}(n^{-4}(n-i)^{-1})
	\end{split}
\end{equation*}
since $\sum_{u\ne v}q_{i,uv}^2\le 1/(n-i)$ and $\beta_{1,1}=\mathrm{o}(n^{-3})$ for $\alpha\ge 3$, where the second term satisfies
\begin{equation*}
	\begin{split}
	&\mathbb{E}\big(\sum_{i=0}^{p-s_2-1}\sum_{u\ne v}q_{i,uv}^2Y_{i+1,u}Y_{i+1,v}\big)^2\\
	\le &\sum_{i=0}^{p-s_2-1}\big(\sum_{u\ne v}\mathbb{E}q_{i,uv}^4\beta_{2,2}+\sum_{u_1\ne v_1\ne u_2}\mathbb{E}q_{i,u_1v_1}^2q_{i,u_2v_1}^2\beta_{2,1,1}+\sum_{u_1\ne v_1\ne u_2\ne v_2}\mathbb{E}q_{i,u_1v_1}^2q_{i,u_2v_2}^2\beta_{1,1}^2\big)\\
	&+2\sum_{i\ne j}^{p-s_2-1}\mathbb{E}\sum_{u\ne v}q_{i,uv}^2\sum_{u\ne v}q_{j,uv}^2\beta_{1,1}^2\\
	\le &\sum_{i=0}^{p-s_2-1}\big(\frac{n^{-2}}{(n-i)^3}+\frac{n^{-4}}{(n-i)^3}+\frac{n^{-6}}{(n-i)^2}\big)+\sum_{i\ne j}^{p-s_2-1}\frac{n^{-6}}{(n-i)(n-j)}\le Cn^{-2} ,
	\end{split}
\end{equation*}
since $q_{i,uv}\le 1/(n-i)$, $\sum_{v}q_{i,uv}^2=q_{i,uu}/(n-i)$. Thus, one has
\begin{equation*}
	\sum_{i=0}^{p-s_2-1}W_1(i)=2\sum_{i=0}^{p-s_2-1}\sum_{u\ne v}q_{i,uv}^2n^2Y_{i+1,u}^2Y_{i+1,v}^2+\mathrm{o}_{\mathbb{P}}(1).
\end{equation*}
Thus, by elementary calculation and conditional expectation argument, we get
\begin{equation*}
	\begin{split}
	&\mathbb{E}\big(\sum_{i=0}^{p-s_2-1}\sum_{u\ne v}q_{i,uv}^2n^2(Y_{i+1,u}^2Y_{i+1,v}^2-\beta_{2,2})\big)^2\\
	\lesssim &\sum_{i=0}^{p-s_1-1}n^4\big(\frac{1}{(n-i)^3}n^{-\alpha}[\mathcal{L}(n^{1/2})]^2+\frac{1}{(n-i)^3}n^{-2-\alpha/2}\mathcal{L}(n^{1/2})+\frac{1}{(n-i)^2}n^{-4}\big)\\
    &+\sum_{i=p-s_1}^{p-s_2-1}n^4\big(\frac{1}{(n-i)^3}n^{-4}+\frac{1}{(n-i)^3}n^{-4}+\frac{1}{(n-i)^2}n^{-4}\big)\\
	\lesssim &n(n-p+s_1)^{-2}[\mathcal{L}(n^{1/2})]^2+(n-p+s_1)^{-1}\mathcal{L}(n^{1/2})+(n-p+s_2)^{-1}+s_2^{-1}\le \mathrm{o}(1)
	\end{split}
\end{equation*}
for $\alpha\ge 3$ by Lemma \ref{lem_moment_rates} and the estimates $q_{i,uv}\le 1/(n-i)$, $\sum_{v}q_{i,uv}^2=q_{i,uu}/(n-i)$ and $\sum_{u,v}q_{i,uv}^2=1/(n-i)$. This together with Markov's inequality gives
\begin{equation*}
	\sigma_n\sum_{i=0}^{p-s_2-1}(W_{1}(i)-2n^2\beta_{2,2}\sum_{u\ne v}q_{i,uv}^2)\stackrel{\mathbb{P}}{\rightarrow} 0,
\end{equation*}
which combined with \eqref{eq_z2decomp} and the above estimates completes the proof of \eqref{eq_claim21}, and thus \eqref{eq_claim2} as desired.

\subsubsection{Proof of \eqref{eq_claim3}}
Unlike \cite{bao2015logarithmic} and \cite{wang2018}, we have to distinguish three cases, which cover the case $\lim_{n\rightarrow \infty}p/n<1$ in our setting \eqref{eq_oldcondition}.
\begin{lemma}
\label{lemma_error}
For $E_{i+1}$ defined in \eqref{eq_error} and $a>0$,
\begin{itemize}
\item[(i)] if $1\le n-p\le n^{19/20}$ and $\widetilde{Z}_{i+1}\ge -1+(\log\log n)^{-a/2}$, we have
\begin{equation*}
|E_{i+1}|\le C(U_{i+1}^2+|V_{i+1}|^{2+\delta})\log \log n,
\end{equation*}
\item[(ii)] if $ n-p\ge n^{19/20}$, $\rho_n=1-p/n\rightarrow 0$ and $\widetilde{Z}_{i+1}\ge -1+(\log \log\rho_n^{-1})^{-1}$, we have
\begin{equation*}
|E_{i+1}|\le C(U_{i+1}^2+|V_{i+1}|^{2+\delta})\log \log \rho_n^{-1},
\end{equation*}
\item[(iii)] if $\lim_{n\rightarrow \infty}p/n<1$ and $\widetilde{Z}_{i+1}\ge -1+1/1000$, we have
$|E_{i+1}|\le C(U_{i+1}^2+|V_{i+1}|^{2+\delta})$;
\end{itemize}
for any $0< \delta\le 1$, where $C:=C(a,\delta)$ is a positive constant that only depends on $a$ and $\delta$.
\end{lemma}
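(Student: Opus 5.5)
The statement is a deterministic inequality about the scalar function
\[
g(x)=\log(1+x)-x+\tfrac{x^2}{2},
\]
evaluated at $x=\widetilde{Z}_{i+1}=U_{i+1}+V_{i+1}$. So I will prove it as a calculus lemma and then substitute the decomposition. Write $\eta\in(0,1)$ for the lower cutoff: $\eta=(\log\log n)^{-a/2}$ in (i), $\eta=(\log\log\rho_n^{-1})^{-1}$ in (ii), and $\eta=1/1000$ in (iii). The goal is a bound $|g(x)|\le C\,h(\eta)\min(x^2,|x|^{3})$ for all $x\ge -1+\eta$, with $h(\eta)$ of order $\log(1/\eta)$. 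After that the split $|x|^{2+\delta}\lesssim |U|^{2+\delta}+|V|^{2+\delta}$ finishes the argument.

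\textbf{Step 1 (the scalar bound).} I split into three ranges of $x$.
\begin{itemize}
\item[(a)] For $|x|\le 1/2$, Taylor's theorem with remainder gives $|g(x)|\le C|x|^3\le C|x|^{2+\delta}$.
\item[(b)] For $x\ge 1/2$, we have $g(x)\ge 0$. Also $\log(1+x)-x\le 0$, so $|g(x)|\le x^2/2$. Since $x\ge 1/2$, this gives $x^2\le 2^{\delta}|x|^{2+\delta}$.
\item[(c)] For $-1+\eta\le x\le -1/2$, we have $|g(x)|\le |\log(1+x)|+|x|+x^2/2\le \log(1/\eta)+C$. Since $|x|\ge 1/2$ here, this is at most $C\log(1/\eta)\,|x|^{2+\delta}$.
\end{itemize}
With $\eta=(\log\log n)^{-a/2}$ we get $\log(1/\eta)=\tfrac a2\log\log\log n\le C(a)\log\log n$. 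The same bound holds with $\rho_n^{-1}$ in place of $n$ in (ii), and $\log(1/\eta)$ is a constant in (iii). Altogether, for $x\ge -1+\eta$,
\[
|g(x)|\le C(a,\delta)\,L_n\,|x|^{2+\delta},
\]
where $L_n$ is $\log\log n$, $\log\log\rho_n^{-1}$, or $1$ in the three cases respectively.

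\textbf{Step 2 (passing to $U$ and $V$).} The inequality $|x|^{2+\delta}\le 2^{1+\delta}(|U|^{2+\delta}+|V|^{2+\delta})$ would give a bound with $|U|^{2+\delta}$, but the lemma asks for $U^2$. To keep $U$ quadratic I split $x^2$ rather than $|x|^{2+\delta}$.

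In ranges (b) and (c) I use $|g(x)|\le C L_n x^2$ together with
\[
x^2\le 2U^2+2V^2 .
\]
Then I distinguish two subcases.
\begin{itemize}
\item If $|V|\ge |x|/4$, then $|V|\ge 1/8$, so $V^2\le 8^{\delta}|V|^{2+\delta}$.
\item If $|V|<|x|/4$, then $|U|\ge 3|x|/4$, so $x^2\le C U^2$.
\end{itemize}

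In range (a) I use $|g(x)|\le C|x|^3\le C x^2$ and apply the same two subcases. When $|V|\ge |x|/4$ I bound $|x|^3\le C|V|^3\le C|V|^{2+\delta}$, using $|V|\le 1/2$ and $\delta\le 1$. When $|V|<|x|/4$ I bound $|x|^3\le C U^2$.

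This yields $|E_{i+1}|\le C(U_{i+1}^2+|V_{i+1}|^{2+\delta})L_n$ in all three cases. The constant depends only on $a$ and $\delta$, and $\log\log\log$ is absorbed into $\log\log$.

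\textbf{Main obstacle.} The delicate point is range (c), close to $x=-1$, where the logarithm blows up. This is exactly why each case has its own cutoff, and why the cutoff forces the $\log(1/\eta)$ factor. Checking that $\log(1/\eta)$ is at most $L_n$ is immediate for each choice of $\eta$.
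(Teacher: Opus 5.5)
Your argument is correct and is essentially the elementary calculus proof that the paper delegates to Lemma 4.1 of \cite{bao2015logarithmic}. You bound $g(x)=\log(1+x)-x+x^2/2$ by $C\log(1/\eta)$ times both $x^2$ and $|x|^{2+\delta}$ on $\{x\ge -1+\eta\}$, split according to whether $U_{i+1}$ or $V_{i+1}$ dominates, and check $\log(1/\eta)\lesssim L_n$ for each cutoff. One small repair is needed: in range (a) with $|V|\ge |x|/4$ you cannot deduce $|V|\le 1/2$, but you do not need it, since $|x|\le 1/2$ and $\delta\le 1$ already give $|x|^3\le |x|^{2+\delta}\le 4^{2+\delta}|V|^{2+\delta}$.
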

\begin{proof}
	The proof is similar to that of Lemma 4.1 in \cite{bao2015logarithmic}. The only difference is the different lower bounds for $\widetilde{Z}_{i+1}+1$. So we omit the details
	of the proof.
\end{proof}
\begin{lemma}\label{lemma_conditionprobability}
    Under the setting of Theorem \ref{thm_logdet} or Theorem \ref{thm_pn},
    \begin{itemize}
    	\item[(i)] if $1\le n-p\le n^{19/20}$, we have
    	 $\sum_{i=0}^{p-s_2-1}\mathbb{P}(\widetilde{Z}_{i+1}< -1+(\log\log n)^{-a/2})\rightarrow 0$;
    	\item[(ii)] if $ n-p\ge n^{19/20}$ and $\rho_n=1-p/n\rightarrow 0$, we have
    	$\sum_{i=0}^{p-s_2-1}\mathbb{P}(\widetilde{Z}_{i+1}< -1+(\log\log \rho_n^{-1})^{-1})\rightarrow 0$;
    	\item[(iii)] if $\lim_{n\rightarrow \infty}p/n<1$, we have
    	$\sum_{i=0}^{p-s_2-1}\mathbb{P}(\widetilde{Z}_{i+1}< -1+1/1000)\rightarrow 0$.
    \end{itemize}
\end{lemma}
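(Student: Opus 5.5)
We will bound each summand separately and sum, using the identity $1+\widetilde{Z}_{i+1}=\frac{n\Delta_{i+1}^2}{n-i}+\mathrm{O}(n^{2}\beta_{1,1}(n-i)^{-1/2})$, where the correction is deterministic given $\mathcal{F}_i$ and of order $\mathrm{o}(n^{-1/2})$ by Lemma \ref{lemma_odd_moment} and \eqref{eq_boundq1}. The event $\{\widetilde{Z}_{i+1}<-1+\eta_n\}$ is therefore contained in $\{n\Delta_{i+1}^2/(n-i)<2\eta_n\}$. Here $\eta_n$ is $(\log\log n)^{-a/2}$, $(\log\log\rho_n^{-1})^{-1}$ or $1/1000$, according to the case. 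This reduces the statement to a small-ball estimate for the normalized distance $\mathbf{y}_{i+1}^{\top}\mathbf{P}_i\mathbf{y}_{i+1}$, which we treat separately for the Gaussian rows and for the original rows.

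\emph{Gaussian rows} $p-s_1\le i\le p-s_2-1$. Conditionally on $\mathcal{F}_i$, the quantity $\|\mathbf{z}_{i+1}\|^2\check\Delta_{i+1}^2\sim\chi^2_{n-i}$, and $\|\mathbf{z}_{i+1}\|^2/n$ concentrates exponentially around $1$. The $\chi^2_{m}$ lower tail gives $\mathbb{P}(\chi^2_m<2\eta_n m)\le (C\eta_n)^{m/2}$, with $m=n-i\ge n-p+s_2$. Summing this geometric series over $m\ge s_2$ gives a bound of order $(C\eta_n)^{s_2/2}\to0$, because $s_2\to\infty$. In case (iii), $m\ge cn$, so the bound is exponentially small.

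\emph{Original rows} $0\le i\le p-s_1-1$. Here $n-i\ge s_1\asymp n$, and we use a second-moment (Chebyshev) bound:
\[
\mathbb{P}(\widetilde{Z}_{i+1}<-1+\eta_n)\le \mathbb{P}(|\widetilde{Z}_{i+1}|>1/2)\le 4\,\mathbb{E}U_{i+1}^2+16\,\mathbb{E}V_{i+1}^4 .
\]
By \eqref{eq_meanU2} and \eqref{eq_smallsumq}, $\sum_{i\le p-s_1-1}\mathbb{E}U_{i+1}^2\lesssim \mathcal{L}(n^{1/2})n^{(3-\alpha)/2}+[\mathcal{L}(n^{1/2})]^2n^{3-\alpha}$. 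By Lemma \ref{lemma_offdiag}, $\sum_i\mathbb{E}V_{i+1}^4\lesssim \sum_i\bigl([\mathcal{L}(n^{1/2})]^2n^{4-\alpha}(n-i)^{-3}+(n-i)^{-2}\bigr)\lesssim [\mathcal{L}(n^{1/2})]^2 n^{1-\alpha}+s_1^{-1}$. Under \eqref{eq_oldcondition}, or under \eqref{eq_newcondition} with $\alpha=3$, both sums are $\mathrm{o}(1)$. In the non-singular case (iii) there are no Gaussian rows, and the same two estimates cover all $i\le p-1$ because $n-i\ge n-p\asymp n$.

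\emph{Main obstacle.} The delicate point is the $U$-part when $\alpha=3$ in case (i) or (ii). There $\mathbb{E}U_{i+1}^2$ carries the factor $\mathcal{L}(n^{1/2})$, which may diverge slowly. We must check that $\mathcal{L}(n^{1/2})n^{(3-\alpha)/2}\to0$ under \eqref{eq_oldcondition}, and that it is only $\mathrm{O}(\log^{1/4-\mathfrak{c}}n)$ under \eqref{eq_newcondition}. The latter is not $\mathrm{o}(1)$, so the Chebyshev bound as written is insufficient for $U$. We would therefore replace it by a truncation argument. On the truncated matrix of Section \ref{sec_multitruncation}, the fourth moment of $nY_{i+1,j}^2$ is $\mathrm{O}(\mathcal{L}(n^{1/2})n^{2-\alpha/2})$. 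Writing $U_{i+1}=\sum_j q_{i,jj}(nY_{i+1,j}^2-1)$, a Bernstein/Rosenthal bound conditional on $\mathcal{F}_i$, with weights satisfying $\max_j q_{i,jj}\le 1/(n-i)\lesssim 1/n$, gives $\mathbb{E}U_{i+1}^4\lesssim n^{-2}\mathcal{L}(n^{1/2})^2+n^{-3}\mathcal{L}(n^{1/2})n^{2-\alpha/2}\cdot(\text{truncation})^{\dots}$. After summing over $i$, this should be $\mathrm{o}(1)$. If that fails, we fall back on the lower-tail-only inequality $\mathbb{P}(U_{i+1}<-1/4)$. Since $U_{i+1}\ge -\sum_j q_{i,jj}=-1$ and $nY^2\ge0$, a one-sided Bernstein bound for nonnegative summands applies; it needs only second moments of $\min(nY^2,1)$ and yields an exponential bound $\exp(-c(n-i))$, which is summable.
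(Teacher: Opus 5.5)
Your reduction to a small-ball bound for $n\Delta_{i+1}^2/(n-i)$ is fine. Two of your three pieces are correct and simpler than the paper's:
\begin{itemize}
\item For the Gaussian rows you use the exact $\chi^2_{n-i}$ lower tail. The paper instead applies Lemma \ref{lemma_4momentG} to $\operatorname{diag}(\mathbf{Q}_i)$ and a fourth-moment Markov bound to $V_{i+1}$.
\item For the original rows under \eqref{eq_oldcondition}, you use Chebyshev with \eqref{eq_usquaresmall} and Lemma \ref{lemma_offdiag}. This covers every case arising from Theorem \ref{thm_logdet}.
\end{itemize}
One slip: your sentence claiming $\mathrm{o}(1)$ also under \eqref{eq_newcondition} with $\alpha=3$ should say $\alpha>3$.

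The gap is the regime you flag yourself: cases (i)--(ii) under \eqref{eq_newcondition} with $\alpha=3$ and divergent $\mathcal{L}$. Neither fallback closes it.

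Fallback (a) cannot work, because no moment bound on $U_{i+1}$ at a fixed threshold is summable there. The collision rows described below already give $\sum_i\mathbb{P}(U_{i+1}<-1/2)\gtrsim[\mathcal{L}(\sqrt{n})]^2$. This matches the $[\mathcal{L}(n^{1/2})]^2n^{3-\alpha}$ term in \eqref{eq_usquaresmall}.

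Fallback (b) treats the $nY_{i+1,j}^2$ as independent, but they are coupled through $\|\mathbf{b}_{i+1}\|^2$. Worse, passing to $\min(nY_{i+1,j}^2,1)$ discards the one coordinate that matters. If a row has an entry with $|b_{i+1,j_0}|^2\ge n/\eta_n$, then typically $\sum_j q_{i,jj}\min(nY_{i+1,j}^2,1)\lesssim\eta_n+(n-i)^{-1}$, whatever $\mathbf{Q}_i$ is. When $\alpha=3$ this event has probability $\asymp\eta_n^{3/2}\mathcal{L}\,n^{-1/2}$ per row. That is polynomial rather than $e^{-c(n-i)}$, and not summable over $\asymp n$ rows.

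The paper uses the monotone-truncation idea you were reaching for, but on the unnormalized vector, where independence holds. It lower-bounds $\mathbf{b}_{i+1}^{\top}\operatorname{diag}(\mathbf{Q}_i)\mathbf{b}_{i+1}$ after truncating entries at $(\log\log n)^2$, applies Lemma \ref{lemma_4momentG}, and handles $V_{i+1}$ separately.

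However, passing from $n\mathbf{y}_{i+1}^{\top}\operatorname{diag}(\mathbf{Q}_i)\mathbf{y}_{i+1}$ to that form needs $\|\mathbf{b}_{i+1}\|^2\lesssim n$ with summable failure probability. Lemma \ref{lemma_highpronorm} gives only a lower bound. The upper bound fails with probability about $n\,\mathbb{P}(|\xi|>\sqrt n)$ per row, which is not summable for $\alpha<4$. So under \eqref{eq_oldcondition}, your Chebyshev route is the sound one.

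Under \eqref{eq_newcondition}, your obstacle is genuine. Suppose rows $k<i+1\le p-s_1$ both carry an entry of size at least $x=\sqrt{Kn/\eta_n}$ in a common column, with $K$ a large constant.
\begin{itemize}
\item Then typically $\operatorname{dist}^2(\mathbf{y}_{i+1},\operatorname{span}\mathbf{b}_k)=\mathrm{O}(\eta_n/K)$, so $\widetilde{Z}_{i+1}<-1+\eta_n$.
\item The expected number of such pairs is $\asymp n^3\mathbb{P}(|\xi|>x)^2\asymp\eta_n^3[x^3\mathbb{P}(|\xi|>x)]^2$.
\item The truncations of Section~\ref{sec_multitruncation} do not remove them, since $x\ll n^{2/3}$.
\end{itemize}
This count is $\mathrm{o}(1)$ under \eqref{eq_oldcondition}. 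Under \eqref{eq_newcondition} with $\eta_n=(\log\log n)^{-a/2}$, it diverges whenever $\mathcal{L}(x)\gg(\log\log x)^{3a/4}$, e.g.\ $\mathcal{L}(x)=(\log x)^{1/4-\mathfrak{c}}$.

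So in that regime the sum in (i), and likewise in (ii), need not tend to $0$. The statement itself must be modified there, for instance by treating such rows separately or by taking $\eta_n$ small enough that $\eta_n^3\mathcal{L}^2\to0$.
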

\begin{proof}
	Recall $\widetilde{Z}_{i+1}+1=(U_{i+1}+1)+V_{i+1}$ by \eqref{eq_decomZ}.
	If $1\le n-p\le n^{19/20}$, note that
	\begin{equation*}
	\begin{split}
	&\mathbb{P}(\widetilde{Z}_{i+1}< -1+(\log\log n)^{-a/2})=\mathbb{P}(U_{i+1}+1+V_{i+1}<(\log\log n)^{-a/2})\\
	\le &\mathbb{P}(|U_{i+1}+1|<2(\log\log n)^{-a/2})+\mathbb{P}(|V_{i+1}|\ge\frac{1}{2}(\log\log n)^{-a/2})\\
	\le & \mathbb{P}(|n\mathbf{y}_{i+1}^{\top}(\operatorname{diag}\mathbf{Q}_i)\mathbf{y}_{i+1}|<2(\log\log n)^{-a/2})+C(\log\log n)^{2a}\mathbb{E}V_{i+1}^4
	\end{split}
	\end{equation*}
	by Markov's inequality. By Lemma \ref{lemma_offdiag} and \eqref{eq_newcondition}, we have
	\begin{equation*}
		(\log\log n)^{2a}\sum_{i=0}^{p-s_2-1}\mathbb{E}V_{i+1}^4\le (\log\log n)^{2a}\big(\sum_{i=0}^{p-s_1-1}\frac{[\mathcal{L}(n^{1/2})]^2n^{4-\alpha}}{(n-i)^3}+\sum_{i=0}^{p-s_2-1}\frac{1}{(n-i)^2}\big)=\mathrm{o}(1)
	\end{equation*}
	since $s_1\asymp n$ and $s_2\asymp (\log n)^{1/2}$ for $1\le n-p\le n^{19/20}$.
	It suffices to consider the $U$-part.
	First, by Lemma \ref{lemma_highpronorm}, we have, with high probability, for $0\le i\le p-s_1-1$, $\|\mathbf{b}_{i+1}\|^2\ge cn$
	for some constant $c>0$. This suggests that
	\begin{equation*}
		\sum_{i=0}^{p-s_1-1}\mathbb{P}(|n\mathbf{y}_{i+1}^{\top}(\operatorname{diag}\mathbf{Q}_i)\mathbf{y}_{i+1}|<2(\log\log n)^{-a/2})\lesssim \sum_{i=0}^{p-s_1-1}\mathbb{P}(|\mathbf{b}_{i+1}^{\top}(\operatorname{diag}\mathbf{Q}_i)\mathbf{b}_{i+1}|<C(\log\log n)^{-a/2}).
	\end{equation*}
	Denote
	\begin{equation*}
		\hat{b}_{jk}=b_{jk}\mathbbm{1}(|b_{jk}|<(\log\log n)^2),~~\tilde{b}_{jk}=\frac{\hat{b}_{jk}-\mathbb{E}\hat{b}_{jk}}{\sqrt{\operatorname{Var}(\hat{b}_{jk})}}.
	\end{equation*}
	It follows that
	\begin{equation*}
		\mathbb{E}\hat{b}_{jk}=\mathrm{O}((\log\log n)^{2(1-\alpha)}\mathcal{L}((\log\log n)^2)),~~\operatorname{Var}(\hat{b}_{jk})=1+\mathrm{O}((\log\log n)^{2(2-\alpha)}\mathcal{L}((\log\log n)^2))
	\end{equation*}
	by Lemma \ref{lemma_truncatemoment}. Consequently, $\tilde{b}_{jk}=\hat{b}_{jk}+(\log\log n)^{-2}$,
	which implies that
	\begin{equation*}
		\tilde{b}_{jk}^2\le 2\hat{b}_{jk}+(\log\log n)^{-2}
	\end{equation*}
	for sufficiently large $n$. Therefore,
\begin{equation*}
	\begin{split}
	&\mathbb{P}(|\mathbf{b}_{i+1}^{\top}(\operatorname{diag}\mathbf{Q}_i)\mathbf{b}_{i+1}|<C(\log\log n)^{-a/2})\le \mathbb{P}(\sum_{k=1}^{n}q_{i,kk}\hat{b}_{i+1,k}^2<C(\log\log n)^{-a/2})\\
	\le &\mathbb{P}(\sum_{k=1}^{n}q_{i,kk}\tilde{b}_{i+1,k}^2<C(\log\log n)^{-a/2})
	\le \mathbb{P}(|\sum_{k=1}^{n}q_{i,kk}\tilde{b}_{i+1,k}^2-1|\ge 1/2),
	\end{split}
\end{equation*}
which together with Lemma \ref{lemma_4momentG} yields that
\begin{equation*}
\begin{split}
\mathbb{P}(|\mathbf{b}_{i+1}^{\top}(\operatorname{diag}\mathbf{Q}_i)\mathbf{b}_{i+1}|<C(\log\log n)^{-a/2})\le &C\log\log n)^{14}\operatorname{tr}\mathbf{Q}_i^{4}+C((\log\log n)^{4}\operatorname{tr}\mathbf{Q}_i^2)^2\\
\le& C(\log\log n)^{14}\frac{1}{(n-i)^3}+C(\log\log n)^{8}\frac{1}{(n-i)^2},
\end{split}
\end{equation*}
where we used
\begin{equation*}
	\mathbb{E}\tilde{b}_{jk}^8\le \mathbb{E}\hat{b}_{jk}^8\lesssim (\log\log n)^{14},~\mathbb{E}\tilde{b}_{jk}^4\le \mathbb{E}\hat{b}_{jk}^4\lesssim (\log\log n)^{4}
\end{equation*}
by Lemma \ref{lemma_truncatemoment}. Thus, we get
\begin{equation*}
\begin{split}
&\sum_{i=0}^{p-s_1-1}\mathbb{P}(|n\mathbf{y}_{i+1}^{\top}(\operatorname{diag}\mathbf{Q}_i)\mathbf{y}_{i+1}|<2(\log\log n)^{-a/2})\\
\lesssim &C\sum_{i=0}^{p-s_1-1}\big((\log\log n)^{14}\frac{1}{(n-i)^3}+(\log\log n)^{8}\frac{1}{(n-i)^2}\big)\le \frac{C(\log\log n)^{14}}{s_1}\rightarrow 0.
\end{split}
\end{equation*}
Note that the last $s_1$ rows of $\mathbf{X}$ consist of standard Gaussian. It follows that, for $p-s_1\le i\le p-s_2-1$,
\begin{equation*}
	\begin{split}
	&\mathbb{P}(|n\mathbf{y}_{i+1}^{\top}\operatorname{diag}\mathbf{Q}_i\mathbf{y}_{i+1}|<2(\log\log n)^{-a/2})=\mathbb{P}(|\frac{n}{\|\mathbf{b}_{i+1}\|^2}\mathbf{b}_{i+1}^{\top}\operatorname{diag}\mathbf{Q}_i\mathbf{b}_{i+1}|<2(\log\log n)^{-a/2})\\
	\le &\mathbb{P}(|\frac{n}{\|\mathbf{b}_{i+1}\|^2}\mathbf{b}_{i+1}^{\top}\operatorname{diag}\mathbf{Q}_i\mathbf{b}_{i+1}-1|\ge 1/2)\le C (\operatorname{tr}\mathbf{Q}_i^4+(\operatorname{tr}\mathbf{Q}_i^2)^2)\le \frac{C}{(n-i)^2},
	\end{split}
\end{equation*}
where we used the independence of $\|\mathbf{b}_{i+1}\|$ and $\mathbf{b}_{i+1}/\|\mathbf{b}_{i+1}\|$. This further implies
\begin{equation*}
\sum_{i=0}^{p-s_2-1}\mathbb{P}(\widetilde{Z}_{i+1}< -1+(\log\log n)^{-a/2})\rightarrow 0
\end{equation*}
with high probability, as $n\rightarrow \infty$.

The cases of $n-p\ge n^{19/20}$ and $\lim_{n\rightarrow \infty}p/n<1$ are similar, where the only difference is that ``$\log\log n$'' is replaced by ``$\log\log \rho_n^{-1}$'' and $1000$, respectively. So we omit the details of the proof.
\end{proof}

Now we proceed to \eqref{eq_claim3}. Consider the case $0\le n-p\le n^{19/20}$ first.
Invoking Lemma \ref{lemma_momentestimateUV}, Lemma \ref{lemma_error} and Lemma \ref{lemma_conditionprobability} above, we have with probability $1-\mathrm{o}(1)$,
\begin{equation*}
	\sum_{i=0}^{p-s_2-1}E_{i+1}\le C\sum_{i=0}^{p-s_2-1}(U_{i+1}^2+|V_{i+1}|^{2+\delta})\log \log n,~0< \delta\le 1.
\end{equation*}
Thus to show \eqref{eq_claim3}, it suffices to verify that
\begin{equation*}
	\sigma_{n}\lim_{n\rightarrow \infty}\log \log n \sum_{i=0}^{p-s_2-1}(\mathbb{E}U_{i+1}^2+\mathbb{E}|V_{i+1}|^{2+\delta})=0,
\end{equation*}
which can be implied by
\begin{equation*}
\lim_{n\rightarrow \infty}\frac{\log \log n}{\sqrt{\log n}} \sum_{i=0}^{p-s_2-1}[\mathbb{E}U_{i+1}^2+(\mathbb{E}|V_{i+1}|^4)^{(2+\delta)/4}]=0,
\end{equation*}
in view of Lyapunov's inequality. For the $U$-part, by \eqref{eq_usquare2}, one has
\begin{equation}\label{eq_sharp2}
	\begin{split}
	\frac{\log \log n}{\sqrt{\log n}}\sum_{i=0}^{p-s_2-1}\mathbb{E}U_{i+1}^2\lesssim& \frac{\log \log n}{\sqrt{\log n}}\big(\mathcal{L}(n^{1/2})n^{(3-\alpha)/2}+[\mathcal{L}(n^{1/2})]^2n^{3-\alpha}+\log\log  n\big)\\
	\le &\frac{\log\log n}{\log^{2\mathfrak{c}} n}=\mathrm{o}(1)
	\end{split}
\end{equation}
in view of \eqref{eq_newcondition}. For the $V$-part, according to Lemma \ref{lemma_offdiag}, one has
\begin{equation*}
	\begin{split}
	&\log \log n\sum_{i=0}^{p-s_2-1}(\mathbb{E}|V_{i+1}|^4)^{(2+\delta)/4}\\
	\le &\log \log n\sum_{i=0}^{p-s_1-1}\big(\frac{[\mathcal{L}(n^{1/2})]^2n^{4-\alpha}}{(n-i)^3}\big)^{\frac{2+\delta}{4}}+\log \log n\sum_{i=0}^{p-s_2-1}\big(\frac{1}{(n-i)^2}\big)^{\frac{2+\delta}{4}}\\
	\lesssim &\log \log n\sum_{i=0}^{p-s_1-1}\frac{[\mathcal{L}(n^{1/2})]^{1+\delta/2}}{n^{1+\delta/2}}+\log \log n \sum_{i=0}^{p-s_2-1}\frac{1}{(n-i)^{1+\delta/2}}\\
	\le & n^{-\delta/2}[\mathcal{L}(n^{1/2})]^{1+\delta/2}\log \log n+(n-p+s_2)^{-\delta/2}\log \log n=\mathrm{o}(1)
	\end{split}
\end{equation*}
as $n\rightarrow \infty$ with $s_1\asymp n$ and $s_2\asymp \log^{1/2} n$ for some small constant $c>0$, completing the proof of \eqref{eq_claim3}. The cases of $n-p\ge n^{19/20}$ and $\lim_{n\rightarrow \infty}p/n<1$ follows similarly by applying Lemma \ref{lemma_error} and Lemma \ref{lemma_conditionprobability}, respectively. This completes the proof of \eqref{eq_claim3}.

\subsubsection{Proof of \eqref{eq_claim4}}
Recalling that we have shown that
\begin{equation*}
	\sigma_n\sum_{i=0}^{p-s_2-1}\big(\mathbb{E}(\widetilde{Z}_{i+1}^2 \mid\mathcal{F}_i)-\mathbb{E}\widetilde{Z}_{i+1}^2\big) \stackrel{\mathbb{P}}{\rightarrow} 0,~\text{as}~n\rightarrow \infty,
\end{equation*}
it remains to show
\begin{equation}\label{eq_cmu}
	\sigma_{n}\big(-\log (1-\frac{p}{n})-\frac{p}{n}-c_n+\mu_n\big) \rightarrow 0,~\text{as}~n\rightarrow \infty,
\end{equation}
where we used \eqref{eq_crossUV}, \eqref{eq_usquare2}, and \eqref{eq_vsquare2}.
Recalling the definitions $\mu_n=(p-n+\frac{1}{2}) \log (1-\frac{p-1}{n})-p+\frac{p}{n}$ and $c_n=-p \log n+\log (n(n-1) \cdots(n-p+1))$, it suffices to show
\begin{equation*}
\sigma_{n}\left(\left(p-n-\frac{1}{2}\right) \log (1-\frac{p-1}{n})-p-\sum_{i=1}^{p-1} \log (1-i / n)\right) \rightarrow 0, ~\text{as}~ n \rightarrow \infty.
\end{equation*}
By elementary calculations, we have
\begin{equation*}
\begin{split}
\sum_{i=1}^p \log (1-i / n)=& (p-1) \log n-\log (n-1)!+\log (n-p)!\\
=&(p-1) \log n-(n-1) \log (n-1)+(n-1)-\frac{\log (2 \pi(n-1))}{2}+\mathrm{O}(n^{-1})\\
&+(n-p) \log (n-p)-(n-p)+\frac{\log (2 \pi(n-p))}{2}+\mathrm{O}((n-p)^{-1}) \\
=&p-1+\big(n-\frac{1}{2}\big) \log \big(\frac{n}{n-1}\big)+\big(n-p+\frac{1}{2}\big) \log \big(1-\frac{p-1}{n}\big)+\mathrm{O}((n-p)^{-1})\\
=&p-\big(n-p+\frac{1}{2}\big) \log \big(1-\frac{p-1}{n}\big)+\mathrm{O}((n-p)^{-1}),
\end{split}
\end{equation*}
where we used Stirling's formula $\log (n!)=n \log n-n+\frac{1}{2} \log (2 \pi n)+\mathrm{O}(n^{-1})$ in the second line, and the fact that  $-1+(n-\frac{1}{2}) \log (\frac{n}{n-1})\rightarrow 0$ as $n \rightarrow \infty$. Thus, we finish the proof of \eqref{eq_claim4}.

\subsection{End game: Last few rows are negligible}
Firstly, we note that the last $s_2$ rows of $\mathbf{X}$ consist of standard Gaussian after the replacement in Step 2. The proof is inspired by \cite{bao2015logarithmic,nguyen2014random,wang2018}, who used the fact that the last $s_2$ quadratic forms $\mathbf{b}_{i+1}^{\top}\mathbf{Q}_i\mathbf{b}_i$ are independent Chi-square random variables. However, it is not the case for sample correlation matrices due to the self-normalization. Fortunately, the independence between $\mathbf{b}_{i+1}/\|\mathbf{b}_{i+1}\|$ and $\|\mathbf{b}_{i+1}\|$ for standard Gaussian random vectors $\mathbf{b}_{i+1}$ guarantees the desired result. Specifically, we aim to show
\begin{equation*}
    \mathbb{P}\left(\left|\sum_{i=p-s_2}^{p-1}\frac{\log (\frac{n\mathbf{y}_{i+1}^{\top}\mathbf{P}_i\mathbf{y}_{i+1}}{n-i})}{\sqrt{-2\log (1-(p-1)/n)-2p/n}}\right|\ge \log ^{-1/2+c} n\right)=\mathrm{o}(\exp(-\log^{c/2} n))
\end{equation*}
for some constant $0<c<1/100$. Let us first consider the lower tail; it suffices to show
\begin{equation*}
    \mathbb{P}\left(\sum_{i=p-s_2}^{p-1}\frac{\log (\frac{n\mathbf{y}_{i+1}^{\top}\mathbf{P}_i\mathbf{y}_{i+1}}{n-i})}{\sqrt{-2\log (1-(p-1)/n)-2p/n}}\le -\log ^{-1/2+c} n\right)=\mathrm{o}(\exp(-\log^{c/2} n)).
\end{equation*}
For the case of $p-s_2\le i\le p-1$, by the Chernoff bounds, it is easy to show that, for any $0<\epsilon<1/2$
\begin{equation*}
    \mathbb{P}\big(|\frac{\|\mathbf{b}_{i+1}\|^2}{n}-1|>\epsilon\big)\le 2\exp(-n\epsilon^2/8).
\end{equation*}
Thus, similary to the result of \cite{nguyen2014random}, as $b_{ij}$ are i.i.d. standard Gaussian, one can verify that $\mathbf{b}_{i+1}^{\top}\mathbf{P}_i\mathbf{b}_{i+1}$ are independent Chi-square random variables of degree $n-i$. Thus, by Lemma \ref{lemma_highpronorm}, we can follow Section 7 of \cite{nguyen2014random} to show that
\begin{equation*}
    n\mathbf{y}_{i+1}^{\top}\mathbf{P}_i\mathbf{y}_{i+1}=\frac{n}{\|\mathbf{b}_{i+1}\|^2}\mathbf{b}_{i+1}^{\top}\mathbf{P}_i\mathbf{b}_{i+1}\asymp \mathbf{b}_{i+1}^{\top}\mathbf{P}_i\mathbf{b}_{i+1}
\end{equation*}
is at least $\exp(-\frac{\sqrt{2}}{4}\log^c n)$ with probability $1-\exp(-\Omega(\log^{c} n))$ for $i=p-1$ and $i=p-2$. So we can omit these terms from the sum. It now suffices to show that
\begin{equation*}
    \mathbb{P}\left(\sum_{i=p-s_2}^{p-3}\frac{\log (\frac{n\mathbf{y}_{i+1}^{\top}\mathbf{P}_i\mathbf{y}_{i+1}}{n-i})}{\sqrt{2\log n}}\le -\frac{1}{2}\log ^{-1/2+c} n\right)=\mathrm{o}(\exp(-\log^{c/2} n)).
\end{equation*}
where we notice the fact that $-\log (1-(p-1)/n)\asymp \log n$ for $ n-n^{\theta}\le p\le n$ with any constant $0<\theta<1$.
Flipping the inequality inside the probability (by changing the sign of the RHS and swapping the denominators and numerators in the logarithms of the LHS) and using the Laplace transform trick (based on the conditional argument), we see that the probability in question is at most
\begin{equation*}
    \frac{\mathbb{E}\prod_{i=p-s_2}^{p-3}\frac{n-i}{n\mathbf{y}_{i+1}^{\top}\mathbf{P}_i\mathbf{y}_{i+1}}}{\exp((1/\sqrt{2})\log^c n)}=\frac{\prod_{i=p-s_2}^{p-3}\mathbb{E}\left(\frac{n-i}{n\mathbf{y}_{i+1}^{\top}\mathbf{P}_i\mathbf{y}_{i+1}}\mid \mathcal{F}_i\right)}{\exp((1/\sqrt{2})\log^c n)}.
\end{equation*}
By the independence between $\mathbf{b}_{i+1}/\|\mathbf{b}_{i+1}\|$ and $\|\mathbf{b}_{i+1}\|$ for standard Gaussian random vectors $\mathbf{b}_{i+1}$, we have
\begin{equation}\label{eq_lasts2}
    \mathbb{E}\left(\frac{n-i}{n\mathbf{y}_{i+1}^{\top}\mathbf{P}_i\mathbf{y}_{i+1}}\mid \mathcal{F}_i\right)=\mathbb{E}\left(\frac{(n-i)\|\mathbf{b}_{i+1}^2\|}{n\mathbf{b}_{i+1}^{\top}\mathbf{P}_i\mathbf{b}_{i+1}}\mid \mathcal{F}_i\right)=\mathbb{E}\left(\frac{(n-i)n}{n\mathbf{b}_{i+1}^{\top}\mathbf{P}_i\mathbf{b}_{i+1}}\mid \mathcal{F}_i\right)=\frac{n-i}{n-i-2},
\end{equation}
where the last equality follows from the fact that $\mathbf{b}_{i+1}^{\top}\mathbf{P}_i\mathbf{b}_{i+1}$ is a Chi-square random variable with degree of freedom $n-i$, which satisfies $\mathbb{E}(\mathbf{b}_{i+1}^{\top}\mathbf{P}_i\mathbf{b}_{i+1})^{-1}=(n-i-2)^{-1}$. Thus, the numerator in \eqref{eq_lasts2} is
\begin{equation*}
    \frac{(n-p+s_2)(n-p+s_2-1)}{(n-p+1)(n-p+2)}\le Cs_2^2 \le \log ^{2} n,
\end{equation*}
which further implies the desired result since
\begin{equation*}
    \frac{\log ^{2} n}{\exp((1/\sqrt{2})\log^c n)}=\mathrm{o}\big(\exp(-\log^{c/2} n)\big)
\end{equation*}
as $n\rightarrow \infty$. The proof for the upper tail is similar, so we omit the details. Thereafter, we have finished the proof.

\appendix
\section{Preliminary results}\label{app_pre}
This section collects some auxiliary lemmas.
Recall the definition of a high probability event in Definition \ref{def_highpro}.
First, we introduce a label matrix associated with some local truncated levels, which is inspired by the resampling strategy (see \cite{aggarwal2018goe,bao2023phase,Li2024clt} for more details). Intuitively, the resampling of $X_{ij}$ implies that most of the entries are not very large, while there are $\mathrm{o}(n)$ entries larger than $n^{c_{\alpha}}$ for some prespecified truncation levels $c_{\alpha}>0$, which will be fixed within suitable cases. Consider the following label matrix $\Psi=:(\psi_{ij})$ with
\begin{equation*}
\psi_{ij}=\mathbbm{1}(|X_{ij}|>n^{c_{\alpha}}).
\end{equation*}
One has, for $\alpha>2$,
\begin{equation*}
\mathbb{P}(\psi_{ij}=1)=\mathbb{P}(|X_{ij}|>n^{c_{\alpha}})\sim \mathcal{L}(n^{c_{\alpha}})n^{-\alpha c_{\alpha}}.
\end{equation*}
Note tha fact that $(\log\log n)^{\log n}\gg n^D$ for any large fixed $D>0$ as $n\rightarrow\infty$.
For the case of $0<c_{\alpha}\le 1/\alpha$, we have for some small constant $\epsilon_{\alpha}>0$,
\begin{equation*}
\begin{split}
\mathbb{P}(\sum_{j}\psi_{ij}>n^{1-\alpha c_{\alpha}+\epsilon_{\alpha}})\lesssim &\sum_{k=n^{1-\alpha c_{\alpha}+\epsilon_{\alpha}}}^{n}\binom{n}{k} [\mathcal{L}(n^{c_{\alpha}})n^{-\alpha c_{\alpha}}]^{k}[1-\mathcal{L}(n^{c_{\alpha}})n^{-\alpha c_{\alpha}}]^{n-k}\\
\lesssim &\sum_{k=n^{1-\alpha c_{\alpha}+\epsilon_{\alpha}}}^{n}\big(C\frac{n^{1-\alpha c_{\alpha}}\mathcal{L}(n^{c_{\alpha}})}{k}\big)^k\lesssim n^{-D}
\end{split}
\end{equation*}
for any large fixed $D>0$, where we used the elementary inequality $\binom{n}{k}\lesssim (Cn/k)^k$ in the second line. For the case of $1/\alpha<c_{\alpha}$, we have
\begin{equation*}
\begin{split}
\mathbb{P}(\sum_{j}\psi_{ij}>\log n)\lesssim &\sum_{k=\log n}^{n}\binom{n}{k} [\mathcal{L}(n^{c_{\alpha}})n^{-\alpha c_{\alpha}}]^{k}[1-\mathcal{L}(n^{c_{\alpha}})n^{-\alpha c_{\alpha}}]^{n-k}\\
\lesssim &\sum_{k=\log n}^{n}\big(C\frac{n^{1-\alpha c_{\alpha}}\mathcal{L}(n^{c_{\alpha}})}{k}\big)^k\le n(\log n)^{-\log n}\lesssim n^{-D}
\end{split}
\end{equation*}
due to the Potter bound in Lemma \ref{lemma_potterbound}.
Similarly, one gets
\begin{equation*}
\begin{split}
\mathbb{P}(\sum_{i,j}\psi_{ij}>n^{2-\alpha c_{\alpha}+\epsilon_{\alpha}})=\sum_{k=n^{2-\alpha c_{\alpha}+\epsilon_{\alpha}}}^{pn}\binom{pn}{k} [\mathcal{L}(n^{c_{\alpha}})n^{-\alpha c_{\alpha}}]^{k}[1-\mathcal{L}(n^{c_{\alpha}})n^{-\alpha c_{\alpha}}]^{n^2-k}\lesssim n^{-D}
\end{split}
\end{equation*}
for $0<c_{\alpha}\le 2/\alpha$ with some small constant $\epsilon_{\alpha}>0$ since $\binom{pn}{k}\lesssim (Cn^2/k)^{k}$.
This estimate suggests that there are at most $n^{2-\alpha c_{\alpha}+\epsilon_{\alpha}}$ entries of $|X_{ij}|$ larger than $n^{c_{\alpha}}$ with high probability. Moreover, with high probability, for every row or column, there are at most $n^{1-\alpha c_{\alpha}+\epsilon_{\alpha}}\lor\log n$ entries of $|X_{ij}|$ larger than $n^{c_{\alpha}}$. Observing this fact, we define the following event for $0<c_{\alpha}\le 2/\alpha$,
\begin{equation}\label{eq_highproeventBn}
\mathcal{B}_n=\{\max_{i}\sum_{j}\psi_{ij}+\max_{j}\sum_{i}\psi_{ij}\le 2(n^{1-\alpha c_{\alpha}+\epsilon_{\alpha}}\lor\log n),\sum_{i,j}\psi_{ij}\le n^{2-\alpha c_{\alpha}+\epsilon_{\alpha}}\}
\end{equation}
and focus on the analysis conditional on $\mathcal{B}_n$, since $\mathbb{P}(\mathcal{B}_n)\ge 1-n^{-D}$ for any large fixed $D>0$. Specifically,
if $1/3<c_{\alpha}\le 2/\alpha$ for $\alpha\in [3,4)$, we can set $\epsilon_{\alpha}=3(c_{\alpha}-1/3)>0$ and further define
\begin{equation}\label{eq_highproeventBn1}
\tilde{\mathcal{B}}_n=\{\max_{i}\sum_{j}\psi_{ij}+\max_{j}\sum_{i}\psi_{ij}\le 2\log n,\sum_{i,j}\psi_{ij}\le n^{2-\alpha c_{\alpha}+\epsilon_{\alpha}}\},
\end{equation}
which satisfies $\mathbb{P}( \tilde{\mathcal{B}}_n)\ge 1-n^{-D}$ and thus we can always safely work on $\mathbf{X}$ conditional on the high probability events $\mathcal{B}_n$ and $\tilde{\mathcal{B}}_n$ in the sequel. Moreover, we will set different values of $c_{\alpha}$ to achieve the desired estimations.
Define the index set $\mathrm{I}_c=\{j\in [n]: \sum_{i}\psi_{ij}\ge 1\}$ and its companion $\mathrm{T}_c=\{j\in [n]: \sum_{i}\psi_{ij}=0\}$. It follows that $|\mathrm{I}_c|\le n^{2-\alpha c_{\alpha}+\epsilon_{\alpha}}$ with high probability. With the label index matrix $\Psi$, we adapt the decomposition of $X_{ij}$ as $X_{ij}=X_{ij}(\mathbbm{1}(\psi_{ij}=0)+\mathbbm{1}(\psi_{ij}=1))$.
To get desired moment estimations of $X_{ij}$, we give the following moment estimations for heavy-tailed random variables, also known as Karamata’s theorem (Theorem 1.6.1 in \cite{Bingham1989}), describing the behavior of truncated moments of the regularly varying random variable.
\begin{lemma}[Truncated moments]\label{lemma_truncatemoment}
	Given Assumption \ref{assump1} for $\alpha\in (2,4)$, we have, as $x\rightarrow \infty$
	\begin{equation*}
	\begin{split}
	\mathbb{E}|\xi|^{\beta}\mathbbm{1}(|\xi|\le x)\sim \frac{\alpha}{\beta-\alpha}x^{\beta}\mathbb{P}(|\xi|>x),~\text{if}~\beta>\alpha,\\
	\mathbb{E}|\xi|^{\beta}\mathbbm{1}(|\xi|>x)\sim \frac{\alpha}{\alpha-\beta}x^{\beta}\mathbb{P}(|\xi|>x),~\text{if}~\beta<\alpha.
	\end{split}
	\end{equation*}
\end{lemma}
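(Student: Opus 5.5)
The statement to prove is the truncated-moment lemma (Karamata's theorem). Both asymptotics follow by writing the truncated moment as an integral of the tail function $\bar F(t):=\mathbb{P}(|\xi|>t)=\mathcal{L}(t)t^{-\alpha}$ and then applying the uniform convergence theorem for slowly varying functions.

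\emph{Upper truncation ($\beta>\alpha$).} By Fubini,
\begin{equation*}
\mathbb{E}|\xi|^{\beta}\mathbbm{1}(|\xi|\le x)=\int_0^x \beta t^{\beta-1}\bar F(t)\,\mathrm{d}t-x^{\beta}\bar F(x).
\end{equation*}
To see this, integrate $|\xi|^\beta=\int_0^{|\xi|}\beta t^{\beta-1}\mathrm{d}t$ against the indicator, which gives $\int_0^x\beta t^{\beta-1}\mathbb{P}(t<|\xi|\le x)\,\mathrm{d}t$, and then expand $\mathbb{P}(t<|\xi|\le x)=\bar F(t)-\bar F(x)$.

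Substitute $t=xu$ in the remaining integral, and fix a small $\delta>0$. On $[\delta,1]$ we have $\mathcal{L}(xu)/\mathcal{L}(x)\to1$ uniformly in $u$, by the uniform convergence theorem. Hence
\begin{equation*}
\int_{\delta x}^x\beta t^{\beta-1}\bar F(t)\,\mathrm{d}t\sim x^{\beta}\bar F(x)\int_\delta^1\beta u^{\beta-\alpha-1}\,\mathrm{d}u .
\end{equation*}
On $[0,\delta x]$ we use Potter's bound (Lemma \ref{lemma_potterbound}) with exponent $\epsilon<\beta-\alpha$. The part over $[A,\delta x]$, with $A$ a large constant, is then at most $Cx^\beta\bar F(x)\int_0^\delta u^{\beta-\alpha-\epsilon-1}\,\mathrm{d}u=Cx^{\beta}\bar F(x)\,\delta^{\beta-\alpha-\epsilon}$. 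The part over $[0,A]$ is $\mathrm{O}(1)$, which is $\mathrm{o}(x^\beta\bar F(x))$ because $x^{\beta}\bar F(x)=x^{\beta-\alpha}\mathcal{L}(x)\to\infty$.

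Letting $x\to\infty$ and then $\delta\to0$, the integral is asymptotic to $x^\beta\bar F(x)\,\beta/(\beta-\alpha)$. Subtracting $x^\beta\bar F(x)$ gives the constant $\beta/(\beta-\alpha)-1=\alpha/(\beta-\alpha)$, as claimed.

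\emph{Lower truncation ($\beta<\alpha$).} Similarly,
\begin{equation*}
\mathbb{E}|\xi|^\beta\mathbbm{1}(|\xi|>x)=x^\beta\bar F(x)+\int_x^\infty\beta t^{\beta-1}\bar F(t)\,\mathrm{d}t .
\end{equation*}
Substitute $t=xu$ with $u\in[1,\infty)$. Uniform convergence handles $u\in[1,M]$. For the tail $u\ge M$, the Potter-type bound $\mathcal{L}(xu)/\mathcal{L}(x)\le Cu^{\epsilon}$ with $\epsilon<\alpha-\beta$ gives a contribution of at most $C M^{\beta-\alpha+\epsilon}x^\beta\bar F(x)$. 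Letting $M\to\infty$, the integral is asymptotic to $x^\beta\bar F(x)\,\beta/(\alpha-\beta)$. Adding $x^\beta\bar F(x)$ gives the constant $\alpha/(\alpha-\beta)$. The restriction $\alpha\in(2,4)$ is not needed here; it only marks the regime where the lemma is used.

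\emph{Main obstacle.} The one delicate step is the ratio form of Potter's bound, $\mathcal{L}(y)/\mathcal{L}(x)\le C\max\{(y/x)^{\epsilon},(y/x)^{-\epsilon}\}$ for $x,y\ge A$. Lemma \ref{lemma_potterbound} as stated bounds $\mathcal{L}$ itself, not the ratio, so I would invoke the standard ratio version from \cite{Bingham1989} (Theorem 1.5.6) rather than the stated lemma. The other inputs are the uniform convergence theorem (Theorem 1.5.2 there) and measurability of $\mathcal{L}$, which holds automatically because $\mathcal{L}(x)=x^{\alpha}\bar F(x)$ is a product of a power and a monotone function. With these, both limits reduce to dominated convergence. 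Alternatively, one can simply cite Theorem 1.6.1 of \cite{Bingham1989}, which is exactly this statement.
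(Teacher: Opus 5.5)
Your proof is correct. The paper gives no argument for this lemma; it only cites Karamata's theorem from \cite{Bingham1989}. What you wrote is the standard proof of that theorem. You use the tail-integral identities $\mathbb{E}|\xi|^{\beta}\mathbbm{1}(|\xi|\le x)=\int_0^x\beta t^{\beta-1}\bar F(t)\,\mathrm{d}t-x^{\beta}\bar F(x)$ and $\mathbb{E}|\xi|^{\beta}\mathbbm{1}(|\xi|>x)=x^{\beta}\bar F(x)+\int_x^\infty\beta t^{\beta-1}\bar F(t)\,\mathrm{d}t$. You then apply the uniform convergence theorem on compacts, and dominate near $0$ (resp.\ $\infty$) with Potter's bound.

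The citation buys brevity. Your explicit route makes the hypotheses transparent: only $\alpha>0$ and the sign of $\beta-\alpha$ matter, so the range $\alpha\in(2,4)$ plays no role. It also shows why the ratio form $\mathcal{L}(y)/\mathcal{L}(x)\le C\max\{(y/x)^{\epsilon},(y/x)^{-\epsilon}\}$ is needed. Lemma \ref{lemma_potterbound} as stated only bounds $\mathcal{L}(x)$ itself. That would leave a factor $x^{2\epsilon}$ in the dominating function, so you are right to invoke the ratio version instead.

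One small remark: deriving the second identity from $|\xi|^{\beta}=\int_0^{|\xi|}\beta t^{\beta-1}\,\mathrm{d}t$ tacitly uses $\beta>0$, which covers every use in the paper. Writing $|\xi|^{\beta}=x^{\beta}+\int_x^{|\xi|}\beta t^{\beta-1}\,\mathrm{d}t$ on $\{|\xi|>x\}$ gives the same identity, and the same conclusion, for every real $\beta<\alpha$.
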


We begin with an extension of Lemma 3.4 in \cite{bao2015logarithmic}, which can handle the heavy-tailed case.
\begin{lemma}\label{lemma_4momentG}
	Suppose $X_i, i=1, \ldots, n$ are independent real random variables with common mean zero and variance $1$. Moreover, we assume $\max _i \mathbb{E}\left|X_i\right|^s \le v_s$. Let $M_n=\left(m_{i j}\right)_{n, n}$ be a nonnegative definite matrix which is deterministic. Then we have
	\begin{align*}
	\mathbb{E}\left|\sum_{i=1}^n m_{i i} X_i^2-\operatorname{tr} M_n\right|^4 \le C\left(v_8 \operatorname{tr} M_n^4+\left(v_4 \operatorname{tr} M_n^2\right)^2\right)
	\end{align*}
	and
	\begin{align*}
	\mathbb{E}\left|\sum_{u \neq v} m_{u v} X_u X_v\right|^4 \le C v_4^2\|M_n\|^2\operatorname{tr} (M_n^2)
	\end{align*}
	for some positive constant $C$.
\end{lemma}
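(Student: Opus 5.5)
We expand the fourth moment combinatorially, using independence and mean zero to kill most index patterns, and bound each surviving pattern by traces and the operator norm of $M_n$. The second display forces one unavoidable comparison, and on that comparison we expect the argument to break down for general $M_n$ (see the third paragraph).

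\textbf{Diagonal part.} Put $\eta_i=X_i^2-1$, so $\sum_i m_{ii}X_i^2-\operatorname{tr}M_n=\sum_i m_{ii}\eta_i$ with independent, centred $\eta_i$. Rosenthal's inequality gives
\begin{equation*}
\mathbb{E}\Big|\sum_i m_{ii}\eta_i\Big|^4\le C\Big(\sum_i m_{ii}^4\,\mathbb{E}\eta_i^4+\big(\sum_i m_{ii}^2\,\mathbb{E}\eta_i^2\big)^2\Big).
\end{equation*}
Moreover $\mathbb{E}\eta_i^4\le Cv_8$ and $\mathbb{E}\eta_i^2\le v_4$. Since $M_n$ is nonnegative definite, $m_{ii}^2\le(M_n^2)_{ii}$, and hence
\begin{equation*}
\sum_i m_{ii}^4\le\sum_i (M_n^2)_{ii}^2\le \sum_i (M_n^4)_{ii}=\operatorname{tr}M_n^4,
\end{equation*}
using Cauchy--Schwarz for the middle step. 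Also $\sum_i m_{ii}^2\le\operatorname{tr}M_n^2$. This gives the first bound.

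\textbf{Off-diagonal part: expansion.} Write $T=\sum_{u\ne v}m_{uv}X_uX_v$ and expand $\mathbb{E}T^4$ over four ordered pairs $(u_k,v_k)$ with $u_k\ne v_k$. By independence and $\mathbb{E}X=0$, every index must appear at least twice, so the pattern is one of the following.
\begin{itemize}
\item[(a)] Two matched pairs on distinct indices. The contribution is at most $C\big(\sum_{u\ne v}m_{uv}^2\big)^2\le C(\operatorname{tr}M_n^2)^2$.
\item[(b)] A ``cycle'' $m_{uv}m_{vw}m_{wx}m_{xu}$. This is bounded by $C\operatorname{tr}M_n^4\le C\|M_n\|^2\operatorname{tr}M_n^2$, after removing the diagonal terms, which are controlled by the same estimate.
\item[(c)] Patterns in which an index occurs three or four times. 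These carry $\mathbb{E}|X|^3$ or $\mathbb{E}X^4$, hence a factor $v_4$ or $v_4^2$, and reduce to sums such as $\sum_{u,v}m_{uv}^4$ and $\sum_u\big(\sum_v m_{uv}^2\big)^2=\sum_u (M_n^2)_{uu}^2$. Each of these is at most $\|M_n\|^2\operatorname{tr}M_n^2$.
\end{itemize}
An alternative is a martingale (Burkholder) decomposition $T=2\sum_v X_v\sum_{u<v}m_{uv}X_u$. Its square-function term reproduces exactly the (a) and (b) structure, and the conditional variances $\sum_{u}m_{uv}^2\le (M_n^2)_{vv}$ handle the rest.

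\textbf{Main obstacle.} Patterns (b) and (c) fit the claimed bound $Cv_4^2\|M_n\|^2\operatorname{tr}M_n^2$. Pattern (a) does not in general: it requires
\begin{equation*}
\Big(\sum_{u\ne v}m_{uv}^2\Big)^2\lesssim\|M_n\|^2\operatorname{tr}M_n^2.
\end{equation*}
This step genuinely fails without further hypotheses. Take $M_n$ block diagonal with $k$ blocks equal to the $2\times2$ all-ones matrix, and let the $X_i$ be Rademacher. Then $T=2\sum_{b}X_{a_b}X_{c_b}$ is a sum of $k$ independent $\pm2$ terms, so $\mathbb{E}T^4\asymp k^2$. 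The claimed bound, however, is $\asymp\|M_n\|^2\operatorname{tr}M_n^2\asymp k$.

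So the displayed off-diagonal inequality cannot hold as worded. The natural salvage keeps the extra $(\operatorname{tr}M_n^2)^2$ term from pattern (a), giving
\begin{equation*}
\mathbb{E}T^4\le Cv_4^2\big((\operatorname{tr}M_n^2)^2+\|M_n\|^2\operatorname{tr}M_n^2\big).
\end{equation*}
In the applications $M_n=\mathbf{Q}_i$, where $(\operatorname{tr}\mathbf{Q}_i^2)^2=(n-i)^{-2}$. This is the rate actually used in the paper, for example in the $C/(n-i)^2$ bounds, so we would then recheck the downstream uses of Lemma~\ref{lemma_offdiag} against this corrected bound.
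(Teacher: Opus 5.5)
Your diagonal argument is correct and gives the same two terms as the computation the paper defers to Bao et al. Your counterexample is also correct: with $k$ diagonal blocks $\begin{pmatrix}1&1\\ 1&1\end{pmatrix}$ and Rademacher entries, $v_4=1$ and $\|M_n\|^2\operatorname{tr}M_n^2=16k$, while $\mathbb{E}T^4=16(3k^2-2k)$. So the off-diagonal inequality is false as stated, and the paper's proof breaks exactly where you locate the problem.

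Its expansion of the fourth moment retains only three patterns: $v_4^2\sum m_{uv}^4$, $v_3^2v_2\sum m_{uv}^2m_{ur}m_{rv}$, and the four-cycle $v_2^4\sum m_{uv}m_{vr}m_{rw}m_{wu}$. It omits the configuration in which the four pairs form two disjoint doubled pairs. Up to relabelling, this means $\{u_1,v_1\}=\{u_2,v_2\}$ and $\{u_3,v_3\}=\{u_4,v_4\}$ with $\{u_1,v_1\}\cap\{u_3,v_3\}=\emptyset$. That configuration contributes $(\sum_{u\ne v}m_{uv}^2)^2$, up to terms bounded by $\|M_n\|^2\operatorname{tr}M_n^2$, and it cannot be absorbed into $\|M_n\|^2\operatorname{tr}M_n^2$. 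The paper also omits the pattern $v_4\sum m_{uv}^2m_{uw}^2$, but that one is harmless. So the paper's proof is wrong at this step, not merely incomplete.

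I have two corrections to your salvage.

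\textbf{The coefficient on the new term.} The doubled-pairs pattern involves only second moments, so the sharp bound is
\[
\mathbb{E}\Big|\sum_{u\ne v}m_{uv}X_uX_v\Big|^4\le C\Big(v_4^2\|M_n\|^2\operatorname{tr}M_n^2+(\operatorname{tr}M_n^2)^2\Big),
\]
with coefficient $v_2^4=1$ on the last term. Since $\|M_n\|^2\le\operatorname{tr}M_n^2$, your version collapses to $Cv_4^2(\operatorname{tr}M_n^2)^2$. That loses a factor $v_4^2$ precisely in the heavy-tailed regime, where $v_4$ is large.

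\textbf{Where the lemma is actually used.} The off-diagonal estimate is never applied with $M_n=\mathbf{Q}_i$. The fourth moments in Lemma~\ref{lemma_offdiag} are computed directly from the $\beta$'s, and $\mathbf{Q}_i$ enters this lemma only through the diagonal part, in Lemma~\ref{lemma_conditionprobability}. The off-diagonal part enters only through \eqref{eq_maxpills4}. There $M_n=G_{i,\ell}(\epsilon_n)$, and $v_4=\mathbb{E}X_{j\ell}^4$ is of order $n^{2/3}$ up to logarithms when $\alpha=3$.

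Use $\|G_{i,\ell}\|\le\epsilon_n^{-1}$ and $\mathbb{E}(n^{-1}\operatorname{tr}G_{i,\ell})^2\lesssim s_i(\epsilon_n)^2+n^{-1}\epsilon_n^{-2}$. Then the new $(\operatorname{tr}M_n^2)^2$ term adds about $n^{-1}\epsilon_n^{-2}s_i(\epsilon_n)^{-2}\log^{12a}n$ to $\mathcal{S}_4$. This is small for the paper's choices $\epsilon_n=n^{-1/4}$ and $\epsilon_n=Cn^{-2/3}$, given the corresponding sizes of $s_i(\epsilon_n)$. With your coefficient, the same term would be multiplied by $v_4^2\approx n^{4/3}$ and would no longer be small.

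When you recheck that display, note one further issue. Its existing $v_4^2\|G_{i,\ell}\|^2\operatorname{tr}G_{i,\ell}^2$ term appears to have been evaluated with a single factor of $\mathbb{E}X_{j\ell}^4$: the stated exponent $n^{2(1-\alpha)/3}$ corresponds to $v_4n^{-2}$, not $v_4^2n^{-2}$. So that estimate needs rechecking independently of this lemma.
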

\begin{proof}
	The diagonal part is the same as that of \cite{bao2015logarithmic}. For the off-diagonal part, we have
	\begin{align*}
	\begin{aligned}
	&\mathbb{E}\left|\sum_{u \neq v} m_{u v} X_u X_v\right|^4  =\sum_{u_i \neq v_i, i=1, \ldots, 4} \prod_{i=1}^4 m_{u_i v_i} \mathbb{E} \prod_{i=1}^4 X_{u_i} X_{v_i} \\
	\le&  C\left(v_4^2 \sum_{u \neq v} m_{u v}^4+v_3^2 v_2 \sum_{u, v, r} m_{u v}^2 m_{u r} m_{r v}+v_2^4 \sum_{u, v, r, w} m_{u v} m_{v r} m_{r w} m_{w u}\right) \\
	\le & C\left(v_4^2 \sum_{u \neq v} m_{u v}^2\|M_n\|^2+v_3^2 v_2 \sum_{u, v} m_{u v}^2 \|M_n\|^2+v_2^4 \operatorname{tr} M_n^4\right)
	 \le C v_4^2\|M_n\|^2\operatorname{tr} (M_n^2),
	\end{aligned}
	\end{align*}
	where in the third line we used $m_{uv}^2\le \|M_n\|^2$ and $|\sum_{r}m_{ur}m_{rv}|=|(M_nM_n^{\top})_{uv}|\le \|M_n\|^2$, and in the last line we used $\sum_{u,v}m_{u,v}^2=\operatorname{tr}M_n^2$ and $\operatorname{tr}M_n^4\le C\|M_n\|^2 \operatorname{tr}M_n^2$.
\end{proof}

Next, we extend Lemma B.25 of \cite{bai2010spectral}, which states the concentration of the quadratic forms after truncation.
\begin{lemma}\label{lemma_quadratic3}
	Let $\mathbf{A}=(a_{ij})$ be one $n\times n$ non-random symmetric positive-definite matrix and $\mathbf{x}=(X_{1},X_{2},\ldots,X_{n})^{\top}$ a random vector with i.i.d. components satisfying \eqref{eq_newcondition} after truncation at $n^{2/3}\log n$. Then, with high probability, we have
	\begin{equation*}
	\mathbb{E}(\mathbf{x}^{\top}\mathbf{A}\mathbf{x}-\operatorname{tr}(\mathbf{A}))^2\lesssim \mathcal{L}(n^{c_{\alpha}})n^{(4-\alpha)c_{\alpha}}\operatorname{tr}(\mathbf{A}\mathbf{A}^{\top})+\mathrm{o}(n^{-4/3}) (\operatorname{tr}\mathbf{A})^2+n^{2/3}\log^{2} n\cdot(\log n+n^{1-\alpha c_{\alpha}+\epsilon_{\alpha}})\|\mathbf{A}\|^2,
	\end{equation*}
	where $c_{\alpha}\in (0,2/3)$ and $\epsilon_{\alpha}>0$ is some small constant. Moreover, if $|X_i|\le n^{c_{\alpha}}$ for $1\le i\le n$ with $2/9< c_{\alpha}<2/3$, then for any $s\ge 1$, we have
	\begin{equation*}
	\mathbb{E}|\mathbf{x}^{\top}\mathbf{A}\mathbf{x}-\operatorname{tr}(\mathbf{A})|^{s}\le C_{s}[(v_4\operatorname{tr}(\mathbf{A}\mathbf{A}^{\top}))^{s/2}+v_{2s}\operatorname{tr}(\mathbf{A}\mathbf{A}^{\top})^{s/2}],
	\end{equation*}
	where $v_{2s}= \mathcal{L}(n^{c_{\alpha}})n^{(2s-\alpha)c_{\alpha}}$.
\end{lemma}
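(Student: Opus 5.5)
We will follow the route of Lemma B.25 in \cite{bai2010spectral}, combined with the label-matrix decomposition built from $\Psi$ in this appendix. The plan is to expand the centred quadratic form into its diagonal and off-diagonal parts,
\begin{equation*}
\mathbf{x}^{\top}\mathbf{A}\mathbf{x}-\operatorname{tr}\mathbf{A}=\sum_{i}a_{ii}(X_i^2-\mathbb{E}X_i^2)+\sum_{i\ne j}a_{ij}X_iX_j+\sum_i a_{ii}(\mathbb{E}X_i^2-1).
\end{equation*}
After truncation at $n^{2/3}\log n$, the mean and variance of $X_i$ shift only by $\mathrm{O}(\mathcal{L}(n^{2/3})n^{2(1-\alpha)/3})$ and $\mathrm{O}(\mathcal{L}(n^{2/3})n^{2(2-\alpha)/3})$, respectively, by Lemma \ref{lemma_truncatemoment}. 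For $\alpha\ge3$ the square of the last (deterministic) term, together with the mean-shift cross terms in the off-diagonal sum, contributes $\mathrm{o}(n^{-4/3})(\operatorname{tr}\mathbf{A})^2$. It therefore suffices to treat the off-diagonal part and the diagonal fluctuation.

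For the off-diagonal part, the second moment is $2\sum_{i\ne j}a_{ij}^2$ up to mean corrections, which is at most $2\operatorname{tr}(\mathbf{A}\mathbf{A}^{\top})$.

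The diagonal fluctuation is the term that needs $\Psi$. We split each $X_i$ as $X_i\mathbbm{1}(\psi_i=0)+X_i\mathbbm{1}(\psi_i=1)$ with threshold $n^{c_\alpha}$.
\begin{itemize}
\item[(a)] For the small entries, Lemma \ref{lemma_truncatemoment} gives $\mathbb{E}X_i^4\mathbbm{1}(|X_i|\le n^{c_\alpha})\lesssim \mathcal{L}(n^{c_\alpha})n^{(4-\alpha)c_\alpha}$. This produces the term $\mathcal{L}(n^{c_\alpha})n^{(4-\alpha)c_\alpha}\sum_i a_{ii}^2$, which is bounded by the first term of the claim.
\item[(b)] For the large entries, we work on the high-probability event $\mathcal{B}_n$ in \eqref{eq_highproeventBn}. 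On this event at most $\log n\vee n^{1-\alpha c_\alpha+\epsilon_\alpha}$ coordinates exceed $n^{c_\alpha}$, and each is bounded by $n^{2/3}\log n$. We bound their contribution by the number of such coordinates times $\max_i a_{ii}^2\,\mathbb{E}[X_i^4\mathbbm{1}(\psi_i=1)]$. Using $a_{ii}\le\|\mathbf{A}\|$ and $\mathbb{E}X_i^4\mathbbm{1}(n^{c_\alpha}<|X_i|<n^{2/3}\log n)\lesssim n^{2/3}\log^2 n$ (this uses $\alpha\ge3$ together with \eqref{eq_newcondition}), we obtain $n^{2/3}\log^2n\,(\log n+n^{1-\alpha c_\alpha+\epsilon_\alpha})\|\mathbf{A}\|^2$.
\item[(c)] The cross terms between small and large entries are handled by Cauchy--Schwarz and absorbed into the two bounds above.
\end{itemize}
The phrase ``with high probability'' refers to this conditioning on $\mathcal{B}_n$. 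We must also check that conditioning on $\mathcal{B}_n$ perturbs the moments only by $n^{-D}$; this holds because the entries are bounded by $n^{2/3}\log n$.

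For the second statement, all $|X_i|\le n^{c_\alpha}$, so every moment is finite: $v_{2s}=\mathbb{E}|X|^{2s}\lesssim\mathcal{L}(n^{c_\alpha})n^{(2s-\alpha)c_\alpha}$ by Lemma \ref{lemma_truncatemoment}, since $2s>\alpha$ for the relevant $s$ (small $s$ being trivial through Hölder). We then apply the classical moment bound for quadratic forms, Lemma B.26 of \cite{bai2010spectral}:
\begin{equation*}
\mathbb{E}|\mathbf{x}^{\top}\mathbf{A}\mathbf{x}-\operatorname{tr}\mathbf{A}|^s\le C_s\bigl[(v_4\operatorname{tr}\mathbf{A}\mathbf{A}^{\top})^{s/2}+v_{2s}\operatorname{tr}(\mathbf{A}\mathbf{A}^{\top})^{s/2}\bigr].
\end{equation*}
This bound is proved by Burkholder's inequality applied to the martingale $\sum_k\bigl(a_{kk}(X_k^2-1)+2X_k\sum_{j<k}a_{kj}X_j\bigr)$ together with Rosenthal-type estimates. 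Its derivation uses only mean zero, unit variance and the values $v_4$, $v_{2s}$. The small bias from truncation is negligible under the condition $c_\alpha>2/9$, which makes $n^{(2-\alpha)c_\alpha}$ small enough.

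The main obstacle is step (b). The issue is bookkeeping which entries are large and making sure that the conditioning on $\mathcal{B}_n$ does not destroy independence; I would handle this by conditioning on the positions of the large entries, so that the remaining entries stay independent given those positions.
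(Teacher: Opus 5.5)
Your outline follows the paper's own route: the diagonal/off-diagonal split, truncation shifts from Lemma \ref{lemma_truncatemoment}, the label vector at level $n^{c_\alpha}$, and Lemma B.26 of \cite{bai2010spectral} for the $s$-th moment. \textbf{Step (b) is not valid, however, and the paper's proof takes the same step.}

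In $\mathbb{E}\big(\sum_u a_{uu}(X_u^2-\mathbb{E}X_u^2)\big)^2=\sum_{u=1}^{n}a_{uu}^2\operatorname{Var}(X_u^2)$, every index $u\in[n]$ appears. The bound $\mathbb{E}[X_u^4\mathbbm{1}(\psi_u=1)]\lesssim n^{2/3}\log^2 n$ is an unconditional moment, so it already carries the rarity factor $\mathbb{P}(\psi_u=1)\asymp\mathcal{L}(n^{c_\alpha})n^{-\alpha c_\alpha}$. You therefore cannot also restrict the sum to the at most $n^{1-\alpha c_\alpha+\epsilon_\alpha}\vee\log n$ large coordinates allowed on $\mathcal{B}_n$; that counts the rarity twice. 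If you genuinely condition on the positions of the large coordinates, as you suggest at the end, each one has conditional fourth moment $\mathbb{E}[X^4\mathbbm{1}(\psi=1)]/\mathbb{P}(\psi=1)$. The product then becomes of order $n^{1+\epsilon_\alpha}\, n^{2/3}\log^2 n\,\|\mathbf{A}\|^2$, which is no better than the trivial bound.

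Your own correct remark, that conditioning on $\mathcal{B}_n$ changes moments by at most $\mathrm{poly}(n)\,n^{-D}$, in fact shows the first inequality cannot hold as stated. Take $\alpha=3$ with constant $\mathcal{L}$ (allowed by \eqref{eq_newcondition}) and $\mathbf{A}=\mathbf{I}_n$. Truncation at $T=n^{2/3}\log n$ gives $\mathbb{E}X^4\asymp T$, so the left-hand side is at least $n\operatorname{Var}(X^2)\asymp n^{5/3}\log n$. The right-hand side is $\mathrm{O}(n^{1+c_\alpha}+n^{5/3-3c_\alpha+\epsilon_\alpha}\log^2 n+n^{2/3}\log^3 n)=\mathrm{o}(n^{5/3})$ for $c_\alpha<2/3$ and $\epsilon_\alpha<3c_\alpha$. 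A second-moment argument can honestly give only $\mathcal{L}(T)T^{4-\alpha}\operatorname{tr}(\mathbf{A}\mathbf{A}^{\top})+\mathrm{o}(n^{-4/3})(\operatorname{tr}\mathbf{A})^2$. Splitting by $\Psi$ can improve bounds on the quadratic form itself on a likely event, but not its second moment, which is at least $\sum_u a_{uu}^2\operatorname{Var}(X_u^2)$.

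In the second part, your claim that the truncation bias is negligible because $c_\alpha>2/9$ is also incorrect. The threshold $2/9$ is exactly what makes the off-diagonal mean term harmless, i.e.\ $\hat\mu^4 n^2\lesssim v_4$ with $\hat\mu=\mathbb{E}X$. But the Burkholder step behind Lemma B.26 needs $\mathbb{E}X^2=1$. The diagonal bias $(1-\mathbb{E}X^2)\operatorname{tr}\mathbf{A}\asymp\mathcal{L}(n^{c_\alpha})n^{(2-\alpha)c_\alpha}\operatorname{tr}\mathbf{A}$ is dominated by $(v_4\operatorname{tr}(\mathbf{A}\mathbf{A}^{\top}))^{s/2}$ only when $c_\alpha>1/\alpha$. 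For example, with $\mathbf{A}=\mathbf{I}_n$, $s=2$ and $\alpha=3$, its square is $\asymp n^{2-2c_\alpha}$ against $v_4 n\asymp n^{1+c_\alpha}$, which fails for $c_\alpha\in(2/9,1/3)$. So you must either centre at $\mathbb{E}\,\mathbf{x}^{\top}\mathbf{A}\mathbf{x}$ instead of $\operatorname{tr}\mathbf{A}$, or require $c_\alpha>1/\alpha$.
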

\begin{proof}
	Note the decomposition
	\begin{equation*}
	\mathbf{x}^{\top}\mathbf{A}\mathbf{x}-\operatorname{tr}(\mathbf{A})=\sum_{u}(X_u^2-1)A_{uu}+\sum_{u\ne v}A_{uv}X_{u}X_{v}.
	\end{equation*}
	Given the global truncation level $n^{2/3}\log n$, we have
	\begin{equation*}
	\mu=:\mathbb{E}X_{u}=\mathbb{E}\xi\mathbbm{1}(|\xi|<n^{2/3}\log n)=\mathrm{O}(\mathcal{L}(n^{2/3}\log n)n^{(1-\alpha)2/3}\log^{1-\alpha} n)
	\end{equation*}
	and
	\begin{equation*}
	\sigma^2=:\mathbb{E}X_u^2=\mathbb{E}\xi^2\mathbbm{1}(|\xi|<n^{2/3}\log n)=1-\mathrm{O}(\mathcal{L}(n^{2/3}\log n)n^{(2-\alpha)2/3}\log^{2-\alpha} n)
	\end{equation*}
	by Lemma \ref{lemma_truncatemoment}. Under  \eqref{eq_newcondition}, by Potter's bound in Lemma \ref{lemma_potterbound}, we have
	\begin{equation}\label{eq_estimatemusigma}
	\mu=\mathrm{o}(n^{-4/3})~\text{and}~1-\sigma^2=\mathrm{o}(n^{-2/3}).
	\end{equation}
	To derive the high probability bounds for the quadratics, consider the resampling for $\mathbf{x}$ at truncation level $n^{c_{\alpha}}$ for $c_{\alpha}\in (0,2/3)$, which will be fixed later. Define the label vector $\mathbf{l}=:(\psi_1,\ldots,\psi_n)^{\top}=(|X_1|>n^{c_{\alpha}},\ldots,|X_n|>n^{c_{\alpha}})^{\top}$. It follows that, we have $\sum_{u}\psi_u\le \log n\lor n^{1-\alpha c_{\alpha}+\epsilon_{\alpha}}$ with high probability. In the sequel, we condition on the following high probability event
	\begin{equation*}
	\mathcal{C}_n=\{\sum_{u}\psi_u\le \log n+n^{1-\alpha c_{\alpha}+\epsilon_{\alpha}}\}.
	\end{equation*}
	By Lemma \ref{lemma_truncatemoment}, one has
	\begin{equation*}
	\mathbb{E}X_u\mathbbm{1}(\psi_u=0)=\mathrm{O}(\mathcal{L}(n^{c_{\alpha}})n^{(1-\alpha)c_{\alpha}}), \mathbb{E}X_{u}^2\mathbbm{1}(\psi_u=0)= 1-\mathrm{O}(\mathcal{L}(n^{c_\alpha})n^{(2-\alpha)c_{\alpha}})
	\end{equation*}
	and for integer $q\ge 2$,
	\begin{equation*}
	\mathbb{E}X_u^{2q}\mathbbm{1}(\psi_u=0)\sim\mathcal{L}(n^{c_{\alpha}})n^{(2q-\alpha)c_{\alpha}}.
	\end{equation*}
	Moreover, it is obvious that
	\begin{equation*}
		\begin{split}
		\mathbb{E}X_{u}^4\mathbbm{1}(\psi_u=1)\le \mathbb{E}X_{u}^4= \mathbb{E}\xi^4\mathbbm{1}(|\xi|<n^{2/3}\log n)\lesssim\mathcal{L}(n^{2/3}\log n)n^{2(4-\alpha)/3}\log^{4-\alpha} n\lesssim n^{2/3}\log^{2} n
		\end{split}
	\end{equation*}
	by Lemma \ref{lemma_truncatemoment} and \eqref{eq_newcondition}.
	For $s=2$, it follows that
	\begin{equation*}
	\begin{split}
	\mathbb{E}(\mathbf{x}^{\top}\mathbf{A}\mathbf{x}-\operatorname{tr}(\mathbf{A}))^2\le  2\mathbb{E}\big(\sum_{u}(X_u^2-1)A_{uu}\big)^2+2\mathbb{E}\big(\sum_{u\ne v}A_{uv}X_{u}X_{v}\big)^2.
	\end{split}
	\end{equation*}
	For the diagonal part, with high probability,
	\begin{equation*}
	\begin{split}
	&\mathbb{E}\big(\sum_{u}(X_u^2-1)A_{uu}\big)^2
	=
	\sum_{u}\mathbb{E}(X_u^2-1)^2A_{uu}^2+\sum_{u\ne v}\mathbb{E}(X_u^2-1)(X_v^2-1)A_{uu}A_{vv}\\
	= &\sum_{u}\mathbb{E}(X_u^2-1)^2A_{uu}^2(\mathbbm{1}(\psi_u=0)+\mathbbm{1}(\psi_u=1))+(1-\sigma^2)^2\sum_{u\ne v}A_{uu}A_{vv}\\
	\lesssim & \mathcal{L}(n^{c_{\alpha}})n^{(4-\alpha)c_{\alpha}}\sum_{u}A_{uu}^2+n^{2/3}\log^{2} n\cdot(\log n+n^{1-\alpha c_{\alpha}+\epsilon_{\alpha}})\|\mathbf{A}\|^2+\mathrm{o}(n^{-4/3}) (\operatorname{tr}\mathbf{A})^2\\
	\le & \mathcal{L}(n^{c_{\alpha}})n^{(4-\alpha)c_{\alpha}}\operatorname{tr}(\mathbf{A}\mathbf{A}^{\top})+n^{2/3}\log^{2} n\cdot(\log n+n^{1-\alpha c_{\alpha}+\epsilon_{\alpha}})\|\mathbf{A}\|^2+\mathrm{o}(n^{-4/3}) (\operatorname{tr}\mathbf{A})^2,
	\end{split}
	\end{equation*}
	due to $\max_u A_{uu}\le \|\mathbf{A}\|$, \eqref{eq_estimatemusigma} and the definition of $\mathcal{C}_n$. For the off-diagonal part, we have
	\begin{equation*}
	\begin{split}
	\mathbb{E}\big(\sum_{u\ne v}A_{uv}X_{u}X_{v}\big)^2
	\lesssim & \sum_{u\ne v}A_{uv}^2 +\mu^2\sum_{u\ne v\ne m}A_{uv}A_{um}+\mu^4\sum_{u\ne v\ne m\ne k}A_{uv}A_{mk}\\
	\lesssim & \operatorname{tr}(\mathbf{A}\mathbf{A}^{\top})+\mu^2n\operatorname{tr}(\mathbf{A}\mathbf{A}^{\top})+\mu^4n^2\operatorname{tr}(\mathbf{A}\mathbf{A}^{\top})\lesssim (2+\mu^4 n^2)\operatorname{tr}(\mathbf{A}\mathbf{A}^{\top}),
	\end{split}
	\end{equation*}
	where we used the facts
	\[
	\sum_{u\ne v\ne m}A_{uv}A_{um}\le \sum_{u}(\sum_{v}A_{uv})^2\le Cn\sum_{u}\sum_{v}A_{uv}^2\le Cn\operatorname{tr}(\mathbf{A}\mathbf{A}^{\top})\]
	and
	\[
	\sum_{u\ne v\ne m\ne k}A_{uv}A_{mk}\le (\sum_{u\ne v}A_{uv})^2\le Cn^2\sum_{u\ne v}A_{uv}^2\le  Cn^2\operatorname{tr}(\mathbf{A}\mathbf{A}^{\top}).
	\]
	The last inequality follows from $\mu=\mathrm{o}(n^{-4/3})$ and $\sigma^2=\mathrm{o}(n^{-2/3})$ in \eqref{eq_estimatemusigma}. Thus, for $c_{\alpha}\in (0,2/3)$, it follows that
	\begin{equation*}
	\mathbb{E}(\mathbf{x}^{\top}\mathbf{A}\mathbf{x}-\operatorname{tr}(\mathbf{A}))^2\lesssim \mathcal{L}(n^{c_{\alpha}})n^{(4-\alpha)c_{\alpha}}\operatorname{tr}(\mathbf{A}\mathbf{A}^{\top})+n^{2/3}\log^{2} n\cdot(
\log n+n^{1-\alpha c_{\alpha}+\epsilon_{\alpha}})\|\mathbf{A}\|^2+\mathrm{o}(n^{-4/3}) (\operatorname{tr}\mathbf{A})^2
	\end{equation*}
	with high probability.
	
	For the second statement, with a similar argument for Lemma B.26 of \cite{bai2010spectral}, conditional on the event $\{\sum_{u}\psi_u=0\}$, we can get,
	\begin{equation*}
	\begin{split}
	\mathbb{E}|\mathbf{x}^{\top}\mathbf{A}\mathbf{x}-\operatorname{tr}(\mathbf{A})|
	\le  \sum_{u}|A_{uu}|\mathbb{E}|X_u^2-1|+[\mathbb{E}(\sum_{u\ne v}A_{uv}X_{u}X_{v})^2]^{1/2}
	\lesssim (\operatorname{tr}(\mathbf{A}\mathbf{A}^{\top})^{1/2}+(v_4\operatorname{tr}\mathbf{A}\mathbf{A}^{\top})^{1/2}),
	\end{split}
	\end{equation*}
	where the last inequality follows from
	\begin{equation*}
	v_4=\mathbb{E}\xi^4\mathbbm{1}(|\xi|<n^{c_{\alpha}})\sim \mathcal{L}(n^{c_{\alpha}})n^{(4-\alpha)c_{\alpha}}\ge \hat{\mu}^4 n^2
	\end{equation*}
	since $\hat{\mu}=:\mathbb{E}\xi\mathbbm{1}(|\xi|<n^{c_{\alpha}})\lesssim \mathcal{L}(n^{c_{\alpha}})n^{(1-\alpha)c_{\alpha}}$ and $c_{\alpha}\in (2/9,2/3)$.
	For $1< s\le 2$, by Lemma 2.12 and Theorem A.13 of \cite{bai2010spectral}, we have
	\begin{equation*}
	\begin{split}
	\mathbb{E}|\sum_{u}A_{uu}(X_u^2-1)|^s\le C\mathbb{E}\left(\sum_{u}|A_{uu}^2||X_u^2-1|^2\right)^{s/2}\le C\sum_{u}|A_{uu}|^s\mathbb{E}|X_u^2-1|^s\le Cv_{2s}\operatorname{tr}(\mathbf{A}\mathbf{A}^{\top})^{s/2}.
	\end{split}
	\end{equation*}
	The off-diagonal part can be bounded by the H\"{o}lder inequality as
	\begin{equation*}
	\mathbb{E}|\sum_{u\ne v}A_{uv}X_{u}X_{v}|^s\le C[\mathbb{E}(\sum_{u\ne v}A_{uv}X_{u}X_{v})^2]^{s/2}\le Cv_{2s}\operatorname{tr}(\mathbf{A}\mathbf{A}^{\top})^{s/2}.
	\end{equation*}
	Following the proof of Lemma B.25 of \cite{bai2010spectral} by induction on $s$, we can get, for any $s\ge 1$,
	\begin{equation*}
	\mathbb{E}|\mathbf{x}^{\top}\mathbf{A}\mathbf{x}-\operatorname{tr}(\mathbf{A})|^{s}\le C_{s}[(v_4\operatorname{tr}(\mathbf{A}\mathbf{A}^{\top}))^{s/2}+v_{2s}\operatorname{tr}(\mathbf{A}\mathbf{A}^{\top})^{s/2}],
	\end{equation*}
	where $v_{2s}=:\mathbb{E}X_u^{2s}\mathbbm{1}(\psi_u=0)\sim\mathcal{L}(n^{c_{\alpha}})n^{(2s-\alpha)c_{\alpha}}$ by Lemma \ref{lemma_truncatemoment}.
\end{proof}
\begin{lemma}[Berry-Esseen bound, Theorem 5.4 of \citep{Petrov1995}] \label{lemma_berryesseen}
	Let $Z_1, \ldots, Z_m$ be independent real random variables such that $\mathbb{E} Z_j=0$ and $\mathbb{E}\left|Z_j\right|^3<\infty, j=1, \ldots, n$. Assume that
	\begin{align*}
	\sigma_j^2=\mathbb{E} Z_j^2, \quad D_m=\sum_{i=1}^m \sigma_j^2, \quad L_m=D_m^{-3 / 2} \sum_{j=1}^m \mathbb{E}\left|Z_j\right|^3 .
	\end{align*}
	Then there exists a constant $C>0$ such that
	\begin{align*}
	\sup _x\left|\mathbb{P}\big(D_m^{-1 / 2} \sum_{j=1}^m Z_j \le x\big)-\Phi(x)\right| \le C L_m,
	\end{align*}
	where $\Phi(x)$ is the cumulative distribution function of the standard normal distribution.
\end{lemma}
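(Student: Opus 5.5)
This is a Berry--Esseen statement for a sum of independent, non-identically distributed variables. I will not rederive it; I will follow the standard smoothing-inequality route of Petrov's Theorem 5.4, and note the reduction that fixes the normalization. Replacing $Z_j$ by $Z_j/D_m^{1/2}$, we may assume $D_m=1$. Then $L_m=\sum_j\mathbb{E}|Z_j|^3$, and the claim becomes $\sup_x|F_m(x)-\Phi(x)|\le CL_m$, where $F_m$ is the distribution function of $S_m=\sum_jZ_j$. If $L_m\ge 1/4$, say, the bound is trivial because the left-hand side is at most $1$. So assume $L_m<1/4$. Lyapunov's inequality then gives $\sigma_j^3\le \mathbb{E}|Z_j|^3\le L_m$, so every individual variance is small.

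\emph{Step 1 (Esseen's smoothing inequality).} For any $T>0$,
\begin{equation*}
\sup_x|F_m(x)-\Phi(x)|\le \frac{1}{\pi}\int_{-T}^{T}\frac{|f_m(t)-e^{-t^2/2}|}{|t|}\,\mathrm{d}t+\frac{C_0}{T},
\end{equation*}
where $f_m(t)=\prod_j f_j(t)$ and $f_j$ is the characteristic function of $Z_j$. I take $T=1/(4L_m)$, which makes the second term $O(L_m)$.

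\emph{Step 2 (characteristic function estimate).} The goal is
\begin{equation*}
|f_m(t)-e^{-t^2/2}|\le C L_m|t|^3e^{-t^2/3}\qquad\text{for } |t|\le 1/(4L_m).
\end{equation*}
Taylor expansion gives $f_j(t)=1-\sigma_j^2t^2/2+\theta_j\mathbb{E}|Z_j|^3|t|^3/6$ with $|\theta_j|\le1$. I split into two ranges.
\begin{itemize}
\item For $|t|\le L_m^{-1/3}/2$: each $|1-f_j(t)|$ is small, so I can write $\log f_j(t)=-\sigma_j^2t^2/2+\vartheta_j C\mathbb{E}|Z_j|^3|t|^3$. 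Summing over $j$ and using $\sum_j\sigma_j^2=1$ gives $\log f_m(t)=-t^2/2+\vartheta CL_m|t|^3$. The elementary inequality $|e^z-1|\le|z|e^{|z|}$ then yields the bound, since $CL_m|t|^3\le t^2/6$ on this range.
\item For $L_m^{-1/3}/2<|t|\le 1/(4L_m)$: I use symmetrization. The function $|f_j(t)|^2$ is the characteristic function of $Z_j-Z_j'$, which has variance $2\sigma_j^2$ and third absolute moment at most $8\mathbb{E}|Z_j|^3$. Hence $|f_j(t)|^2\le \exp(-\sigma_j^2t^2+\tfrac{4}{3}\mathbb{E}|Z_j|^3|t|^3)$. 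Taking the product over $j$ gives $|f_m(t)|\le \exp(-t^2/2+\tfrac{2}{3}L_m|t|^3)\le e^{-t^2/3}$ on this range. The target bound then follows, because $e^{-t^2/3}\le 8L_m|t|^3e^{-t^2/3}$ when $|t|\ge L_m^{-1/3}/2$, and $e^{-t^2/2}$ is smaller still.
\end{itemize}

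\emph{Step 3 (conclusion).} Inserting the Step 2 bound into Step 1 gives
\begin{equation*}
\int_{-T}^{T}\frac{|f_m(t)-e^{-t^2/2}|}{|t|}\,\mathrm{d}t\le CL_m\int_{\mathbb{R}}t^2e^{-t^2/3}\,\mathrm{d}t=O(L_m).
\end{equation*}
Combined with $C_0/T=O(L_m)$, this proves the statement.

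The main obstacle is the Step 2 estimate on the large-$|t|$ range, where no logarithm expansion is available. Symmetrization is the tool I would use there. The smoothing inequality in Step 1 is classical and I take it as given.
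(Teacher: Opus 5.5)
Your proposal is correct and is the classical proof behind Petrov's Theorem 5.4: Esseen's smoothing inequality with $T=1/(4L_m)$, combined with $|f_m(t)-e^{-t^2/2}|\le CL_m|t|^3e^{-t^2/3}$ from the logarithmic expansion for $|t|\le \tfrac12 L_m^{-1/3}$ and symmetrization beyond that. The one detail you left implicit, that the log-expansion constant satisfies $CL_m|t|\le 1/6$ on the small range, does hold: $C\le 1/6+1/14<1/4$ there and $L_m|t|\le L_m^{2/3}/2<1/5$. The paper gives no proof of its own and simply cites Petrov, whose argument is exactly this, so there is nothing further to compare.
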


\section{Diagonals of the projection matrices}\label{sec_diag}
The diagonals of the normalized projection matrices $\mathbf{Q}_i$ (which is equivalent to $\mathbf{P}_i$ except the normalized factor $1/(n-i)$) in \eqref{eq_defQ}) play an essential role in the proof of Theorems \ref{thm_logdet} and \ref{thm_pn}, which perform well-concentrated with variance estimation. However, under the heavy-tailed setting, the fluctuation is difficult to control, especially in the near-singularity case. By the relation $\mathbf{B}_i\mathbf{B}_i^{\top}=\sum_{\ell=1}^{n}\mathbf{v}_{\ell,i}\mathbf{v}_{\ell,i}^{\top}$ and the resolvent identity, we have
\begin{equation*}
\begin{split}
\mathbf{v}_{\ell,i}^{\top}(\mathbf{B}_i\mathbf{B}_i^{\top})^{-1}\mathbf{v}_{\ell,i}=\mathbf{v}_{\ell,i}^{\top}(\mathbf{B}_{i(\ell)}\mathbf{B}_{i(\ell)}^{\top})^{-1}\mathbf{v}_{\ell,i}-\mathbf{v}_{\ell,i}^{\top}(\mathbf{B}_i\mathbf{B}_i^{\top})^{-1}\mathbf{v}_{\ell,i}\mathbf{v}_{\ell,i}^{\top}(\mathbf{B}_{i(\ell)}\mathbf{B}_{i(\ell)}^{\top})^{-1}\mathbf{v}_{\ell,i},
\end{split}
\end{equation*}
where $\mathbf{B}_{i(\ell)}\mathbf{B}_{i(\ell)}^{\top}=\sum_{s\ne \ell}^{n}\mathbf{v}_{s,i}\mathbf{v}_{s,i}^{\top}$. This further gives
\begin{equation}\label{eq_rewrite}
p_{i,\ell\ell}=1-\mathbf{v}_{\ell,i}^{\top}(\mathbf{B}_{i}\mathbf{B}_{i}^{\top})^{-1}\mathbf{v}_{\ell,i}=1-\frac{\mathbf{v}_{\ell,i}^{\top}(\mathbf{B}_{i(\ell)}\mathbf{B}_{i(\ell)}^{\top})^{-1}\mathbf{v}_{\ell,i}}{1+\mathbf{v}_{\ell,i}^{\top}(\mathbf{B}_{i(\ell)}\mathbf{B}_{i(\ell)}^{\top})^{-1}\mathbf{v}_{\ell,i}}=\frac{1}{1+\mathbf{v}_{\ell,i}^{\top}(\mathbf{B}_{i(\ell)}\mathbf{B}_{i(\ell)}^{\top})^{-1}\mathbf{v}_{\ell,i}}.
\end{equation}
The first result is an extension of Lemma B.2 of \cite{heiny2023logdet}.
\subsection{Concentration for the diagonals of the projection matrices}
\begin{lemma}\label{lemma_diagonals}
	Let $\alpha\in [3,4)$ and $\lim_{n\rightarrow \infty}p/n<1$. It holds that
	\begin{equation}\label{eq_concentraqikk}
	0\le \sum_{i=0}^{p-1}\sum_{\ell=1}^{n}\big(\mathbb{E}[q_{i,\ell\ell}^2]-\frac{1}{n^2}\big)\le \mathrm{O}(n^{-1/2}+n^{(2-\alpha)/2}\mathcal{L}(n^{1/2})).
	\end{equation}
\end{lemma}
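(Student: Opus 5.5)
The lower bound is deterministic. Since $\sum_\ell q_{i,\ell\ell}=\operatorname{tr}\mathbf{Q}_i=1$, Cauchy--Schwarz gives $\sum_\ell q_{i,\ell\ell}^2\ge n^{-1}$, and so each summand $\sum_\ell(\mathbb{E}q_{i,\ell\ell}^2-n^{-2})$ is nonnegative.

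For the upper bound I use exchangeability. The diagonal entries $p_{i,\ell\ell}$, $\ell=1,\dots,n$, are identically distributed with mean $(n-i)/n$. Writing $q_{i,\ell\ell}=p_{i,\ell\ell}/(n-i)$, this gives
\begin{equation*}
\sum_{\ell=1}^n\Big(\mathbb{E}q_{i,\ell\ell}^2-\frac{1}{n^2}\Big)=\frac{n}{(n-i)^2}\operatorname{Var}(p_{i,11}).
\end{equation*}
Because $\lim p/n<1$, we have $(n-i)\asymp n$ uniformly in $0\le i\le p-1$. It therefore suffices to prove
\begin{equation*}
\operatorname{Var}(p_{i,11})\lesssim n^{-3/2}+\mathcal{L}(n^{1/2})n^{-\alpha/2}\qquad\text{uniformly in } i\le p-1 .
\end{equation*}
Summing over the $p\asymp n$ values of $i$ then yields $\mathrm{O}(n^{-1/2}+n^{(2-\alpha)/2}\mathcal{L}(n^{1/2}))$.

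To bound the variance I start from \eqref{eq_rewrite}: $p_{i,11}=(1+\eta)^{-1}$ with $\eta=\mathbf{v}_{1,i}^{\top}(\mathbf{B}_{i(1)}\mathbf{B}_{i(1)}^{\top})^{-1}\mathbf{v}_{1,i}$. Here $\mathbf{v}_{1,i}\in\mathbb{R}^i$ is the first column of $\mathbf{B}_i$, and it is independent of $\mathbf{A}:=(\mathbf{B}_{i(1)}\mathbf{B}_{i(1)}^{\top})^{-1}$. The map $x\mapsto(1+x)^{-1}$ is $1$-Lipschitz on $[0,\infty)$, so
\begin{equation*}
\operatorname{Var}(p_{i,11})\le \mathbb{E}(\eta-\mathbb{E}\eta)^2=\mathbb{E}\big(\eta-\operatorname{tr}\mathbf{A}\big)^2+\operatorname{Var}(\operatorname{tr}\mathbf{A}).
\end{equation*}

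\emph{Quadratic-form term.} Conditioning on $\mathbf{A}$ and expanding the quadratic form of i.i.d.\ entries, I get
\begin{equation*}
\mathbb{E}\big[(\eta-\operatorname{tr}\mathbf{A})^2\mid\mathbf{A}\big]\lesssim \mathbb{E}\xi^4\,\operatorname{tr}\mathbf{A}^2
\end{equation*}
(Lemma \ref{lemma_quadratic3} with truncation, or directly). The smallest eigenvalue of $\mathbf{B}_{i(1)}\mathbf{B}_{i(1)}^{\top}$ is $\gtrsim n$ with overwhelming probability, since $i/n$ stays away from $1$. On that event $\operatorname{tr}\mathbf{A}^2\lesssim i/n^2\lesssim n^{-1}$, and $\operatorname{tr}\mathbf{A}=\mathrm{O}(1)$.

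The naive fourth moment is too large for $\alpha<4$, so I truncate at level $n^{1/2}$:
\begin{itemize}
\item Entries below $n^{1/2}$ have $\mathbb{E}\xi^4\mathbbm{1}(|\xi|\le n^{1/2})\sim \mathcal{L}(n^{1/2})n^{2-\alpha/2}$ by Lemma \ref{lemma_truncatemoment}. This contributes $\mathcal{L}(n^{1/2})n^{2-\alpha/2}\cdot n^{-2}=\mathcal{L}(n^{1/2})n^{-\alpha/2}$ after using $\|\mathbf{A}\|\lesssim n^{-1}$ entrywise.
\item An entry exceeding $n^{1/2}$ in $\mathbf{v}_{1,i}$ occurs with probability $\lesssim i\,\mathcal{L}(n^{1/2})n^{-\alpha/2}$. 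On that event $p_{i,11}\in[0,1]$ changes by at most $1$, so it costs $\lesssim \mathcal{L}(n^{1/2})n^{1-\alpha/2}$ to $\mathbb{E}p_{i,11}^2$.
\end{itemize}
That second cost is too crude by a factor $n$. The remedy is the resampling idea around \eqref{eq_highproeventBn}: the large entries are only $\mathrm{O}(\log n)$ per column, and an isolated huge entry $X_{k1}$ pushes $\eta$ upward in a controlled way through $A_{kk}X_{k1}^2$. I would handle this by comparing $p_{i,11}$ with and without these entries, as in Lemma B.2 of \cite{heiny2023logdet}.

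\emph{Trace term.} For $\operatorname{Var}(\operatorname{tr}\mathbf{A})$ I use a martingale decomposition over the columns $s\ne1$. Each column removal changes $\operatorname{tr}\mathbf{A}$ by
\begin{equation*}
\frac{\mathbf{v}^{\top}\mathbf{A}^2\mathbf{v}}{1+\mathbf{v}^{\top}\mathbf{A}\mathbf{v}}\lesssim n^{-1},
\end{equation*}
which gives $\operatorname{Var}(\operatorname{tr}\mathbf{A})\lesssim n\cdot n^{-2}\cdot(\text{fourth-moment factor})$. With the same truncation this improves to $\mathrm{O}(n^{-3/2})$ once the centred increments are estimated using $\mathbb{E}\xi^4\mathbbm{1}(|\xi|\le n^{1/2})$.

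I expect the main obstacle to be getting the sharp $n^{-3/2}$ and $n^{-\alpha/2}$ rates rather than $n^{-1}$. For the trace term I would first try the Efron--Stein inequality together with the second-order resolvent expansion. For the large-entry contribution I would try the resampling comparison described above.
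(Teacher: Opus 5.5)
Your reduction loses a factor of $n$, and the target it produces is false. Since $(n-i)\asymp n$, the weights $\frac{n}{(n-i)^2}$ are $\asymp n^{-1}$. Summing over the $p\asymp n$ indices therefore gives $\sum_{i}\frac{n}{(n-i)^2}\operatorname{Var}(p_{i,11})\asymp \max_i \operatorname{Var}(p_{i,11})$. The per-$i$ bound actually required is $\operatorname{Var}(p_{i,11})\lesssim n^{-1/2}+\mathcal{L}(n^{1/2})n^{(2-\alpha)/2}$, not $n^{-3/2}+\mathcal{L}(n^{1/2})n^{-\alpha/2}$.

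The bound you aim for cannot hold. For Gaussian entries, $p_{i,11}\sim\mathrm{Beta}(\frac{n-i}{2},\frac{i}{2})$ has variance $\frac{2i(n-i)}{n^2(n+2)}\asymp n^{-1}$ when $i\asymp n$. For any centred unit-variance entries, the off-diagonal part of $\eta$ alone has conditional variance $2\sum_{u\neq v}A_{uv}^2\asymp \operatorname{tr}\mathbf{A}^2\asymp n^{-1}$. The same slip appears in your truncated estimate: $\mathbb{E}\xi^4\mathbbm{1}(|\xi|\le n^{1/2})\operatorname{tr}\mathbf{A}^2$ is $\mathcal{L}(n^{1/2})n^{2-\alpha/2}\cdot n^{-1}$, not $\cdot\, n^{-2}$, by your own bound $\operatorname{tr}\mathbf{A}^2\lesssim n^{-1}$. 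So the ``main obstacle'' you identify, and the resampling comparison and Efron--Stein refinement meant to overcome it, target a false statement. The argument as planned cannot be completed.

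Once the target is corrected, the estimates you dismissed as too crude are exactly what is needed, and your route becomes essentially the paper's.
\begin{itemize}
\item The cost of entries above $n^{1/2}$ is $\lesssim i\,\mathcal{L}(n^{1/2})n^{-\alpha/2}\lesssim \mathcal{L}(n^{1/2})n^{(2-\alpha)/2}$. The paper's \eqref{eq_trundiff} gets the same order via $\mathbb{E}v_{11,i}^2\mathbbm{1}(|v_{11,i}|\ge n^{1/2})$.
\item The truncated quadratic form contributes $\mathcal{L}(n^{1/2})n^{(2-\alpha)/2}+\mathrm{O}(n^{-1})$, plus terms from the nonzero mean and deficient variance of the truncated entries, which you should account for.
\item The trace fluctuation only has to be $\mathrm{O}(n^{-1/2})$.
\end{itemize}

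What you still need to handle is that $\|\mathbf{A}\|\lesssim n^{-1}$ holds only on a high-probability event. The martingale differences of $\operatorname{tr}\mathbf{A}$ are conditional expectations that also see the complement, where nothing in the assumptions controls $\lambda_{\min}^{-1}$; $\eta$ need not even be square integrable.

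The paper avoids this by replacing the inverse with $(\mathbf{B}_{i(1)}\mathbf{B}_{i(1)}^{\top}+\epsilon_n n\mathbf{I}_i)^{-1}$, whose norm is at most $(\epsilon_n n)^{-1}$ deterministically. Comparing with the unregularized quadratic form costs $\mathrm{O}(\epsilon_n^2)$; both quantities lie in $[0,1]$, so the bad event costs only its probability. The trace variance becomes $\mathrm{O}(\epsilon_n^{-2}n^{-1})$ by bounded martingale differences. Choosing $\epsilon_n=n^{-1/4}$ balances these two costs and produces the $n^{-1/2}$ term in the statement.
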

\begin{proof}
Notice the definition of $\mathbf{Q}_i$ in \eqref{eq_defQ}, which implies that $q_{i,\ell\ell}$ for $1\le \ell\le n$ share the same distribution with $\mathbb{E}q_{i,\ell\ell}=1/n$. Applying Jensen’s inequality and the fact that $\sum_{\ell}q_{i,\ell\ell}=1$ with $0\le q_{i,\ell\ell}\le 1/(n-i)$, we obtain that
\begin{equation*}
\frac{1}{n^k}=\big(\frac{1}{n}\sum_{\ell=1}^{n}q_{i,\ell\ell}\big)^{k}\le \frac{1}{n}\sum_{\ell=1}^{n}q_{i,\ell\ell}^k
\end{equation*}
for any integer $k\ge 1$. Taking expectation on both hands gives the lower bound as desired. It suffices to consider the upper bound.
Following a simialr argument of Lemma B.2 with $k=2$ in \cite{heiny2023logdet}, we have
\begin{equation*}
\delta_n=:\sum_{i=0}^{p-1}\sum_{\ell=1}^{n}\big(\mathbb{E}[q_{i,\ell\ell}^2]-\frac{1}{n^2}\big)=\sum_{i=0}^{p-1}\sum_{\ell=1}^{n}\mathbb{E}\big(q_{i,\ell\ell}-\mathbb{E}(q_{i,\ell\ell})\big)^2.
\end{equation*}
Invoke the fact that $q_{i,\ell\ell}$ are identically distributed over $\ell$.
Applying Minkowski's inequality gives
\begin{equation*}
	\delta_n=\sum_{i=0}^{p-1}\frac{n}{(n-i)^2}\mathbb{E}\big(\mathbf{v}_{1,i}^{\top}(\mathbf{B}_{i}\mathbf{B}_{i}^{\top})^{-1}\mathbf{v}_{1,i}-\mathbb{E}(\mathbf{v}_{1,i}^{\top}(\mathbf{B}_{i}\mathbf{B}_{i}^{\top})^{-1}\mathbf{v}_{1,i})\big)^2\le C(\delta_n^{(1)}+\delta_n^{(2)}+\delta_n^{(3)}),
\end{equation*}
where
\begin{equation*}
	\begin{split}
	\delta_n^{(1)}=&\sum_{i=0}^{p-1}\frac{n}{(n-i)^2}\mathbb{E}\big(\mathbf{v}_{1,i}^{\top}(\mathbf{B}_{i}\mathbf{B}_{i}^{\top})^{-1}\mathbf{v}_{1,i}-\mathbf{v}_{1,i}^{\top}(\mathbf{B}_{i}\mathbf{B}_{i}^{\top}+\epsilon_n n\mathbf{I}_i)^{-1}\mathbf{v}_{1,i}\big)^2,\\
	\delta_n^{(2)}=&\sum_{i=0}^{p-1}\frac{n}{(n-i)^2}\mathbb{E}\big(\mathbf{v}_{1,i}^{\top}(\mathbf{B}_{i}\mathbf{B}_{i}^{\top}+\epsilon_n n\mathbf{I}_i)^{-1}\mathbf{v}_{1,i}-\frac{\mathbb{E}\operatorname{tr}(\mathbf{B}_{i(1)}\mathbf{B}_{i(1)}^{\top}+\epsilon_n n\mathbf{I}_i)^{-1}}{1+\mathbb{E}\operatorname{tr}(\mathbf{B}_{i(1)}\mathbf{B}_{i(1)}^{\top}+\epsilon_n n\mathbf{I}_i)^{-1}}\big)^2,\\
	\delta_n^{(3)}=&\sum_{i=0}^{p-1}\frac{n}{(n-i)^2}\mathbb{E}\big(\frac{\mathbb{E}\operatorname{tr}(\mathbf{B}_{i(1)}\mathbf{B}_{i(1)}^{\top}+\epsilon_n n\mathbf{I}_i)^{-1}}{1+\mathbb{E}\operatorname{tr}(\mathbf{B}_{i(1)}\mathbf{B}_{i(1)}^{\top}+\epsilon_n n\mathbf{I}_i)^{-1}}-\mathbb{E}(\mathbf{v}_{1,i}^{\top}(\mathbf{B}_{i}\mathbf{B}_{i}^{\top})^{-1}\mathbf{v}_{1,i})\big)^2.
	\end{split}
\end{equation*}
Here $\epsilon_n$ is a sequence tending to zero arbitrarily slower than $1/n$ and will be fixed later. For $\delta_n^{(1)}$, note $\mathbf{v}_{1,i}^{\top}(\mathbf{B}_{i}\mathbf{B}_{i}^{\top})^{-1}\mathbf{v}_{1,i}\le 1$, which further implies
\begin{equation*}
	\begin{split}
	&|\mathbf{v}_{1,i}^{\top}(\mathbf{B}_{i}\mathbf{B}_{i}^{\top})^{-1}\mathbf{v}_{1,i}-\mathbf{v}_{1,i}^{\top}(\mathbf{B}_{i}\mathbf{B}_{i}^{\top}+\epsilon_n n\mathbf{I}_i)^{-1}\mathbf{v}_{1,i}|
	=\epsilon_{n}n\cdot \mathbf{v}_{1,i}^{\top}(\mathbf{B}_{i}\mathbf{B}_{i}^{\top})^{-1}(\mathbf{B}_{i}\mathbf{B}_{i}^{\top}+\epsilon_n n\mathbf{I}_i)^{-1}\mathbf{v}_{1,i}\\
	\le &\frac{\epsilon_{n} n}{\lambda_{\min}(\mathbf{B}_{i}\mathbf{B}_{i}^{\top}+\epsilon_n n\mathbf{I}_i)}\mathbf{v}_{1,i}^{\top}(\mathbf{B}_{i}\mathbf{B}_{i}^{\top})^{-1}\mathbf{v}_{1,i}
	\le  \frac{\epsilon_{n}}{(1-\sqrt{i/n})^2}\le C\epsilon_{n}.
	\end{split}
\end{equation*}
It can be verified that
\begin{equation}\label{eq_delta1}
	\delta_n^{(1)}\le C\sum_{i=0}^{p-1}\frac{n}{(n-i)^2}\epsilon_{n}^2=\mathrm{O}(\epsilon_{n}^2).
\end{equation}
For $\delta_n^{(2)}$, similaly to \eqref{eq_rewrite}, one has
\begin{equation}\label{eq_delta20}
	\begin{split}
	&\mathbb{E}\big(\mathbf{v}_{1,i}^{\top}(\mathbf{B}_{i}\mathbf{B}_{i}^{\top}+\epsilon_n n\mathbf{I}_i)^{-1}\mathbf{v}_{1,i}-\frac{\mathbb{E}\operatorname{tr}(\mathbf{B}_{i(1)}\mathbf{B}_{i(1)}^{\top}+\epsilon_n n\mathbf{I}_i)^{-1}}{1+\mathbb{E}\operatorname{tr}(\mathbf{B}_{i(1)}\mathbf{B}_{i(1)}^{\top}+\epsilon_n n\mathbf{I}_i)^{-1}}\big)^2\\
	=&\mathbb{E}\big(\frac{\mathbf{v}_{1,i}^{\top}(\mathbf{B}_{i(1)}\mathbf{B}_{i(1)}^{\top}+\epsilon_n n\mathbf{I}_i)^{-1}\mathbf{v}_{1,i}}{1+\mathbf{v}_{1,i}^{\top}(\mathbf{B}_{i(1)}\mathbf{B}_{i(1)}^{\top}+\epsilon_n n\mathbf{I}_i)^{-1}\mathbf{v}_{1,i}}-\frac{\mathbb{E}\operatorname{tr}(\mathbf{B}_{i(1)}\mathbf{B}_{i(1)}^{\top}+\epsilon_n n\mathbf{I}_i)^{-1}}{1+\mathbb{E}\operatorname{tr}(\mathbf{B}_{i(1)}\mathbf{B}_{i(1)}^{\top}+\epsilon_n n\mathbf{I}_i)^{-1}}\big)^2\\
	\le & C\mathbb{E}(\mathbf{v}_{1,i}^{\top}(\mathbf{B}_{i(1)}\mathbf{B}_{i(1)}^{\top}+\epsilon_n n\mathbf{I}_i)^{-1}\mathbf{v}_{1,i}-\operatorname{tr}(\mathbf{B}_{i(1)}\mathbf{B}_{i(1)}^{\top}+\epsilon_n n\mathbf{I}_i)^{-1})^2\\
	&+C\mathbb{E}(\operatorname{tr}(\mathbf{B}_{i(1)}\mathbf{B}_{i(1)}^{\top}+\epsilon_n n\mathbf{I}_i)^{-1}-\mathbb{E}\operatorname{tr}(\mathbf{B}_{i(1)}\mathbf{B}_{i(1)}^{\top}+\epsilon_n n\mathbf{I}_i)^{-1})^2.
	\end{split}
\end{equation}
Similarly to (B.8) of \cite{heiny2023logdet}, using the martingale differences and Sherman-Morrison formula gives
\begin{equation}\label{eq_delta22}
	\mathbb{E}(\operatorname{tr}(\mathbf{B}_{i(1)}\mathbf{B}_{i(1)}^{\top}+\epsilon_n n\mathbf{I}_i)^{-1}-\mathbb{E}\operatorname{tr}(\mathbf{B}_{i(1)}\mathbf{B}_{i(1)}^{\top}+\epsilon_n n\mathbf{I}_i)^{-1})^2\le \mathrm{O}(\epsilon_n^{-2}n^{-1}).
\end{equation}
Define $\mathbb{E}_{\ell}:=\mathbb{E}[\cdot\mid \mathbf{v}_{\ell,i},\ldots,\mathbf{v}_{n,i}]$ for $\ell=1,\ldots,n$. It follows that
\begin{equation*}
	\begin{split}
	&\mathbb{E}(\mathbf{v}_{1,i}^{\top}(\mathbf{B}_{i(1)}\mathbf{B}_{i(1)}^{\top}+\epsilon_n n\mathbf{I}_i)^{-1}\mathbf{v}_{1,i}-\operatorname{tr}(\mathbf{B}_{i(1)}\mathbf{B}_{i(1)}^{\top}+\epsilon_n n\mathbf{I}_i)^{-1})^2\\
	=&\mathbb{E}(\mathbf{v}_{1,i}^{\top}(\mathbf{B}_{i(1)}\mathbf{B}_{i(1)}^{\top}+\epsilon_n n\mathbf{I}_i)^{-1}\mathbf{v}_{1,i}-\mathbb{E}_2\mathbf{v}_{1,i}^{\top}(\mathbf{B}_{i(1)}\mathbf{B}_{i(1)}^{\top}+\epsilon_n n\mathbf{I}_i)^{-1}\mathbf{v}_{1,i})^2.
	\end{split}
\end{equation*}
Let $\hat{v}_{1j,i}=v_{1j,i}\mathbbm{1}(|v_{1j,i}|<n^{1/2})$. By Lemma \ref{lemma_truncatemoment} for $\alpha\in[3,4)$, we have
\begin{equation}\label{eq_v1imoments1}
\mathbb{E}\hat{v}_{1j,i}=-\mathbb{E}v_{1j,i}\mathbbm{1}(|v_{1j,i}|\ge n^{1/2})=\mathrm{O}(\mathcal{L}(n^{1/2})n^{(1-\alpha)/2})=\mathrm{o}(n^{-1+\epsilon_0})
\end{equation}
where $\epsilon_0>0$ can be arbitrarily small due to Potter's bound, and
\begin{equation}\label{eq_v1imoments2}
1-\mathbb{E}\hat{v}_{1j,i}^2=\mathbb{E}v_{1j,i}^2\mathbbm{1}(|v_{1j,i}|\ge n^{1/2})=\mathrm{O}(\mathcal{L}(n^{1/2})n^{(2-\alpha)/2}).
\end{equation}
It follows that
\begin{equation}\label{eq_delta21}
\begin{split}
&\mathbb{E}(\mathbf{v}_{1,i}^{\top}(\mathbf{B}_{i(1)}\mathbf{B}_{i(1)}^{\top}+\epsilon_n n\mathbf{I}_i)^{-1}\mathbf{v}_{1,i}-\mathbb{E}_2\mathbf{v}_{1,i}^{\top}(\mathbf{B}_{i(1)}\mathbf{B}_{i(1)}^{\top}+\epsilon_n n\mathbf{I}_i)^{-1}\mathbf{v}_{1,i})^2\\
\le &C\mathbb{E}(\mathbf{v}_{1,i}^{\top}(\mathbf{B}_{i(1)}\mathbf{B}_{i(1)}^{\top}+\epsilon_n n\mathbf{I}_i)^{-1}\mathbf{v}_{1,i}-\hat{\mathbf{v}}_{1,i}^{\top}(\mathbf{B}_{i(1)}\mathbf{B}_{i(1)}^{\top}+\epsilon_n n\mathbf{I}_i)^{-1}\hat{\mathbf{v}}_{1,i})^2\\
&+C\mathbb{E}(\mathbb{E}_2\mathbf{v}_{1,i}^{\top}(\mathbf{B}_{i(1)}\mathbf{B}_{i(1)}^{\top}+\epsilon_n n\mathbf{I}_i)^{-1}\mathbf{v}_{1,i}-\mathbb{E}_2\hat{\mathbf{v}}_{1,i}^{\top}(\mathbf{B}_{i(1)}\mathbf{B}_{i(1)}^{\top}+\epsilon_n n\mathbf{I}_i)^{-1}\hat{\mathbf{v}}_{1,i})^2\\
&+C\mathbb{E}(\hat{\mathbf{v}}_{1,i}^{\top}(\mathbf{B}_{i(1)}\mathbf{B}_{i(1)}^{\top}+\epsilon_n n\mathbf{I}_i)^{-1}\hat{\mathbf{v}}_{1,i}-\mathbb{E}_2\hat{\mathbf{v}}_{1,i}^{\top}(\mathbf{B}_{i(1)}\mathbf{B}_{i(1)}^{\top}+\epsilon_n n\mathbf{I}_i)^{-1}\hat{\mathbf{v}}_{1,i})^2.
\end{split}
\end{equation}
Consider the first term in \eqref{eq_delta21}.
Similarly to (B.11) of \cite{heiny2023logdet}, we get
\begin{equation}\label{eq_trundiff}
	\begin{split}
	&\mathbb{E}(\mathbf{v}_{1,i}^{\top}(\mathbf{B}_{i(1)}\mathbf{B}_{i(1)}^{\top}+\epsilon_n n\mathbf{I}_i)^{-1}\mathbf{v}_{1,i}-\hat{\mathbf{v}}_{1,i}^{\top}(\mathbf{B}_{i(1)}\mathbf{B}_{i(1)}^{\top}+\epsilon_n n\mathbf{I}_i)^{-1}\hat{\mathbf{v}}_{1,i})^2\\
	=&\mathbb{E}((\mathbf{v}_{1,i}-\hat{\mathbf{v}}_{1,i})^{\top}(\mathbf{B}_{i(1)}\mathbf{B}_{i(1)}^{\top}+\epsilon_n n\mathbf{I}_i)^{-1}(\mathbf{v}_{1,i}+\hat{\mathbf{v}}_{1,i}))^2\\
	\le &C \mathbb{E}((\mathbf{v}_{1,i}-\hat{\mathbf{v}}_{1,i})^{\top}(\mathbf{B}_{i(1)}\mathbf{B}_{i(1)}^{\top}+\epsilon_n n\mathbf{I}_i)^{-1}(\mathbf{v}_{1,i}-\hat{\mathbf{v}}_{1,i}))\\
	\le & C\mathbb{E}\frac{(\mathbf{v}_{1,i}-\hat{\mathbf{v}}_{1,i})^{\top}(\mathbf{v}_{1,i}-\hat{\mathbf{v}}_{1,i})}{\lambda_{\min}(\mathbf{B}_{i(1)}\mathbf{B}_{i(1)}^{\top}+\epsilon_n n\mathbf{I}_i)}\le \frac{C}{n}\mathbb{E}\big(\sum_{j=1}^{n}v_{1j,i}^2\mathbbm{1}(|v_{1j,i}|\ge n^{1/2})\big)\\
	\le & C\mathbb{E}v_{11,i}^2\mathbbm{1}(|v_{11,i}|\ge n^{1/2})=\mathrm{O}(\mathcal{L}(n^{1/2})n^{(2-\alpha)/2}).
	\end{split}
\end{equation}
The second term of \eqref{eq_delta21} satisfies
\begin{equation*}
	\begin{split}
	&\mathbb{E}(\mathbb{E}_2\mathbf{v}_{1,i}^{\top}(\mathbf{B}_{i(1)}\mathbf{B}_{i(1)}^{\top}+\epsilon_n n\mathbf{I}_i)^{-1}\mathbf{v}_{1,i}-\mathbb{E}_2\hat{\mathbf{v}}_{1,i}^{\top}(\mathbf{B}_{i(1)}\mathbf{B}_{i(1)}^{\top}+\epsilon_n n\mathbf{I}_i)^{-1}\hat{\mathbf{v}}_{1,i})^2\\
	\le &C\mathbb{E}(\operatorname{tr}(\mathbf{B}_{i(1)}\mathbf{B}_{i(1)}^{\top}+\epsilon_n n\mathbf{I}_i)^{-1}(1-\mathbb{E}\hat{v}_{1j,i}^2))^2+C\mathbb{E}(\sum_{u\ne v}(\mathbb{E}\hat{v}_{1j,i})^2[(\mathbf{B}_{i(1)}\mathbf{B}_{i(1)}^{\top}+\epsilon_n n\mathbf{I}_i)^{-1}]_{uv})^2\\
	\le & C[\mathcal{L}(n^{1/2})]^2n^{2-\alpha}\mathbb{E}(\operatorname{tr}(\mathbf{B}_{i(1)}\mathbf{B}_{i(1)}^{\top}+\epsilon_n n\mathbf{I}_i)^{-1})^2+C[\mathcal{L}(n^{1/2})]^4n^{2(1-\alpha)}\cdot n^2\mathbb{E}(\operatorname{tr}(\mathbf{B}_{i(1)}\mathbf{B}_{i(1)}^{\top}+\epsilon_n n\mathbf{I}_i)^{-2})\\
	\le & C[\mathcal{L}(n^{1/2})]^2n^{2-\alpha}+C[\mathcal{L}(n^{1/2})]^4n^{3-2\alpha},
	\end{split}
\end{equation*}
since $\|(\mathbf{B}_{i(1)}\mathbf{B}_{i(1)}^{\top}+\epsilon_n n\mathbf{I}_i)^{-1}\|\le Cn^{-1}$ and
\begin{equation*}
	\begin{split}
	\mathbb{E}(\operatorname{tr}(\mathbf{B}_{i(1)}\mathbf{B}_{i(1)}^{\top}+\epsilon_n n\mathbf{I}_i)^{-1})^2
	\le& [\mathbb{E}(\operatorname{tr}(\mathbf{B}_{i(1)}\mathbf{B}_{i(1)}^{\top}+\epsilon_n n\mathbf{I}_i)^{-1})]^2+\operatorname{Var}(\operatorname{tr}(\mathbf{B}_{i(1)}\mathbf{B}_{i(1)}^{\top}+\epsilon_n n\mathbf{I}_i)^{-1})\\
	\le & C(1-i/n)^{-1}+\mathrm{O}(n^{-1}\epsilon_{n}^{-2})\le C
	\end{split}
\end{equation*}
by Lemma \ref{lemma_esdconvergence} for $\epsilon_{n}\ge n^{-1/2}$.
Moreover, one has
\begin{equation*}
\mathbb{E}\hat{v}_{1j,i}^4=\mathbb{E}v_{1j,i}^4\mathbbm{1}(|v_{1j,i}|<n^{1/2})\sim \frac{\alpha}{4-\alpha}n^{(4-\alpha)/2}\mathcal{L}(n^{1/2})=\mathrm{O}(\mathcal{L}(n^{1/2})n^{(4-\alpha)/2})
\end{equation*}
as $n\rightarrow \infty$ by Lemma \ref{lemma_truncatemoment}.
For the last term in \eqref{eq_delta21}, applying Lemma \ref{lemma_quadratic3} with $c_{\alpha}=1/2$ gives
\begin{equation*}
	\begin{split}
	&\mathbb{E}(\hat{\mathbf{v}}_{1,i}^{\top}(\mathbf{B}_{i(1)}\mathbf{B}_{i(1)}^{\top}+\epsilon_n n\mathbf{I}_i)^{-1}\hat{\mathbf{v}}_{1,i}-\mathbb{E}_2\hat{\mathbf{v}}_{1,i}^{\top}(\mathbf{B}_{i(1)}\mathbf{B}_{i(1)}^{\top}+\epsilon_n n\mathbf{I}_i)^{-1}\hat{\mathbf{v}}_{1,i})^2\\
	\le & C\mathbb{E}\hat{v}_{11,i}^4\mathbb{E}(\operatorname{tr}(\mathbf{B}_{i(1)}\mathbf{B}_{i(1)}^{\top}+\epsilon_n n\mathbf{I}_i)^{-2})
	\le C\mathcal{L}(n^{1/2})n^{(2-\alpha)/2}.
	\end{split}
\end{equation*}
Thus, we get, for $\epsilon_{n}\ge n^{-1/2}$,
\begin{equation}\label{eq_delta23}
	\begin{split}
	\mathbb{E}(\mathbf{v}_{1,i}^{\top}(\mathbf{B}_{i(1)}\mathbf{B}_{i(1)}^{\top}+\epsilon_n n\mathbf{I}_i)^{-1}\mathbf{v}_{1,i}-\mathbb{E}_2\mathbf{v}_{1,i}^{\top}(\mathbf{B}_{i(1)}\mathbf{B}_{i(1)}^{\top}+\epsilon_n n\mathbf{I}_i)^{-1}\mathbf{v}_{1,i})^2
	\lesssim \mathcal{L}(n^{1/2})n^{(2-\alpha)/2},
	\end{split}
\end{equation}
which together with \eqref{eq_delta22} further implies
\begin{equation}\label{eq_delta2}
	\delta_n^{(2)}\le C\sum_{i=0}^{p-1}\frac{n}{(n-i)^2}(\epsilon_{n}^{-2}n^{-1}+\mathcal{L}(n^{1/2})n^{(2-\alpha)/2})=\mathrm{O}(\epsilon_{n}^{-2}n^{-1}+\mathcal{L}(n^{1/2})n^{(2-\alpha)/2})
\end{equation}
for $\epsilon_{n}\ge n^{-1/2}$. Next, we proceed to $\delta_n^{(3)}$. Observe that
\begin{equation*}
	\begin{split}
	&\big(\frac{\mathbb{E}\operatorname{tr}(\mathbf{B}_{i(1)}\mathbf{B}_{i(1)}^{\top}+\epsilon_n n\mathbf{I}_i)^{-1}}{1+\mathbb{E}\operatorname{tr}(\mathbf{B}_{i(1)}\mathbf{B}_{i(1)}^{\top}+\epsilon_n n\mathbf{I}_i)^{-1}}-\mathbb{E}(\mathbf{v}_{1,i}^{\top}(\mathbf{B}_{i}\mathbf{B}_{i}^{\top})^{-1}\mathbf{v}_{1,i})\big)^2\\
	\le &C\left(\mathbb{E}\big(\mathbf{v}_{1,i}^{\top}(\mathbf{B}_{i}\mathbf{B}_{i}^{\top}+\epsilon_n n\mathbf{I}_i)^{-1}\mathbf{v}_{1,i}-\frac{\mathbb{E}\operatorname{tr}(\mathbf{B}_{i(1)}\mathbf{B}_{i(1)}^{\top}+\epsilon_n n\mathbf{I}_i)^{-1}}{1+\mathbb{E}\operatorname{tr}(\mathbf{B}_{i(1)}\mathbf{B}_{i(1)}^{\top}+\epsilon_n n\mathbf{I}_i)^{-1}}\big)\right)^2\\
	&+C\big(\mathbb{E}(\mathbf{v}_{1,i}^{\top}(\mathbf{B}_{i}\mathbf{B}_{i}^{\top})^{-1}\mathbf{v}_{1,i})-\mathbb{E}\mathbf{v}_{1,i}^{\top}(\mathbf{B}_{i}\mathbf{B}_{i}^{\top}+\epsilon_n n\mathbf{I}_i)^{-1}\mathbf{v}_{1,i}\big)^2\\
	\le &C(\epsilon_{n}^{-2}n^{-1}+\mathcal{L}(n^{1/2})n^{(2-\alpha)/2})+C\epsilon_{n}^{2},
	\end{split}
\end{equation*}
where in the last line we used Jensen's inequality, and the results in \eqref{eq_delta1} and \eqref{eq_delta2}. As a result, we have
\begin{equation*}
	\delta_n\le C(\epsilon_{n}^2+\epsilon_{n}^{-2}n^{-1})+C\mathcal{L}(n^{1/2})n^{(2-\alpha)/2}
\end{equation*}
for $\alpha\ge 3$ and $n^{-1}\le \epsilon_{n}\le n^{-1/2}$. Choosing $\epsilon_{n}=n^{-1/4}$ finishes the proof of the lemma.
\end{proof}

\subsection{Maximum of diagonals of the projection matrices near singularity}

Now, we present the main results for the maximum of diagonals of the projection matrices $\mathbf{P}_i$, given the global truncation level at $n^{2/3}\log n$, similarly to Lemma 3.3 of \cite{bao2015logarithmic} and Lemma 3.5 of \cite{wang2018}.

\begin{lemma}\label{lemma_upperdiagonal}
    Under \eqref{eq_newcondition}, if  $n-p=\mathrm{O}(s_3)$, for $p-s_3\le i\le p-1$, we have
    \begin{equation}\label{eq_Emaxpll}
        \max_{p-s_3\le i\le p-1}\mathbb{E}\max_{1\le \ell\le n}p_{i,\ell\ell}\le C\log^{-a} n
    \end{equation}
    for some constant $C>0$.
    For $p-p^{2/3}\le i\le p-1$, we have
    \begin{equation}\label{eq_Emaxpilllarge}
        \max_{p-p^{2/3}\le i\le p-1}\mathbb{E}\max_{1\le \ell\le n}p_{i,\ell\ell}\le Cn^{-1/3}\log^{12a+4} n.
    \end{equation}
    Moreover, we have, with high probability,
    \begin{equation}\label{eq_maxpikklarge}
        \begin{split}
        \mathbb{P}(\max_{1\le \ell\le n}p_{i,\ell\ell}\ge n^{-1/8}\log s_2)\le Cn^{-3/4}\log^{10} n,
        \end{split}
    \end{equation}
    and
    \begin{equation}\label{eq_maxpikksmall}
        \begin{split}
        \mathbb{P}(\max_{1\le \ell\le n}p_{i,\ell\ell}\ge n^{-1/4}\log s_2)\le Cn^{-1/2}\log^{10} n,\\\mathbb{P}(\max_{1\le \ell\le n}p_{i,\ell\ell}\ge n^{-1/3}\log s_2)\le Cn^{-1/3}\log^{12a+4} n
        \end{split}
    \end{equation}
    for $p-p^{2/3}\le i\le p-1$.
\end{lemma}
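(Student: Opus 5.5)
The plan is to start from the Sherman--Morrison representation \eqref{eq_rewrite},
\[
p_{i,\ell\ell}=\big(1+\mathbf{v}_{\ell,i}^{\top}(\mathbf{B}_{i(\ell)}\mathbf{B}_{i(\ell)}^{\top})^{-1}\mathbf{v}_{\ell,i}\big)^{-1}\le \big(1+n^{-1}\mathbf{v}_{\ell,i}^{\top}G_{i,\ell}(\epsilon_n)\mathbf{v}_{\ell,i}\big)^{-1},
\]
where $G_{i,\ell}(\epsilon_n)=(n^{-1}\mathbf{B}_{i(\ell)}\mathbf{B}_{i(\ell)}^{\top}+\epsilon_n\mathbf{I}_i)^{-1}$. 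The inequality holds because adding $\epsilon_n\mathbf{I}$ only decreases the quadratic form. The point is that $\mathbf{v}_{\ell,i}$ is independent of $G_{i,\ell}$. Conditionally on $\mathbf{B}_{i(\ell)}$, I will compare $n^{-1}\mathbf{v}_{\ell,i}^{\top}G_{i,\ell}\mathbf{v}_{\ell,i}$ with $n^{-1}\operatorname{tr}G_{i,\ell}$, and then compare $n^{-1}\operatorname{tr}G_{i,\ell}$ with its mean. For that mean I use Lemma \ref{lemma_esdconvergence}: $i^{-1}\mathbb{E}\operatorname{tr}G_i(\epsilon_n)=s_i(\epsilon_n)+$ error. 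Near singularity ($n-i\lesssim s_3$ or $\lesssim p^{2/3}$) the Stieltjes value $s_i(\epsilon_n)$ behaves like $\epsilon_n^{-1/2}$. Hence $n^{-1}\operatorname{tr}G_{i,\ell}\gtrsim \epsilon_n^{-1/2}$, which would give $p_{i,\ell\ell}\lesssim \epsilon_n^{1/2}$ on a good event. The rank-one removal $\mathbf{B}_i\to\mathbf{B}_{i(\ell)}$ changes the trace by at most $\epsilon_n^{-1}$, which is harmless after dividing by $n$.

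Next I handle the maximum over $\ell$ with a union bound. I split $\mathbb{E}\max_\ell p_{i,\ell\ell}\le t+\mathbb{P}(\max_\ell p_{i,\ell\ell}\ge t)$, using $p_{i,\ell\ell}\le 1$. It then suffices to show, for each $\ell$, that
\[
\mathbb{P}\big(n^{-1}\mathbf{v}^{\top}G\mathbf{v}<\tfrac12 n^{-1}\mathbb{E}\operatorname{tr}G\big)\ll n^{-1}\times(\text{target}).
\]
To this end I apply a high-moment bound for quadratic forms. I use the resampling decomposition $\mathbf{v}=\mathbf{v}\mathbbm{1}(\psi=0)+\mathbf{v}\mathbbm{1}(\psi=1)$ at a level $c_\alpha\in(2/9,2/3)$. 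The second statement of Lemma \ref{lemma_quadratic3}, with $s$ large, controls the bulk part. The few large entries (at most $\log n$ of them on $\tilde{\mathcal{B}}_n$) can only increase $\mathbf{v}^\top G\mathbf{v}$, since $G\succeq 0$ and removing coordinates from a PSD form can be handled by restricting to the submatrix, so they are harmless for a lower-tail event. I also need a lower bound on the conditional $n^{-1}\operatorname{tr}G_{i,\ell}$ with high probability, and for this I plan to use the martingale variance bound \eqref{eq_delta22}, $\operatorname{Var}(\operatorname{tr}G)\lesssim \epsilon_n^{-2}n^{-1}$, combined with Markov's inequality (or higher moments).

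I then choose parameters for each claim. For \eqref{eq_Emaxpll} I take $\epsilon_n\asymp\log^{-2a}n$, so that $p_{i,\ell\ell}\lesssim\log^{-a}n$ on the good event. For \eqref{eq_Emaxpilllarge} and \eqref{eq_maxpikksmall} I take $\epsilon_n\asymp n^{-2/3}$ up to logs, which is consistent with $n-i\ge n-p$ and $1-i/n\lesssim n^{-1/3}$. The probability bounds $n^{-3/4}\log^{10}n$, $n^{-1/2}\log^{10}n$ and $n^{-1/3}\log^{12a+4}n$ come from balancing the Chebyshev-type failure probability $\epsilon_n^{-2}n^{-1}$ (times the error in Lemma \ref{lemma_esdconvergence}) against the threshold $t=n^{-1/8},n^{-1/4},n^{-1/3}$ times $\log s_2$.

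The main obstacle is the lower tail of $n^{-1}\operatorname{tr}G_{i,\ell}(\epsilon_n)$ when $\epsilon_n$ is this small. The crude bound $\operatorname{Var}\lesssim\epsilon_n^{-2}n^{-1}$ blows up for $\epsilon_n\approx n^{-2/3}$, and the heavy tails (truncated only at $n^{2/3}\log n$) prevent standard local-law concentration. To get around this I would first run the resampling argument to obtain a crude rate in Lemma \ref{lemma_esdconvergence}. I would then bootstrap in $\epsilon$, descending from $\epsilon=n^{-1/7}$ toward $n^{-2/3}$ in the style of \cite{erdHos2017dynamical}, and use monotonicity of $\epsilon\mapsto\epsilon\operatorname{tr}G(\epsilon)$ to transfer bounds between scales. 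I expect the polylog losses ($\log^{12a+4}n$) to come out of this step.
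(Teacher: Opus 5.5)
Your skeleton is the paper's. It uses the regularized Sherman--Morrison bound \eqref{eq_pillupper}, the estimate $s_i(\epsilon_n)\asymp(\epsilon_n^{1/2}+1-i/n)^{-1}$ from Lemma \ref{lemma_esdconvergence}, a union bound over $\ell$ for the quadratic-form deviations, and a choice of $\epsilon_n$ so that $s_i(\epsilon_n)^{-1}$ matches the threshold. Your $\epsilon_n\asymp\log^{-2a}n$ for \eqref{eq_Emaxpll} works as well as the paper's $n^{-1/4}$.

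However, the ``main obstacle'' you name is not one. The paper controls the trace by a single event $\chi(i)=\mathbbm{1}\{n^{-1}\operatorname{tr}G_i\ge s_i(\epsilon_n)/10\}$. This one event serves every $\ell$ at once, because $\operatorname{tr}G_{i,\ell}\ge\operatorname{tr}G_i$. Chebyshev with \eqref{eq_varepsn} then gives $\mathcal{S}_1\lesssim n^{-1}\epsilon_n^{-2}s_i(\epsilon_n)^{-2}\asymp n^{-1}\epsilon_n^{-1}$ whenever $s_i(\epsilon_n)\asymp\epsilon_n^{-1/2}$. That is $n^{-1/3}$ at $\epsilon_n\asymp n^{-2/3}$ and $n^{-3/4}$ at $\epsilon_n=n^{-1/4}$, which are exactly the claimed polynomial rates. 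The variance is large only in absolute terms, not relative to $s_i(\epsilon_n)^2\asymp\epsilon_n^{-1}$; your own ``balancing'' paragraph already does this computation. So no bootstrap in $\epsilon$ is needed here; that machinery sits inside the proof of Lemma \ref{lemma_esdconvergence}, which you take as input.

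Two related corrections follow. Your per-$\ell$ requirement $\mathbb{P}(n^{-1}\mathbf{v}^{\top}G\mathbf{v}<\tfrac12 n^{-1}\mathbb{E}\operatorname{tr}G)\ll n^{-1}\cdot(\text{target})$ cannot hold if it includes the trace fluctuation. It has to be imposed conditionally on the good trace event. Also, the $\log^{12a+4}n$ in the paper comes from the off-diagonal term $\mathcal{S}_4$, not from any bootstrap. There, the threshold $\log^{-3a}n$ is raised to the fourth power and multiplied by the fourth moment under the global truncation $n^{2/3}\log n$.

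The genuine gap is your treatment of the large entries. For $G\succeq0$ it is false that zeroing coordinates can only decrease $\mathbf{v}^{\top}G\mathbf{v}$. Write $\mathbf{v}=\mathbf{w}+\mathbf{u}$ (bulk plus large part); the cross term $2\mathbf{w}^{\top}G\mathbf{u}$ has no sign. So the lower tail of $\mathbf{v}^{\top}G\mathbf{v}$ is not controlled by that of $\mathbf{w}^{\top}G\mathbf{w}$. Monotonicity holds only for the diagonal part $\sum_jG_{i,\ell}(j,j)X_{j\ell}^2$, and that is exactly how the paper uses it: truncation at $\log n$ in $\mathcal{S}_3$. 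The off-diagonal part is handled by a moment bound on the whole globally-truncated column, via Lemma \ref{lemma_4momentG} in $\mathcal{S}_4$.

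To keep your route, replace monotonicity by the Schur complement: $\mathbf{v}^{\top}G\mathbf{v}\ge\mathbf{v}_S^{\top}(G/G_{S^cS^c})\mathbf{v}_S$, where $S$ indexes the small coordinates. Equivalently, delete the rows $S^c$ from $\mathbf{B}_{i(\ell)}$, which can only increase $p_{i,\ell\ell}$. The trace of $G/G_{S^cS^c}$ is at least $\operatorname{tr}G-2|S^c|\,\|G\|$, and conditionally on the labels, $\mathbf{v}_S$ is independent of it.

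This also fixes an inconsistency in your parameters at $\alpha=3$. For the $s$-th moment bound of Lemma \ref{lemma_quadratic3} to give $n^{-D}$ when $\|G\|/\operatorname{tr}G\asymp n^{-2/3}$, you need $v_{2s}(\|G\|/\operatorname{tr}G)^{s-1}\to0$, i.e.\ $c_\alpha<1/3$. But the ``at most $\log n$'' count on $\tilde{\mathcal{B}}_n$ requires $c_\alpha>1/3$. With $c_\alpha\in(2/9,1/3)$ there are about $n^{1-3c_\alpha+\epsilon_\alpha}$ large entries per column. The Schur-complement loss $|S^c|\epsilon_n^{-1}\ll n^{4/3}\asymp\operatorname{tr}G$ still tolerates this. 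With that fix, your resampling and high-moment argument is a valid alternative to the paper's fourth-moment estimates for $\mathcal{S}_3$ and $\mathcal{S}_4$.
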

\begin{proof}
Let $\epsilon_{n}$ be a sequence tending to zero arbitrarily slower than $n^{-1}$, which will be fixed later. Define
    \begin{equation*}
    G_i(\epsilon_n):=\big(\frac{1}{n}\mathbf{B}_{i}\mathbf{B}_{i}^{\top}+\epsilon_n\mathbf{I}_i\big)^{-1},~G_{i,\ell}(\epsilon_n):=\big(\frac{1}{n}\mathbf{B}_{i(\ell)}\mathbf{B}_{i(\ell)}^{\top}+\epsilon_n\mathbf{I}_i\big)^{-1},
    \end{equation*}
    where $\mathbf{I}_i$ denotes the $i\times i$ identity matrix. By \eqref{eq_rewrite}, we have
    \begin{equation}\label{eq_pillupper}
        p_{i,\ell\ell}\le \big(1+\frac{1}{n}\mathbf{v}_{\ell,i}^{\top}(\mathbf{B}_{i(\ell)}\mathbf{B}_{i(\ell)}^{\top}+\epsilon_n\mathbf{I}_i)^{-1}\mathbf{v}_{\ell,i}\big)^{-1}=\big(1+\frac{1}{n}\mathbf{v}_{\ell,i}^{\top}G_{i,\ell}(\epsilon_n)\mathbf{v}_{\ell,i}\big)^{-1}.
    \end{equation}
    For the first statement, it suffices to show
    \begin{equation*}
        \mathbb{E}\max_{1\le \ell\le n}\big(1+\frac{1}{n}\mathbf{v}_{\ell,i}^{\top}G_{i,\ell}(\epsilon_n)\mathbf{v}_{\ell,i}\big)^{-1}\le C\log ^{-a}p,~~p-s_3\le i\le p-1.
    \end{equation*}
    It is apparent that $G_i(\epsilon_n)$ and $G_{i,\ell}(\epsilon_n)$ are positive-definite and
    \begin{equation*}
        \|G_i(\epsilon_n)\|\le \epsilon_n^{-1},~\|G_{i,\ell}(\epsilon_n)\|\le \epsilon_n^{-1}.
    \end{equation*}
    Moreover, one has
    \begin{equation}\label{eq_diffiil}
        |\operatorname{tr}G_i(\epsilon_n)-\operatorname{tr}G_{i,\ell}(\epsilon_n)|=\frac{\frac{1}{n}\mathbf{v}_{\ell,i}^{\top}G_{i,\ell}(\epsilon_n)^2\mathbf{v}_{\ell,i}}{1+\frac{1}{n}\mathbf{v}_{\ell,i}^{\top}G_{i,\ell}(\epsilon_n)\mathbf{v}_{\ell,i}}\le \epsilon_n^{-1}
    \end{equation}
    by the Sherman-Morrison formula. Set
    \begin{equation}\label{eq_defsiepsn}
        s_i(\epsilon_n)=\mathbb{E}\frac{1}{n}\operatorname{tr}G_i(\epsilon_n),
    \end{equation}
    which satisfies
    \begin{equation*}
        s_i(\epsilon_n)=2\big(\epsilon_n+1-\frac{i}{n}+\sqrt{(\epsilon_n+1-\frac{i}{n})^2+\frac{4\epsilon_n i}{n}}\big)^{-1}(1+\mathrm{o}(1))
    \end{equation*}
    for $\epsilon_n\ge n^{-5/12}$ by Lemma \ref{lemma_esdconvergence}. Thus, for $p-s_3\le i\le p-1$, we get
    \begin{equation}\label{eq_sieps}
        s_i(\epsilon_n)\sim (\epsilon_n^{1/2}+(1-i/n))^{-1}.
    \end{equation}
    Moreover, define the indicator
    \begin{align*}
\chi(i)=\mathbbm{1}_{\left\{(1 / n) \operatorname{tr} G_{i}(\epsilon_n) \ge s_i(\epsilon_n)/10\right\}},
\end{align*}
and denote the ($u, v$)-th entry of $G_{i, \ell}(\epsilon_n)$ by $G_{i,\ell}(u, v)$ below. Recalling the Sherman-Morrison formula, we have
\begin{equation*}
    \operatorname{tr}G_i(\epsilon_n)-\operatorname{tr}G_{i,\ell}(\epsilon_n)=-\frac{\frac{1}{n}\mathbf{v}_{\ell,i}^{\top}G_{i,\ell}(\epsilon_n)^{2}\mathbf{v}_{\ell,i}}{1+\frac{1}{n}\mathbf{v}_{\ell,i}^{\top}G_{i,\ell}(\epsilon_n)\mathbf{v}_{\ell,i}}\le 0.
\end{equation*}
Moreover, for ease of presentation, when there is no confusion, we will omit the parameter $\epsilon_n$ from the notation $G_{i, \ell}(\epsilon_n)$ and $G_{i}(\epsilon_n)$. Define the events
\begin{equation*}
    \begin{split}
        D_1(i)=&\left\{\max _{1 \le \ell \le n}\left|\sum_{1 \le u \ne v \le i} G_{i,\ell}(u, v) X_{u \ell} X_{v \ell}\right| \le \log ^{-2a} n \cdot \operatorname{tr} G_{i, \ell}\right\},\\
        D_2(i)=&\left\{\sum_{j=1}^i G_{i, \ell}(j, j) X_{j \ell}^2>\frac{1}{20} \operatorname{tr} G_{i, \ell}, \right\}.
    \end{split}
\end{equation*}
Then we have
\begin{equation}\label{eq_decomp_pill}
\begin{aligned}
& \mathbb{E} \max_{1\le \ell\le n}\big(1+\frac{1}{n}\mathbf{v}_{\ell,i}^{\top}G_{i,\ell}(\epsilon_n)\mathbf{v}_{\ell,i}\big)^{-1} \\
\le & \mathbb{P}\left\{\frac{1}{n} \operatorname{tr} G_{i} \le s_i(\epsilon_n)/10\right\}+\mathbb{E} \chi(i) \max_{1\le \ell\le n}\left(1+\frac{1}{n}\mathbf{v}_{\ell,i}^{\top}G_{i,\ell}(\epsilon_n)\mathbf{v}_{\ell,i}\right)^{-1}[\mathbbm{1}(D_1(i))+\mathbbm{1}(D_1^c(i))] \\
\le & \mathbb{P}\left\{\frac{1}{n} \operatorname{tr} G_{i} \le s_i(\epsilon_n)/10\right\}+\mathbb{E} \chi(i) \max_{1\le \ell\le n}\left(1+\frac{1}{n} \sum_{j=1}^i G_{i, \ell}(j, j) X_{j \ell}^2-\frac{1}{n}\log ^{-2a} n \cdot \operatorname{tr} G_{i, \ell}\right)^{-1}[\mathbbm{1}(D_2(i))+\mathbbm{1}(D_2^c(i))]\\
& +\mathbb{P}\left\{\frac{1}{n} \max _{1 \le \ell \le n}\left|\sum_{1 \le u \ne v \le i} G_{i,\ell}(u, v) X_{u \ell} X_{v \ell}\right| \ge \frac{1}{n}\log ^{-2a} n \cdot \operatorname{tr} G_{i, \ell}, \frac{1}{n} \operatorname{tr} G_{i} \ge s_{i}(\epsilon_n)/10\right\} \\
\le & \mathbb{P}\left\{\frac{1}{n} \operatorname{tr} G_{i} \le s_i(\epsilon_n)/10\right\}+\mathbb{E} \chi(i) \max_{1\le \ell\le n}\left(1+\frac{1}{20}\frac{1}{n} \operatorname{tr} G_{i, \ell}-\frac{1}{n}\log ^{-2a} n \cdot \operatorname{tr} G_{i, \ell}\right)^{-1} \\
& +C \sum_{k=1}^n \mathbb{P}\left\{\sum_{j=1}^i G_{i, \ell}(j, j) X_{j \ell}^2<\frac{1}{20} \operatorname{tr} G_{i, \ell}, \frac{1}{n} \operatorname{tr} G_{i} \ge s_{i}(\epsilon_n)/10\right\} \\
& +\mathbb{P}\left\{\frac{1}{n} \max _{1 \le \ell \le n}\left|\sum_{1 \le u \ne v \le i} G_{i, \ell}(u, v) X_{u \ell} X_{v \ell}\right| \ge \frac{1}{n}\log ^{-2a} n \cdot \operatorname{tr} G_{i, \ell}, \frac{1}{n} \operatorname{tr} G_{i} \ge s_{i}(\epsilon_n)/10\right\} \\
=&: \mathcal{S}_1+\mathcal{S}_2+\mathcal{S}_3+\mathcal{S}_4,
\end{aligned}
\end{equation}
where the above last inequality follows from \eqref{eq_diffiil} and $n^{-1}\epsilon_n^{-1}\rightarrow 0$ by our choices for $\epsilon_n$. By Lemma \ref{lemma_esdconvergence}, we can estimate the first term $\mathcal{S}_1$ as follows. Note that by definition $s_i(\epsilon_n) \sim (\epsilon_n^{1/2}+1-i/n)^{-1}$ in \eqref{eq_sieps} for $p-s_3 \le i \le p-1$, we have
\begin{equation}\label{eq_maxpills1}
\begin{aligned}
\mathcal{S}_1=\mathbb{P}\left\{\frac{1}{n} \operatorname{tr} G_{i} \le s_i(\epsilon_n)/10\right\} & \le \mathbb{P}\left\{\left|\frac{1}{n} \operatorname{tr} G_{i}-\mathbb{E} \frac{1}{n} \operatorname{tr} G_{i}\right| \ge \frac{9}{10}s_i(\epsilon_n)\right\} \\
& \le C[s_i(\epsilon_n)]^{-2} \operatorname{Var}\left\{\frac{1}{n} \operatorname{tr} G_{i}\right\}=\mathrm{O}(n^{-1}\epsilon_n^{-2}[s_i(\epsilon_n)]^{-2}),
\end{aligned}
\end{equation}
by the variance estimation \eqref{eq_varepsn} in Lemma \ref{lemma_esdconvergence}.
For the second term $\mathcal{S}_2$ of \eqref{eq_decomp_pill}, with the definition of $\chi(i)$, it is obvious that
\begin{equation}\label{eq_maxpills2}
\mathcal{S}_2=\mathbb{E}{\chi(i)}\left(1+\frac{1}{20} \cdot \frac{1}{n} \operatorname{tr} G_{i,\ell}-\frac{1}{n}\log ^{-2a} n \cdot \operatorname{tr} G_{i, \ell}\right)^{-1} \le C[s_i(\epsilon_n)]^{-1}
\end{equation}
by $\operatorname{tr}G_i\le \operatorname{tr}G_{i,\ell}$.
Now we deal with the third term $\mathcal{S}_3$. Set
\begin{align*}
\hat{X}_{j \ell}=X_{j \ell} \mathbbm{1}_{\left\{\left|X_{j \ell}\right| \le \log n\right\}}, \quad \tilde{X}_{j \ell}=\frac{\hat{X}_{j \ell}-\mathbb{E} \hat{X}_{j \ell}}{\sqrt{\operatorname{Var}(\hat{X}_{j \ell})}}.
\end{align*}
Since $G_{i,\ell}(j, j)$'s are positive and $\hat{X}_{j \ell}^2 \le X_{j \ell}^2$, one has
\begin{align*}
\begin{aligned}
\mathcal{S}_3
 \le \sum_{\ell=1}^n \mathbb{P}\left\{\sum_{j=1}^i G_{i,\ell}(j, j) \hat{X}_{j \ell}^2<\frac{1}{20} \operatorname{tr} G_{i,\ell}, \frac{1}{n} \operatorname{tr} G_{i} \ge s_{i}(\epsilon_n)/10\right\} .
\end{aligned}
\end{align*}
Moreover, by \eqref{eq_newcondition} and Lemma \ref{lemma_truncatemoment}, it is easy to derive that
\begin{align*}
\mathbb{E} \hat{X}_{j \ell}\sim \mathrm{O}\big(\mathcal{L}(\log n)\log ^{(1-\alpha)}n \big)\le \log^{-1} n, \quad \operatorname{Var}(\hat{X}_{j \ell})=1+\mathrm{O}\big(\mathcal{L}(\log n)\log ^{(2-\alpha) } n\big) >1/2
\end{align*}
as $n\rightarrow \infty$. Consequently, $\tilde{X}_{j \ell}^2 \le 2 \hat{X}_{j \ell}^2+\mathrm{O}(\log^{-2} n)$
for sufficiently large $n$. Therefore, we have
\begin{equation}\label{eq_maxpills3}
\begin{aligned}
\mathcal{S}_3& \le \sum_{\ell=1}^n \mathbb{P}\left\{\sum_j G_{i,\ell}(j, j) \hat{X}_{j \ell}^2<\frac{1}{20} \operatorname{tr} G_{i,\ell}, \frac{1}{n} \operatorname{tr} G_{i} \ge s_{i}(\epsilon_n)/10\right\} \\
& \le \sum_{\ell=1}^n \mathbb{P}\left\{\frac{1}{n}\left|\sum_j G_{i,\ell}(j, j) \tilde{X}_{j \ell}^2-\operatorname{tr} G_{i,\ell}\right| \ge s_{i}(\epsilon_n)/20\right\} \\
& \le (\log^{10}n) n^{-4} [s_{i}(\epsilon_n)]^{-4}\sum_{k=1}^n \mathbb{E}\left[ \operatorname{tr} G_{i,\ell}^4+\left(\operatorname{tr} G_{i, \ell}^2\right)^2\right] \\
& \lesssim  (\log^{10}n)\big([s_{i}(\epsilon_n)]^{-3}n^{-2}\epsilon_n^{-3}+\epsilon_n^{-2}[s_{i}(\epsilon_n)]^{-4}n^{-1}([s_{i}(\epsilon_n)]^2+n^{-1}\epsilon_n^{-2})\big).
\end{aligned}
\end{equation}
In the third inequality we used the fact \eqref{eq_diffiil} and in the fourth inequality we used Lemma \ref{lemma_4momentG} and the fact $\mathbb{E}|\tilde{X}_{i j}|^{3+t}\lesssim \mathrm{O}(\mathcal{L}(\log n) \log^{t }n )\lesssim \log^{2t} n$ for any $t \ge 1$ by the definition of $\tilde{X}_{i j}$ and Lemma \ref{lemma_truncatemoment}. In the last inequality, we used the trivial bound $\|G_{i,\ell}\|\le \epsilon_n^{-1}$ and
\begin{equation*}
    \mathbb{E}(n^{-1}\operatorname{tr}G_{i,\ell})^2=(\mathbb{E}n^{-1}\operatorname{tr}G_{i,l})^2+\operatorname{Var}(n^{-1}\operatorname{tr}G_{i,\ell})\le C([s_{i}(\epsilon_n)]^2+n^{-1}\epsilon_n^{-2})
\end{equation*}
by Lemma \ref{lemma_esdconvergence}.
Now we begin to deal with the last term $\mathcal{S}_4$ of \eqref{eq_decomp_pill}. Note that by Lemma \ref{lemma_4momentG} for off-diagonals,
\begin{equation}\label{eq_maxpills4}
\begin{aligned}
\mathcal{S}_4
 &  \le \sum_{k=1}^n \mathbb{P}\left\{\frac{1}{n}\left|\sum_{u \neq v} G_{i,\ell}(u, v) X_{u \ell} X_{v \ell}\right| \ge s_{i}(\epsilon_n)\log ^{-3a} n\right\} \\
&  \le Cn^{-4}[s_{i}(\epsilon_n)]^{-4}\log ^{12a} n  \sum_{\ell=1}^n \mathbb{E}\left(\sum_{u \neq v} G_{i,\ell}(u, v) X_{u \ell} X_{v \ell}\right)^4 \\
&  \lesssim C (\mathbb{E}X_{j\ell}^4)^2 n^{-4}[s_{i}(\epsilon_n)]^{-4}\log ^{12a} n \sum_{\ell=1}^n \mathbb{E}(\|G_{i,\ell}\|^2\operatorname{tr} G_{i,\ell}^2) \\
& \lesssim (\log^{12a+2} n)[\mathcal{L}(n^{2/3}\log n)]^2 n^{2(1-\alpha)/3}\epsilon_n^{-3}[s_i(\epsilon_n)]^{-3}
\end{aligned}
\end{equation}
by $\mathbb{E}X_{j\ell}^4\lesssim \mathcal{L}(n^{2/3}\log n)n^{2(4-\alpha)/3}\log^{(4-\alpha)}n$ due to the global truncation $n^{2/3}\log n$.
Therefore, for all $p-s_3\le i\le p-1$, we have $\log^{a} n\le s_i(\epsilon_n)\le \epsilon_n^{-1/2}$, which further implies
\begin{equation*}
    \begin{split}
        \mathbb{E}\max_{1\le \ell\le n}(1+\frac{1}{n}\mathbf{v}_{\ell,i}G_{i,\ell}(\epsilon_n)\mathbf{v}_{\ell,i})^{-1}\lesssim [s_i(\epsilon_n)]^{-1}\lesssim \log^{-a} n,
    \end{split}
\end{equation*}
by choosing $\epsilon_n=n^{-1/4}$ which satisfies the condition of Lemma \ref{lemma_esdconvergence}, completing the proof of the first statement.

For the second claim, for $p-p^{2/3}\le i\le p-1$, setting $\epsilon_n=Cn^{-2/3}$ in Lemma \ref{lemma_esdconvergence} for some large constant $C>0$, we have $s_i(\epsilon_n)\asymp \epsilon_n^{-1/2}\sim n^{1/3}$ with high probability. Following a similar argument above, we get
\begin{equation*}
    \mathcal{S}_1=\mathrm{O}(n^{-1/3}), \mathcal{S}_2=\mathrm{O}(n^{-1/3}), \mathcal{S}_3= \mathrm{O}(n^{-1/3}\log^{10} n), \mathcal{S}_4=\mathrm{O}(n^{-1/3}\log^{12a+4}n),
\end{equation*}
which together with \eqref{eq_decomp_pill} give
\begin{equation*}
    \begin{split}
      \mathbb{E}\max_{1\le \ell\le n}(1+\frac{1}{n}\mathbf{v}_{\ell,i}G_{i,\ell}(\epsilon_n)\mathbf{v}_{\ell,i})^{-1}\lesssim \mathrm{O}(n^{-1/3}\log^{12a+4} n)
    \end{split}
\end{equation*}
for $p-p^{2/3}\le i\le p-1$ as desired.

It remains to consider the last three statements. On the one hand, for $p-p^{2/3}\le i\le p-1$,
setting $\epsilon_n=n^{-1/4}$ in Lemma \ref{lemma_esdconvergence} gives
\begin{equation*}
    s_{i}(\epsilon_n)=n^{1/8}+\mathrm{O}_{\prec}(1)\asymp n^{1/8},
\end{equation*}
with high probability. Recall $s_2=[-\log^{1/2}(1-(p-1)/n)]\rightarrow\infty$.
Following a similar argument in \eqref{eq_decomp_pill}, we have, for $\epsilon_n=n^{-1/4}$,
\begin{equation*}
\begin{aligned}
&\mathbb{P}(\max_{1\le \ell\le n}p_{i,\ell\ell}\ge n^{-1/8}\log s_2)\le  \mathbb{P} \big(\max_{1\le \ell\le n}\big(1+\frac{1}{n}\mathbf{v}_{\ell,i}^{\top}G_{i,\ell}(\epsilon_n)\mathbf{v}_{\ell,i}\big)^{-1}\ge n^{-1/8}\log s_2\big) \\
\le & \mathbb{P}\left\{\frac{1}{n} \operatorname{tr} G_{i} \le s_i(\epsilon_n)/10\right\}\\
&+\mathbb{P} \left( \max_{1\le \ell\le n}\left(1+\frac{1}{20}\frac{1}{n} \operatorname{tr} G_{i, \ell}-\frac{1}{n}\log ^{-2a} n \cdot \operatorname{tr} G_{i, \ell}\right)^{-1}\ge n^{-1/8}\log s_2, \frac{1}{n} \operatorname{tr} G_{i} \ge s_{i}(\epsilon_n)/10\right) \\
& +C \sum_{k=1}^n \mathbb{P}\left\{\sum_{j=1}^i G_{i, \ell}(j, j) X_{j \ell}^2<\frac{1}{20} \operatorname{tr} G_{i, \ell}, \frac{1}{n} \operatorname{tr} G_{i} \ge s_{i}(\epsilon_n)/10\right\} \\
& +\mathbb{P}\left\{\frac{1}{n} \max _{1 \le \ell \le n}\left|\sum_{1 \le u \ne v \le i} G_{i, \ell}(u, v) X_{u \ell} X_{v \ell}\right| \ge \frac{1}{n}\log ^{-2a} n \cdot \operatorname{tr} G_{i, \ell}, \frac{1}{n} \operatorname{tr} G_{i} \ge s_{i}(\epsilon_n)/10\right\} \\
=&: \mathcal{S}_1+\mathcal{S}_5+\mathcal{S}_3+\mathcal{S}_4,
\end{aligned}
\end{equation*}
where
\begin{equation*}
    \mathcal{S}_5\le \mathbb{P}(Cn^{-1/8}\ge n^{-1/8}\log s_2, \frac{1}{n}\operatorname{tr}G_i\ge n^{1/8}/10)=0
\end{equation*}
since $s_2\rightarrow \infty$ for sufficiently large $n$. Note that the case for $\epsilon_n=n^{-1/4}$ implies
\begin{equation*}
    \mathcal{S}_1=\mathrm{O}(n^{-3/4}), \mathcal{S}_3=\mathrm{O}(n^{-3/4}\log^{10}n), \mathcal{S}_4=\mathrm{o}(n^{-3/4}),
\end{equation*}
by \eqref{eq_maxpills1}, \eqref{eq_maxpills3}, and \eqref{eq_maxpills4}. Thus, one has
\begin{equation*}
\mathbb{P}(\max_{1\le \ell\le n}p_{i,\ell\ell}\ge n^{-1/8}\log s_2)\le \mathrm{O}(n^{-3/4}\log^{10} n)
\end{equation*}
 as desired.
Similarly, setting $\epsilon_n=n^{-1/4}$ and $\epsilon_n=Cn^{-2/3}$, we get
\begin{equation*}
\mathbb{P}(\max_{1\le \ell\le n}p_{i,\ell\ell}\ge n^{-1/4}\log s_2)\le \mathrm{O}(n^{-1/2}\log^{10} n)
\end{equation*}
and
\begin{equation*}
    \mathbb{P}(\max_{1\le \ell\le n}p_{i,\ell\ell}\ge n^{-1/3} \log s_2)\le \mathrm{O}(n^{-1/3}\log^{12a+4} n)
\end{equation*}
for $p-p^{2/3}\le i\le p-1$, respectively, completing the proof of this proposition.
\end{proof}

\section{Convergence rate of expected spectral distributions}\label{sec_esd}
This section investigates the estimation and convergence rate of expected spectral distributions for the heavy-tailed case near singularity, which is a key input for the proof of Lemma \ref{lemma_upperdiagonal}. Let $\epsilon_{n}$ be a sequence tending to zero arbitrarily slower than $n^{-1}$, which will be fixed later. Define
\begin{equation*}
G_i(\epsilon_n):=\big(\frac{1}{n}\mathbf{B}_{i}\mathbf{B}_{i}^{\top}+\epsilon_n\mathbf{I}_i\big)^{-1},~G_{i,\ell}(\epsilon_n):=\big(\frac{1}{n}\mathbf{B}_{i(\ell)}\mathbf{B}_{i(\ell)}^{\top}+\epsilon_n\mathbf{I}_i\big)^{-1},
\end{equation*}
where $\mathbf{I}_i$ denotes the $i\times i$ identity matrix. For simplicity, we denote
\begin{equation}\label{eq_defGil0}
G_i(0):=\big(\frac{1}{n}\mathbf{B}_{i}\mathbf{B}_{i}^{\top}\big)^{-1},~G_{i,\ell}(0):=\big(\frac{1}{n}\mathbf{B}_{i(\ell)}\mathbf{B}_{i(\ell)}^{\top}\big)^{-1}.
\end{equation}
Our proof is based on the resampling techniques and the method by \cite{bai2010spectral}, which relies on the concentration of the quadratic forms.

We begin with a precise estimate of the least eigenvalue of $\mathbf{B}_{i}\mathbf{B}_{i}^{\top}$ for $i/n\rightarrow 1$ when $\alpha>2$, which is illustrated by the following lemma due to \cite[Theorem 2]{dabagia2024smallest},
\begin{lemma}\label{lemma_smallesteigenvalue}
	Let $\mathbf{A}\in \mathbb{R}^{N\times n}$ be a random matrix with mean-zero isotropic columns and independent entries $\xi$ satisfying $\mathbb{E}|\xi|^{2+\beta}<D$. The smallest singular values $\sigma_n(\mathbf{A})$ satisfies
	\begin{equation*}
	\mathbb{P}(\sigma_{n}(\mathbf{A})\le \epsilon_0(\sqrt{N+1}-\sqrt{n}))\le (C\epsilon_0)^{N-n+1}+e^{-cN}
	\end{equation*}
	for all $\epsilon_0>0$, where $c,C$ depend only on $\beta$ and $D$.
\end{lemma}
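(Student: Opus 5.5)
Since this is \cite[Theorem 2]{dabagia2024smallest}, in the paper we would simply invoke it. If a self-contained argument were wanted, I would follow the Rudelson--Vershynin scheme for rectangular matrices, modified to avoid any bound on $\|\mathbf{A}\|$: with only $2+\beta$ moments, $\|\mathbf{A}\|$ can be much larger than $\sqrt N$ with non-negligible probability. Concretely, I would estimate $\inf_{x\in S^{n-1}}\|\mathbf{A}x\|$ by splitting the sphere into compressible vectors (close to $\delta n$-sparse vectors) and incompressible vectors. Throughout I would never use a global norm bound, only norm bounds of $\mathbf{A}$ restricted to random or structured subspaces.

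\textbf{Step 1 (one fixed vector).} For fixed $x\in S^{n-1}$, each coordinate $(\mathbf{A}x)_k=\sum_j \xi_{kj}x_j$ is a sum of independent mean-zero unit-variance variables. The bound $\mathbb{E}|\xi|^{2+\beta}<D$ gives, via a Paley--Zygmund / Esseen-type argument, a uniform anti-concentration bound $\mathcal{L}((\mathbf{A}x)_k,c)\le 1-c'$. Tensorizing over the $N$ independent rows then gives $\mathbb{P}(\|\mathbf{A}x\|\le c\sqrt N)\le e^{-c''N}$.

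\textbf{Step 2 (compressible vectors).} I would pass from fixed vectors to all compressible vectors by a net argument. Instead of the usual bound $\|\mathbf{A}\|\lesssim\sqrt N$, I would use that for each fixed set $J$ of $\delta n$ columns the submatrix $\mathbf{A}_J$ satisfies $\|\mathbf{A}_J\|_{\mathrm{HS}}^2\le CN\delta n$ off an event of small probability, by truncating and using the $2+\beta$ moment. A Lipschitz approximation in Hilbert--Schmidt norm, in the spirit of Tikhomirov/Livshyts, then controls the net error. This step contributes the term $e^{-cN}$.

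\textbf{Step 3 (incompressible vectors).} For incompressible $x$ I would use the rectangular ``distance to random subspace'' reduction. The event $\{\sigma_n(\mathbf{A})\le \epsilon_0(\sqrt{N+1}-\sqrt n)\}$ is controlled by the distance of a column $\mathbf{A}_j$ to the span $H_j$ of the other columns, or more precisely of a block of columns to a codimension-$(N-n+1)$ subspace. I would write this distance as $\|P_{H_j^\perp}\mathbf{A}_j\|$, where $P_{H_j^\perp}$ is the orthogonal projection onto $H_j^\perp$, of dimension $N-n+1$, and apply a small-ball estimate for projections of a random vector with independent coordinates. This should give $\mathbb{P}(\|P\mathbf{A}_j\|\le \epsilon_0\sqrt{N-n+1})\le (C\epsilon_0)^{N-n+1}$, provided the normal directions of $H_j^\perp$ are unstructured, meaning they have large least common denominator. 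I would take this small-ball estimate from a Rudelson--Vershynin type result for projections, with the sub-Gaussian assumption replaced by the $2+\beta$ moment through a Rogozin/Esseen inequality. The factor $\sqrt{N+1}-\sqrt n\asymp (N-n+1)/\sqrt N$ arises by averaging over columns as in Rudelson--Vershynin.

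\textbf{Main obstacle.} I expect the hard part to be showing that the normals to $H_j$ have large least common denominator with exponentially high probability. The standard proof nets over level sets of the least common denominator, and that net argument again needs an operator-norm bound on $\mathbf{A}$, which fails here. I would first try the ``random rounding'' net technique of Livshyts, which requires only Hilbert--Schmidt control and hence only second moments, combined with the tensorized anti-concentration from Step 1.
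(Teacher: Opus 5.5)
Your main line matches the paper: the paper gives no proof of this lemma and simply quotes \cite[Theorem 2]{dabagia2024smallest}, so invoking that theorem is the whole argument. The self-contained sketch is therefore unnecessary, but Step 2 as written would not produce the $e^{-cN}$ term. Under only a $2+\beta$ moment, an event such as $\|\mathbf{A}_J\|_{\mathrm{HS}}^2\le CN\delta n$ can fail with merely polynomially small probability (a single entry exceeding $\sqrt{CN\delta n}$ suffices), so Hilbert--Schmidt control cannot just be imposed off a bad event and would have to be built into the net argument in a way that tolerates a few heavy entries.
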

Choosing $\epsilon_0 \in (1/(4C),1/(2C))$ (since $C$ depends only on $\beta$ and $D$, which is constant in our setting $\alpha\ge 3$) in Lemma \ref{lemma_smallesteigenvalue}, one has
\begin{equation*}
\mathbb{P}(\lambda_{\min}(\mathbf{B}_{i}\mathbf{B}_{i}^{\top})\le \epsilon_0^2 (\sqrt{n+1}-\sqrt{i})^2)\le 2^{-n+i}+e^{-cn},
\end{equation*}
which implies that with high probability, $\lambda_{\min}(n^{-1}\mathbf{B}_{i(\ell)}\mathbf{B}_{i(\ell)}^{\top})\ge c(1-\sqrt{i/n})^2\sim (1-i/n)^2$ as $i/n\rightarrow 1$. Let $\lambda_{1}\ge \lambda_{2}\ge \cdots\ge \lambda_{p}$ be the ordered eigenvalues of $n^{-1}XX^{\top}$. One has $\lambda_p\ge c(1-(p-1)/n)^2$ with high probability for some small $c>0$ by Lemma \ref{lemma_smallesteigenvalue} under the assumption that $n-p\rightarrow \infty$. One can obtain that
\begin{equation*}
\operatorname{tr}(G(0))=\sum_{k=1}^{p}\frac{1}{\lambda_{k}}\ge \sum_{k=1}^{p}\frac{1}{\lambda_{k}+\epsilon_{n}}=\operatorname{tr}(G(\epsilon_{n})),
\end{equation*}
which further gives
\begin{equation*}
\operatorname{tr}(G(0))-\operatorname{tr}(G(\epsilon_{n}))=\sum_{k=1}^{p}\frac{\epsilon_{n}}{(\lambda_{k}+\epsilon_{n})\lambda_{k}}\le \sum_{k=1}^{p}\frac{1}{\lambda_{k}+\epsilon_{n}}=\operatorname{tr}(G(\epsilon_{n})),
\end{equation*}
whenever we choose $0<\epsilon_{n}\le \lambda_p\asymp (1-p/n)^2$. This implies that $\operatorname{tr}G(0)$ and $\operatorname{tr}G(\epsilon_n)$ have the same order. Since one can not get an estimate of $\operatorname{tr}(G(0))$ directly for the near-singularity case, we can get a good estimate from $\operatorname{tr}(G(\epsilon_{n}))$ by choosing an appropriate and small enough $\epsilon_{n}$, which further provides a good estimate for $\operatorname{tr}G(0)$. Moreover, notice the monotonicity that, for $\epsilon_n^{\prime}\ge \epsilon_n^{\prime\prime}\ge n^{-1}$,
\begin{equation}\label{eq_monotonicityG}
    \operatorname{tr}G(\epsilon_n^{\prime})\le \operatorname{tr}G(\epsilon_n^{\prime\prime})\le \operatorname{tr}G(0).
\end{equation}
\begin{lemma}\label{lemma_esdconvergence}
	Let $X=(X_{ij})_{p\times n}$ be a random matrix with i.i.d. entries following Assumption \ref{assump1} with \eqref{eq_newcondition}, where $n/2\le p\le n$. Set $y_p=p/n$. Let $G(\epsilon_n)=(n^{-1}XX^{\top}+\epsilon_n\mathbf{I}_p)^{-1}$ with $\epsilon_{n}\gg n^{-5/12}$. We have, with high probability,
	\begin{equation}\label{eq_meanesd}
		\mathbb{E}(p^{-1}\operatorname{tr}G(\epsilon_n))=s_{p}(\epsilon_n)+\mathrm{O}_{\prec}\big(n^{-5/6}(\log^4 n)(\epsilon_n+(1-y_p)^2)^{-2}\big),
	\end{equation}
	where
	\begin{equation*}
		s_{p}(\epsilon_n)=2\left(\epsilon_n+1-\frac{p}{n}+\sqrt{(\epsilon_n+1-\frac{p}{n})^{2}+\frac{4\epsilon_{n} p}{n}}\right)^{-1}.
	\end{equation*}
    For $n/2\le p\le n-n^{19/20}$ and $n^{-5/6+1/20}\le \epsilon_n\le n^{-1/10}$, we have
    \begin{equation}\label{eq_smallepsn}
        \mathbb{E}(p^{-1}\operatorname{tr}G(\epsilon_n))=s_{p}(\epsilon_n)+\mathrm{O}_{\prec}(n^{-5/6}\log^4 n\cdot(1-y_p)^{-4}).
    \end{equation}
    For $n-n^{2/3}\le p\le n$ with $n^{-2/3}\le \epsilon_n\ll n^{-4/9}$, we have
\begin{equation}\label{eq_largeepsn}
   \mathbb{E}(p^{-1}\operatorname{tr}G(\epsilon_n)) =s_p(\epsilon_n)+\mathrm{O}_{\prec}(n^{-1}\epsilon_n^{-2}) \asymp \epsilon_n^{-1/2}.
\end{equation}
    Moreover, for any $\epsilon_n\gg n^{-1}$, we have
    \begin{equation}\label{eq_varepsn}
		\operatorname{Var}(p^{-1}\operatorname{tr}G(\epsilon_n))=\mathrm{O}(n^{-1}\epsilon_n^{-2}).
	\end{equation}
\end{lemma}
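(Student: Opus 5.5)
We prove Lemma \ref{lemma_esdconvergence} in three steps: first the variance bound \eqref{eq_varepsn}, then an approximate self-consistent equation for $m_n:=\mathbb{E}(p^{-1}\operatorname{tr}G(\epsilon_n))$ with a quantified error, and finally the stability of the Marchenko--Pastur equation at the real spectral parameter $z=-\epsilon_n$.

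\emph{Variance.} Write $\operatorname{tr}G-\mathbb{E}\operatorname{tr}G=\sum_{\ell=1}^n(\mathbb{E}_{\ell}-\mathbb{E}_{\ell+1})(\operatorname{tr}G-\operatorname{tr}G_{(\ell)})$, where $G_{(\ell)}$ is the resolvent with column $\mathbf{x}_\ell$ removed. By the Sherman--Morrison formula (as in \eqref{eq_diffiil}), each increment is bounded by $\epsilon_n^{-1}$. Burkholder's inequality then gives $\operatorname{Var}(\operatorname{tr}G)\le Cn\epsilon_n^{-2}$, and dividing by $p^2\asymp n^2$ yields \eqref{eq_varepsn}. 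This step uses no moment assumptions.

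\emph{Self-consistent equation.} Start from the identity $p=\operatorname{tr}(G(n^{-1}XX^\top+\epsilon_n))$ and expand $n^{-1}XX^\top=n^{-1}\sum_\ell\mathbf{x}_\ell\mathbf{x}_\ell^\top$. Sherman--Morrison gives
\[
1-\epsilon_n m_n=\frac{1}{p}\sum_{\ell}\mathbb{E}\frac{n^{-1}\mathbf{x}_\ell^\top G_{(\ell)}\mathbf{x}_\ell}{1+n^{-1}\mathbf{x}_\ell^\top G_{(\ell)}\mathbf{x}_\ell}.
\]
Replace $n^{-1}\mathbf{x}_\ell^\top G_{(\ell)}\mathbf{x}_\ell$ by $y_p m_n$. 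The resulting error is controlled by second-order terms, namely $\mathbb{E}|n^{-1}\mathbf{x}_\ell^\top G_{(\ell)}\mathbf{x}_\ell-n^{-1}\operatorname{tr}G_{(\ell)}|^2$, the variance from the previous step, and $|n^{-1}\operatorname{tr}G-n^{-1}\operatorname{tr}G_{(\ell)}|\le(n\epsilon_n)^{-1}$.

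For the quadratic-form term, I apply the resampling event $\tilde{\mathcal{B}}_n$ with some $c_\alpha\in(1/3,2/3)$ together with Lemma \ref{lemma_quadratic3} at the global truncation $n^{2/3}\log n$. This gives
\[
\mathbb{E}|\cdot|^2\lesssim n^{-2}\Big(\mathcal{L}(n^{c_\alpha})n^{(4-\alpha)c_\alpha}\operatorname{tr}G^2+n^{2/3}\log^3 n\,\|G\|^2\Big).
\]
The term $\mu=\mathbb{E}X=\mathrm{o}(n^{-4/3})$ is negligible by \eqref{eq_estimatemusigma}. Using $\|G\|\le(\lambda_p+\epsilon_n)^{-1}$ and $\operatorname{tr}G^2\le p\|G\|^2$, together with the smallest-singular-value bound of Lemma \ref{lemma_smallesteigenvalue} (valid with high probability), yields an error of order $n^{-5/6}\log^4n\,(\epsilon_n+(1-y_p)^2)^{-2}$ after balancing $c_\alpha$. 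The result is
\[
y_p\epsilon_n m_n^2+(\epsilon_n+1-y_p)m_n-1=\delta_n,
\]
with $\delta_n$ of the stated order.

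\emph{Stability.} The quadratic $y_p\epsilon_n s^2+(\epsilon_n+1-y_p)s-1=0$ has positive root $s_p(\epsilon_n)$. Subtracting the two equations gives
\[
|m_n-s_p|\le|\delta_n|\big/\big(y_p\epsilon_n(m_n+s_p)+\epsilon_n+1-y_p\big).
\]
The denominator is at least $\sqrt{(\epsilon_n+1-y_p)^2+4\epsilon_n y_p}\gtrsim\epsilon_n^{1/2}+(1-y_p)$, because $m_n>0$ lies on the correct branch. This gives \eqref{eq_meanesd}.

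For \eqref{eq_smallepsn}, note that $1-y_p\ge n^{-1/20}$, so $\lambda_p\gtrsim(1-y_p)^2$ and the factor $(\epsilon_n+(1-y_p)^2)^{-2}\le(1-y_p)^{-4}$.

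For \eqref{eq_largeepsn} with $\epsilon_n\ge n^{-2/3}$, I would run a bootstrap instead, since the crude bound $\|G\|\le\epsilon_n^{-1}$ is too lossy. First obtain a crude bound $m_n\lesssim\epsilon_n^{-1/2}$ at the larger value $\epsilon_n=n^{-1/4}$. Then decrease $\epsilon_n$ stepwise along a grid, using the monotonicity \eqref{eq_monotonicityG} and the a priori bound $\operatorname{tr}G^2\le\epsilon_n^{-1}\operatorname{tr}G\lesssim n\epsilon_n^{-3/2}$ in the quadratic-form error. This produces an error $\delta_n\lesssim n^{-1}\epsilon_n^{-3/2}$, which after dividing by the stability factor $\epsilon_n^{1/2}$ gives $\mathrm{O}_\prec(n^{-1}\epsilon_n^{-2})$. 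That is $\ll\epsilon_n^{-1/2}$ precisely when $\epsilon_n\gg n^{-2/3}$; a large constant in $\epsilon_n=Cn^{-2/3}$ handles the endpoint.

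\emph{Main obstacle.} The hardest step is this near-singular regime. The large entries (up to $n^{2/3}\log n$) make the quadratic-form concentration lossy through the $\|G\|^2$ term. Closing the bootstrap therefore requires the resampling bound $\max_\ell\sum_j\psi_{j\ell}\le2\log n$ to confine the heavy entries to $\mathrm{O}(\log n)$ coordinates per column, and this must be balanced carefully against $\epsilon_n^{-1}$.
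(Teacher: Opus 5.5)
Your architecture (column deletion, the Marchenko--Pastur self-consistent equation, stability at $z=-\epsilon_n$) is a legitimate alternative to the paper's row-deletion argument. Your variance bound \eqref{eq_varepsn} and your stability step are fine. The gap is the resampling level.

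\textbf{Main gap: the choice $c_\alpha\in(1/3,2/3)$.} You fix $c_\alpha$ in this window so as to work on $\tilde{\mathcal{B}}_n$. But at $\alpha=3$ (allowed by \eqref{eq_newcondition}; take e.g.\ $\mathbb{P}(|\xi|>x)\asymp x^{-3}$), the truncated fourth moment in Lemma \ref{lemma_quadratic3} is $v_4=\mathcal{L}(n^{c_\alpha})n^{(4-\alpha)c_\alpha}\gtrsim n^{c_\alpha}>n^{1/3}$, and it multiplies $n^{-2}\operatorname{tr}G^2$.

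In your bootstrap for \eqref{eq_largeepsn} this factor has been dropped. Use $\operatorname{tr}G^2\lesssim n\epsilon_n^{-3/2}$ and the damping $(1+y_pm_n)^{-1}\asymp\epsilon_n^{1/2}$, which your claim $\delta_n\lesssim n^{-1}\epsilon_n^{-3/2}$ implicitly relies on. Then the quadratic-form contribution to $\delta_n$ is $v_4n^{-1}\epsilon_n^{-1}$, and after dividing by the stability factor, $|m_n-s_p|\lesssim v_4n^{-1}\epsilon_n^{-3/2}$. At $\epsilon_n=Cn^{-2/3}$, which is the value actually used in Lemma \ref{lemma_upperdiagonal}, this is of order $n^{c_\alpha}\gg n^{1/3}\asymp\epsilon_n^{-1/2}$. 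So the bootstrap does not close, and you do not even obtain $m_n\asymp\epsilon_n^{-1/2}$.

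The same factor blocks the $n^{-5/6}$ rate in \eqref{eq_meanesd}. With $\operatorname{tr}G^2\le p\|G\|^2$ you get $n^{c_\alpha-1}\|G\|^2\ge n^{-2/3}\|G\|^2$ throughout your window. The balance that gives $n^{-5/6}$ is at $c_\alpha=1/6$, where only $\mathcal{B}_n$ is available.

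\textbf{How to fix it.} The repair is the opposite of your ``main obstacle'' paragraph: do not insist on $\mathrm{O}(\log n)$ large entries per column. Lower $c_\alpha$ and accept $n^{1-\alpha c_\alpha+\epsilon_\alpha}$ of them.
\begin{itemize}
\item Near singularity, $c_\alpha\in(2/9,1/3)$ on $\mathcal{B}_n$ works. The condition $c_\alpha<1/3$ makes $v_4n^{-1}\epsilon_n^{-3/2}\ll\epsilon_n^{-1/2}$.
\item The condition $c_\alpha>2/9$ makes the large-entry term $n^{-1/3-3c_\alpha+\epsilon_\alpha}\log^2 n\,\|G\|^2\le n^{-1/3-3c_\alpha+\epsilon_\alpha}\log^2 n\,\epsilon_n^{-2}$ negligible against $\epsilon_n^{-1/2}$, even with the crude norm bound. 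This is exactly the paper's window.
\item The monotonicity \eqref{eq_monotonicityG} only carries \emph{lower} bounds on $m_n$ to smaller $\epsilon_n$; you do need those, for the damping. The \emph{upper} a priori bound that feeds $\operatorname{tr}G^2\le\epsilon_n^{-1}\operatorname{tr}G$ must come instead from $m_n(\epsilon)\le(\epsilon'/\epsilon)\,m_n(\epsilon')$ for $\epsilon<\epsilon'$.
\end{itemize}

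\textbf{Secondary gap: the regime $(1-y_p)^2\gg\epsilon_n$ in \eqref{eq_meanesd} and \eqref{eq_smallepsn}.} In your column expansion, the trace fluctuation and the rank-one correction enter $\delta_n$ undamped, and you bound them by $\operatorname{Var}(n^{-1}\operatorname{tr}G)\lesssim n^{-1}\epsilon_n^{-2}$ and $(n\epsilon_n)^{-1}$. For $y_p$ bounded away from $1$ and, say, $\epsilon_n=n^{-1/3}$, these are $n^{-1/3}$ and $n^{-2/3}$, far above the target $n^{-5/6}\log^4 n$. Likewise, \eqref{eq_smallepsn} allows $\epsilon_n$ down to $n^{-5/6+1/20}$, below the threshold $n^{-5/12}$ of \eqref{eq_meanesd}, so it cannot simply be read off from \eqref{eq_meanesd}.

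To fix this, bound the martingale increments and rank-one differences by $\|G_{(\ell)}\|\lesssim(\epsilon_n+(1-y_p)^2)^{-1}$ on the smallest-singular-value event, not by $\epsilon_n^{-1}$. The failure probability $2^{-(n-p)}+e^{-cn}$ is super-polynomially small whenever $(1-y_p)^2\gg\epsilon_n$, which is the only regime where this matters.

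\textbf{Comparison of the two routes.} The paper expands over rows. Its quadratic forms are then against $\Gamma_k$ or $\mathbf{I}_n-\Gamma_k$, of norm at most $1$, and the trace enters only through \eqref{eq_trGammak}. That carries a prefactor $\epsilon_n$, which is why $\mathbb{E}T_3^2=\mathrm{O}(n^{-1})$ regardless of $\epsilon_n$. Your column expansion gains the denominator $1+n^{-1}\mathbf{x}_\ell^{\top}G_{(\ell)}\mathbf{x}_\ell\ge 1$ and a cleaner stability step. It pays with resolvent norms inside the quadratic form and with undamped trace fluctuations.
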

\begin{proof}
	Define
	\begin{equation*}
	\begin{split}
	r_{p}(\epsilon_n)=\mathbb{E}\frac{\operatorname{tr}G(\epsilon_n)}{p}
	=\frac{1}{p}\mathbb{E}\sum_{k=1}^{p}\frac{1}{n^{-1}\mathbf{x}_k\mathbf{x}_k^{\top}+\epsilon_n-n^{-2}\mathbf{x}_kX_{(k)}^{\top}[n^{-1}X_{(k)}X_{(k)}^{\top}+\epsilon_n\mathbf{I}_{p-1}]^{-1}X_{(k)}\mathbf{x}_k^{\top}},
	\end{split}
	\end{equation*}
	where $\mathbf{x}_k$ is the $k$-th row of $X$ and $X_{(k)}$ is the submatrix of $X$ by deleting its $k$-th row. Let $y_p=p/n$. Then we can rewrite $r_p(\epsilon_n)$ as
	\begin{equation}\label{eq_rpeps}
		r_p(\epsilon_n)=\frac{1}{p}\sum_{k=1}^{p}\mathbb{E}\frac{1}{a_k+1+\epsilon_n-y_p+y_p\epsilon_n r_p(\epsilon_n)}=\frac{1}{1+\epsilon_n-y_p+y_p\epsilon_n r_p(\epsilon_n)}+b,
	\end{equation}
	where
	\begin{equation*}
		a_k=\frac{1}{n}\sum_{j=1}^{n}(X_{kj}^2-1)+y_p-y_p\epsilon_n r_p(\epsilon_n)-n^{-2}\mathbf{x}_kX_{(k)}^{\top}[n^{-1}X_{(k)}X_{(k)}^{\top}+\epsilon_n\mathbf{I}_{p-1}]^{-1}X_{(k)}\mathbf{x}_k^{\top}
	\end{equation*}
	and
	\begin{equation}\label{eq_bpeps}
		\begin{split}
		b=&b_p=-\frac{1}{p}\sum_{k=1}^{p}\mathbb{E}\frac{a_k}{(1+\epsilon_n-y_p+y_p\epsilon_n r_p(\epsilon_n))(1+\epsilon_n-y_p+y_p\epsilon_n r_p(\epsilon_n)+a_k))}\\
		=&-\frac{1}{p}\sum_{k=1}^{p}\mathbb{E}\frac{a_k}{(1+\epsilon_n-y_p+y_p\epsilon_n r_p(\epsilon_n))^2}\\
		&-\frac{1}{p}\sum_{k=1}^{p}\mathbb{E}\frac{a_k^2}{(1+\epsilon_n-y_p+y_p\epsilon_n r_p(\epsilon_n))^2(1+\epsilon_n-y_p+y_p\epsilon_n r_p(\epsilon_n)+a_k))}.
		\end{split}
	\end{equation}
 It follows that
 \begin{equation}\label{eq_sqrpeps}
     \begin{split}
         r_{p}(\epsilon_n)=\frac{1}{2y_p\epsilon_n}\big(\sqrt{(1+\epsilon_n-y_p+y_p\epsilon_n b )^2+4y_p\epsilon_n}-(1+\epsilon_n-y_p+y_p\epsilon_n b )\big).
     \end{split}
 \end{equation}
 It can be verified that
 \begin{equation*}
     a_k+1-y_p+y_p\epsilon_n r_p(\epsilon_n)=\frac{1}{n}\mathbf{x}_k\big(\mathbf{I}_n-n^{-1}X_{(k)}^{\top}[n^{-1}X_{(k)}X_{(k)}^{\top}+\epsilon_n\mathbf{I}_{p-1}]^{-1}X_{(k)}\big)\mathbf{x}_k^{\top}\ge 0,
 \end{equation*}
 which further gives
 \begin{equation*}
     \frac{1}{|1+\epsilon_n-y_p+y_p\epsilon_n r_p(\epsilon_n)+a_k|}\le \epsilon_n^{-1}.
 \end{equation*}
 Thus, by \eqref{eq_bpeps}, we have
 \begin{equation}\label{eq_absbeps}
     \begin{split}
         |b|\le \frac{1}{p}\sum_{k=1}^{p}(|\mathbb{E}a_k|+\epsilon_{n}^{-1}\mathbb{E}a_k^2)(1+\epsilon_n-y_p+y_p\epsilon_n r_p(\epsilon_n))^{-2}.
     \end{split}
 \end{equation}
 By the definition of $a_k$, one has
 \begin{equation*}
     \begin{split}
         \mathbb{E}a_k=&\mathbb{E}\big(y_p-\epsilon_{n}\frac{\operatorname{tr}G(\epsilon_n)}{n}-\frac{1}{n}\operatorname{tr}[n^{-1}X_{(k)}X_{(k)}^{\top}+\epsilon_n \mathbf{I}_{p-1}]^{-1}\frac{1}{n}X_{(k)}X_{(k)}^{\top}\big)\\
         = &\frac{1}{n}-\frac{\epsilon_n}{n}\mathbb{E}\big[\operatorname{tr}\big(\frac{1}{n}XX^{\top}+\epsilon_n \mathbf{I}_p\big)^{-1}-\operatorname{tr}\big(\frac{1}{n}X_{(k)}X_{(k)}^{\top}+\epsilon_n \mathbf{I}_{p-1}\big)^{-1}\big]\ge 0.
     \end{split}
 \end{equation*}
Note the Sherman-Morrison formula
 \begin{equation*}
     \operatorname{tr}\big(\frac{1}{n}XX^{\top}+\epsilon_n \mathbf{I}_p\big)^{-1}-\operatorname{tr}\big(\frac{1}{n}X_{(k)}X_{(k)}^{\top}+\epsilon_n \mathbf{I}_{p-1}\big)^{-1}=-\frac{\frac{1}{n}\mathbf{x}_k\big(\frac{1}{n}X_{(k)}X_{(k)}^{\top}+\epsilon_n \mathbf{I}_{p-1}\big)^{-2}\mathbf{x}_k^{\top}}{1+\frac{1}{n}\mathbf{x}_k\big(\frac{1}{n}X_{(k)}X_{(k)}^{\top}+\epsilon_n \mathbf{I}_{p-1}\big)^{-1}\mathbf{x}_k^{\top}}\le 0,
 \end{equation*}
So, $|\operatorname{tr}\big(\frac{1}{n}XX^{\top}+\epsilon_n \mathbf{I}_p\big)^{-1}-\operatorname{tr}\big(\frac{1}{n}X_{(k)}X_{(k)}^{\top}+\epsilon_n \mathbf{I}_{p-1}\big)^{-1}|\le \epsilon_n^{-1}$. This further implies the following bound
 \begin{equation*}
     n^{-1}\le \mathbb{E}a_k\le 2n^{-1}
 \end{equation*}
and $b\le 0$ since both terms of \eqref{eq_bpeps} are negative due to $\mathbb{E}a_k\ge 0$ and $1+\epsilon_n-y_p+y_p\epsilon_n r_p(\epsilon_n)+a_k\ge \epsilon_n$. Now we come to $\mathbb{E}a_k^2$, which can be estimated by
 \begin{equation}\label{eq_ak2}
     \begin{split}
         \mathbb{E}a_k^2=\operatorname{Var}(a_k)+(\mathbb{E}a_k)^2= \operatorname{Var}(a_k)+\mathrm{O}(n^{-2}).
     \end{split}
 \end{equation}
 Let
\begin{equation*}
    \Gamma_k=(\gamma_{ij}(k))=\frac{1}{n}X_{(k)}^{\top}[n^{-1}X_{(k)}X_{(k)}^{\top}+\epsilon_n\mathbf{I}_{p-1}]^{-1}X_{(k)},
\end{equation*}
which satisfies $\Gamma_k\prec \mathbf{I}_n$.
 Consider the following decomposition
 \begin{equation}\label{eq_decomak}
     \begin{split}
         a_k-\mathbb{E}a_k
         =&\frac{1}{n}\sum_{j=1}^{n}(X_{kj}^2-1)-\big(n^{-1}\mathbf{x}_k\Gamma_k\mathbf{x}_k^{\top}-n^{-1}\operatorname{tr}\Gamma_k\big)-(n^{-1}\operatorname{tr}\Gamma_k-\mathbb{E}n^{-1}\operatorname{tr}\Gamma_k)\\
         =&:T_1+T_2+T_3
     \end{split}
 \end{equation}
 since $\mathbb{E}\big(n^{-2}\mathbf{x}_kX_{(k)}^{\top}[n^{-1}X_{(k)}X_{(k)}^{\top}+\epsilon_n\mathbf{I}_{p-1}]^{-1}X_{(k)}\mathbf{x}_k^{\top}\big)=\mathbb{E}n^{-1}\operatorname{tr}\Gamma_k$. Specifically, consider the following label matrix $\Psi=:(\psi_{ij})$ with the resampling level $n^{c_{\alpha}}$, where $c_{\alpha}\in (0,2/3)$ which will be fixed later. Similarly, given $1\le k\le p$, there are at most $n^{1-\alpha c_{\alpha}+\epsilon_{\alpha}}$ entries of $|X_{kj}|$ larger than $n^{c_{\alpha}}$ with high probability. Applying Lemma \ref{lemma_quadratic3} for $\mathbf{A}=\mathbf{I}_n$ and $\mathbf{A}=\Gamma_k$ respectively, we have, with high probability,
\[
\mathbb{E}T_1^2+\mathbb{E}T_2^2\lesssim \mathcal{L}(n^{c_{\alpha}})n^{(4-\alpha)c_{\alpha}-1}+n^{-4/3}\log^2 n\cdot(\log n+n^{1-\alpha c_{\alpha}+\epsilon_{\alpha}}).
\]
Note the identity
\begin{equation}\label{eq_trGammak}
    \begin{split}
        \operatorname{tr}\Gamma_k
        =\operatorname{tr}(\mathbf{I}_{p-1}-\epsilon_n[n^{-1}X_{(k)}X_{(k)}^{\top}+\epsilon_n\mathbf{I}_{p-1}]^{-1})
        =p-1-\epsilon_{n}\operatorname{tr}\big(n^{-1}X_{(k)}X_{(k)}^{\top}+\epsilon_n\mathbf{I}_{p-1}\big)^{-1},
    \end{split}
\end{equation}
which further implies
\begin{equation*}
T_3=-n^{-1}\epsilon_{n}\big(\operatorname{tr}\big(n^{-1}X_{(k)}X_{(k)}^{\top}+\epsilon_n\mathbf{I}_{p-1}\big)^{-1}-\mathbb{E}\operatorname{tr}\big(n^{-1}X_{(k)}X_{(k)}^{\top}+\epsilon_n\mathbf{I}_{p-1}\big)^{-1}\big)
\end{equation*}
and thus
 \begin{equation*}
     \mathbb{E}T_3^2=\frac{\epsilon_n^2}{n^2}\mathbb{E}\big|\operatorname{tr}\big(n^{-1}X_{(k)}X_{(k)}^{\top}+\epsilon_n\mathbf{I}_{p-1}\big)^{-1}-\mathbb{E}\operatorname{tr}\big(n^{-1}X_{(k)}X_{(k)}^{\top}+\epsilon_n\mathbf{I}_{p-1}\big)^{-1}\big|^2.
 \end{equation*}
Let $\mathbb{E}_d$ be the conditional expectation given $\{X_{ij},d+1\le i\le p,1\le j\le n\}$. Define
\begin{equation*}
    \begin{split}
        \gamma_d(k)=&\mathbb{E}_{d-1}\operatorname{tr}\big(n^{-1}X_{(k)}X_{(k)}^{\top}+\epsilon_n\mathbf{I}_{p-1}\big)^{-1}-\mathbb{E}_{d}\operatorname{tr}\big(n^{-1}X_{(k)}X_{(k)}^{\top}+\epsilon_n\mathbf{I}_{p-1}\big)^{-1}\\
        =&\mathbb{E}_{d-1}g_d(k)-\mathbb{E}_{d}g_d(k),~~d=1,2,\ldots,p,
    \end{split}
\end{equation*}
where
\begin{equation*}
    g_d(k)=\operatorname{tr}\big(n^{-1}X_{(k)}X_{(k)}^{\top}+\epsilon_n\mathbf{I}_{p-1}\big)^{-1}-\operatorname{tr}\big(n^{-1}X_{(k,d)}X_{(k,d)}^{\top}+\epsilon_n\mathbf{I}_{p-2}\big)^{-1}
\end{equation*}
which satisfies $|g_d(k)|\le \epsilon_n^{-1}$. It follows that
\begin{equation*}
    \operatorname{tr}\big(n^{-1}X_{(k)}X_{(k)}^{\top}+\epsilon_n\mathbf{I}_{p-1}\big)^{-1}-\mathbb{E}\operatorname{tr}\big(n^{-1}X_{(k)}X_{(k)}^{\top}+\epsilon_n\mathbf{I}_{p-1}\big)^{-1}=\sum_{d=1}^{p}\gamma_d(k),
\end{equation*}
which further gives
\begin{equation*}
    \mathbb{E}T_3^2\le \frac{\epsilon_n^{2}}{n^2}\sum_{d=1}^{p}\mathbb{E}|\gamma_d(k)|^2\le \mathrm{O}(n^{-1}).
\end{equation*}
Therefore, under \eqref{eq_newcondition}, we can set $c_{\alpha}=1/6$ to get a nearly optimal bound as
\begin{equation*}
    \operatorname{Var}(a_k)\lesssim n^{-5/6}\log^4 n
\end{equation*}
with high probability, which further gives
\begin{equation}\label{eq_ak21}
    \mathbb{E}a_k^2\lesssim n^{-5/6}\log^4 n
\end{equation}
with high probability. Substituting $|\mathbb{E}a_k|=\mathrm{O}(n^{-1})$ and \eqref{eq_ak21} into \eqref{eq_absbeps} gives
\begin{equation}\label{eq_estimationb}
    |b|\lesssim n^{-5/6}(\log^4 n)\cdot \epsilon_n^{-1}(1-y_p+\epsilon_n+y_p\epsilon_n r_p(\epsilon_n))^{-2}.
\end{equation}
Notice the elementary equality
\begin{equation*}
\begin{split}
\sqrt{(x+y)^2+z}-(x+y)-(\sqrt{x^2+z}-x)
=\frac{-2yz}{(\sqrt{(x+y)^2+z}+\sqrt{x^2+z}+y)(x+\sqrt{x^2+z})}.
\end{split}
\end{equation*}
Then by \eqref{eq_sqrpeps} with $x=(1-y_p+\epsilon_n)\ge \epsilon_n> 0, y=y_p\epsilon_n b\le 0$ and $z=4y_p\epsilon_n\ge 0$, we have
\begin{equation*}
\begin{split}
(\sqrt{(x+y)^2+z})^2-(x-y)^2=4xy+z=4y_p\epsilon_n[(1-y_p+\epsilon_n)b+1]\ge 0
\end{split}
\end{equation*}
since $1+(1-y_p+\epsilon_n)b\ge 1-\mathrm{O}_{\prec}(n^{-5/6}(\log^4 n)\cdot\epsilon_n^{-1}(1-y_p+\epsilon_n+y_p\epsilon_n r_p(\epsilon_n))^{-1})\ge 0$ due to the trivial bound that $|b|\lesssim \mathrm{O}\big(n^{-5/6}(\log^4 n)\cdot\epsilon_n^{-1}(1-y_p+\epsilon_n)^{-2}\big)$  with $\epsilon_n\gg n^{-5/12}$, which further gives
\begin{equation*}
\sqrt{(x+y)^2+z}+y\ge x
\end{equation*}
for our choices of $x,y,z$. So, we have
\begin{equation*}
    0\le \sqrt{(x+y)^2+z}-(x+y)-(\sqrt{x^2+z}-x)\le \mathrm{O}(\frac{yz}{x^2+z})
\end{equation*}
since $y<0$. This observation implies, with high probability,
\begin{equation*}
\begin{split}
r_p(\epsilon_n)=&\frac{1}{2y_p\epsilon_n}\big(\sqrt{(1+\epsilon_n-y_p )^2+4y_p\epsilon_n}-(1+\epsilon_n-y_p)\big)+\mathrm{O}\big(\frac{y_p\epsilon_n |b|}{(1-y_p+\epsilon_n)^2+4y_p\epsilon_n}\big)\\
=&2\big(\sqrt{(1+\epsilon_n-y_p )^2+4y_p\epsilon_n}+(1+\epsilon_n-y_p)\big)^{-1}+\mathrm{O}_{\prec}\big(\frac{n^{-5/6}(\log^4 n)}{((1-y_p)^2+\epsilon_n)(1-y_p+\epsilon_n+y_p\epsilon_n r_p(\epsilon_n))^2}\big)
\end{split}
\end{equation*}
by \eqref{eq_estimationb}. Now we consider the different cases for $y_p$. For the case of $1-y_p\ge \epsilon_n^{1/2}$, we have $r_p(\epsilon_n)\asymp (1-y_p)^{-1}$, which further gives
\begin{equation*}
r_{p}(\epsilon_n)=2\big(\sqrt{(1+\epsilon_n-y_p )^2+4y_p\epsilon_n}+(1+\epsilon_n-y_p)\big)^{-1}+\mathrm{O}_{\prec}\big(n^{-5/6}(\log^4 n)(1-y_p)^{-4}\big).
\end{equation*}
For the case of $1-y_p\le \epsilon_n^{1/2}$, we have $r_p(\epsilon_n)\asymp \epsilon_n^{-1/2}$, which further implies
\begin{equation*}
r_{p}(\epsilon_n)=2\big(\sqrt{(1+\epsilon_n-y_p )^2+4y_p\epsilon_n}+(1+\epsilon_n-y_p)\big)^{-1}+\mathrm{O}_{\prec}\big(n^{-5/6}(\log^4 n)\epsilon_n^{-2}\big).
\end{equation*}
Thus, we have
\begin{equation*}
r_{p}(\epsilon_n)=2\big(\sqrt{(1+\epsilon_n-y_p )^2+4y_p\epsilon_n}+(1+\epsilon_n-y_p)\big)^{-1}+\mathrm{O}_{\prec}\big(n^{-5/6}(\log^4 n)(\epsilon_n+(1-y_p)^2)^{-2}\big),
\end{equation*}
with high probability, which completes the proof of the first statement.

Now, we consider the near-singularity case, say $p/n\rightarrow 1$. For the first part $n/2\le p\le n-n^{19/20}$, following the similar argument for the estimation of $|b|$ from \eqref{eq_estimationb} above, if $\epsilon_n\le n^{-1/10}\le (1-y_p)^2$, we have
\begin{equation*}
    |b|\lesssim \mathrm{O}(n^{-5/6}\log^4 n\epsilon_n^{-1}(1-y_p)^{-2}),
\end{equation*}
which satisfies
\begin{equation*}
    1+(1-y_p+\epsilon_n)b\ge 1-\mathrm{O}_{\prec}(n^{-5/6}\log^4 n\epsilon_n^{-1}(1-y_p)^{-1})\ge 1/2
\end{equation*}
if we choose $\epsilon_n$ such that $n^{-1/10}\ge \epsilon_n\ge n^{-5/6}(1-y_p)^{-1}$. Therefore, we can get
\begin{equation*}
\begin{split}
    r_p(\epsilon_n)=&\frac{1}{2y_p\epsilon_n}\big(\sqrt{(1+\epsilon_n-y_p )^2+4y_p\epsilon_n}-(1+\epsilon_n-y_p)\big)+\mathrm{O}\big(\frac{y_p\epsilon_n |b|}{(1-y_p+\epsilon_n)^2+4y_p\epsilon_n}\big)\\
=&2\big(\sqrt{(1+\epsilon_n-y_p )^2+4y_p\epsilon_n}+(1+\epsilon_n-y_p)\big)^{-1}+\mathrm{O}_{\prec}\big(\frac{n^{-5/6}\log^4 n}{(1-y_p)^4}\big)
\end{split}
\end{equation*}
for $n/2\le p\le n-n^{19/20}$ and $n^{-5/6+1/20}\le \epsilon_n\le n^{-1/10}$, finishing the proof of the second statement.

Next, we use the bootstrap strategy to get an estimation for $ n-n^{2/3}\le p\le n$, which implies $(1-p/n)^2\le n^{-2/3}$. In the sequel, we consider $\epsilon_n\gg n^{-2/3}$.
Notice the decomposition
\begin{equation}\label{eq_decomak2}
a_k-\mathbb{E}a_k=(n^{-1}\mathbf{x}_k(\mathbf{I}_n-\Gamma_k)\mathbf{x}_k^{\top}-n^{-1}\operatorname{tr}(\mathbf{I}_n-\Gamma_k))-(n^{-1}\operatorname{tr}\Gamma_k-\mathbb{E}n^{-1}\operatorname{tr}\Gamma_k)=:(T_1+T_2)+T_3.
\end{equation}
Moreover, it can be checked that $\|\mathbf{I}_n-\Gamma_k\|\le 1+\|\Gamma_k\|\le 2$ and
\begin{equation*}
    \operatorname{tr}(\mathbf{I}_n-\Gamma_k)=n-p+1+\epsilon_n\operatorname{tr}(n^{-1}X_{(k)}X_{(k)}^{\top}+\epsilon_n \mathbf{I}_{p-1})\le n-p+2+\epsilon_n \operatorname{tr}\big(\frac{1}{n}XX^{\top}+\epsilon_n \mathbf{I}_p\big)^{-1}
\end{equation*}
by \eqref{eq_trGammak} and the Sherman-Morrison formula
\[
-\epsilon_n^{-1}\le \operatorname{tr}\big(\frac{1}{n}XX^{\top}+\epsilon_n \mathbf{I}_p\big)^{-1}-\operatorname{tr}\big(\frac{1}{n}X_{(k)}X_{(k)}^{\top}+\epsilon_n \mathbf{I}_{p-1}\big)^{-1}\le 0.
\]
Applying Lemma \ref{lemma_quadratic3} to the positive definite matrix $\mathbf{A}=\mathbf{I}_n-\Gamma_k$ gives, with high probability,
\begin{equation*}
    \begin{split}
        \mathbb{E}(T_1+T_2)^2\lesssim &\mathcal{L}(n^{c_{\alpha}})n^{(4-\alpha)c_{\alpha}-2}\mathbb{E}\operatorname{tr}(\mathbf{I}_n-\Gamma_k)+n^{-4/3}\log^2 n\cdot(\log n+n^{1-\alpha c_{\alpha}+\epsilon_{\alpha}})+\mathrm{o}(n^{-4/3})\\
        \le & n^{c_{\alpha}-1}(1-y_p+\epsilon_n r_p(\epsilon_n))+n^{-1/3-3c_{\alpha}}\log^4 n+\mathrm{o}(n^{-1})
    \end{split}
\end{equation*}
for $c_{\alpha}\in (0,2/3)$. Thus, following a similar argument above for $\mathbb{E}T_3^2=\mathrm{O}(n^{-1})$, we get
\begin{equation*}
    \mathbb{E}a_k^2\lesssim n^{c_{\alpha}-1}(1-y_p+\epsilon_n r_p(\epsilon_n))+n^{-1/3-3c_{\alpha}}\log^4 n+\mathrm{O}(n^{-1})\lesssim n^{c_{\alpha}-1}\epsilon_n r_p(\epsilon_n)+\mathrm{O}(n^{-1}),
\end{equation*}
if we choose $c_{\alpha}\in (2/9,1/3)$ since $1-y_p=(n-p)/n\le n^{-1/3}$ for $n-n^{2/3}\le p\le n$, which together with \eqref{eq_absbeps} further implies
\begin{equation*}
    \begin{split}
        |b|\lesssim &\epsilon_n^{-1}[n^{c_{\alpha}-1}\epsilon_n r_p(\epsilon_n)+\mathrm{O}(n^{-1})](1-y_p+\epsilon_n+y_p\epsilon_n r_p(\epsilon_n))^{-2}\\
        \le &n^{c_{\alpha}-1}r_p(\epsilon_n)(1-y_p+\epsilon_n+y_p\epsilon_n r_p(\epsilon_n))^{-2}+\mathrm{O}(n^{-1}\epsilon_n^{-1}(1-y_p+\epsilon_n+y_p\epsilon_n r_p(\epsilon_n))^{-2})
    \end{split}
\end{equation*}
since $1-y_p+\epsilon_n+y_p\epsilon_n r_p(\epsilon_n)\ge \epsilon_n$. Therefore, setting $c_{\alpha}=2/9+\epsilon_0$ with a small enough constant $0<\epsilon_0<1/9$, for $n^{-2/3}\le \epsilon_n\ll n^{-4/9}$, by \eqref{eq_rpeps}, we have $|\epsilon_n b|\lesssim \mathrm{o}(1)$, and thus we get
\begin{equation*}
\begin{split}
r_p(\epsilon_n)=&2\big(\sqrt{(1+\epsilon_n-y_p )^2+4y_p\epsilon_n}+(1+\epsilon_n-y_p)\big)^{-1}+\mathrm{O}(|b|)\\
=&2\big(\sqrt{(1+\epsilon_n-y_p )^2+4y_p\epsilon_n}+(1+\epsilon_n-y_p)\big)^{-1}+\mathrm{O}_{\prec}\big(\frac{n^{-1}}{((1-y_p)^2+\epsilon_n)(1-y_p+\epsilon_n+y_p\epsilon_n r_p(\epsilon_n))^2}\big)\\
=&2\big(\sqrt{(1+\epsilon_n-y_p )^2+4y_p\epsilon_n}+(1+\epsilon_n-y_p)\big)^{-1}+\mathrm{O}_{\prec}\big(n^{-1}\epsilon_n^{-2}\big).
\end{split}
\end{equation*}
Thus, to make the error term negligible, choosing $n^{-2/3}\le \epsilon_n\ll n^{-4/9}$ gives
\begin{equation*}
r_p(\epsilon_n)= 2\big(\sqrt{(1+\epsilon_n-y_p )^2+4y_p\epsilon_n}+(1+\epsilon_n-y_p)\big)^{-1} +\mathrm{O}_{\prec}\big(n^{-1}\epsilon_n^{-2}\big)\asymp \epsilon_n^{-1/2}
\end{equation*}
for all $n-n^{2/3}\le p\le n$ as desired.

Finally, similarly to the estimate towards $\mathbb{E}T_3^2$, we have
\begin{equation*}
\operatorname{Var}(n^{-1}\operatorname{tr}G(\epsilon_n))=\mathrm{O}(n^{-1}\epsilon_n^{-2})
\end{equation*}
for all $\epsilon_n\gg n^{-1}$ as desired.
\end{proof}

\section{Off-diagonal part of a quadratic form}\label{sec-offdiag}
\subsection{Proof of Lemma \ref{lemma_Qbounds}}
\begin{proof}
    Recalling the identity $\mathbf{Q}_i\mathbf{Q}_i^{\top}=\mathbf{Q}_i/(n-i)$ and $\operatorname{tr}(\mathbf{Q}_i)=1$, we have
    \begin{equation*}
        \sum_{l}q_{i,kl}^2=\frac{q_{i,kk}}{n-i}~\text{and}~\sum_{k\ne l}q_{i,kl}^2=\operatorname{tr}(\mathbf{Q}_i\mathbf{Q}_i^{\top})-S_{2}^{(i)}=\frac{1}{(n-i)}-S_{2}^{(i)}\le \frac{i}{n(n-i)},
    \end{equation*}
    by $S_{2}^{(i)}\ge n^{-1}$ in Lemma \ref{lemma_diagonals}. It is obvious that $S_{2}^{(i)}\le 1/(n-i)$ since $\sum_{k\ne l}q_{i,kl}^2\ge 0$. For the first part of \eqref{eq_boundq1}, by the Cauchy-Schwarz inequality, one has
    \begin{equation*}
        \sum_{l}|q_{i,kl}|\le \big(n\sum_{l}q_{i,kl}^2\big)^{1/2}\le \big(\frac{nq_{i,kk}}{n-i}\big)^{1/2}\le \frac{n^{1/2}}{(n-i)}.
    \end{equation*}
    For the rest parts of \eqref{eq_boundq1}, we have
    \begin{equation*}
        \sum_{k\ne l}|q_{i,kl}|\le \big(n^2\sum_{k\ne l}q_{i,kl}^2\big)^{1/2}\le \frac{n}{(n-i)^{1/2}}
    \end{equation*}
    and
    \begin{equation*}
    	\sum_{l\ne s}|q_{i,kl}q_{i,ks}|\lesssim (\sum_{l}q_{i,kl})^2 \le n\sum_{l}q_{i,kl}^2\le \frac{nq_{i,kk}}{n-i}\le \frac{n}{(n-i)^2}.
    \end{equation*}
    For the first part of \eqref{eq_boundq2}, by the Cauchy-Schwarz inequality and \eqref{eq_boundq1}, we have
    \begin{equation*}
        \begin{split}
            \sum_{k\ne l\ne s}|q_{i,kl}q_{i,ks}|\le \sum_{k}(\sum_{l}q_{i,kl})^2 \le \sum_{k}\frac{nq_{i,kk}}{n-i}=\frac{n}{n-i}.
        \end{split}
    \end{equation*}
    Now we turn to the second part of \eqref{eq_boundq2}. Applying the Cauchy-Schwarz inequality, it is straightforward that
    \begin{equation*}
      \begin{split}
      \sum_{k\ne l\ne s}|q_{i,kl}q_{i,ks}q_{i,ls}|\le& \sum_{k\ne l}|q_{i,kl}|\sum_{s}|q_{i,ks}q_{i,ls}|
      \le \sum_{k\ne l}|q_{i,kl}|\big(\sum_{s}q_{i,ks}^2\sum_{s}q_{i,ls}^2\big)^{1/2}\\
      \lesssim& \sum_{k}\big(\frac{nq_{i,kk}}{n-i}\big)^{1/2}\big(\frac{q_{i,kk}q_{i,ll}}{(n-i)^2}\big)^{1/2}\le \frac{\sum_{k}n^{1/2}q_{i,kk}}{(n-i)^2}\le \frac{n^{1/2}}{(n-i)^2},
      \end{split}
    \end{equation*}
    where in the last line we used $q_{i,ll}\le 1/(n-i)$ and $\sum_{k}q_{i,kk}=1$.
    Similarly for \eqref{eq_boundq3}, applying the Cauchy-Schwarz inequality gives
    \begin{equation*}
        \begin{split}
            \sum_{k\ne l\ne s\ne m}|q_{i,kl}q_{i,ks}q_{i,lm}|\le&\sum_{k\ne s\ne m}|q_{i,ks}|\sum_{l}|q_{i,kl}q_{i,lm}|\\
            \le &\sum_{k\ne m}\big(n\sum_{s}q_{i,ks}^2\big)^{1/2}\big(\sum_{l}q_{i,kl}^2\sum_{l}q_{i,lm}^2\big)^{1/2}
            \le \sum_{k\ne m}\big(\frac{nq_{i,kk}}{n-i}\frac{q_{i,kk}q_{i,mm}}{(n-i)^2}\big)^{1/2}\\
            \le &\frac{n^{1/2}}{(n-i)^{3/2}}\big(\sum_{k}q_{i,kk}\big)\big(n\sum_{m}q_{i,mm}\big)^{1/2}
            \le  \frac{n}{(n-i)^{3/2}}.
        \end{split}
    \end{equation*}
    Similarly, we have
    \begin{equation*}
        \begin{split}
            \sum_{k\ne l\ne s\ne m}|q_{i,kl}q_{i,ks}q_{i,km}|\le \sum_{k}(\sum_{l}|q_{i,kl}|)^3\le \big(\frac{n}{n-i}\big)^{3/2}\sum_{k}q_{i,kk}^{3/2}\le  \frac{n^{3/2}}{(n-i)^2}
        \end{split}
    \end{equation*}
    by the Cauchy-Schwarz inequality and $\sum_{k}q_{i,kk}^{3/2}\le \sum_{k}q_{i,kk}/(n-i)^{1/2}=1/(n-i)^{1/2}$.
\end{proof}

\subsection{Fourth moments of off-diagonal parts}
\begin{lemma}\label{lemma_offdiag}
    Let $\alpha\in [3,4)$. We have for $n$ sufficiently large,
    \begin{equation}\label{eq_offdiag4}
        n^4\mathbb{E}\big[\big(\sum_{k\ne l}q_{i,kl}(Y_{i+1,k}Y_{i+1,l}-\beta_{1,1})\big)^4\big]\lesssim \frac{[\mathcal{L}(n^{1/2})]^2n^{4-\alpha}}{(n-i)^3}+\frac{1}{(n-i)^2}, ~i=0,1,\ldots,p-s_1-1,
    \end{equation}
    and
    \begin{equation}\label{eq_offdiag5}
        n^4\mathbb{E}\big[\big(\sum_{k\ne l}q_{i,kl}Y_{i+1,k}Y_{i+1,l}\big)^4\big]\lesssim \frac{1}{(n-i)^2}, ~i=p-s_1,\ldots,p-1.
    \end{equation}
\end{lemma}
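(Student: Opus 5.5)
The plan is to expand the fourth power directly and bound each pattern of coincident indices using the mixed moments $\beta_{k_1,\dots,k_r}$ together with Lemma \ref{lemma_Qbounds}. Write $W=\sum_{k\ne l}q_{i,kl}(Y_{i+1,k}Y_{i+1,l}-\beta_{1,1})$. Since $\mathbf{Q}_i$ is independent of $\mathbf{y}_{i+1}$, we condition on $\mathcal{F}_i$. Then
\[
\mathbb{E}(W^4\mid\mathcal{F}_i)=\sum q_{i,k_1l_1}\cdots q_{i,k_4l_4}\,\mathbb{E}\prod_{r=1}^4(Y_{k_r}Y_{l_r}-\beta_{1,1}),
\]
with the sum over $k_r\ne l_r$. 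We classify terms by the multiset of indices $\{k_r,l_r\}$; each index appears with some multiplicity, and the total multiplicity is $8$. Because $\beta_{1,1}=\mathrm{o}(n^{-3})$ (Lemma \ref{lemma_odd_moment}), any factor carrying a centring constant $\beta_{1,1}$ can be absorbed by the crude bounds $\sum_{k\ne l}|q_{i,kl}|\le n(n-i)^{-1/2}$ from \eqref{eq_boundq1}. After multiplying by $n^4$, these contributions are $\mathrm{o}((n-i)^{-2})$. So the main work is the uncentred moment $\mathbb{E}\prod_r Y_{k_r}Y_{l_r}$.

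\textbf{Even patterns} are those in which every index has even multiplicity; here Lemma \ref{lem_moment_rates} applies.
\begin{itemize}
\item Four distinct indices, each of multiplicity two (cycles $k\,l\,s\,m$ and pairings): this gives $\beta_{2,2,2,2}\asymp n^{-4}$. The coefficient is $\sum|q_{kl}q_{ls}q_{sm}q_{mk}|\le\operatorname{tr}\mathbf{Q}_i^4$-type sums plus $(\sum_{k\ne l}q_{kl}^2)^2\le (n-i)^{-2}$. After the factor $n^4$ this is $\lesssim (n-i)^{-2}$.
\item Multiplicities $(4,2,2)$: this gives $\beta_{4,2,2}\asymp\mathcal{L}(n^{1/2})n^{-1-\alpha/2}$. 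The coefficient is $\sum_{k,l,s}q_{kl}^2q_{ks}^2\le\sum_k q_{kk}^2/(n-i)^2\le (n-i)^{-3}$, using $\sum_l q_{kl}^2=q_{kk}/(n-i)$. The resulting $n^{3-\alpha/2}\mathcal{L}(n^{1/2})(n-i)^{-3}$ is dominated by the stated bound.
\item Multiplicities $(4,4)$: this gives $\beta_{4,4}\asymp\mathcal{L}^2(n^{1/2})n^{-\alpha}$ and coefficient $\sum q_{kl}^4\le (n-i)^{-3}$. After the factor $n^4$ this is exactly the term $\mathcal{L}^2n^{4-\alpha}(n-i)^{-3}$.
\item Multiplicities $(6,2)$: this vanishes, since it would need $k=l$, which is excluded.
\end{itemize}

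\textbf{Odd patterns} are those in which some index appears an odd number of times. Examples are $\beta_{3,3,1,1}$, $\beta_{3,1,2,2}$, $\beta_{1,1,1,1,2,2}$, \dots, up to $\beta_{1,\dots,1}$ with eight indices. Here we use the self-normalisation identity $\sum_j Y_{1j}^2=1$ repeatedly, as in the derivation $\beta_{2,1,1}\lesssim n^{-1}\beta_{1,1}$. This reduces each odd mixed moment to $\beta_{1,1}$, $\beta_{3,1}$ or $\beta_{1,\dots,1}$, each gaining a factor $n^{-1}$ per extra squared index. Lemma \ref{lemma_odd_moment} then gives bounds like $\beta_{3,3,1,1},\beta_{3,1,2,2}=\mathrm{o}(n^{-5})$ and $\beta_{1^{2q},2,2}=\mathrm{o}(n^{-2q-3})$. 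Pairing each such bound with the appropriate sum from \eqref{eq_boundq2}–\eqref{eq_boundq3} shows that $n^4\times(\text{moment})\times(\text{coefficient})=\mathrm{o}((n-i)^{-2})$. For instance, the 3-paths $\sum|q_{kl}q_{ks}q_{lm}|\le n(n-i)^{-3/2}$ and 3-stars $\le n^{3/2}(n-i)^{-2}$ handle the patterns with singleton indices.

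\textbf{Main obstacle.} I expect the odd patterns with many singleton indices (six or eight distinct indices) to be the hardest. There the $q$-sums are large (for example, $(\sum_{k\ne l}|q_{kl}|)^4\sim n^4(n-i)^{-2}$), while the moment bound is only $\mathrm{o}(n^{-2q-1})$. My first attempt is to factor the sum into a product of the available bounds; for example, the eight-singleton term gives $n^4\cdot\mathrm{o}(n^{-9})\cdot n^4(n-i)^{-2}=\mathrm{o}(n^{-1}(n-i)^{-2})$. If some intermediate pattern does not close this way, I would instead rewrite it through $\mathbf{Q}_i^2=\mathbf{Q}_i/(n-i)$ so that index sums collapse into traces.

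\textbf{Second statement \eqref{eq_offdiag5}.} For $i\ge p-s_1$ the row is Gaussian-normalised, so all odd moments vanish by symmetry and $\kappa_{4,4},\kappa_{4,2,2},\kappa_{2,2,2,2}\asymp n^{-4}$. Only the even patterns survive, and the same coefficient bounds give $\lesssim (n-i)^{-2}$ directly.
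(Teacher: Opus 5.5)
Your plan follows the paper's proof. The paper sets $T_1=\sum_{k\ne l}q_{i,kl}Y_{i+1,k}Y_{i+1,l}$ and $T_2=\mathbb{E}(T_1\mid\mathcal F_i)=\beta_{1,1}\sum_{k\ne l}q_{i,kl}$, and writes $\mathbb{E}[(T_1-T_2)^4\mid\mathcal F_i]=\mathbb{E}(T_1^4\mid\mathcal F_i)-4T_2\mathbb{E}(T_1^3\mid\mathcal F_i)+6T_2^2\mathbb{E}(T_1^2\mid\mathcal F_i)-3T_2^4$, which is your product expansion regrouped. It then sorts $\mathbb{E}(T_1^4\mid\mathcal F_i)$ by index-coincidence patterns, with the same tools you name: Lemma \ref{lem_moment_rates}, Lemma \ref{lemma_odd_moment}, the self-normalisation identity, Lemma \ref{lemma_Qbounds} (factorised exactly as you propose for many-singleton patterns), and symmetry for the Gaussian rows. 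You also include the $4$-cycle $\beta_{2,2,2,2}$ term, which the paper's listing omits; it is $\lesssim(n-i)^{-2}$ by Cauchy--Schwarz.

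\textbf{Slip in the $(4,2,2)$ pattern.} One computational slip breaks a step as written. For $\beta_{4,2,2}$ you have $q=3$ and $N_1=2$ in Lemma \ref{lem_moment_rates}, so $\beta_{4,2,2}\asymp\mathcal L(n^{1/2})n^{-2-\alpha/2}$, not $n^{-1-\alpha/2}$. With your exponent the claimed domination fails. At $\alpha=3$ your term is $n^{3/2}\mathcal L(n^{1/2})(n-i)^{-3}$. The right-hand side is at most $(\mathcal L^2(n^{1/2})+1)\,n\,(n-i)^{-3}$, since $(n-i)^{-2}\le n(n-i)^{-3}$. With the correct exponent the contribution is $\mathcal L(n^{1/2})n^{2-\alpha/2}(n-i)^{-3}$. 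This is at most $\mathcal L^2(n^{1/2})n^{4-\alpha}(n-i)^{-3}$, because $\mathcal L(n^{1/2})n^{2-\alpha/2}\to\infty$ by Lemma \ref{lemma_potterbound}. It is also $\lesssim (n-i)^{-2}$, since $n-i\ge s_1\asymp n$ in this range.

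\textbf{The one-$\beta_{1,1}$ cross term.} Absorbing centring constants via $\sum_{k\ne l}|q_{i,kl}|\le n(n-i)^{-1/2}$ works for terms with two or more factors $\beta_{1,1}$. It does not work for the term with exactly one, $-4T_2\,\mathbb{E}(T_1^3\mid\mathcal F_i)$. Bounding that third moment term by term with absolute moments $\mathbb{E}|Y_{11}\cdots Y_{16}|\asymp n^{-3}$ gives only $(n-i)^{-3/2}$. After the factor $n^4|T_2|$ this is $\mathrm{o}(n^2)(n-i)^{-2}$. You need a signed bound. The paper proves $\mathbb{E}(T_1^3\mid\mathcal F_i)\lesssim n^{-2}(n-i)^{-2}$ by the same pattern analysis. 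Alternatively, use H\"older: $|\mathbb{E}(T_1^3\mid\mathcal F_i)|\le(\mathbb{E}(T_1^4\mid\mathcal F_i))^{3/4}$. Combined with $n|T_2|\le n^2\beta_{1,1}(n-i)^{-1/2}=\mathrm{o}(n^{-1})$ and your uncentred bound, this closes the term.

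\textbf{Odd-moment reductions.} The identity $\sum_j Y_{1j}^2=1$ only trades an exponent-$2$ index for raised exponents elsewhere; for example $(n-2)\beta_{3,2,1}=\beta_{3,1}-\beta_{5,1}-\beta_{3,3}$. So by itself it does not reduce $\beta_{3,3}$, $\beta_{5,1}$ or $\beta_{3,3,1,1}$ to the quantities in Lemma \ref{lemma_odd_moment}. The paper likewise just asserts comparisons such as $\beta_{3,3}\le\beta_{1,1}$, so this gap is shared. Only $\mathrm{O}(\cdot)$ versions of those orders are needed anywhere in the estimate.
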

\begin{proof}
Consider $0\le i\le p-s_2-1$ first. Denote $T_1=\sum_{k\ne l}q_{i,kl}Y_{i+1,k}Y_{i+1,l}$ and $T_2=\sum_{k\ne l}q_{i,kl}\beta_{1,1}$. Noting the elementary identity $(a-b)^4=a^4-4a^3b+6a^2b^2-4ab^3+b^4$ and $\mathbb{E}(T_1|\mathcal{F}_i)=T_2$, we have
    \begin{equation*}
        \begin{split}
            \mathbb{E}[(T_1-T_2)^4|\mathcal{F}_i]=\mathbb{E}(T_1^4|\mathcal{F}_i)-4T_2\mathbb{E}(T_1^3|\mathcal{F}_i)+6T_2^2\mathbb{E}(T_1^2|\mathcal{F}_i)-3T_2^4=:\mathrm{V}_1+\mathrm{V}_2+\mathrm{V}_3+\mathrm{V}_4,
        \end{split}
    \end{equation*}
    where $\mathbb{E}(\cdot|\mathcal{F}_i)$ denotes the conditional expectation given $X_{1},\ldots,X_{i}$. In what follows, we analyze the above terms one by one.
    Invoking \eqref{eq_boundq1}, we have
    \begin{equation}\label{eq_off_q1}
        T_2=\beta_{1,1}\sum_{k\ne l}q_{i,kl}\le \beta_{1,1}n(\sum_{k\ne l}q_{i,kl}^2)^{1/2}\le \beta_{1,1}n/(n-i)^{1/2}.
    \end{equation}
    By Lemma \ref{lemma_Qbounds}, we have
    \begin{equation}\label{eq_off_q2}
        \begin{split}
            \mathbb{E}(T_1^2|\mathcal{F}_i)\lesssim &\sum_{k\ne l}q_{i,kl}^2n^2\beta_{2,2}+\sum_{k\ne l\ne s}q_{i,kl}q_{i,ks}n^2\beta_{2,1,1}+\sum_{k\ne l\ne s\ne m}q_{i,kl}q_{i,sm}n^2\beta_{1,1,1,1}\\
            \lesssim & \frac{n^2\beta_{2,2}}{n-i}+\frac{n^3\beta_{2,1,1}}{n-i}+\frac{n^4\beta_{1,1,1,1}}{n-i},
        \end{split}
    \end{equation}
    where the last term follows from $\sum_{k\ne l\ne s\ne m}q_{i,kl}q_{i,sm}\lesssim (\sum_{k\ne l}q_{i,kl})^2\le n^2/(n-i)$ by \eqref{eq_boundq1}. As an immediate consequence, we get
    \begin{equation*}
    	\begin{split}
    	n^4(\mathrm{V}_3+\mathrm{V}_4)\lesssim & n^4\big(\frac{n\beta_{1,1}}{(n-i)^{1/2}}\big)^4+n^4\big(\frac{n\beta_{1,1}}{(n-i)^{1/2}}\big)^2\big(\frac{n^2\beta_{2,2}}{n-i}+\frac{n^3\beta_{2,1,1}}{n-i}+\frac{n^4\beta_{1,1,1,1}}{n-i}\big)\\
    	\le&\mathrm{o}\big(\frac{n^{-4}}{(n-i)^2}+\frac{1}{(n-i)^2}\big)=\mathrm{o}\big(\frac{1}{(n-i)^2}\big),
    	\end{split}
    \end{equation*}
    where we used $\beta_{1,1}=\mathrm{o}(n^{-3}), \beta_{1,1,1,1}=\mathrm{o}(n^{-4}),\beta_{2,1,1}=\mathrm{o}(n^{-4})$ and $\beta_{2,2}=\mathrm{O}(n^{-2})$ by Lemma \ref{lemma_odd_moment}.

    Now we turn to $\mathbb{E}(T_1^3|\mathcal{F}_i)$. By direct calculation,
    \begin{equation*}
        \begin{split}
            \mathbb{E}(T_1^3|\mathcal{F}_i)\lesssim &\sum_{k\ne l}q_{i,kl}^3\beta_{3,3}+\sum_{k\ne l\ne s}\big(q_{i,kl}q_{i,ks}q_{i,ls}\beta_{2,2,2}+q_{i,kl}^2q_{i,ks}\beta_{3,2,1}\big)\\
            &+\sum_{k\ne l\ne s\ne m}\big(q_{i,kl}q_{i,ks}q_{i,km}\beta_{3,1,1,1}+(q_{i,kl}^2q_{i,sm}+q_{i,kl}q_{i,ks}q_{i,lm})\beta_{2,2,1,1}\big)\\
            &+\sum_{k\ne l\ne s\ne m\ne r}q_{i,kl}q_{i,ks}q_{i,mr}\beta_{2,1,1,1,1}+\sum_{k\ne l\ne s\ne m\ne r\ne t}q_{i,kl}q_{i,sm}q_{i,rt}\beta_{1,1,1,1,1,1}\\
            \lesssim & \frac{\beta_{3,3}}{(n-i)^2}+\frac{n\beta_{2,2,2}}{(n-i)^2}+\frac{n\beta_{3,2,1}}{(n-i)^2}+\frac{n^{3/2}\beta_{3,1,1,1}}{(n-i)^2}+\frac{n\beta_{2,2,1,1}}{(n-i)^{3/2}}+\frac{n^2\beta_{2,1,1,1,1}}{(n-i)^{3/2}}+\frac{n^3\beta_{1,1,1,1,1,1}}{(n-i)^{3/2}},
        \end{split}
    \end{equation*}
    where we used Lemma \ref{lemma_Qbounds} and the following estimates
\begin{equation*}
    \begin{split}
        &\sum_{k\ne l}|q_{i,kl}^3|\le \frac{\sum_{k\ne l}q_{i,kl}^2}{n-i}\le \frac{1}{(n-i)^2},~
        \sum_{k\ne l\ne s}|q_{i,kl}^2q_{i,ks}|\le\frac{\sum_{k\ne l\ne s}|q_{i,kl}q_{i,ks}|}{n-i} \frac{n}{(n-i)^2},\\
        &\sum_{k\ne l\ne s\ne m}|q_{i,kl}^2q_{i,sm}+q_{i,kl}q_{i,ks}q_{i,lm}|\le \sum_{k\ne l}q_{i,kl}^2\sum_{s\ne m}|q_{i,sm}|+\frac{n}{(n-i)^{3/2}}\le \frac{n}{(n-i)^{3/2}},\\
        &\sum_{k\ne l\ne s\ne m\ne r}|q_{i,kl}q_{i,ks}q_{i,mr}|\le \sum_{k\ne l\ne s}|q_{i,kl}q_{i,ks}|\sum_{m\ne r}|q_{i,mr}|\le \frac{n^2}{(n-i)^{3/2}},\\
        &\sum_{k\ne l\ne s\ne m\ne r\ne t}|q_{i,kl}q_{i,sm}q_{i,rt}|\le \big(\sum_{k\ne l}|q_{i,kl}|\big)^3\le \frac{n^3}{(n-i)^{3/2}}.
    \end{split}
\end{equation*}
Therefore, combining the bounds of $\beta$'s in Lemma \ref{lem_moment_rates} and Lemma \ref{lemma_odd_moment}, we have
\begin{equation*}
\begin{split}
&\beta_{3,3}\le \beta_{1,1}=\mathrm{o}(n^{-3}), ~\beta_{3,2,1}\lesssim n^{-1}\beta_{3,1}=\mathrm{o}(n^{-4}),\beta_{3,1,1,1}\le \beta_{1,1,1,1}=\mathrm{o}(n^{-5}),\\
&\beta_{2,2,1,1}\lesssim n^{-2}\beta_{1,1}=\mathrm{o}(n^{-5}),~\beta_{2,1,1,1,1}\lesssim n^{-1}\beta_{1,1,1,1}=\mathrm{o}(n^{-6}),~\beta_{1,1,1,1,1,1,1}=\mathrm{o}(n^{-7}),
\end{split}
\end{equation*}
and yields that
\begin{equation}\label{eq_off_q3}
    \mathbb{E}(T_1^3|\mathcal{F}_i)\lesssim \frac{n^{-2}}{(n-i)^2}.
\end{equation}
which further gives
\begin{equation*}
	n^4\mathrm{V}_2\le n^4\frac{n^{-2}}{(n-i)^2}\frac{n\beta_{1,1}}{(n-i)^{1/2}}=\mathrm{o}\big(\frac{1}{(n-i)^{5/2}}\big).
\end{equation*}
We now turn to the first term, which is more involved. Similarly, we have
\begin{equation*}
\begin{split}
&\mathbb{E}(T_1^4|\mathcal{F}_i)\\
\lesssim &\sum_{k\ne l}\big(q_{i,kl}^4\beta_{4,4}\big)+\sum_{k\ne l\ne s}\big(q_{i,kl}^2q_{i,ks}^2\beta_{4,2,2}+q_{i,kl}^3q_{i,ks}\beta_{4,3,1}+q_{i,kl}^2q_{i,ks}q_{i,ls}\beta_{3,3,2}\big)\\
&+\sum_{k\ne l\ne s\ne m}\big(q_{i,kl}^2q_{i,ks}q_{i,km}\beta_{4,2,1,1}+(q_{i,kl}^3q_{i,sm}+q_{i,kl}^2q_{i,ks}q_{i,lm})\beta_{3,3,1,1}\\
&\quad+(q_{i,kl}^2q_{i,ks}q_{i,sm}+q_{i,kl}q_{i,ks}q_{i,ls}q_{i,km})\beta_{3,2,2,1}+q_{i,kl}^2q_{i,sm}^2\beta_{2,2,2,2}\big)\\
&+\sum_{k\ne l\ne s\ne m\ne r}\big(q_{i,kl}q_{i,ks}q_{i,km}q_{i,kr}\beta_{4,1,1,1,1}+(q_{i,kl}^2q_{i,ks}q_{i,mr}+q_{i,kl}q_{i,ks}q_{i,km}q_{i,lr})\beta_{3,2,1,1,1}\\
&\quad+(q_{i,kl}^2q_{i,sm}q_{i,sr}+q_{i,kl}q_{i,ks}q_{i,ls}q_{i,mr})\beta_{2,2,2,1,1}\big)\\
&+\sum_{k\ne l\ne s\ne m\ne r\ne t}\big(q_{i,kl}q_{i,ks}q_{i,km}q_{i,rt}\beta_{3,1,1,1,1,1}+(q_{i,kl}^2q_{i,sm}q_{i,rt}+q_{i,kl}q_{i,ks}q_{i,mr}q_{i,mt})\beta_{2,2,1,1,1,1}\big)\\
&+\sum_{k\ne l\ne s\ne m\ne r\ne t\ne u}q_{i,kl}q_{i,ks}q_{i,mr}q_{i,tu}\beta_{2,1,1,1,1,1,1}+\sum_{k\ne l\ne s\ne m\ne r\ne t\ne u\ne v}q_{i,kl}q_{i,sm}q_{i,rt}q_{i,uv}\beta_{1,1,1,1,1,1,1,1}\\
=&:\theta_{i,n}^{(2)}+\theta_{i,n}^{(3)}+\theta_{i,n}^{(4)}+\theta_{i,n}^{(5)}+\theta_{i,n}^{(6)}+\theta_{i,n}^{(7)}+\theta_{i,n}^{(8)}.
\end{split}
\end{equation*}
Invoke the bounds of the entries of $\mathbf{Q}_i$ in Lemma \ref{lemma_Qbounds}. Now we estimate the terms one by one. Firstly, we have
\begin{equation*}
    \begin{split}
        \theta_{i,n}^{(2)}=\sum_{k\ne l}\big(q_{i,kl}^4\beta_{4,4}\big)\lesssim \beta_{4,4}\sum_{k}(\sum_{ l}q_{i,kl}^2)^2
        \lesssim \beta_{4,4}\sum_{k}q_{i,kk}^2\frac{1}{(n-i)^2}= \beta_{4,4}\frac{S_{2}^{(i)}}{(n-i)^2}
    \end{split}
\end{equation*}
and
\begin{equation*}
    \begin{split}
        \theta_{i,n}^{(3)}\lesssim &\sum_{k}(\sum_{l}q_{i,kl}^2)^2\beta_{4,2,2}+\sum_{k\ne s}q_{i,ks}\sum_{l}q_{i,kl}^2\frac{1}{n-i}\beta_{4,3,1}+\sum_{k\ne s}q_{i,ks}\sum_{l}q_{i,kl}^2\frac{1}{n-i}\beta_{3,3,2}\\
        \lesssim &\beta_{4,2,2}\sum_{k}q_{i,kk}^2\frac{1}{(n-i)^2} +\sum_{k\ne s}q_{i,ks}q_{i,kk}\frac{1}{(n-i)^2}(\beta_{4,3,1}+\beta_{3,3,2})\\
        \lesssim & \beta_{4,2,2}\frac{S_{2}^{(i)}}{(n-i)^2}+(\beta_{4,3,1}+\beta_{3,3,2})\frac{n^{1/2}}{(n-i)^3}
    \end{split}
\end{equation*}
by $\sum_{l}q_{i,kl}^2=q_{i,kk}/(n-i)$ and \eqref{eq_boundq1} in Lemma \ref{lemma_Qbounds}.
 Similarly, we have
\begin{equation*}
	\begin{split}
 \theta_{i,n}^{(4)}\lesssim &\beta_{4,2,1,1}\sum_{k\ne l}q_{i,kl}^2\sum_{s\ne m}q_{i,ks}q_{i,km}+\beta_{3,3,1,1}\sum_{k\ne l}q_{i,kl}^2\big(\frac{1}{n-i}\sum_{s\ne m}q_{i,sm}+(\sum_{s}q_{i,ks})(\sum_{m}q_{i,lm})\big)\\
 &+\beta_{3,2,2,1}\big(\sum_{k\ne l}q_{i,kl}^2\sum_{s}q_{i,ks}\sum_{m}q_{i,sm}+\sum_{k\ne l\ne m}q_{i,kl}q_{i,km}\sum_{s}q_{i,ks}q_{i,ls}\big)+\beta_{2,2,2,2}(\sum_{k\ne l}q_{i,kl}^2)^2\\
 \lesssim & \frac{n\beta_{4,2,1,1}}{(n-i)^3}+\frac{n\beta_{3,3,1,1}}{(n-i)^{2}}+\frac{n\beta_{3,2,2,1}}{(n-i)^3}+\frac{\beta_{2,2,2,2}}{(n-i)^2},
	\end{split}
\end{equation*}
where we used Lemma \ref{lemma_Qbounds} and
\begin{equation*}
    \begin{split}
        &\sum_{k\ne l\ne m}|q_{i,kl}q_{i,km}|\sum_{s}|q_{i,ks}q_{i,ls}|\le \sum_{k\ne l\ne m}|q_{i,kl}q_{i,km}|(\sum_{s}q_{i,ks}^2\sum_{s}q_{i,ls}^2)^{1/2}\\
        \le & \sum_{k\ne l\ne m}|q_{i,kl}q_{i,km}|\frac{(q_{i,kk}q_{i,ll})^{1/2}}{(n-i)}\le \sum_{k\ne l\ne m}|q_{i,kl}q_{i,km}|\frac{1}{(n-i)^2}\le n/(n-i)^3,
    \end{split}
\end{equation*}
where in the last line we used \eqref{eq_boundq2}. Analogously, one has
\begin{equation*}
    \begin{split}
      \theta_{i,n}^{(5)}\lesssim &\beta_{4,1,1,1,1}\sum_{k}\sum_{l\ne s}q_{i,kl}q_{i,ks}\sum_{m\ne r}q_{i,km}q_{i,kr}+\beta_{3,2,1,1,1}\big(\sum_{k\ne l\ne s}q_{i,kl}q_{i,ks}\frac{1}{n-i}\sum_{m\ne r}q_{i,mr}\\
      &+\sum_{k\ne r}\sum_{l}q_{i,kl}q_{i,lr}\sum_{s\ne m}q_{i,ks}q_{i,km}\big)\\
 &+\beta_{2,2,2,1,1}\big(\sum_{k\ne l}q_{i,kl}^2\sum_{s\ne m\ne r}q_{i,sm}q_{i,sr}+\sum_{k\ne l\ne s}q_{i,kl}q_{i,ks}q_{i,ls}\sum_{m\ne r}q_{i,mr}\big)\\
 \lesssim &\beta_{4,1,1,1,1}\sum_{k}\frac{n^2q_{i,kk}^2}{(n-i)^2}+\beta_{3,2,1,1,1}\big(\frac{n^2}{(n-i)^{5/2}}+\frac{n^2}{(n-i)^3}\big)+\beta_{2,2,2,1,1}\big(\frac{n}{(n-i)^2}+\frac{n^2}{(n-i)^{5/2}}\big)\\
 \lesssim & \frac{n\beta_{4,1,1,1,1}n^2S_{2}^{(i)}}{(n-i)^2}+\frac{n^2\beta_{3,2,1,1,1}}{(n-i)^{5/2}}+\frac{n^2\beta_{2,2,2,1,1}}{(n-i)^{5/2}}
    \end{split}
\end{equation*}
by Lemma \ref{lemma_Qbounds}.
For $\theta_{i,n}^{(6)}$, we have
\begin{equation*}
    \begin{split}
        \theta_{i,n}^{(6)}=&\beta_{3,1,1,1,1,1}\sum_{k\ne l\ne s\ne m}q_{i,kl}q_{i,ks}q_{i,km}\sum_{r\ne t}q_{i,rt}\\
        &+\beta_{2,2,1,1,1,1}\big(\sum_{k\ne l}q_{i,kl}^2(\sum_{s\ne m}q_{i,sm})(\sum_{r\ne t}q_{i,rt})+(\sum_{k\ne l\ne s}q_{i,kl}q_{i,ks})^2\big)\\
        \lesssim & \beta_{3,1,1,1,1,1}\frac{n^3}{(n-i)^{5/2}}+\beta_{2,2,1,1,1,1}\frac{n^2}{(n-i)^2}.
    \end{split}
\end{equation*}
For the last two terms, one has
\begin{equation*}
    \begin{split}
        \theta_{i,n}^{(7)}+\theta_{i,n}^{(8)}\lesssim &\beta_{2,1,1,1,1,1,1}\sum_{k\ne l\ne s}q_{i,kl}q_{i,ks}(\sum_{m\ne r}q_{i,mr})^2+\beta_{1,1,1,1,1,1,1,1}(\sum_{k\ne l}q_{i,kl})^4\\
        \lesssim &\beta_{2,1,1,1,1,1,1} \frac{n^3}{(n-i)^2}+\beta_{1,1,1,1,1,1,1,1}\frac{n^4}{(n-i)^2}.
    \end{split}
\end{equation*}
Therefore, similarly to $\mathrm{V}_2$, by Lemma \ref{lem_moment_rates} and Lemma \ref{lemma_odd_moment}, we get
\begin{equation*}
	n^4\mathrm{V}_1\lesssim n^4\beta_{4,4}\frac{S_2^{(i)}}{(n-i)^2}+\frac{1}{(n-i)^2}\lesssim \frac{[\mathcal{L}(n^{1/2})]^2n^{4-\alpha}}{(n-i)^3}+\frac{1}{(n-i)^2},
\end{equation*}
where we used the fact that $S_{2}^{(i)}\le (n-i)^{-1}$. Thereafter, we conclude that, for $\alpha\ge 3$,
\begin{equation*}
	n^4\mathbb{E}\big[\big(\sum_{k\ne l}q_{i,kl}(Y_{i+1,k}Y_{i+1,l}-\beta_{1,1})\big)^4\big]\lesssim \frac{[\mathcal{L}(n^{1/2})]^2n^{4-\alpha}}{(n-i)^3}+\frac{1}{(n-i)^2}, ~i=0,1,\ldots,p-s_1-1,
\end{equation*}
as desired.

To this end, invoking the Gaussian replacement for $p-s_1\le i\le p-1$, which suggests that $\kappa_{1,1}=0$ and all the odd terms vanish, we get
\begin{equation*}
    \begin{split}
        \mathbb{E}\big[\big(\sum_{k\ne l}q_{i,kl}Y_{i+1,k}Y_{i+1,l}\big)^4\mid \mathcal{F}_i\big]\lesssim &\sum_{k\ne l}q_{i,kl}^4\kappa_{4,4}+\sum_{k\ne l\ne s}q_{i,kl}^2q_{i,ks}^2\kappa_{4,2,2}+\sum_{k\ne l\ne s\ne m}q_{i,kl}^2q_{i,sm}^2\kappa_{2,2,2,2}\\
\lesssim & \frac{\kappa_{4,4}\sum_{k\ne l}q_{i,kl}^2}{(n-i)^2}+\frac{\kappa_{4,2,2}\sum_{k}q_{i,kk}^2}{(n-i)^2}+\frac{\kappa_{2,2,2,2}}{(n-i)^2}
\lesssim  \frac{n^{-4}}{(n-i)^2},
    \end{split}
\end{equation*}
which further implies
\begin{equation*}
    n^4\mathbb{E}\big[\big(\sum_{k\ne l}q_{i,kl}Y_{i+1,k}Y_{i+1,l}\big)^4\big]\lesssim \frac{1}{(n-i)^2}, i=p-s_1,\ldots,p-1,
\end{equation*}
finishing the proof of this proposition.
\end{proof}

\bibliography{reference.bib}

\end{document}